\newtheorem{thm}{Theorem}
\newtheorem{lem}[thm]{Lemma}
\newtheorem{prop}[thm]{Proposition}
\newtheorem{cor}[thm]{Corollary}
\newtheorem{prob}[thm]{Problem}
\theoremstyle{definition}
\newtheorem{dfn}[thm]{Definition}
\newtheorem{rem}[thm]{Remark}
\def\Dom{\mathop{\mathrm{Dom}}\nolimits}
\def\Hom{\mathop{\mathrm{Hom}}\nolimits}
\def\End{\mathop{\mathrm{End}}\nolimits}
\def\ad{\mathop{\mathrm{ad}}\nolimits}
\def\id{\mathop{\mathrm{id}}\nolimits}
\def\im{\mathop{\mathrm{im}}\nolimits}
\def\coker{\mathop{\mathrm{coker}}\nolimits}
\def\SL{\mathop{\mathrm{SL}}\nolimits}
\def\PSL{\mathop{\mathrm{PSL}}\nolimits}
\def\PSO{\mathop{\mathrm{PSO}}\nolimits}
\def\sl{\mathop{\mathfrak{sl}}\nolimits}
\def\so{\mathop{\mathfrak{so}}\nolimits}
\def\tr{\mathop{\mathrm{tr}}\nolimits}
\def\Im{\mathop{\mathrm{Im}}\nolimits}
\newcommand{\mf}[1]{{\mathfrak{#1}}}
\newcommand{\bb}[1]{{\mathbb{#1}}}
\newcommand{\mca}[1]{{\mathcal{#1}}}
\newcommand{\ol}[1]{{\overline{#1}}}
\newcommand{\df}{{d_\mathcal{F}}}
\title{De Rham cohomology of the weak stable foliation\\
of the geodesic flow of a hyperbolic surface}
\author[1]{Hirokazu Maruhashi\thanks{maruhashihirokazu@gmail.com}}
\author[2]{Mitsunobu Tsutaya\thanks{tsutaya@math.kyushu-u.ac.jp}}
\affil[1]{Graduate School of Mathematical Sciences, The University of Tokyo, Tokyo 153-8914, Japan}
\affil[2]{Faculty of Mathematics, Kyushu University, Fukuoka 819-0395, Japan}
\date{\empty}
\begin{document}
\maketitle

\begin{abstract}
We compute the de Rham cohomology of the weak stable foliation of the geodesic flow of a connected orientable closed hyperbolic surface with various coefficients. For most of the coefficients, we also give certain ``Hodge decompositions'' of the corresponding de Rham complexes, which are not obtained by the usual Hodge theory of foliations. These results are based on unitary representation theory of $\PSL(2,\bb{R})$. As an application we obtain an answer to a problem considered by Haefliger and Li around 1980. 
\end{abstract}

\tableofcontents

\section{Introduction}
\subsection{What will be done in this paper}
Let $\Sigma$ be a connected orientable closed hyperbolic surface and $\Phi$ be the geodesic flow of $\Sigma$ on the unit tangent bundle $T^1\Sigma$ of $\Sigma$. Then $\Phi$ is known to be an Anosov flow and we can consider the weak stable foliation $\mca{F}$ of $\Phi$, which is a $C^\infty$ foliation of $T^1\Sigma$ in this case. 

We have another well-known description of the foliation $\mca{F}$. Let $\Gamma$ be a torsion free cocompact lattice of $\PSL(2,\bb{R})$. Then $\Sigma=\Gamma\backslash\PSL(2,\bb{R})/\PSO(2)$ is a connected oriented closed hyperbolic surface and $P=\Gamma\backslash\PSL(2,\bb{R})$ is the unit tangent bundle of $\Sigma$. Consider the subgroups 
\begin{gather*}
A=\left\{\pm
\begin{pmatrix}
a&\\
&a^{-1}
\end{pmatrix}
\ \middle|\ a>0\right\},\quad N=\left\{\pm
\begin{pmatrix}
1&b\\
&1
\end{pmatrix}
\ \middle|\ b\in\bb{R}\right\}
\end{gather*}
and 
\begin{equation*}
AN=\left\{\pm
\begin{pmatrix}
a&b\\
&a^{-1}
\end{pmatrix}
\ \middle|\ a>0,\ b\in\bb{R}\right\}
\end{equation*}
of $\PSL(2,\bb{R})$. Then the action $P\curvearrowleft A$ by right multiplication is identified with the geodesic flow of $\Sigma$ and the orbit foliation $\mca{F}$ of the action $P\curvearrowleft AN$ by right multiplication is the weak stable foliation of the geodesic flow of $\Sigma$. 

The objectives of this paper are to compute the de Rham cohomology of this foliation $\mca{F}$ with certain coefficients, and to give a kind of Hodge decompositions to cochain complexes defining the cohomology for most of the coefficients. We do not use the usual Hodge theory of foliations to obtain these Hodge decompositions. (See \'{A}lvarez L\'{o}pez--Tondeur \cite{AT} and \'{A}lvarez L\'{o}pez--Kordyukov \cite{AK}, where the existences of the usual Hodge decompositions are proved for a Riemannian foliation.) Since the foliation $\mca{F}$ is not Riemannian, we cannot apply the theory to $\mca{F}$. See Deninger--Singhof \cite{DS} for a one-dimensional foliation on a three-dimensional Heisenberg nilmanifold, which is not Riemannian and for which the usual Hodge decomposition does not hold. 

Let $T\mca{F}$ be the tangent bundle of $\mca{F}$, ie a subbundle of $TP$ consisting of vectors tangent to leaves of $\mca{F}$. Let $\mf{an}$ be the Lie algebra of $AN$. For a finite dimensional real or complex representation $V$ of $\mf{an}$, the de Rham cohomology $H^*(\mca{F};V)$ of the foliation $\mca{F}$ with coefficient $V$ is the cohomology of the cochain complex $\Gamma\left(\bigwedge^*T^*\mca{F}\otimes V\right)$, the space of $C^\infty$ sections of the bundle $\bigwedge^*T^*\mca{F}\otimes V$. (A detailed definition of the cohomology is given in Section \ref{de Rham cohomology}.) 

In \cite{MM} Matsumoto and Mitsumatsu proved 
\begin{equation*}
H^1(\mca{F};\bb{R})\simeq H^1(P;\bb{R})\oplus H^1(\mf{an};\bb{R}), 
\end{equation*}
where the right hand side is a direct sum of a de Rham cohomology and a Lie algebra cohomology. (To be more precise, they take a (not necessarily torsion free) cocompact lattice of the universal cover $\widetilde{\SL}(2,\bb{R})$ of $\SL(2,\bb{R})$ instead of that in $\PSL(2,\bb{R})$.) 

Consider a basis 
\begin{equation*}
H=\frac{1}{2\sqrt{2}}
\begin{pmatrix}
1&0\\
0&-1
\end{pmatrix}
,\quad E=\frac{1}{2}
\begin{pmatrix}
0&1\\
0&0
\end{pmatrix}
\end{equation*}
of $\mf{an}$. For $\lambda\in\bb{C}$, define a $1$-dimensional complex representation $\bb{C}_\lambda$ by 
\begin{equation*}
\bb{C}_\lambda=\bb{C},\quad H1=\lambda1,\quad E1=0. 
\end{equation*}
A $1$-dimensional real representation $\bb{R}_\lambda$ for $\lambda\in\bb{R}$ is defined similarly. 

In this paper we will compute $H^*(\mca{F};V)$ for 
\begin{equation*}
V=\bb{C}_\lambda,\ \bb{R}_\mu,\ \mf{an}\otimes\bb{C},\ \mf{an},\ \sl(2,\bb{R})\otimes\bb{C},\ \sl(2,\bb{R})
\end{equation*}
for all $\lambda\in\bb{C}$ and $\mu\in\bb{R}$. See Section \ref{comprrrr} for the computation results for $V=\bb{C}_\lambda$, $\bb{R}_\lambda$, Corollary \ref{ancohrrreee} for $V=\mf{an}\otimes\bb{C}$, $\mf{an}$, and Section \ref{slslsls2rr} for $V=\sl(2,\bb{R})\otimes\bb{C}$, $\sl(2,\bb{R})$. The method is completely different from that in Matsumoto--Mitsumatsu \cite{MM}. As far as we know even $H^2(\mca{F};\bb{R})$ was not computed. 

We not only compute the de Rham cohomologies but also construct a kind of Hodge decompositions of $\Gamma\left(\bigwedge^*T^*\mca{F}\otimes V\right)$. This will be done when $V=\mf{an}\otimes\bb{C}$ or $V=\bb{C}_\lambda$ for 
\begin{equation}\label{conditionlll}
\lambda\not\in\left\{\frac{1\pm\sqrt{1-4\nu}}{2\sqrt{2}}\ \middle|\ \nu\in\sigma(\ol{\Delta_\Sigma})\setminus\{0\}\right\}, 
\end{equation}
where $\sigma(\ol{\Delta_\Sigma})$ is the spectrum of the closure $\ol{\Delta_\Sigma}$ of the Laplacian $\Delta_\Sigma$ of the hyperbolic surface $\Sigma$. See Figure \eqref{figure1}. 
\begin{figure}[hbtp]
\begin{center}
\includegraphics[keepaspectratio, scale=0.5]{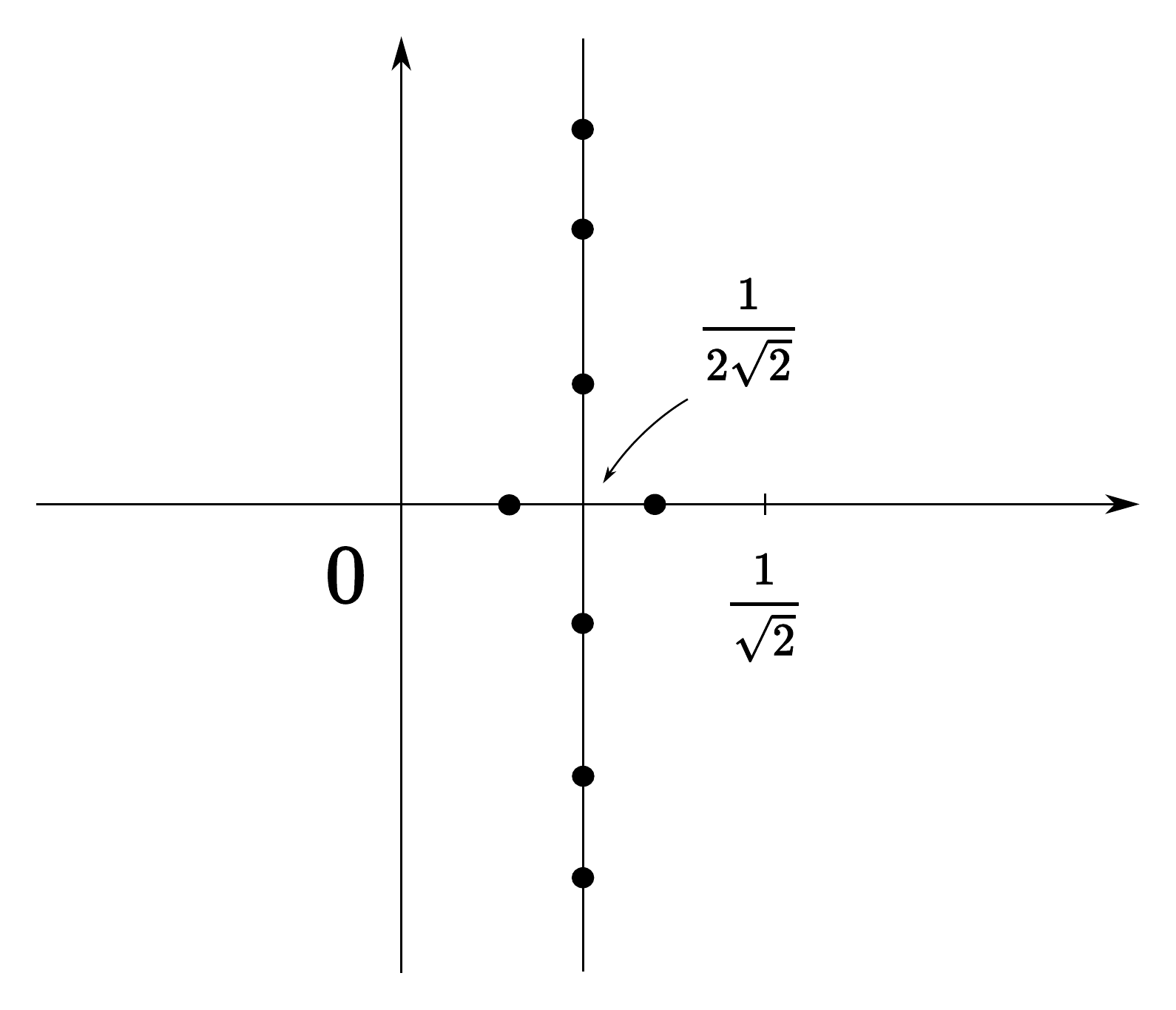}
\caption{$\left\{\frac{1\pm\sqrt{1-4\nu}}{2\sqrt{2}}\ \middle|\ \nu\in\sigma(\ol{\Delta_\Sigma})\setminus\{0\}\right\}$}
\label{figure1}
\end{center}
\end{figure}

To be more precise, we will construct an operator $\delta$ on $\Gamma\left(\bigwedge^*T^*\mca{F}\otimes V\right)$ of degree $-1$ such that $\delta^2=0$ and, defining $\Delta=\df\delta+\delta\df$, where $\df$ is the differential of $\Gamma\left(\bigwedge^*T^*\mca{F}\otimes V\right)$, we have 
\begin{align*}
\Gamma\left(\bigwedge^*T^*\mca{F}\otimes V\right)&=\ker\Delta\oplus\im\df\oplus\im\delta\\
\ker\df&=\ker\Delta\oplus\im\df\\
\ker\delta&=\ker\Delta\oplus\im\delta\\
\ker\Delta&=\ker\df\cap\ker\delta\\
\im\Delta&=\im\df\oplus\im\delta. 
\end{align*}
It follows that $H^*\left(\mca{F};V\right)\simeq\ker\Delta$.

\subsection{How to compute the cohomologies}
Assume that $V$ is a finite dimensional complex representation of $\mf{an}$. Computations of the cohomologies are based on an isomorphism 
\begin{equation}\label{isommmo}
\Gamma\left(\bigwedge^*T^*\mca{F}\otimes V\right)\simeq C^*\left(\mf{an};C^\infty(P,\bb{C})\otimes V\right)
\end{equation}
of cochain complexes, where the right hand side is the Chevalley--Eilenberg complex. See Proposition \ref{drce}. Here $\mf{an}\curvearrowright C^\infty(P,\bb{C})$ is the representation obtained from the action $P\curvearrowleft AN$ and $\mf{an}\curvearrowright C^\infty(P,\bb{C})\otimes V$ is the tensor product of representations. 

From the action $P\curvearrowleft\PSL(2,\bb{R})$ by right multiplication we get a unitary representation $\PSL(2,\bb{R})\curvearrowright L^2(P)$. The representation $L^2(P)$ decomposes into countably infinitely many irreducible unitary representations of $\PSL(2,\bb{R})$. We have a classification of irreducible unitary representations of $\PSL(2,\bb{R})$ and there are formulas for the multiplicities of irreducible unitary representations in $L^2(P)$. Since $C^\infty(P,\bb{C})$ coincides with the space of $C^\infty$ vectors in $L^2(P)$, by taking the spaces of $C^\infty$ vectors, we can obtain a decomposition of $C^\infty(P,\bb{C})$ into representations of $\sl(2,\bb{R})$. Hence the complex $C^*\left(\mf{an};C^\infty(P,\bb{C})\otimes V\right)$ decomposes into complexes of the form $C^*\left(\mf{an};C^\infty(\mca{H})\otimes V\right)$, where $\mca{H}$ is an irreducible unitary representation of $\PSL(2,\bb{R})$ and $C^\infty(\mca{H})$ is the space of $C^\infty$ vectors in $\mca{H}$. (We must be careful with the meanings of the direct sums here, since we are dealing with infinite direct sums. We actually avoid dealing with infinite direct sums when considering the complex $C^*\left(\mf{an};C^\infty(P,\bb{C})\otimes V\right)$. We decompose it into a finite direct sum. See Section \ref{cconsthoxce} below.) 

When $V=\bb{C}_\lambda$ for $\lambda\in\bb{C}$, we compute $H^*\left(\mf{an};C^\infty(\mca{H})\otimes\bb{C}_\lambda\right)$ for an irreducible unitary representation $\mca{H}$ of $\PSL(2,\bb{R})$ by using the Hochschild--Serre spectral sequence for the pair $\mf{n}\subset\mf{an}$, where $\mf{n}$ is the Lie algebra of $N$. During the computation we need to know $H^1(\mf{n};C^\infty(\mca{H}))$, but this cohomology was computed by Flaminio and Forni in \cite{FF}. 

Since $C^*\left(\mf{an};C^\infty(P,\bb{C})\otimes\bb{C}_\lambda\right)$ is an infinite direct sum of complexes of the form $C^*\left(\mf{an};C^\infty(\mca{H})\otimes\bb{C}_\lambda\right)$, to go from the computations of $H^*\left(\mf{an};C^\infty(\mca{H})\otimes\bb{C}_\lambda\right)$ to $H^*\left(\mf{an};C^\infty(P,\bb{C})\otimes\bb{C}_\lambda\right)$, we must consider convergence problems. It is possible to solve them by explicit computations using certain realizations of irreducible unitary representations $\mca{H}$, but we will not explain this in this paper. Instead we will use one of the following Hodge decompositions to deal with the infinite direct sum.

\subsection{Construction of Hodge decompositions}\label{cconsthoxce}
By the isomorphism \eqref{isommmo}, to obtain a Hodge decomposition of $\Gamma\left(\bigwedge^*T^*\mca{F}\otimes V\right)$, we will construct one for $C^*\left(\mf{an};C^\infty(P,\bb{C})\otimes V\right)$. We do it for $V=\bb{C}_\lambda$, where $\lambda\not\in\left\{\frac{1\pm\sqrt{1-4\nu}}{2\sqrt{2}}\ \middle|\ \nu\in\sigma(\ol{\Delta_\Sigma})\setminus\{0\}\right\}$, in Sections \ref{firsthodge99}, \ref{easy} and \ref{discrete series}, and for $V=\mf{an}\otimes\bb{C}$ in Section \ref{hodgean}. Here we consider the case when $V=\bb{C}_0$ for simplicity. We have a decomposition 
\begin{equation*}
L^2(P)=\bb{C}\oplus g_\Sigma D_1^+\oplus g_\Sigma D_1^-\oplus\bigoplus_iW_i
\end{equation*}
of a unitary representation of $\PSL(2,\bb{R})$ into irreducible unitary representations, where $D_1^\pm$ are certain discrete series representations, $W_i$'s are irreducible unitary representations with nonzero Casimir parameter and $g_\Sigma$ is the genus of the surface $\Sigma$. By taking the $C^\infty$ vectors of both sides we get a decomposition 
\begin{equation*}
C^\infty(P,\bb{C})=\bb{C}\oplus g_\Sigma C^\infty(D_1^+)\oplus g_\Sigma C^\infty(D_1^-)\oplus C^\infty\left(\bigoplus_iW_i\right)
\end{equation*}
of a representation of $\sl(2,\bb{R})$, where $C^\infty(\ \cdot\ )$ denotes the space of $C^\infty$ vectors. Hence we obtain a decomposition 
\begin{align*}
C^*\left(\mf{an};C^\infty(P,\bb{C})\right)&\simeq C^*(\mf{an};\bb{C})\oplus g_\Sigma C^*(\mf{an};C^\infty(D_1^+))\oplus g_\Sigma C^*(\mf{an};C^\infty(D_1^-))\\
&\quad\oplus C^*\left(\mf{an};C^\infty\left(\bigoplus_iW_i\right)\right)
\end{align*}
of a cochain complex. We will construct a Hodge decomposition for each summand of this decomposition. In Section \ref{firsthodge99}, we construct an operator $\delta$ on $C^*\left(\mf{an};C^\infty\left(\bigoplus_iW_i\right)\right)$ of degree $-1$ such that $d\delta+\delta d=\id$. Hence this summand does not contribute to the cohomology. The complex $C^*(\mf{an};\bb{C})$ is finite dimensional and it is easy to construct a Hodge decomposition for it (Section \ref{easy}). In Section \ref{discrete series}, we construct an operator $\delta$ on $C^*(\mf{an};C^\infty(D_1^\pm))$ of degree $-1$ such that $d\delta+\delta d=\id-p$, where $p$ is a projection onto the space of ``harmonic elements'', which is a finite dimensional subspace. These Hodge decompositions give a Hodge decomposition of the entire cochain complex $C^*\left(\mf{an};C^\infty(P,\bb{C})\right)$.

\subsection{An application}
By restricting differential forms, we have a homomorphism $H^*(P;\bb{R})\to H^*(\mca{F};\bb{R})$. We will prove in Section \ref{restmapH} that the homomorphism is an isomorphism on the second cohomology: 
\begin{equation*}
H^2(\mca{F};\bb{R})\simeq H^2(P;\bb{R}). 
\end{equation*}
As an application of this isomorphism, we can give an answer to a problem considered by Haefliger and Li in \cite{HL}. Let $\mca{G}$ be a $C^\infty$ foliation of a $C^\infty$ manifold $M$. We say $\mca{G}$ is totally minimizable if any $C^\infty$ Riemannian metric $g$ of $T\mca{G}$ can be extended to a $C^\infty$ Riemannian metric $\tilde{g}$ of $TM$ such that each leaf of $\mca{G}$ is a minimal submanifold of the Riemannian manifold $(M,\tilde{g})$. Haefliger studied this property in \cite{Haef} for general foliations. Haefliger and Li studied it for the weak stable foliation $\mca{F}$ in \cite{HL} but they did not succeed in determining whether $\mca{F}$ has the property or not. This property was later named as the total minimizability by Matsumoto in \cite{Mat}. As stated in \cite{Mat} if the restriction map $H^2(P;\bb{R})\to H^2(\mca{F};\bb{R})$ is surjective, then $\mca{F}$ is totally minimizable. Hence $\mca{F}$ is totally minimizable.

\subsection{Convention and notations}
\begin{itemize}
\item For a vector space $V$, the definition of the wedge product in this paper is 
\begin{align*}
&(\varphi\wedge\psi)(X_1,\ldots,X_{p+q})\\
&:=\frac{1}{p!q!}\sum_{\sigma\in\mf{S}_{p+q}}(-1)^\sigma\varphi(X_{\sigma(1)},\ldots,X_{\sigma(p)})\psi(X_{\sigma(p+1)},\ldots,X_{\sigma(p+q)}), 
\end{align*}
where $\varphi\in\bigwedge^pV^*$, $\psi\in\bigwedge^qV^*$ and $X_1,\ldots,X_{p+q}\in V$. 
\item For a $C^\infty$ vector bundle $E$, $\Gamma(E)$ denotes the space of all $C^\infty$ sections of $E$. 
\item For manifolds $M$ and $N$, the set of all $C^\infty$ maps from $M$ to $N$ is denoted by $C^\infty(M,N)$. 
\end{itemize}

\section{De Rham cohomology of a foliation and Lie algebra cohomology}
In this section we review the definition of the de Rham cohomology of a foliation with coefficients in a vector bundle with a flat partial connection, a construction of a flat partial connection from a locally free action and a representation and an isomorphism between the de Rham cohomology and a Lie algebra cohomology.

\subsection{De Rham cohomology of a foliation}\label{de Rham  cohomology}
Let $\mca{F}$ be a $C^\infty$ foliation on a $C^\infty$ manifold $M$. Recall that the tangent bundle $T\mca{F}$ of the foliation $\mca{F}$ is a $C^\infty$ subbundle of $TM$ defined by 
\begin{equation*}
T_x\mca{F}=T_xL\subset T_xM
\end{equation*}
for $x\in M$, where $L$ is the leaf of $\mca{F}$ through $x$. An element of $\Gamma\left(\bigwedge^pT^*\mca{F}\right)$ is called a leafwise $p$-form of $\mca{F}$. We have the exterior derivative 
\begin{equation*}
d_\mca{F}\colon\Gamma\left(\bigwedge^pT^*\mca{F}\right)\to\Gamma\left(\bigwedge^{p+1}T^*\mca{F}\right)
\end{equation*}
for each $p$, defined by the formula 
\begin{align*}
\left(d_\mca{F}\omega\right)(X_1,\ldots,X_{p+1})&=\sum_{i=1}^{p+1}(-1)^{i+1}X_i\omega\left(X_1,\ldots,\widehat{X_i},\ldots,X_{p+1}\right)\\
&\quad+\sum_{i<j}(-1)^{i+j}\omega\left([X_i,X_j],X_1,\ldots,\widehat{X_i},\ldots,\widehat{X_j},\ldots,X_{p+1}\right)
\end{align*}
for $X_1,\ldots,X_{p+1}\in\Gamma(T\mca{F})$. Note that $[X_i,X_j]\in\Gamma(T\mca{F})$ because $\mca{F}$ is a foliation. Since we have $d_\mca{F}^2=0$, we can define the cohomology of the cochain complex $\left(\Gamma\left(\bigwedge^*T^*\mca{F}\right),d_\mca{F}\right)$. This cohomology is the de Rham cohomology of $\mca{F}$ (with the trivial coefficient $\bb{R}$) and denoted by $H^*(\mca{F})$ or $H^*(\mca{F};\bb{R})$. 

Next we will define the de Rham cohomology with coefficients. Let $\bb{F}$ be $\bb{R}$ or $\bb{C}$ and $E$ be a finite dimensional $C^\infty$ $\bb{F}$-vector bundle over $M$. An element of $\Gamma\left(\bigwedge^pT^*\mca{F}\otimes E\right)$ is called an $E$-valued leafwise $p$-form of $\mca{F}$. 

\begin{dfn}
An $\mca{F}$-partial connection of the vector bundle $E$ is an $\bb{F}$-linear map $\nabla\colon\Gamma(E)\to\Gamma(T^*\mca{F}\otimes E)$ such that the Leibniz rule 
\begin{equation*}
\nabla(f\eta)=d_\mca{F}f\otimes\eta+f\nabla\eta
\end{equation*}
holds for all $f\in C^\infty(M,\bb{F})$ and $\eta\in\Gamma(E)$. 
\end{dfn}

For an $\mca{F}$-partial connection $\nabla$ of $E$, the curvature 
\begin{equation*}
R\colon\Gamma\left(T\mca{F}\otimes\bb{F}\right)\times\Gamma\left(T\mca{F}\otimes\bb{F}\right)\times\Gamma(E)\to\Gamma(E)
\end{equation*}
of $\nabla$ is defined by 
\begin{equation*}
R(X,Y)\eta=\nabla_X\nabla_Y\eta-\nabla_Y\nabla_X\eta-\nabla_{[X,Y]}\eta
\end{equation*}
for $X,Y\in\Gamma\left(T\mca{F}\otimes\bb{F}\right)$ and $\eta\in\Gamma(E)$. We have 
\begin{equation*}
R\left(fX,Y\right)\eta=R\left(X,fY\right)\eta=R(X,Y)f\eta=fR(X,Y)\eta
\end{equation*}
and 
\begin{equation*}
R(X,Y)\eta=-R(Y,X)\eta
\end{equation*}
for $f\in C^\infty(M,\bb{F})$, $X,Y\in\Gamma\left(T\mca{F}\otimes\bb{F}\right)$ and $\eta\in\Gamma(E)$, hence $R$ can be regarded as an element of $\Gamma\left(\bigwedge^2T^*\mca{F}\otimes\End_\bb{F}(E)\right)$. An $\mca{F}$-partial connection $\nabla$ is said to be flat if $R=0$. 

For an $\mca{F}$-partial connection $\nabla$, we can define the covariant exterior derivative 
\begin{equation*}
d_\mca{F}^\nabla\colon\Gamma\left(\bigwedge^pT^*\mca{F}\otimes E\right)\to\Gamma\left(\bigwedge^{p+1}T^*\mca{F}\otimes E\right)
\end{equation*}
by 
\begin{align*}
\left(d_\mca{F}^\nabla\omega\right)(X_1,\ldots,X_{p+1})&=\sum_{i=1}^{p+1}(-1)^{i+1}\nabla_{X_i}\omega\left(X_1,\ldots,\widehat{X_i},\ldots,X_{p+1}\right)\\
&+\sum_{i<j}(-1)^{i+j}\omega\left([X_i,X_j],X_1,\ldots,\widehat{X_i},\ldots,\widehat{X_j},\ldots,X_{p+1}\right)
\end{align*}
for $\omega\in\Gamma\left(\bigwedge^pT^*\mca{F}\otimes E\right)$ and $X_1,\ldots,X_{p+1}\in\Gamma(T\mca{F})$. It follows that 
\begin{equation*}
d_\mca{F}^\nabla\left(\omega\otimes\eta\right)=d_\mca{F}\omega\otimes\eta+(-1)^p\omega\wedge\nabla\eta
\end{equation*}
for $\omega\in\Gamma\left(\bigwedge^pT^*\mca{F}\right)$ and $\eta\in\Gamma(E)$. We have $\left(d_\mca{F}^\nabla\right)^2=R\wedge$. Hence for a flat $\mca{F}$-partial connection $\nabla$, the pair $\left(\Gamma\left(\bigwedge^*T^*\mca{F}\otimes E\right),d_\mca{F}^\nabla\right)$ is a cochain complex and its cohomology is denoted by $H^*(\mca{F};E,\nabla)$. 

If $E$ is a real vector bundle with a flat $\mca{F}$-partial connection $\nabla$, then $E\otimes\bb{C}$ is a complex vector bundle with the $\mca{F}$-partial connection $\nabla\otimes\bb{C}$. Then $\nabla\otimes\bb{C}$ is flat and we have an isomorphism 
\begin{equation}\label{rrrccc}
\left(\Gamma\left(\bigwedge^*T^*\mca{F}\otimes E\otimes\bb{C}\right),d_\mca{F}^{\nabla\otimes\bb{C}}\right)\simeq\left(\Gamma\left(\bigwedge^*T^*\mca{F}\otimes E\right)\otimes\bb{C},d_\mca{F}^\nabla\otimes\bb{C}\right)
\end{equation}
of complex cochain complexes, hence 
\begin{equation*}
H^*\left(\mca{F};E\otimes\bb{C}\right)\simeq H^*(\mca{F};E)\otimes\bb{C}. 
\end{equation*}

\subsection{A construction of a flat $\mca{F}$-partial connection from a representation}\label{acofpcf}
Let $G$ be a connected Lie group with the Lie algebra $\mf{g}$, $M$ be a $C^\infty$ manifold, $M\stackrel{\rho}{\curvearrowleft}G$ be a $C^\infty$ locally free action with the orbit foliation $\mca{F}$. Let $\bb{F}$ be $\bb{R}$ or $\bb{C}$ and $\mf{g}\stackrel{\pi}{\curvearrowright}V$ be a representation on a finite dimensional vector space over $\bb{F}$, ie a real Lie algebra homomorphism $\pi\colon\mf{g}\to\End_\bb{F}(V)$. We define a flat $\mca{F}$-partial connection $\nabla^\pi$ of the trivial $\bb{F}$-vector bundle $M\times V\to M$. 

First we have a trivialization $T\mca{F}\simeq M\times\mf{g}$ defined by 
\begin{equation*}
\left.\frac{d}{dt}\rho\left(x,e^{tX}\right)\right|_{t=0}\mapsfrom(x,X). 
\end{equation*}
Let $\omega_0\in\Gamma\left(T^*\mca{F}\otimes\mf{g}\right)$ be the composition of the above isomorphism $T\mca{F}\simeq M\times\mf{g}$ and the projection $M\times\mf{g}\to\mf{g}$. The $\mf{g}$-valued leafwise $1$-form $\omega_0$ is called the canonical $1$-form of $\rho$. It satisfies the Maurer--Cartan equation 
\begin{equation}\label{Maurer}
\df\omega_0+\frac{1}{2}[\omega_0,\omega_0]=0, 
\end{equation}
where the $\mf{g}$-valued leafwise $2$-form $[\omega_0,\omega_0]\in\Gamma\left(\bigwedge^2T^*\mca{F}\otimes\mf{g}\right)$ is defined by 
\begin{equation*}
[\omega_0,\omega_0](X,Y)=[\omega_0(X),\omega_0(Y)]-[\omega_0(Y),\omega_0(X)]=2[\omega_0(X),\omega_0(Y)]
\end{equation*}
for $X$, $Y\in\Gamma(T\mca{F})$. 

By composing the representation $\pi\colon\mf{g}\to\End_\bb{F}(V)$, we get 
\begin{equation*}
\pi\omega_0\in\Gamma\left(T^*\mca{F}\otimes\End_\bb{F}(V)\right), 
\end{equation*}
which satisfies 
\begin{equation}\label{curvature}
\df(\pi\omega_0)+\frac{1}{2}\left[\pi\omega_0,\pi\omega_0\right]=0
\end{equation}
by \eqref{Maurer}. 

We define an $\mca{F}$-partial connection $\nabla^\pi\colon\Gamma(V)\to\Gamma\left(T^*\mca{F}\otimes V\right)$ of the trivial $\bb{F}$-vector bundle $M\times V\to M$ by 
\begin{equation*}
\nabla^\pi\xi=\df\xi+(\pi\omega_0)\xi. 
\end{equation*}

Let $v_1,\ldots,v_n$ be an $\bb{F}$-basis of $V$. Regard $v_1,\ldots,v_n$ as a global frame of the trivial bundle $M\times V\to M$. Then the connection form of $\nabla^\pi$ with respect to the global frame is the matrix representation of $\pi\omega_0$ with respect to $v_1,\ldots,v_n$. Since the curvature form of $\nabla^\pi$ with respect to the global frame is the matrix representation of $\df(\pi\omega_0)+\frac{1}{2}\left[\pi\omega_0,\pi\omega_0\right]$ with respect to $v_1,\ldots,v_n$, it vanishes by \eqref{curvature}. Hence the $\mca{F}$-partial connection $\nabla^\pi$ is flat. 

\begin{dfn}
We have a de Rham complex $\left(\Gamma\left(\bigwedge^*T^*\mca{F}\otimes V\right),d_\mca{F}^{\nabla^\pi}\right)$ and its cohomology is denoted by $H^*(\mca{F};V)$, $H^*(\mca{F};\pi)$ or $H^*\left(\mca{F};\mf{g}\stackrel{\pi}{\curvearrowright}V\right)$. 
\end{dfn}

If $\mf{g}\stackrel{\pi}{\curvearrowright}V$ is a real representation, then $\mf{g}\stackrel{\pi\otimes\bb{C}}{\curvearrowright}V\otimes\bb{C}$ is a complex representation and $\nabla^\pi\otimes\bb{C}=\nabla^{\pi\otimes\bb{C}}$. So we have an isomorphism 
\begin{equation*}
\left(\Gamma\left(\bigwedge^*T^*\mca{F}\otimes V\otimes\bb{C}\right),d_\mca{F}^{\nabla^{\pi\otimes\bb{C}}}\right)\simeq\left(\Gamma\left(\bigwedge^*T^*\mca{F}\otimes V\right)\otimes\bb{C},d_\mca{F}^{\nabla^\pi}\otimes\bb{C}\right)
\end{equation*}
of complex cochain complexes by \eqref{rrrccc}, hence 
\begin{equation}\label{realcomplex}
H^*\left(\mca{F};\mf{g}\stackrel{\pi\otimes\bb{C}}{\curvearrowright}V\otimes\bb{C}\right)\simeq H^*\left(\mca{F};\mf{g}\stackrel{\pi}{\curvearrowright}V\right)\otimes\bb{C}. 
\end{equation}

\subsection{An isomorphism between the de Rham cohomology and a Lie algebra cohomology}
Let $G$ be a connected Lie group with the Lie algebra $\mf{g}$, $M$ be a $C^\infty$ manifold, $M\stackrel{\rho}{\curvearrowleft}G$ be a $C^\infty$ locally free action with the orbit foliation $\mca{F}$. Let $\bb{F}$ be $\bb{R}$ or $\bb{C}$ and $\mf{g}\stackrel{\pi}{\curvearrowright}V$ be a representation on a finite dimensional vector space over $\bb{F}$. In the previous section we defined a flat $\mca{F}$-partial connection $\nabla^\pi$ of the trivial vector bundle $M\times V\to M$. Hence we have a de Rham complex $\left(\Gamma\left(\bigwedge^*T^*\mca{F}\otimes V\right),d_\mca{F}^{\nabla^\pi}\right)$. 

On the other hand, we have a representation $\mf{g}\curvearrowright C^\infty(M,\bb{R})$ defined by 
\begin{equation*}
(Xf)(x)=\left.\frac{d}{dt}f\left(\rho\left(x,e^{tX}\right)\right)\right|_{t=0}
\end{equation*}
for $X\in\mf{g}$, $f\in C^\infty(M,\bb{R})$ and $x\in M$. Hence we can form the tensor product $\mf{g}\curvearrowright C^\infty(M,\bb{R})\otimes V$ of the representations and consider the Chevalley--Eilenberg complex $\left(C^*\left(\mf{g};C^\infty(M,\bb{R})\otimes V\right),d\right)$. 

\begin{prop}\label{drce}
We have an isomorphism 
\begin{equation*}
\left(\Gamma\left(\bigwedge^*T^*\mca{F}\otimes V\right),d_\mca{F}^{\nabla^\pi}\right)\simeq\left(C^*\left(\mf{g};C^\infty(M,\bb{R})\otimes V\right),d\right)
\end{equation*}
of cochain complexes. Hence
\begin{equation*}
H^*(\mca{F};V)\simeq H^*\left(\mf{g};C^\infty(M,\bb{R})\otimes V\right). 
\end{equation*}
\end{prop}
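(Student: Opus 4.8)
The plan is to exhibit an explicit graded-vector-space isomorphism that carries the covariant exterior derivative to the Chevalley--Eilenberg differential, so that the cohomology statement follows immediately. The starting point is the trivialization $T\mca{F}\simeq M\times\mf{g}$ from Section \ref{acofpcf}: for $X\in\mf{g}$ let $X^*\in\Gamma(T\mca{F})$ be the fundamental vector field $X^*_x=\left.\frac{d}{dt}\rho\left(x,e^{tX}\right)\right|_{t=0}$, the image of $(x,X)$. A basis $X_1,\ldots,X_n$ of $\mf{g}$ then gives a global frame $X_1^*,\ldots,X_n^*$ of $T\mca{F}$, and by definition of the canonical $1$-form one has $\omega_0(X_i^*)=X_i$. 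I would define
\begin{equation*}
\Psi\colon\Gamma\left(\bigwedge^pT^*\mca{F}\otimes V\right)\to C^p\left(\mf{g};C^\infty(M,\bb{R})\otimes V\right),\qquad \Psi(\omega)(X_{i_1},\ldots,X_{i_p})=\omega\left(X_{i_1}^*,\ldots,X_{i_p}^*\right),
\end{equation*}
extended $\mf{g}$-multilinearly; here the right-hand side is a $V$-valued $C^\infty$ function, i.e. an element of $C^\infty(M,\bb{R})\otimes V$ (identifying $C^\infty(M,\bb{R})\otimes_{\bb{R}}V=C^\infty(M,\bb{F})\otimes_{\bb{F}}V$ when $\bb{F}=\bb{C}$). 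Because $X_1^*,\ldots,X_n^*$ is a global frame, $\Psi$ is a grading-preserving linear isomorphism, with inverse obtained by declaring a cochain to take the prescribed values on the frame and extending $C^\infty(M,\bb{F})$-multilinearly. This settles the underlying-space part with no difficulty.

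The substance is to check $\Psi\circ\df^{\nabla^\pi}=d\circ\Psi$, where $d$ is the Chevalley--Eilenberg differential of the tensor-product module $C^\infty(M,\bb{R})\otimes V$. Evaluating the defining formula of $\df^{\nabla^\pi}$ on the frame $X_0^*,\ldots,X_p^*$ produces two families of terms that I would match against the two families in the Chevalley--Eilenberg differential of $\Psi\omega$. The bracket terms agree at once: since $\rho$ is a right action, $X\mapsto X^*$ is a Lie algebra homomorphism, so $[X_i^*,X_j^*]=[X_i,X_j]^*$, and therefore $\omega\left([X_i^*,X_j^*],\ldots\right)=\Psi\omega\left([X_i,X_j],\ldots\right)$, which is exactly the second sum in $d\,\Psi\omega$ (with identical signs $(-1)^{i+j}$).

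For the derivative terms I would compute $\nabla^\pi$ on the $V$-valued function $\xi:=\omega\left(X_0^*,\ldots,\widehat{X_i^*},\ldots,X_p^*\right)$, viewed as a section of $M\times V$. By the definition $\nabla^\pi\xi=\df\xi+(\pi\omega_0)\xi$ together with $\omega_0(X_i^*)=X_i$, one gets $\nabla^\pi_{X_i^*}\xi=X_i^*(\xi)+\pi(X_i)\xi$. The first summand is the derivative of $\xi$ along the fundamental field $X_i^*$, which by the very definition $(Xf)(x)=\left.\frac{d}{dt}f\left(\rho\left(x,e^{tX}\right)\right)\right|_{t=0}$ is the $\mf{g}$-action of $X_i$ on the $C^\infty(M,\bb{R})$ factor; the second summand is the action $\pi(X_i)$ on the $V$ factor. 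Their sum is precisely the tensor-product action of $X_i$ on $C^\infty(M,\bb{R})\otimes V$, i.e. the term $X_i\cdot\Psi\omega\left(\ldots\right)$ appearing in the first sum of $d\,\Psi\omega$ (again with matching signs $(-1)^{i+1}$). Hence $\Psi$ is an isomorphism of cochain complexes, and passing to cohomology yields $H^*(\mca{F};V)\simeq H^*\left(\mf{g};C^\infty(M,\bb{R})\otimes V\right)$.

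The computation is essentially bookkeeping, and the only point demanding genuine care is the orientation of $X\mapsto X^*$: for a \emph{right} action this map is a Lie algebra homomorphism rather than an anti-homomorphism, and it is exactly this fact that makes the bracket terms align without stray signs. I would therefore verify $[X_i^*,X_j^*]=[X_i,X_j]^*$ explicitly and confirm that it is consistent with the wedge-product and differential conventions fixed earlier; once this is pinned down, the rest follows mechanically.
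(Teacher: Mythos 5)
Your proposal is correct and is essentially the paper's own proof: the map $\Psi$ is exactly the paper's $\Phi$ (since $X^*=\omega_0^{-1}(X)$), the identification of $V$-valued functions with $C^\infty(M,\bb{R})\otimes V$ as $\mf{g}$-modules is the paper's intermediate step, and the cochain-map verification splits into the same two matchings, with $\nabla^\pi_{X^*}\xi=X^*\xi+\pi(X)\xi$ giving the tensor-product action and $[X_i^*,X_j^*]=[X_i,X_j]^*$ (valid for right actions) giving the bracket terms. Your explicit flagging of the homomorphism-versus-antihomomorphism point is a sound precaution; the paper uses the same fact silently in its displayed computation.
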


\begin{proof}
Define a representation $\mf{g}\curvearrowright C^\infty(M,V)$ over $\bb{F}$ by 
\begin{equation*}
(X\xi)(x)=\left.\frac{d}{dt}\xi\left(\rho\left(x,e^{tX}\right)\right)\right|_{t=0}+\pi(X)\xi(x)
\end{equation*}
for $X\in\mf{g}$, $\xi\in C^\infty(M,V)$ and $x\in M$. We have an isomorphism 
\begin{align*}
C^\infty(M,\bb{R})\otimes V&\simeq C^\infty(M,V)\\
f\otimes v&\mapsto fv
\end{align*}
of $\mf{g}$-modules. So it suffices to construct an isomorphism 
\begin{equation*}
\Phi\colon\Gamma\left(\bigwedge^*T^*\mca{F}\otimes V\right)\simeq C^*\left(\mf{g};C^\infty(M,V)\right). 
\end{equation*}
First note that the isomorphism $T\mca{F}\simeq M\times\mf{g}$ induces an isomorphism 
\begin{equation*}
\bigwedge^pT^*\mca{F}\otimes V\simeq M\times\left(\bigwedge^p\mf{g}^*\otimes V\right)
\end{equation*}
of $\bb{F}$-vector bundles. Then $\Phi$ is defined to be the following composition: 
\begin{align*}
\Gamma\left(\bigwedge^pT^*\mca{F}\otimes V\right)&\simeq C^\infty\left( M,\bigwedge^p\mf{g}^*\otimes V\right)\simeq\bigwedge^p\mf{g}^*\otimes C^\infty(M,V)\\
&=C^p\left(\mf{g};C^\infty(M,V)\right). 
\end{align*}
Observe that we have
\begin{equation*}
(\Phi\omega)\left(X_1,\ldots,X_p\right)=\omega\left(\omega_0^{-1}(X_1),\ldots,\omega_0^{-1}(X_p)\right)
\end{equation*}
for $\omega\in\Gamma\left(\bigwedge^pT^*\mca{F}\otimes V\right)$ and $X_1,\ldots,X_p\in\mf{g}$. 

To show that $\Phi$ is a cochain map, first note that 
\begin{align*}
\nabla^\pi_{\omega_0^{-1}(X)}\xi&=\left(\df\xi+(\pi\omega_0)\xi\right)\left(\omega_0^{-1}(X)\right)\\
&=\omega_0^{-1}(X)\xi+\pi(X)\xi\\
&=X\xi
\end{align*}
for $X\in\mf{g}$ and $\xi\in C^\infty(M,V)$. Take $\omega\in\Gamma\left(\bigwedge^pT^*\mca{F}\otimes V\right)$ and $X_1,\ldots,X_{p+1}\in\mf{g}$. Then we have 
\begin{align*}
&\left(\Phi d_\mca{F}^{\nabla^\pi}\omega\right)\left(X_1,\ldots,X_{p+1}\right)\\
&=\left(d_\mca{F}^{\nabla^\pi}\omega\right)\left(\omega_0^{-1}(X_1),\ldots,\omega_0^{-1}(X_{p+1})\right)\\
&=\sum_{i=1}^{p+1}(-1)^{i+1}\nabla^\pi_{\omega_0^{-1}(X_i)}\omega\left(\omega_0^{-1}(X_1),\ldots,\widehat{\omega_0^{-1}(X_i)},\ldots,\omega_0^{-1}(X_{p+1})\right)\\
&\quad+\sum_{i<j}(-1)^{i+j}\omega\left(\left[\omega_0^{-1}(X_i),\omega_0^{-1}(X_j)\right],\omega_0^{-1}(X_1),\ldots,\widehat{\omega_0^{-1}(X_i)},\right.\\
&\qquad\qquad\qquad\qquad\qquad\qquad\qquad\quad\left.\ldots,\widehat{\omega_0^{-1}(X_j)},\ldots,\omega_0^{-1}(X_{p+1})\right)\\
&=\sum_{i=1}^{p+1}(-1)^{i+1}X_i\omega\left(\omega_0^{-1}(X_1),\ldots,\widehat{\omega_0^{-1}(X_i)},\ldots,\omega_0^{-1}(X_{p+1})\right)\\
&\quad+\sum_{i<j}(-1)^{i+j}\omega\left(\omega_0^{-1}\left([X_i,X_j]\right),\omega_0^{-1}(X_1),\ldots,\widehat{\omega_0^{-1}(X_i)},\right.\\
&\qquad\qquad\qquad\qquad\qquad\qquad\qquad\quad\left.\ldots,\widehat{\omega_0^{-1}(X_j)},\ldots,\omega_0^{-1}(X_{p+1})\right)\\
&=\sum_{i=1}^{p+1}(-1)^{i+1}X_i(\Phi\omega)\left(X_1,\ldots,\widehat{X_i},\ldots,X_{p+1}\right)\\
&\quad+\sum_{i<j}(-1)^{i+j}(\Phi\omega)\left([X_i,X_j],X_1,\ldots,\widehat{X_i},\ldots,\widehat{X_j},\ldots,X_{p+1}\right)\\
&=\left(d\Phi\omega\right)\left(X_1,\ldots,X_{p+1}\right). 
\end{align*}
\end{proof}

\section{Notations around $\mf{sl}(2,\bb{R})$}\label{notation}
In this paper we will use the following basis of $\mf{sl}(2,\bb{R})$: 
\begin{equation*}
F=\frac{1}{2}
\begin{pmatrix}
0&0\\
1&0
\end{pmatrix}
,\quad H=\frac{1}{2\sqrt{2}}
\begin{pmatrix}
1&0\\
0&-1
\end{pmatrix}
,\quad E=\frac{1}{2}
\begin{pmatrix}
0&1\\
0&0
\end{pmatrix}
. 
\end{equation*}
We use this basis because it is an orthonormal basis with respect to an inner product $B_\theta$ described below. The commutation relations are 
\begin{equation*}
[H,E]=\frac{1}{\sqrt{2}}E,\quad[H,F]=-\frac{1}{\sqrt{2}}F,\quad[E,F]=\frac{1}{\sqrt{2}}H. 
\end{equation*}
Let $B\colon\mf{sl}(2,\bb{R})\times\mf{sl}(2,\bb{R})\to\bb{R}$ be the Killing form of $\mf{sl}(2,\bb{R})$. This is represented by 
\begin{equation*}
B=
\begin{pmatrix}
0&0&1\\
0&1&0\\
1&0&0
\end{pmatrix}
\end{equation*}
with respect to the basis $F$, $H$, $E$. 

Let $U(\mf{sl}(2,\bb{R}))$ be the universal enveloping algebra of $\mf{sl}(2,\bb{R})$. (This is an algebra over $\bb{R}$.) Let $\Omega\in U(\mf{sl}(2,\bb{R}))$ be the Casimir element of $\mf{sl}(2,\bb{R})$. Since the dual basis of $F$, $H$, $E$ with respect to $B$ is $E$, $H$, $F$, we have 
\begin{align*}
\Omega&=FE+H^2+EF\\
&=2FE+H^2+\frac{1}{\sqrt{2}}H\\
&=2EF+H^2-\frac{1}{\sqrt{2}}H. 
\end{align*}

\begin{rem}
Many authors take $-2\Omega$ as the Casimir element when they deal with $\PSL(2,\bb{R})$. This choice corresponds to the the nondegenerate invariant symmetric bilinear form 
\begin{align*}
B^\prime\colon\sl(2,\bb{R})\times\sl(2,\bb{R})&\to\bb{R}\\
(X,Y)&\mapsto-2\tr(XY). 
\end{align*}
In fact we have $B^\prime=-\frac{1}{2}B$, hence the corresponding Casimir element is $-2\Omega$. 
\end{rem}

Let $\theta\colon\mf{sl}(2,\bb{R})\to\mf{sl}(2,\bb{R})$ be the Cartan involution of $\mf{sl}(2,\bb{R})$ defined by $\theta(X)=-X^\top$. By definition the form $B_\theta\colon\sl(2,\bb{R})\times\sl(2,\bb{R})\to\bb{R}$ defined by 
\begin{equation*}
B_\theta(X,Y)=-B(X,\theta Y)=-B(\theta X,Y)
\end{equation*}
is an inner product of $\sl(2,\bb{R})$. Since 
\begin{equation*}
\theta H=-H,\quad\theta E=-F,\quad\theta F=-E, 
\end{equation*}
the inner product $B_\theta$ is represented by 
\begin{equation*}
B_\theta=
\begin{pmatrix}
1&&\\
&1&\\
&&1
\end{pmatrix}
\end{equation*}
with respect to $F$, $H$, $E$. Hence the basis $F$, $H$, $E$ is an orthonormal basis of $\sl(2,\bb{R})$ with respect to $B_\theta$.

\section{De Rham cohomology of the weak stable foliation of the geodesic flow of a hyperbolic surface}\label{drws}
Let $\Gamma$ be a torsion free cocompact lattice of $\PSL(2,\bb{R})$ and put $P=\Gamma\backslash\PSL(2,\bb{R})$ and $\Sigma=\Gamma\backslash\PSL(2,\bb{R})/\PSO(2)$. The manifold $P$ is naturally identified with the unit tangent bundle of the hyperbolic surface $\Sigma$. (Note that we can obtain any connected oriented closed hyperbolic surface in this way.) Consider the subgroups 
\begin{gather*}
A=\left\{\pm
\begin{pmatrix}
a&\\
&a^{-1}
\end{pmatrix}
\ \middle|\ a>0\right\}\quad\text{and}\quad N=\left\{\pm
\begin{pmatrix}
1&b\\
&1
\end{pmatrix}
\ \middle|\ b\in\bb{R}\right\}
\end{gather*}
of $\PSL(2,\bb{R})$. Then 
\begin{equation*}
AN=\left\{\pm
\begin{pmatrix}
a&b\\
&a^{-1}
\end{pmatrix}
\ \middle|\ a>0,\ b\in\bb{R}\right\}
\end{equation*}
is a $2$-dimensional solvable Lie group and $\PSL(2,\bb{R})=\PSO(2)AN$ is an Iwasawa decomposition of $\PSL(2,\bb{R})$. Let $\mca{F}$ be the orbit foliation of the action $P\curvearrowleft AN$ by right multiplication. The flow 
\begin{align*}
P\times\bb{R}&\to P\\
(x,t)&\mapsto x
\begin{pmatrix}
e^\frac{t}{2}&\\
&e^{-\frac{t}{2}}
\end{pmatrix}
\end{align*}
is identified with the geodesic flow of $\Sigma$, which is an Anosov flow, and $\mca{F}$ is its weak stable foliation. The flow 
\begin{align*}
P\times\bb{R}&\to P\\
(x,t)&\mapsto x
\begin{pmatrix}
1&t\\
&1
\end{pmatrix}
\end{align*}
is identified with (one of) the (two) horocycle flow(s) of $\Sigma$. 

Let $\mf{a}$, $\mf{n}$ and $\mf{an}$ be the Lie algebras of $A$, $N$ and $AN$. Recall the elements $H$ and $E$ defined in Section \ref{notation}. We have 
\begin{gather*}
\mf{a}=\bb{R}H,\quad\mf{n}=\bb{R}E,\quad\mf{an}=\bb{R}H\oplus\bb{R}E, 
\end{gather*}
and $[H,E]=\frac{1}{\sqrt{2}}E$. 

If $\mf{an}\stackrel{\pi}{\curvearrowright}V$ is a finite dimensional representation on a real vector space, then we have 
\begin{equation*}
H^*\left(\mca{F};\mf{an}\stackrel{\pi}{\curvearrowright}V\right)\otimes\bb{C}\simeq H^*\left(\mca{F};\mf{an}\stackrel{\pi\otimes\bb{C}}{\curvearrowright}V\otimes\bb{C}\right)
\end{equation*}
by \eqref{realcomplex} in Section \ref{acofpcf}. Hence the computation of $H^*(\mca{F};\pi)$ reduces to the computation of $H^*\left(\mca{F};\pi\otimes\bb{C}\right)$. 

If $\mf{an}\stackrel{\pi}{\curvearrowright}V$ is a finite dimensional representation on a complex vector space, then we will use the isomorphism 
\begin{equation*}
H^*\left(\mca{F};\mf{an}\stackrel{\pi}{\curvearrowright}V\right)\simeq H^*\left(\mf{an};C^\infty(P,\bb{C})\otimes_\bb{C}V\right)
\end{equation*}
in Proposition \ref{drce} to compute $H^*(\mca{F};\pi)$. 

In this paper we consider the following representations of $\mf{an}$ as coefficients of the de Rham cohomology of $\mca{F}$. For any $\lambda\in\bb{R}$ set $\bb{R}_\lambda=\bb{R}$ and define a representation $\mf{an}\curvearrowright\bb{R}_\lambda$ by 
\begin{gather*}
H1=\lambda1\quad\text{and}\quad E1=0. 
\end{gather*}
Similarly, for any $\lambda\in\bb{C}$ set $\bb{C}_\lambda=\bb{C}$ and define $\mf{an}\curvearrowright\bb{C}_\lambda$ by 
\begin{gather*}
H1=\lambda1\quad\text{and}\quad E1=0. 
\end{gather*}
Since $\mf{n}=\left[\mf{an},\mf{an}\right]$, these are the only $1$-dimensional real or complex representations of $\mf{an}$. Note that $\bb{R}_0$ and $\bb{C}_0$ are the trivial representations $\bb{R}$ and $\bb{C}$, and we have 
\begin{gather*}
H^*\left(\mca{F};\bb{R}_0\right)=H^*(\mca{F};\bb{R})\quad\text{and}\quad H^*\left(\mca{F};\bb{C}_0\right)=H^*(\mca{F};\bb{C}). 
\end{gather*}
We also consider the adjoint representations $\mf{an}\stackrel{\ad}{\curvearrowright}\mf{n}$, $\mf{an}\stackrel{\ad}{\curvearrowright}\mf{an}$ and $\mf{an}\stackrel{\ad}{\curvearrowright}\sl(2,\bb{R})$. We have short exact sequences 
\begin{equation*}
0\to\mf{n}\to\mf{an}\to\mf{an}/\mf{n}\to0
\end{equation*}
and 
\begin{equation*}
0\to\mf{an}\to\sl(2,\bb{R})\to\sl(2,\bb{R})/\mf{an}\to0. 
\end{equation*}
Note that 
\begin{gather*}
\mf{n}\simeq\bb{R}_\frac{1}{\sqrt{2}},\quad\mf{an}/\mf{n}\simeq\bb{R}_0\quad\text{and}\quad\sl(2,\bb{R})/\mf{an}\simeq\bb{R}_{-\frac{1}{\sqrt{2}}}. 
\end{gather*}

\section{Preparation from the unitary representation theory}
Here we will review the unitary representation theory of $\PSL(2,\bb{R})$ we need in this paper. It includes the classification of irreducible unitary representations, the decomposition of $L^2\left(\Gamma\backslash\PSL(2,\bb{R})\right)$ into irreducible representations and a description of $C^\infty$ vectors of $L^2\left(\Gamma\backslash\PSL(2,\bb{R})\right)$. All these things are standard but we think it is useful to gather these results here because it was not easy for us to learn these things through the existing literature.

\subsection{Generalities}\label{gener}
For $n\in\bb{Z}$, let $\PSO(2)\stackrel{r_n}{\curvearrowright}\bb{C}$ be the irreducible unitary representation defined by 
\begin{equation*}
\exp\theta
\begin{pmatrix}
&\frac{1}{2}\\
-\frac{1}{2}&
\end{pmatrix}
=\pm
\begin{pmatrix}
\cos\frac{\theta}{2}&\sin\frac{\theta}{2}\\
-\sin\frac{\theta}{2}&\cos\frac{\theta}{2}
\end{pmatrix}
\stackrel{r_n}{\mapsto}e^{in\theta}. 
\end{equation*}
Note that the element $\pm
\begin{pmatrix}
\cos\frac{\theta}{2}&\sin\frac{\theta}{2}\\
-\sin\frac{\theta}{2}&\cos\frac{\theta}{2}
\end{pmatrix}
$ acts, as a M\"{o}bius transformation on the upper half plane $\bb{H}^2$, as the rotation by $\theta$ about $i$. This follows from the formula $\left(\frac{az+b}{cz+d}\right)^\prime=\frac{1}{(cz+d)^2}$ for $
\begin{pmatrix}
a&b\\
c&d
\end{pmatrix}
\in\SL(2,\bb{R})$. Let $\so(2)\stackrel{r_n}{\curvearrowright}\bb{C}$ denote the corresponding representation of the Lie algebra. Then we have $r_n
\begin{pmatrix}
&\frac{1}{2}\\
-\frac{1}{2}&
\end{pmatrix}
=in$, hence if we let 
\begin{equation*}
H_0=i
\begin{pmatrix}
&-\frac{1}{2}\\
\frac{1}{2}&
\end{pmatrix}
\in\so(2)\otimes\bb{C}, 
\end{equation*}
we have $r_n(H_0)=n$. 

Let $\PSL(2,\bb{R})\stackrel{\pi}{\curvearrowright}\mca{H}$ be a unitary representation on a complex Hilbert space $\mca{H}$. The restriction $\PSO(2)\curvearrowright\mca{H}$ of $\pi$ decomposes as 
\begin{equation}\label{macom}
\mca{H}=\bigoplus_{n\in\bb{Z}}\mca{H}_{(n)}, 
\end{equation}
where $\mca{H}_{(n)}$ is the isotypic component of $\mca{H}$ corresponding to $\PSO(2)\stackrel{r_n}{\curvearrowright}\bb{C}$. (See 1.4.7 of Wallach \cite{Wallach1}.)

An element $v\in\mca{H}$ is called a $C^\infty$ vector of $\mca{H}$ if the map 
\begin{equation*}
\PSL(2,\bb{R})\to\mca{H},\quad g\mapsto\pi(g)v
\end{equation*}
is a $C^\infty$ map. Let $C^\infty(\mca{H})$ be the space of $C^\infty$ vectors in $\mca{H}$. The space $C^\infty(\mca{H})$ is a dense subspace of $\mca{H}$. See 1.6.2 of Wallach \cite{Wallach1} or Proposition 4.4.1.1 of Warner \cite{Warner1}. We have an injective map 
\begin{align*}
C^\infty(\mca{H})&\hookrightarrow C^\infty(\PSL(2,\bb{R}),\mca{H})\\
v&\mapsto(g\mapsto\pi(g)v). 
\end{align*}
The space $C^\infty(\PSL(2,\bb{R}),\mca{H})$ is a Fr\'{e}chet space with respect to the topology of uniform convergence on compact subsets of a function and each of its derivatives. See Example 1 in 2.2 of Appendices of Warner \cite{Warner1}. The image of $C^\infty(\mca{H})$ in $C^\infty(\PSL(2,\bb{R}),\mca{H})$ is closed and we equip $C^\infty(\mca{H})$ with the relative topology, which makes it a Fr\'{e}chet space. 

Let $\Gamma$ be a cocompact lattice of $\PSL(2,\bb{R})$ and $P=\Gamma\backslash\PSL(2,\bb{R})$. Let $L^2(P)$ be the $L^2$ space of $\bb{C}$-valued functions on $P$ with respect to the unique $\PSL(2,\bb{R})$-invariant probability measure of $P$. Then we have a unitary representation $\PSL(2,\bb{R})\curvearrowright L^2(P)$ arising from the action $P\curvearrowleft\PSL(2,\bb{R})$ by right multiplication. 

\begin{prop}\label{cinfinityvect}
We have 
\begin{equation*}
C^\infty(L^2(P))=C^\infty(P,\bb{C}). 
\end{equation*}
\end{prop}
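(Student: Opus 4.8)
The plan is to prove the two inclusions $C^\infty(P,\bb{C})\subseteq C^\infty(L^2(P))$ and $C^\infty(L^2(P))\subseteq C^\infty(P,\bb{C})$ separately. For the first inclusion, let $f\in C^\infty(P,\bb{C})$ and consider the orbit map $g\mapsto\pi(g)f$, where $(\pi(g)f)(\Gamma x)=f(\Gamma xg)$. Since $P$ is compact, one checks directly that the difference quotients of this map converge in $L^2(P)$, with derivatives again given by smooth functions, namely the derivatives of $f$ along the vector fields generated by the right action. Smoothness of the orbit map into $L^2(P)$ then follows by induction on the order of differentiation, so $f$ is a $C^\infty$ vector.

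For the reverse inclusion I would argue by elliptic regularity. For $X\in\sl(2,\bb{R})$ let $X$ also denote the first order differential operator on $P$ generated by the right action (the operator $(Xf)(x)=\frac{d}{dt}f(xe^{tX})|_{t=0}$ defined earlier for a locally free action), and form the second order operator $\mca{D}=1-(F^2+H^2+E^2)$ on $C^\infty(P,\bb{C})$. Since $F,H,E$ form a global frame of $TP$, the principal symbol of $-(F^2+H^2+E^2)$ is positive definite, so $\mca{D}$ is elliptic. Now take $v\in C^\infty(L^2(P))$. By definition $\pi(Z)v\in L^2(P)$ for every $Z\in U(\sl(2,\bb{R}))$. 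The key computation is that the representation-theoretic derivative $\pi(X)v$ coincides with the distributional derivative $Xv$: pairing the difference quotient $t^{-1}(\pi(e^{tX})v-v)$ with a test function $\varphi\in C^\infty(P,\bb{C})$ and substituting via the right invariance of the measure yields $\langle\pi(X)v,\varphi\rangle=-\langle v,X\varphi\rangle$, which says precisely that $Xv=\pi(X)v$ in the sense of distributions. Iterating, $\mca{D}^kv=\pi(\mca{D}^k)v\in L^2(P)$ for every $k$, with $\mca{D}^kv$ understood distributionally. Bootstrapping with elliptic regularity for $\mca{D}$ then gives $v\in H^{2k}(P)$ for all $k$, where $H^s(P)$ denotes the Sobolev space, and hence $v\in\bigcap_s H^s(P)=C^\infty(P,\bb{C})$ by the Sobolev embedding theorem on the compact manifold $P$.

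The technical heart, and the step I expect to be the main obstacle, is the identification $\pi(X)v=Xv$ of the representation-theoretic derivative, which is an $L^2$-limit of difference quotients, with the distributional derivative along the vector field $X$; this requires careful bookkeeping of formal adjoints, and it is precisely the unimodularity of $\PSL(2,\bb{R})$ that guarantees the formal adjoint of $X$ is $-X$ with no zeroth order correction term. Once this identification is established, the ellipticity of $\mca{D}$ and the Sobolev embedding are standard. An alternative, more black-box route would invoke Nelson's characterization of the $C^\infty$ vectors as $\bigcap_k\Dom(\overline{\pi(\mca{D})}^k)$ and identify this intersection with the domains of powers of the elliptic operator $\mca{D}$, but the direct argument above keeps the required input confined to standard facts about elliptic operators on compact manifolds.
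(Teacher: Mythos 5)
Your proof is correct, and it is essentially the argument behind the paper's own ``proof,'' which consists solely of a citation to Chapter III.7 of Borel--Wallach \cite{BW}: there the smooth vectors of $L^2(\Gamma\backslash G)$ are identified with smooth functions via exactly this scheme --- smooth functions on the compact quotient are smooth vectors by uniform convergence of difference quotients, and conversely the representation-theoretic derivatives are identified with distributional derivatives (using invariance of the measure), after which elliptic regularity for a second-order operator built from a basis of $\mathfrak{g}$ and the Sobolev embedding theorem finish the bootstrap. So you have filled in, correctly, the content the paper outsources to the reference rather than found a genuinely different route.
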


\begin{proof}
See Chapter III. 7. of Borel--Wallach \cite{BW}. 
\end{proof}

\subsection{Irreducible unitary representations of $\PSL(2,\bb{R})$}\label{iurops}
Let $\PSL(2,\bb{R})\curvearrowright\mca{H}$ be an irreducible unitary representation on a complex Hilbert space $\mca{H}$. The Casimir element $\Omega$ acts as a scalar on $C^\infty(\mca{H})$. See Proposition in 1.2.2 of Wallach \cite{Wallach1} or Proposition 4.4.1.4 of Warner \cite{Warner1}. We call the value the Casimir parameter of $\mca{H}$. The following table is a complete list of irreducible unitary representations of $\PSL(2,\bb{R})$ with their Casimir parameters. Any two of them are not unitarily equivalent to each other. 

\renewcommand{\arraystretch}{1.3}
\begin{table}[htbp]
\centering
\begin{tabular}{rlcc}
&&{\scriptsize\shortstack{Range of\\parameter}}&{\scriptsize\shortstack{Casimir\\parameter}}\\\hline
The trivial representation&$\bb{C}$&&$0$\\
Holomorphic discrete series representations&$D_n^+$&$n\in\bb{Z}_{\geq1}$&$\frac{n(n-1)}{2}$\\
Antiholomorphic discrete series representations&$D_n^-$&$n\in\bb{Z}_{\geq1}$&$\frac{n(n-1)}{2}$\\
Principal series representations&$\mca{H}_\mu$&$-\infty<\mu\leq-\frac{1}{8}$&$\mu$\\
Complementary series representations&$\mca{H}_\mu$&$-\frac{1}{8}<\mu<0$&$\mu$\\\hline
\end{tabular}
\end{table}
\renewcommand{\arraystretch}{1.0}

\begin{rem}
\begin{itemize}
\item There are many literatures dealing with the classification of irreducible unitary representations of $\SL(2,\bb{R})$. We can obtain the corresponding results for $\PSL(2,\bb{R})$ by identifying representations for which $-I\in\SL(2,\bb{R})$ acts as the identity. 
\item The set of possible Casimir parameter is 
\begin{equation*}
(-\infty,0]\cup\left\{\frac{n(n-1)}{2}\middle|n\in\bb{Z}_{\geq2}\right\}. 
\end{equation*}
Note that many authors use $-2\Omega$ as the Casimir element. 
\item Let $\mca{R}_\mu$ be the set of the unitary equivalence classes of the irreducible unitary representations of $\PSL(2,\bb{R})$ with Casimir parameter $\mu$. Then we have 
\begin{equation*}
\mca{R}_\mu=
\begin{cases}
\{\mca{H}_\mu\}&\mu<0\\
\{\bb{C},D_1^+,D_1^-\}&\mu=0\\
\{D_n^+,D_n^-\}&\mu=\frac{n(n-1)}{2}\ \ \text{for $n\in\bb{Z}_{\geq2}$}\\
\varnothing&\text{otherwise. }
\end{cases}
\end{equation*}
\end{itemize}
\end{rem}

The dimensions of $\PSO(2)$-isotypic components are as follows: 
\begin{equation*}
\dim\bb{C}_{(i)}=
\begin{cases}
1&i=0\\
0&i\neq0, 
\end{cases}
\qquad\dim(\mca{H}_\mu)_{(i)}=1, 
\end{equation*}
\begin{equation}\label{dimmm}
\dim(D_n^+)_{(i)}=
\begin{cases}
1&i\geq n\\
0&i<n, 
\end{cases}
\qquad\dim(D_n^-)_{(i)}=
\begin{cases}
1&i\leq-n\\
0&i>-n. 
\end{cases}
\end{equation}

\subsection{Decomposition of $L^2(\Gamma\backslash\PSL(2,\bb{R}))$}\label{decompl2}
Let $\Gamma$ be a torsion free cocompact lattice of $\PSL(2,\bb{R})$ and put $P=\Gamma\backslash\PSL(2,\bb{R})$. Let $L^2(P)$ be the $L^2$ space of $\bb{C}$-valued functions on $P$ with respect to the $\PSL(2,\bb{R})$-invariant probability measure of $P$. Then we get a unitary representation $\PSL(2,\bb{R})\stackrel{\pi}{\curvearrowright}L^2(P)$. Since $P$ is compact, $L^2(P)$ is an orthogonal direct sum of irreducible unitary representations each appearing with finite multiplicity. See Example following Proposition 4.3.1.8 of Warner \cite{Warner1} (or use 1.4.1 of Wallach \cite{Wallach1}). Let $W_\mu\subset L^2(P)$ be the sum of the irreducible unitary subrepresentations of $L^2(P)$ with Casimir parameter $\mu$. By the classification of irreducible unitary representations of $\PSL(2,\bb{R})$, 
\begin{equation}\label{decompooo}
L^2(P)=\bigoplus_{\mu<0}W_\mu\oplus W_0\oplus\bigoplus_{n\in\bb{Z}_{\geq2}}W_{\frac{n(n-1)}{2}}. 
\end{equation}
Here we have 
\begin{gather*}
W_0\simeq\bb{C}\oplus*D_1^+\oplus*D_1^-,\quad W_{\frac{n(n-1)}{2}}\simeq*D_n^+\oplus*D_n^-\quad(n\geq2),\\
W_\mu\simeq*\mca{H}_\mu\quad(\mu<0)
\end{gather*}
for some integers $*$. We will determine these integers.

\subsubsection{Contribution of principal and complementary series}
Let $\bb{H}^2$ be the upper half plane. We have a natural action $\PSL(2,\bb{R})\curvearrowright\bb{H}^2$ by M\"{o}bius transformations and an identification $\bb{H}^2=\PSL(2,\bb{R})/\PSO(2)$ given by 
\begin{align*}
\PSL(2,\bb{R})/\PSO(2)&\simeq\bb{H}^2\\
g\PSO(2)&\mapsto gi. 
\end{align*}
Put $\Sigma=\Gamma\backslash\PSL(2,\bb{R})/\PSO(2)$. Then $\Sigma$ is a closed oriented connected hyperbolic surface. Let $p$, $q$ be the following projections: 
\begin{equation*}
\begin{tikzcd}
\PSL(2,\bb{R})\ar[r,"q"]\ar[d,"p"]&P\ar[d,"p"]\\
\bb{H}^2\ar[r,"q"]&\Sigma. 
\end{tikzcd}
\end{equation*}
Let $\Delta_{\bb{H}^2}\colon C^\infty(\bb{H}^2,\bb{C})\to C^\infty(\bb{H}^2,\bb{C})$ be the Laplacian of $\bb{H}^2$. We have 
\begin{equation*}
\Delta_{\bb{H}^2}=-y^2\left(\frac{\partial^2}{\partial x^2}+\frac{\partial^2}{\partial y^2}\right). 
\end{equation*}
Let $\PSL(2,\bb{R})\stackrel{\pi^\prime}{\curvearrowright}C^\infty(\PSL(2,\bb{R}),\bb{C})$ be the representation defined by 
\begin{equation*}
(\pi^\prime(g)f)(g^\prime)=f(g^\prime g). 
\end{equation*}
Let 
\begin{equation*}
H_0=i
\begin{pmatrix}
&-\frac{1}{2}\\
\frac{1}{2}&
\end{pmatrix}
\in\so(2)\otimes\bb{C}. 
\end{equation*}
For a given $f\in C^\infty(\PSL(2,\bb{R}),\bb{C})$, note that $f\left(g\exp\theta
\begin{pmatrix}
&\frac{1}{2}\\
-\frac{1}{2}&
\end{pmatrix}
\right)=f(g)$ for all $g\in\PSL(2,\bb{R})$ and $\theta\in\bb{R}$ if and only if $\pi^\prime
\begin{pmatrix}
&\frac{1}{2}\\
-\frac{1}{2}&
\end{pmatrix}
f=0$. Hence we have 
\begin{equation}\label{hcpsl}
p^*\colon C^\infty(\bb{H}^2,\bb{C})\stackrel{\sim}{\to}\left\{f\in C^\infty(\PSL(2,\bb{R}),\bb{C})\ \middle|\ \pi^\prime(H_0)f=0\right\}, 
\end{equation}
where $(p^*h)(g)=h(gi)$. There are actions of $\PSL(2,\bb{R})$, one on $C^\infty(\bb{H}^2,\bb{C})$ defined by $(gh)(z)=h(g^{-1}z)$ and another one on $\left\{f\in C^\infty(\PSL(2,\bb{R}),\bb{C})\ \middle|\ \pi^\prime(H_0)f=0\right\}$ defined by $(gf)(g^\prime)=f(g^{-1}g^\prime)$, with respect to which $p^*$ is $\PSL(2,\bb{R})$-equivariant. Note that $\pi^\prime(\Omega)$ preserves $\left\{f\in C^\infty(\PSL(2,\bb{R}),\bb{C})\ \middle|\ \pi^\prime(H_0)f=0\right\}$ since $\Omega$ commutes with $H_0$. 

\begin{lem}\label{omegalaplacian}
We have 
\begin{equation*}
(p^*)^{-1}\pi^\prime(\Omega)p^*=-\frac{1}{2}\Delta_{\bb{H}^2}. 
\end{equation*}
\end{lem}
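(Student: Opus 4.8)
The plan is to use the $\PSL(2,\bb{R})$-invariance of both operators to reduce the identity to the base point $i\in\bb{H}^2$, and then to compute directly there.

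First I would observe that both sides are $\PSL(2,\bb{R})$-invariant differential operators on $\bb{H}^2$. The operator $\pi^\prime(\Omega)$ is a left-invariant differential operator on $\PSL(2,\bb{R})$: it is built from the right-differentiation operators $\pi^\prime(X)$, which commute with left translations $L_g\colon f\mapsto f(g^{-1}\,\cdot\,)$, and it preserves the subspace $\{f\mid\pi^\prime(H_0)f=0\}$ since $\Omega$ is central. On the other side, $\Delta_{\bb{H}^2}$ commutes with the action $(g\cdot h)(z)=h(g^{-1}z)$ because the hyperbolic metric is $\PSL(2,\bb{R})$-invariant. As $p^*$ intertwines $L_g$ with this action, the conjugate $(p^*)^{-1}\pi^\prime(\Omega)p^*$ and $-\tfrac12\Delta_{\bb{H}^2}$ are both $\PSL(2,\bb{R})$-invariant differential operators on the homogeneous space $\bb{H}^2=\PSL(2,\bb{R})\cdot i$. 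Such an operator is determined by its action on the jet of a function at the single base point $i$, so it suffices to prove $(\pi^\prime(\Omega)p^*h)(e)=-\tfrac12(\Delta_{\bb{H}^2}h)(i)$ for every $h\in C^\infty(\bb{H}^2,\bb{C})$, where $e$ is the identity element (so that $e\cdot i=i$).

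For the pointwise computation, I would use that right-$\PSO(2)$-invariance of $f=p^*h$ gives $f(ge^{tX})=h\bigl(g\cdot(e^{tX}\cdot i)\bigr)$, whence $(\pi^\prime(X)\pi^\prime(Y)f)(e)=\partial_t\partial_s\,h(e^{tX}e^{sY}\cdot i)\big|_{t=s=0}$. The relevant one-parameter subgroups act on $\bb{H}^2$ by the explicit M\"{o}bius transformations $e^{tH}\cdot z=e^{t/\sqrt2}z$, $e^{sE}\cdot z=z+\tfrac{s}{2}$ and $e^{uF}\cdot z=\tfrac{z}{(u/2)z+1}$. Substituting these into $\Omega=FE+H^2+EF$ and computing each mixed second derivative at $i$ (writing $h=h(x,y)$) by the chain rule yields $\pi^\prime(H)^2f(e)=\tfrac12\partial_y^2 h+\tfrac12\partial_y h$, $\pi^\prime(F)\pi^\prime(E)f(e)=\tfrac14\partial_x^2 h-\tfrac12\partial_y h$ and $\pi^\prime(E)\pi^\prime(F)f(e)=\tfrac14\partial_x^2 h$, all evaluated at $i$. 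Summing, the two first-order terms $\pm\tfrac12\partial_y h$ cancel and the remainder equals $\tfrac12(\partial_x^2+\partial_y^2)h(i)$, which is exactly $-\tfrac12\Delta_{\bb{H}^2}h(i)$ since $\Delta_{\bb{H}^2}=-y^2(\partial_x^2+\partial_y^2)$ and $y=1$ at $i$. Combined with the invariance reduction, this proves the lemma.

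The main obstacle is the bookkeeping in the second step: each term $\partial_t\partial_s\,h(e^{tX}e^{sY}\cdot i)$ contains, besides the expected Hessian contribution, a first-order term coming from the nonvanishing second derivative $\partial_t\partial_s(e^{tX}e^{sY}\cdot i)$ (the flows do not commute and the orbit map is not affine). The crux of the argument is that these first-order ``drift'' terms cancel between the $H^2$ and $FE$ summands, leaving a pure Laplacian; keeping the normalization factors $\tfrac{1}{\sqrt2}$ and $\tfrac12$ coming from the chosen basis straight is exactly what pins down the constant $-\tfrac12$.
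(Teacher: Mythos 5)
Your proof is correct; I checked the three pointwise evaluations, and with $f=p^*h$ one indeed gets $\pi'(H)^2f(e)=\tfrac12\partial_y^2h(i)+\tfrac12\partial_yh(i)$, $\pi'(F)\pi'(E)f(e)=\tfrac14\partial_x^2h(i)-\tfrac12\partial_yh(i)$ and $\pi'(E)\pi'(F)f(e)=\tfrac14\partial_x^2h(i)$, so the first-order terms cancel exactly as you claim. Your route is, however, genuinely different from the paper's. The paper uses no invariance of the operators and never differentiates along $F$: it writes $\Omega=H^2-\frac{1}{\sqrt{2}}H+2EF=H^2-\frac{1}{\sqrt{2}}H+2E^2-2E(E-F)$ and observes that $E-F$ spans $\so(2)$ (one has $H_0=-i(E-F)$), so $\pi'(E-F)$ annihilates the right-$\PSO(2)$-invariant function $p^*h$ and the term containing $F$ drops. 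The surviving operator $\pi'(H)^2-\frac{1}{\sqrt{2}}\pi'(H)+2\pi'(E)^2$ involves only the $\mf{a}$- and $\mf{n}$-directions, whose flows act affinely on $\bb{H}^2$ ($H$ dilates, and $E$ translates after being commuted past $a_y$ by $a_yn_x=n_{xy}a_y$), so the paper computes at an arbitrary point $z=n_xa_yi$ with a trivial chain rule and no reduction step. You instead keep the symmetric form $\Omega=FE+H^2+EF$, which forces you through the non-affine M\"{o}bius flow of $F$ and its second-order drift term, and you compensate by reducing to the base point $i$ via equivariance. That reduction is sound, and it needs even less than you invoke: it suffices that both operators are linear, commute with the $\PSL(2,\bb{R})$-action on $C^\infty(\bb{H}^2,\bb{C})$, and agree at $i$ on every function --- no jet or differential-operator structure is required, since $(D_1h)(gi)=(D_1(h\circ g))(i)=(D_2(h\circ g))(i)=(D_2h)(gi)$. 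The paper's trick buys a shorter computation valid at all points simultaneously; your argument buys a standard, transferable principle (an invariant operator on a homogeneous space is determined by its action at the base point) at the price of heavier chain-rule bookkeeping.
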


\begin{proof}
Let 
\begin{equation*}
n_x=\pm
\begin{pmatrix}
1&x\\
&1
\end{pmatrix}
,\quad a_y=\pm
\begin{pmatrix}
\sqrt{y}&\\
&\frac{1}{\sqrt{y}}
\end{pmatrix}
\in\PSL(2,\bb{R})
\end{equation*}
for $x\in\bb{R}$ and $y>0$. Then a point $z=x+iy$ of $\bb{H}^2$ is written as $z=n_xa_yi$. Note that $a_yn_x=n_{xy}a_y$. Since 
\begin{align*}
\Omega&=H^2-\frac{1}{\sqrt{2}}H+2EF\\
&=H^2-\frac{1}{\sqrt{2}}H+2E^2-2E(E-F), 
\end{align*}
first we have, for $h\in C^\infty(\bb{H}^2,\bb{C})$, 
\begin{align*}
\left((p^*)^{-1}\pi^\prime(\Omega)p^*h\right)(z)&=\left(\pi^\prime(\Omega)p^*h\right)(n_xa_y)\\
&=\left(\left(\pi^\prime(H)^2-\frac{1}{\sqrt{2}}\pi^\prime(H)+2\pi^\prime(E)^2\right)p^*h\right)(n_xa_y). 
\end{align*}
To compute this, we have, for $f\in C^\infty(\PSL(2,\bb{R}),\bb{C})$, 
\begin{equation*}
\left(\pi^\prime(H)f\right)(n_xa_y)=\left.\frac{d}{dt}f\left(n_xa_ya_{\exp\frac{t}{\sqrt{2}}}\right)\right|_{t=0}=\left.\frac{d}{dt}f\left(n_xa_{y\exp\frac{t}{\sqrt{2}}}\right)\right|_{t=0}
\end{equation*}
and 
\begin{align*}
\left(\pi^\prime(E)f\right)(n_xa_y)&=\left.\frac{d}{dt}f\left(n_xa_yn_\frac{t}{2}\right)\right|_{t=0}=\left.\frac{d}{dt}f\left(n_xn_\frac{ty}{2}a_y\right)\right|_{t=0}\\
&=\left.\frac{d}{dt}f\left(n_{x+\frac{ty}{2}}a_y\right)\right|_{t=0}. 
\end{align*}
Hence 
\begin{align*}
&\left((p^*)^{-1}\pi^\prime(\Omega)p^*h\right)(z)\\
&=\left.\left.\frac{d}{dt}\frac{d}{ds}p^*h\left(n_xa_{y\exp\frac{t+s}{\sqrt{2}}}\right)\right|_{s=0}\right|_{t=0}-\frac{1}{\sqrt{2}}\left.\frac{d}{dt}p^*h\left(n_xa_{y\exp\frac{t}{\sqrt{2}}}\right)\right|_{t=0}\\
&\quad+2\left.\left.\frac{d}{dt}\frac{d}{ds}p^*h\left(n_{x+\frac{(t+s)y}{2}}a_y\right)\right|_{s=0}\right|_{t=0}\\
&=\left.\left.\frac{d}{dt}\frac{d}{ds}h\left(x+iye^\frac{t+s}{\sqrt{2}}\right)\right|_{s=0}\right|_{t=0}-\frac{1}{\sqrt{2}}\left.\frac{d}{dt}h\left(x+iye^\frac{t}{\sqrt{2}}\right)\right|_{t=0}\\
&\quad+2\left.\left.\frac{d}{dt}\frac{d}{ds}h\left(x+\frac{(t+s)y}{2}+iy\right)\right|_{s=0}\right|_{t=0}\\
&=\left.\frac{d}{dt}\frac{1}{\sqrt{2}}ye^\frac{t}{\sqrt{2}}\frac{\partial h}{\partial y}\left(x+iye^\frac{t}{\sqrt{2}}\right)\right|_{t=0}-\frac{1}{2}y\frac{\partial h}{\partial y}(z)+y\left.\frac{d}{dt}\frac{\partial h}{\partial x}\left(x+\frac{ty}{2}+iy\right)\right|_{t=0}\\
&=\frac{1}{2}y\frac{\partial h}{\partial y}(z)+\frac{1}{2}y^2\frac{\partial^2h}{\partial y^2}(z)-\frac{1}{2}y\frac{\partial h}{\partial y}(z)+\frac{1}{2}y^2\frac{\partial^2h}{\partial x^2}(z)\\
&=\left(-\frac{1}{2}\Delta_{\bb{H}^2}h\right)(z). 
\end{align*}
\end{proof}

Let $\Delta_\Sigma\colon C^\infty(\Sigma,\bb{C})\to C^\infty(\Sigma,\bb{C})$ be the Laplacian of $\Sigma$. By taking the $\Gamma$-invariant parts of both sides of the isomorphism \eqref{hcpsl}, we get 
\begin{equation*}
p^*\colon C^\infty(\Sigma,\bb{C})\stackrel{\sim}{\to}\left\{f\in C^\infty(P,\bb{C})\ \middle|\ \pi(H_0)f=0\right\}. 
\end{equation*}

\begin{cor}\label{laplacianomega}
We have 
\begin{equation*}
(p^*)^{-1}\pi(\Omega)p^*=-\frac{1}{2}\Delta_\Sigma. 
\end{equation*}
\end{cor}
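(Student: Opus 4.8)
The plan is to obtain the corollary from Lemma \ref{omegalaplacian} by descent along the covering projections $q$ of the diagram, since both sides of the claimed identity are simply the $\Gamma$-quotients of the operators appearing in the lemma. Indeed, functions on $P=\Gamma\backslash\PSL(2,\bb{R})$ are left $\Gamma$-invariant functions on $\PSL(2,\bb{R})$, functions on $\Sigma=\Gamma\backslash\bb{H}^2$ are $\Gamma$-invariant functions on $\bb{H}^2$, and the map $p^*\colon C^\infty(\Sigma,\bb{C})\to C^\infty(P,\bb{C})$ preceding the corollary is precisely the restriction of the $\PSL(2,\bb{R})$-equivariant isomorphism \eqref{hcpsl} to these $\Gamma$-invariant subspaces. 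So it suffices to check that the identity of Lemma \ref{omegalaplacian}, which holds on all of $C^\infty(\bb{H}^2,\bb{C})$, survives this restriction.

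Concretely, I would pull everything back to the universal covers via the vertical maps $q$ and use injectivity of $q^*$. Three compatibilities are needed. First, $q\colon\PSL(2,\bb{R})\to P$ is equivariant for the right $\PSL(2,\bb{R})$-actions, so $q^*$ intertwines $\pi$ and $\pi^\prime$, giving $q^*\pi(\Omega)=\pi^\prime(\Omega)q^*$; equivalently, $\pi^\prime(\Omega)$ is built from right-differentiations, which commute with the left $\Gamma$-action defining $P$, so it descends to $\pi(\Omega)$. Second, $q\colon\bb{H}^2\to\Sigma$ is a Riemannian covering, hence a local isometry, so by naturality of the Laplacian $\Delta_{\bb{H}^2}q^*=q^*\Delta_\Sigma$; this is what it means for $\Delta_\Sigma$ to be the descent of $\Delta_{\bb{H}^2}$. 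Third, commutativity of the diagram gives $q^*p^*=p^*q^*$ as maps $C^\infty(\Sigma,\bb{C})\to C^\infty(\PSL(2,\bb{R}),\bb{C})$.

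Granting these, for $h\in C^\infty(\Sigma,\bb{C})$ I would chain the identities with Lemma \ref{omegalaplacian}:
\begin{equation*}
q^*\pi(\Omega)p^*h=\pi^\prime(\Omega)p^*q^*h=p^*\left(-\tfrac12\Delta_{\bb{H}^2}q^*h\right)=p^*\left(-\tfrac12 q^*\Delta_\Sigma h\right)=q^*p^*\left(-\tfrac12\Delta_\Sigma h\right),
\end{equation*}
and conclude $\pi(\Omega)p^*h=p^*\left(-\tfrac12\Delta_\Sigma h\right)$ by injectivity of $q^*$. I do not expect a genuine obstacle here: the content is entirely in the computation already carried out in Lemma \ref{omegalaplacian}, and the step requiring the most care is merely recording the naturality of the Laplacian under the covering $q\colon\bb{H}^2\to\Sigma$ together with the compatibility of the Casimir operator with the quotient $\Gamma\backslash\PSL(2,\bb{R})$, both of which are standard.
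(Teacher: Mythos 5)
Your proposal is correct and follows essentially the same route as the paper: the paper's proof also descends Lemma \ref{omegalaplacian} through the commutative diagram, noting that $q^*$ intertwines $\Delta_\Sigma$ with $\Delta_{\bb{H}^2}$ on one side and $\pi(\Omega)$ with $\pi^\prime(\Omega)$ on the other, the only cosmetic difference being that the paper phrases the descent via the isomorphisms onto $\Gamma$-invariant subspaces while you phrase it via injectivity of $q^*$.
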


\begin{proof}
We have the following commutative diagram 
\begin{equation*}
\begin{tikzcd}
C^\infty(\bb{H}^2,\bb{C})^\Gamma\ar[r,"\sim","p^*"']&\left\{f\in C^\infty(\PSL(2,\bb{R}),\bb{C})\ \middle|\ \pi^\prime(H_0)f=0\right\}^\Gamma\\
C^\infty(\Sigma,\bb{C})\ar[u,"\wr","q^*"']\ar[r,"\sim","p^*"']&\left\{f\in C^\infty(P,\bb{C})\ \middle|\ \pi(H_0)f=0\right\}. \ar[u,"\wr","q^*"']
\end{tikzcd}
\end{equation*}
Obviously, $q^*$ on the left intertwines $\Delta_\Sigma$ and $\Delta_{\bb{H}^2}$ and $q^*$ on the right intertwines $\pi(\Omega)$ and $\pi^\prime(\Omega)$. Hence this lemma follows from Lemma \ref{omegalaplacian}. 
\end{proof}

Let $\mu<0$. We have 
\begin{equation*}
p^*\colon\left\{h\in C^\infty(\Sigma,\bb{C})\ \middle|\ \Delta_\Sigma h=-2\mu h\right\}\stackrel{\sim}{\to}\left\{f\in C^\infty(P,\bb{C})\ \middle|\ \pi(H_0)f=0, \pi(\Omega)f=\mu f\right\}
\end{equation*}
by Corollary \ref{laplacianomega}. Let $u_0$ be a nonzero element of $(\mca{H}_\mu)_{(0)}$. We have $(\mca{H}_\mu)_{(0)}=\bb{C}u_0$. 

\begin{lem}\label{homiso}
We have 
\begin{align*}
\Hom_{\PSL(2,\bb{R})}\left(\mca{H}_\mu,L^2(P)\right)&\stackrel{\sim}{\to}\left\{f\in C^\infty(P,\bb{C})\ \middle|\ \pi(H_0)f=0, \pi(\Omega)f=\mu f\right\}\\
\varphi&\mapsto\varphi(u_0), 
\end{align*}
where $\Hom_{\PSL(2,\bb{R})}\left(\mca{H}_\mu,L^2(P)\right)$ is the set of $\PSL(2,\bb{R})$-equivariant bounded operators $\mca{H}_\mu\to L^2(P)$. 
\end{lem}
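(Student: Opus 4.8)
The plan is to check that $\varphi\mapsto\varphi(u_0)$ is well-defined into the indicated space, and then to prove injectivity and surjectivity separately, with surjectivity being the substantive part.

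First I would verify well-definedness. Since $(\mca{H}_\mu)_{(0)}$ is finite dimensional, $u_0$ is a $\PSO(2)$-finite vector, hence a $C^\infty$ (in fact analytic) vector of $\mca{H}_\mu$. For $\varphi\in\Hom_{\PSL(2,\bb{R})}(\mca{H}_\mu,L^2(P))$ the map $g\mapsto\pi(g)\varphi(u_0)=\varphi(\pi(g)u_0)$ is then $C^\infty$, being the composite of the smooth map $g\mapsto\pi(g)u_0$ with the bounded operator $\varphi$; thus $\varphi(u_0)\in C^\infty(L^2(P))=C^\infty(P,\bb{C})$ by Proposition \ref{cinfinityvect}. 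Equivariance of $\varphi$ on $C^\infty$ vectors gives $\pi(H_0)\varphi(u_0)=\varphi(\pi(H_0)u_0)=0$, because $u_0\in(\mca{H}_\mu)_{(0)}$ forces $\pi(H_0)u_0=r_0(H_0)u_0=0$, and $\pi(\Omega)\varphi(u_0)=\varphi(\pi(\Omega)u_0)=\mu\varphi(u_0)$, since $\Omega$ acts on $C^\infty(\mca{H}_\mu)$ by the Casimir parameter $\mu$. Hence $\varphi(u_0)$ lies in the target.

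For injectivity, note that $u_0\neq0$ and $\mca{H}_\mu$ is irreducible, so the smallest closed $\PSL(2,\bb{R})$-invariant subspace containing $u_0$ is all of $\mca{H}_\mu$. If $\varphi(u_0)=0$, then $\ker\varphi$ is a closed invariant subspace containing $u_0$, hence equals $\mca{H}_\mu$, so $\varphi=0$.

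The main work is surjectivity. Given $f$ in the target, I would first show $f\in W_\mu$. Let $P_\nu$ denote the orthogonal projection of $L^2(P)$ onto the Casimir-isotypic summand $W_\nu$ in \eqref{decompooo}; each $P_\nu$ commutes with the $\PSL(2,\bb{R})$-action, so it preserves $C^\infty$ vectors, $P_\nu f$ is smooth, and $\pi(\Omega)P_\nu f=\nu P_\nu f$. Applying $P_\nu$ to $\pi(\Omega)f=\mu f$ yields $(\nu-\mu)P_\nu f=0$, so $P_\nu f=0$ for $\nu\neq\mu$ and $f\in W_\mu$. By the finite-multiplicity decomposition of Section \ref{decompl2}, I can write $W_\mu=\bigoplus_{j=1}^m V_j$ with unitary isomorphisms $T_j\colon\mca{H}_\mu\stackrel{\sim}{\to}V_j$. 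Each $T_j(u_0)$ spans $(V_j)_{(0)}$, and since $\dim(\mca{H}_\mu)_{(0)}=1$ these span $(W_\mu)_{(0)}=\bigoplus_j(V_j)_{(0)}$. The condition $\pi(H_0)f=0$ places $f$ in the weight-zero space $(W_\mu)_{(0)}$ (decomposing $f$ into $\PSO(2)$-isotypic pieces, $\pi(H_0)f=\sum_n n f_{(n)}=0$ forces $f_{(n)}=0$ for $n\neq0$), so $f=\sum_{j=1}^m c_j T_j(u_0)$ for scalars $c_j$, and $\varphi:=\sum_j c_j T_j$ is a bounded $\PSL(2,\bb{R})$-equivariant operator with $\varphi(u_0)=f$.

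The step I expect to require the most care is showing $f\in W_\mu$ and, relatedly, that $f$ lies in the finite-dimensional weight-zero space $(W_\mu)_{(0)}$ rather than only in its $L^2$-closure; this rests on the interplay between the eigenvalue equations $\pi(H_0)f=0$, $\pi(\Omega)f=\mu f$ and the fact that the equivariant orthogonal projections preserve smooth vectors. Once $f$ is known to lie in the finite direct sum $W_\mu$, the remaining linear algebra via the multiplicity decomposition is routine.
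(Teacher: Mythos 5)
Your proposal is correct and follows essentially the same route as the paper: well-definedness from the finite-dimensionality of $(\mca{H}_\mu)_{(0)}$ (so that $u_0$ is a $C^\infty$ vector), injectivity from irreducibility, and surjectivity by placing $f$ inside $W_\mu$ via the Casimir eigenvalue and then inside the span of the vectors $\varphi_i(u_0)$ using the finite multiplicity decomposition of $W_\mu$ together with the condition $\pi(H_0)f=0$. The only difference is that you make explicit some steps the paper leaves implicit, such as deducing $f\in W_\mu$ from the equivariant projections $P_\nu$.
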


\begin{proof}
Take $\varphi\in\Hom_{\PSL(2,\bb{R})}\left(\mca{H}_\mu,L^2(P)\right)$. Since $(\mca{H}_\mu)_{(0)}$ is finite dimensional, we have $(\mca{H}_\mu)_{(0)}\subset C^\infty(\mca{H}_\mu)$. See Corollary 4.4.3.3 of Warner \cite{Warner1}. Hence $u_0\in C^\infty(\mca{H}_\mu)$ and then $\varphi(u_0)\in C^\infty(L^2(P))=C^\infty(P,\bb{C})$. The map $\varphi\mapsto\varphi(u_0)$ is defined and it is injective by the irreducibility of $\mca{H}_\mu$. 

To show the surjectivity, first fix $\varphi_i\in\Hom_{\PSL(2,\bb{R})}\left(\mca{H}_\mu,L^2(P)\right)$ such that $\varphi_i\colon\mca{H}_\mu\to\im\varphi_i$ is a unitary equivalence, $\im\varphi_i\perp\im\varphi_j$ if $i\neq j$ and $W_\mu=\bigoplus_{i=1}^m\im\varphi_i$. Take any $f\in C^\infty(P,\bb{C})$ such that $\pi(H_0)f=0$ and $\pi(\Omega)f=\mu f$. By the latter condition, we have $f\in W_\mu$. Write $f=\sum_{i=1}^mf_i$ for some $f_i\in\im\varphi_i$. Since the projection $L^2(P)\to\im\varphi_i$ is bounded, we have $f_i\in C^\infty(\im\varphi_i)$. It follows that $\pi(H_0)f_i=0$ for all $i$. Hence $f_i=a_i\varphi_i(u_0)$ for some $a_i\in\bb{C}$. Now let $\varphi=\sum_{i=1}^ma_i\varphi_i\in\Hom_{\PSL(2,\bb{R})}\left(\mca{H}_\mu,L^2(P)\right)$. Then $\varphi(u_0)=f$ and the map is surjective. 
\end{proof}

\begin{cor}
We have 
\begin{equation*}
\Hom_{\PSL(2,\bb{R})}\left(\mca{H}_\mu,L^2(P)\right)\simeq\left\{h\in C^\infty(\Sigma,\bb{C})\ \middle|\ \Delta_\Sigma h=-2\mu h\right\}. 
\end{equation*}
\end{cor}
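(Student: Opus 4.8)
The plan is simply to chain together the two isomorphisms that have already been established, so no new construction is needed. Lemma \ref{homiso} identifies $\Hom_{\PSL(2,\bb{R})}(\mca{H}_\mu,L^2(P))$ with the space $\{f\in C^\infty(P,\bb{C})\mid\pi(H_0)f=0,\ \pi(\Omega)f=\mu f\}$ via the evaluation map $\varphi\mapsto\varphi(u_0)$. Thus the whole task reduces to identifying this latter space with the Laplace eigenspace $\{h\in C^\infty(\Sigma,\bb{C})\mid\Delta_\Sigma h=-2\mu h\}$.

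For that identification I would invoke the isomorphism $p^*$ displayed immediately before Lemma \ref{homiso}. Recall that restricting \eqref{hcpsl} to $\Gamma$-invariant parts gives $p^*\colon C^\infty(\Sigma,\bb{C})\stackrel{\sim}{\to}\{f\in C^\infty(P,\bb{C})\mid\pi(H_0)f=0\}$, so every $p^*h$ automatically lies in the kernel of $\pi(H_0)$. Corollary \ref{laplacianomega} then gives $(p^*)^{-1}\pi(\Omega)p^*=-\tfrac12\Delta_\Sigma$, which means that a function $h$ satisfies $\Delta_\Sigma h=-2\mu h$ precisely when $f=p^*h$ satisfies $\pi(\Omega)f=\mu f$. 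Consequently $p^*$ restricts to a bijection between the two eigenspaces, as already recorded in the excerpt.

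Composing the inverse of this restricted $p^*$ with the evaluation isomorphism of Lemma \ref{homiso} yields the asserted isomorphism. I expect no genuine obstacle here: the corollary is a purely formal consequence of the preceding lemma and corollary, and the one point that could require attention — the translation of the Casimir eigenvalue condition $\pi(\Omega)f=\mu f$ into the Laplace eigenvalue condition $\Delta_\Sigma h=-2\mu h$ — is exactly the content of Corollary \ref{laplacianomega} and has already been used to state the $p^*$ isomorphism for $\mu<0$.
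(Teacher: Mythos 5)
Your proposal is correct and is exactly the paper's (implicit) argument: the corollary follows by composing the evaluation isomorphism of Lemma \ref{homiso} with the restricted isomorphism $p^*\colon\left\{h\in C^\infty(\Sigma,\bb{C})\ \middle|\ \Delta_\Sigma h=-2\mu h\right\}\stackrel{\sim}{\to}\left\{f\in C^\infty(P,\bb{C})\ \middle|\ \pi(H_0)f=0,\ \pi(\Omega)f=\mu f\right\}$, which the paper records just before that lemma as a consequence of Corollary \ref{laplacianomega}. Your translation of the eigenvalue conditions via $(p^*)^{-1}\pi(\Omega)p^*=-\frac{1}{2}\Delta_\Sigma$ is precisely the point, and there is no gap.
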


Now we recall some properties of the spectrum of the Laplacian. 
See for example the appendix of Kodaira \cite{Kodaira}. The operator $\Delta_\Sigma$ with domain $C^\infty(\Sigma,\bb{C})$, is essentially self-adjoint on $L^2(\Sigma)$. Let $\ol{\Delta_\Sigma}$ be the closure of $\Delta_\Sigma$ and $\sigma\left(\ol{\Delta_\Sigma}\right)$ be the spectrum of $\ol{\Delta_\Sigma}$. We have $\sigma\left(\ol{\Delta_\Sigma}\right)\subset\bb{R}_{\geq0}$. The spectrum $\sigma\left(\ol{\Delta_\Sigma}\right)$ consists of eigenvalues, has no accumulation points in $\bb{R}_{\geq0}$ and is infinite. Moreover, 
\begin{equation*}
\left\{h\in\Dom\ol{\Delta_\Sigma}\ \middle|\ \ol{\Delta_\Sigma}h=\nu h\right\}=\left\{h\in C^\infty(\Sigma,\bb{C})\ \middle|\ \Delta_\Sigma h=\nu h\right\}
\end{equation*}
and $m_\nu=\dim\left\{h\in C^\infty(\Sigma,\bb{C})\ \middle|\ \Delta_\Sigma h=\nu h\right\}$ is finite. 

\begin{cor}
For $\mu<0$, we have 
\begin{equation*}
W_\mu\simeq m_{-2\mu}\mca{H}_\mu. 
\end{equation*}
\end{cor}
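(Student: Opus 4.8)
The plan is to identify the multiplicity of $\mca{H}_\mu$ in $L^2(P)$ with the dimension of the space of intertwiners computed in the preceding Corollary, and then simply read off the answer. Since $\mu<0$, the remark following the classification table gives $\mca{R}_\mu=\{\mca{H}_\mu\}$, so every irreducible unitary subrepresentation of $L^2(P)$ with Casimir parameter $\mu$ is unitarily equivalent to $\mca{H}_\mu$; thus $W_\mu\simeq m\,\mca{H}_\mu$ for some integer $m$ (finite, by the finiteness of multiplicities recalled in Section \ref{decompl2}), namely the multiplicity, and it suffices to prove $m=m_{-2\mu}$.

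First I would show that the multiplicity $m$ equals $\dim_\bb{C}\Hom_{\PSL(2,\bb{R})}(\mca{H}_\mu,L^2(P))$. Any bounded intertwiner $\varphi\colon\mca{H}_\mu\to L^2(P)$ has image contained in $W_\mu$: composing $\varphi$ with the orthogonal projection onto the Casimir-$\mu'$ part $W_{\mu'}$ for $\mu'\neq\mu$ produces an intertwiner that must vanish, because $\Omega$ acts as the scalar $\mu$ on the source and as $\mu'$ on the target. Writing $W_\mu=\bigoplus_{i=1}^m\im\varphi_i$ as an orthogonal sum of $m$ copies of $\mca{H}_\mu$ exactly as in the proof of Lemma \ref{homiso}, and composing $\varphi$ with each orthogonal projection $W_\mu\to\im\varphi_i\simeq\mca{H}_\mu$, one obtains an endomorphism of $\mca{H}_\mu$, which is a scalar by Schur's lemma for bounded operators on an irreducible unitary representation. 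Hence $\varphi=\sum_{i=1}^m a_i\varphi_i$ for unique $a_i\in\bb{C}$, so $\dim_\bb{C}\Hom_{\PSL(2,\bb{R})}(\mca{H}_\mu,L^2(P))=m$. This is precisely the content of the surjectivity argument already carried out in the proof of Lemma \ref{homiso}, so in fact no new analysis is needed here.

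Finally, the preceding Corollary identifies $\Hom_{\PSL(2,\bb{R})}(\mca{H}_\mu,L^2(P))$ with the $(-2\mu)$-eigenspace $\{h\in C^\infty(\Sigma,\bb{C})\mid\Delta_\Sigma h=-2\mu h\}$ of the Laplacian of $\Sigma$, whose dimension is $m_{-2\mu}$ by definition. Combining the two dimension computations yields $m=m_{-2\mu}$, and therefore $W_\mu\simeq m_{-2\mu}\,\mca{H}_\mu$.

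The step I expect to be the main obstacle is the first one, namely confining intertwiners to $W_\mu$ via the Casimir eigenvalue and applying Schur's lemma in the bounded unitary setting. However, since the orthogonal decomposition $W_\mu=\bigoplus_{i=1}^m\im\varphi_i$ and the expansion $\varphi=\sum_i a_i\varphi_i$ have already been established inside the proof of Lemma \ref{homiso}, the argument reduces to bookkeeping of dimensions rather than genuinely new work.
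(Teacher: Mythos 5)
Your proposal is correct and takes essentially the same route as the paper: the corollary is meant to follow immediately from the preceding corollary $\Hom_{\PSL(2,\bb{R})}\left(\mca{H}_\mu,L^2(P)\right)\simeq\left\{h\in C^\infty(\Sigma,\bb{C})\ \middle|\ \Delta_\Sigma h=-2\mu h\right\}$ once the multiplicity of $\mca{H}_\mu$ in $W_\mu$ is identified with $\dim_\bb{C}\Hom_{\PSL(2,\bb{R})}\left(\mca{H}_\mu,L^2(P)\right)$, which is the step you spell out. Your identification argument (confinement of intertwiners to $W_\mu$ via the Casimir eigenvalue, then Schur's lemma against the orthogonal decomposition $W_\mu=\bigoplus_{i=1}^m\im\varphi_i$) is exactly the reasoning already embedded in the surjectivity part of the paper's proof of Lemma \ref{homiso}, so you have simply made explicit what the paper leaves implicit.
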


\subsubsection{Contribution of holomorphic discrete series}
Define $\alpha\colon\PSL(2,\bb{R})\times\bb{H}^2\to\bb{C}^\times$ by 
\begin{equation*}
\alpha(g,z)=(cz+d)^2, 
\end{equation*}
where $g=\pm
\begin{pmatrix}
a&b\\
c&d
\end{pmatrix}
\in\PSL(2,\bb{R})$. Then $\alpha$ is a $C^\infty$ cocycle over the action $\PSL(2,\bb{R})\curvearrowright\bb{H}^2$, ie it satisfies 
\begin{equation*}
\alpha(gg^\prime,z)=\alpha(g,g^\prime z)\alpha(g^\prime,z)
\end{equation*}
for all $g$, $g^\prime$ and $z$. Define an action $\PSL(2,\bb{R})\curvearrowright\bb{H}^2\times\bb{C}$ by 
\begin{equation*}
g(z,w)=(gz,\alpha(g,z)w). 
\end{equation*}
This makes the trivial bundle $\bb{H}^2\times\bb{C}\to\bb{H}^2$ a $\PSL(2,\bb{R})$-vector bundle over the natural action $\PSL(2,\bb{R})\curvearrowright\bb{H}^2$. Since the surface $\Sigma=\Gamma\backslash\bb{H}^2$ has a natural complex structure coming from $\bb{H}^2$, we have the decomposition $T\Sigma\otimes\bb{C}=T^{1,0}\Sigma\oplus T^{0,1}\Sigma$ of the complexified tangent bundle. We have $T\Sigma\simeq T^{1,0}\Sigma$ as complex vector bundles. Let $L=\Hom_\bb{C}\left(T^{1,0}\Sigma,\bb{C}\right)$ be the canonical line bundle of $\Sigma$. The complex vector bundle $L$ is a holomorphic vector bundle. 

\begin{lem}\label{veciso}
We have $L\simeq\bb{H}^2\times_\Gamma\bb{C}$ as holomorphic vector bundles, where the right hand side is the quotient $\Gamma\backslash(\bb{H}^2\times\bb{C})$ by the restriction of the action $\PSL(2,\bb{R})\curvearrowright\bb{H}^2\times\bb{C}$ defined above. 
\end{lem}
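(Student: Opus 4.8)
The plan is to produce an explicit $\PSL(2,\bb{R})$-equivariant trivialization of the canonical bundle of $\bb{H}^2$ and then pass to the quotient by $\Gamma$. First I would recall that since $\Gamma$ is torsion free and acts freely and properly discontinuously on $\bb{H}^2$ by biholomorphisms, the covering $q\colon\bb{H}^2\to\Sigma$ is a local biholomorphism; consequently $q^*L$ is canonically isomorphic to the canonical bundle $K_{\bb{H}^2}=T^{*(1,0)}\bb{H}^2$ of $\bb{H}^2$, and under this identification the deck action of $\Gamma$ on $q^*L$ is carried to the natural action of $\Gamma\subset\PSL(2,\bb{R})$ on $K_{\bb{H}^2}$ induced from $\PSL(2,\bb{R})\curvearrowright\bb{H}^2$. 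Hence $L\simeq\Gamma\backslash K_{\bb{H}^2}$ as holomorphic line bundles, and it remains to identify the $\PSL(2,\bb{R})$-bundle $K_{\bb{H}^2}$ with the trivial bundle $\bb{H}^2\times\bb{C}$ carrying the action $g(z,w)=(gz,\alpha(g,z)w)$.

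The bundle $K_{\bb{H}^2}$ is holomorphically trivial, with global frame the holomorphic $1$-form $dz$, so the heart of the matter is to express the natural action of $g=\pm\begin{pmatrix}a&b\\c&d\end{pmatrix}$ on this frame. The natural action on covectors is $(g^{-1})^*$, the pullback by $g^{-1}$; differentiating the M\"{o}bius action and using the formula $(gz)^\prime=(cz+d)^{-2}$ recalled in Section \ref{gener}, I would show that $g\cdot(dz|_z)=(g^{-1})^*(dz|_z)=(cz+d)^2\,dz|_{gz}=\alpha(g,z)\,dz|_{gz}$. It follows that the map $\Psi\colon\bb{H}^2\times\bb{C}\to K_{\bb{H}^2}$, $(z,w)\mapsto w\,dz|_z$, is a holomorphic isomorphism of line bundles over $\bb{H}^2$ intertwining the action $g(z,w)=(gz,\alpha(g,z)w)$ with the natural $\PSL(2,\bb{R})$-action on $K_{\bb{H}^2}$, since $\Psi(g(z,w))=\alpha(g,z)w\,dz|_{gz}=g\cdot\Psi(z,w)$.

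Finally I would restrict $\Psi$ to the $\Gamma$-action and pass to quotients: being $\Gamma$-equivariant, $\Psi$ descends to a holomorphic isomorphism $\bb{H}^2\times_\Gamma\bb{C}\xrightarrow{\sim}\Gamma\backslash K_{\bb{H}^2}\simeq L$, which is the desired statement. The step requiring the most care is the conventions in the first two paragraphs: one must verify that the geometric $\PSL(2,\bb{R})$-action on $K_{\bb{H}^2}$ realizing the quotient as the canonical bundle of $\Sigma$ is precisely the action by $(g^{-1})^*$, and that this produces the cocycle $\alpha(g,z)=(cz+d)^2$ rather than its reciprocal. Once the frame transformation law $g\cdot(dz|_z)=\alpha(g,z)\,dz|_{gz}$ is pinned down, the remaining verifications are routine.
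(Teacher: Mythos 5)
Your proposal is correct and follows essentially the same route as the paper's proof: trivialize the canonical bundle of $\bb{H}^2$ by the global frame $dz$, compute that the natural $\PSL(2,\bb{R})$-action (pullback by $g^{-1}$, using $\left(\frac{az+b}{cz+d}\right)^\prime=\frac{1}{(cz+d)^2}$ and the cocycle relation) sends $(dz)_z$ to $\alpha(g,z)(dz)_{gz}$, and conclude that the trivialization is equivariant and hence descends to the quotient by $\Gamma$. The only difference is cosmetic ordering — you first reduce to $L\simeq\Gamma\backslash K_{\bb{H}^2}$ and then identify the equivariant structure, while the paper identifies $L_{\bb{H}^2}\simeq\bb{H}^2\times\bb{C}$ equivariantly first and quotients at the end — and your frame transformation law matches the paper's computation exactly.
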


\begin{proof}
Let $L_{\bb{H}^2}=\Hom_\bb{C}\left(T^{1,0}\bb{H}^2,\bb{C}\right)$ be the canonical bundle of $\bb{H}^2$. We have a holomorphic trivialization $L_{\bb{H}^2}\simeq\bb{H}^2\times\bb{C}$ given by the section $dz=dx+idy$. Since the action $\PSL(2,\bb{R})\curvearrowright\bb{H}^2$ is holomorphic, we get actions $\PSL(2,\bb{R})\curvearrowright T^{1,0}\bb{H}^2$ and $\PSL(2,\bb{R})\curvearrowright L_{\bb{H}^2}$. Explicitly the latter action is given by 
\begin{align*}
(L_{\bb{H}^2})_z&\to(L_{\bb{H}^2})_{gz}\\
\varphi&\mapsto\varphi\circ(g^{-1})_*
\end{align*}
for $g\in\PSL(2,\bb{R})$ and $z\in\bb{H}^2$. We compute $(dz)_{z_0}\circ(g^{-1})_*$ for a point $z_0\in\bb{H}^2$. Let $g^{-1}=\pm
\begin{pmatrix}
a&b\\
c&d
\end{pmatrix}
$. Then 
\begin{align*}
(dz)_{z_0}\circ(g^{-1})_*&=\left((g^{-1})^*dz\right)_{gz_0}=\left(d\left(\frac{az+b}{cz+d}\right)\right)_{gz_0}\\
&=\left(\frac{1}{(cz+d)^2}dz\right)_{gz_0}=\left(\alpha\left(g^{-1},z\right)^{-1}dz\right)_{gz_0}\\
&=\alpha\left(g^{-1},gz_0\right)^{-1}(dz)_{gz_0}=\alpha(g,z_0)(dz)_{gz_0}. 
\end{align*}
We used the relation $1=\alpha(g^{-1}g,z_0)=\alpha(g^{-1},gz_0)\alpha(g,z_0)$ at the last line. This computation shows the isomorphism $L_{\bb{H}^2}\simeq\bb{H}^2\times\bb{C}$ is $\PSL(2,\bb{R})$-equivariant. Since $\Gamma\backslash L_{\bb{H}^2}\simeq L$, we get $L\simeq\bb{H}^2\times_\Gamma\bb{C}$. 
\end{proof}

Let $q\colon\bb{H}^2\to\Sigma$ be the natural projection. 

\begin{cor}\label{gammah}
For $n\in\bb{Z}_{\geq1}$, we have an isomorphism 
\begin{align*}
\Gamma(L^{\otimes n})&\simeq\left\{h\in C^\infty(\bb{H}^2,\bb{C})\ \middle|\ h(\gamma z)=\alpha(\gamma,z)^nh(z)\ \text{{\rm for all $\gamma\in\Gamma$ and $z\in\bb{H}^2$}}\right\}\\
\xi&\mapsto h
\end{align*}
given by $q^*\xi=hdz^{\otimes n}$. Moreover we have $q^*\bar{\partial}\xi=\frac{\partial h}{\partial\bar{z}}d\bar{z}\otimes dz^{\otimes n}$ for all $\xi\in\Gamma(L^{\otimes n})$, where $\bar{\partial}$ is the holomorphic structure of $L^{\otimes n}$. 
\end{cor}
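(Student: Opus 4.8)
The plan is to transport sections of $L^{\otimes n}$ to $\bb{H}^2$ along the holomorphic covering $q$ and to read off the transformation law from the cocycle $\alpha$ already analyzed in Lemma \ref{veciso}. First I would note that since $q\colon\bb{H}^2\to\Sigma$ is a holomorphic covering, its differential identifies the pullback $q^*L^{\otimes n}$ with the $n$-th tensor power $L_{\bb{H}^2}^{\otimes n}$ of the canonical bundle of $\bb{H}^2$, and that $dz^{\otimes n}$ is a global holomorphic frame of $L_{\bb{H}^2}^{\otimes n}$, coming from the trivialization $L_{\bb{H}^2}\simeq\bb{H}^2\times\bb{C}$ by $dz$ used in the proof of Lemma \ref{veciso}. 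Hence every $\xi\in\Gamma(L^{\otimes n})$ determines a unique $h\in C^\infty(\bb{H}^2,\bb{C})$ through $q^*\xi=h\,dz^{\otimes n}$, and this is the map $\xi\mapsto h$.

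Next I would identify the image. Because $q\circ\gamma=q$ for every $\gamma\in\Gamma$, the pulled-back section is invariant under the deck transformations, i.e. $\gamma^*(q^*\xi)=q^*\xi$. The derivative computation $\left(\frac{az+b}{cz+d}\right)'=\frac{1}{(cz+d)^2}$ from the proof of Lemma \ref{veciso} gives $\gamma^*dz=\alpha(\gamma,z)^{-1}dz$, whence $\gamma^*dz^{\otimes n}=\alpha(\gamma,z)^{-n}dz^{\otimes n}$. Applying $\gamma^*$ to $q^*\xi=h\,dz^{\otimes n}$ and comparing the coefficients of the frame $dz^{\otimes n}$ turns the invariance into exactly $h(\gamma z)=\alpha(\gamma,z)^n h(z)$. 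Conversely, any $h$ satisfying this law makes $h\,dz^{\otimes n}$ a $\Gamma$-invariant section of $L_{\bb{H}^2}^{\otimes n}$, which therefore descends to a section $\xi$ of $L^{\otimes n}$ with $q^*\xi=h\,dz^{\otimes n}$; this furnishes the inverse and establishes the claimed isomorphism. Alternatively, one may simply tensor the isomorphism $L\simeq\bb{H}^2\times_\Gamma\bb{C}$ of Lemma \ref{veciso} to obtain $L^{\otimes n}\simeq\bb{H}^2\times_\Gamma\bb{C}$ with cocycle $\alpha^n$ and invoke the standard description of sections of an associated bundle.

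For the final formula I would use that $q$ is holomorphic, so $q^*$ intertwines the Dolbeault operators and $q^*\bar{\partial}\xi=\bar{\partial}(q^*\xi)=\bar{\partial}(h\,dz^{\otimes n})$. Since $dz^{\otimes n}$ is a holomorphic frame, $\bar{\partial}$ acts only on the coefficient function, yielding $\frac{\partial h}{\partial\bar{z}}\,d\bar{z}\otimes dz^{\otimes n}$, as asserted.

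I expect the only delicate point to be the bookkeeping that translates ``$\xi$ is a section over $\Sigma$'' into $\Gamma$-invariance of $q^*\xi$ and then into the transformation law with the correct power and sign of $\alpha$. This is precisely the place where the cocycle computation of Lemma \ref{veciso} is needed, and once it is in hand the remainder is a matter of unwinding definitions.
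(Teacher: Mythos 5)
Your proposal is correct and follows essentially the same route as the paper: the paper tensors the identification of Lemma \ref{veciso} to get $L^{\otimes n}\simeq\bb{H}^2\times_\Gamma\bb{C}$ with cocycle $\alpha^n$ and identifies $\Gamma(L^{\otimes n})$ with the $\Gamma$-invariants of $C^\infty(\bb{H}^2,\bb{C})$ under the twisted action---precisely your ``alternative'' argument---while your main computation with the frame $dz^{\otimes n}$ and $\gamma^*dz=\alpha(\gamma,z)^{-1}dz$ is just an explicit unwinding of that same correspondence. Your verification of the $\bar{\partial}$ formula (which the paper leaves implicit in the phrase ``as holomorphic vector bundles'') is also correct.
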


\begin{proof}
Consider the action $\PSL(2,\bb{R})\curvearrowright\bb{H}^2\times\bb{C}$ defined by $g(z,w)=(gz,\alpha(g,z)^nw)$. Then we have $L^{\otimes n}\simeq\bb{H}^2\times_\Gamma\bb{C}$ as holomorphic vector bundles, where the action $\Gamma\curvearrowright\bb{H}^2\times\bb{C}$ is the restriction of the above action. The action $\PSL(2,\bb{R})\curvearrowright\bb{H}^2\times\bb{C}$ gives rise to a representation $\PSL(2,\bb{R})\curvearrowright C^\infty(\bb{H}^2,\bb{C})$ defined by 
\begin{equation*}
(gh)(z)=\alpha(g,g^{-1}z)^nh(g^{-1}z)=\alpha(g^{-1},z)^{-n}h(g^{-1}z). 
\end{equation*}
Here we used the relation $1=\alpha(gg^{-1},z)=\alpha(g,g^{-1}z)\alpha(g^{-1},z)$. Since 
\begin{equation*}
\Gamma(L^{\otimes n})\simeq\Gamma\left(\bb{H}^2\times_\Gamma\bb{C}\right)\simeq C^\infty\left(\bb{H}^2,\bb{C}\right)^\Gamma, 
\end{equation*}
the stated isomorphism holds. 
\end{proof}

\begin{lem}\label{phiprime}
For $n\in\bb{Z}_{\geq1}$, we have an isomorphism 
\begin{equation*}
\Phi_n^\prime\colon C^\infty(\bb{H}^2,\bb{C})\stackrel{\sim}{\to}\left\{f\in C^\infty(\PSL(2,\bb{R}),\bb{C})\ \middle|\ \pi^\prime(H_0)f=nf\right\}
\end{equation*}
defined by 
\begin{equation*}
\Phi_n^\prime(h)(g)=\alpha(g,i)^{-n}h(gi). 
\end{equation*}
The isomorphism $\Phi_n^\prime$ is $\PSL(2,\bb{R})$-equivariant, where $\PSL(2,\bb{R})$ acts on $C^\infty(\bb{H}^2,\bb{C})$ by 
\begin{equation*}
(gh)(z)=\alpha(g,g^{-1}z)^nh(g^{-1}z)=\alpha(g^{-1},z)^{-n}h(g^{-1}z)
\end{equation*}
and on $\left\{f\in C^\infty(\PSL(2,\bb{R}),\bb{C})\ \middle|\ \pi^\prime(H_0)f=nf\right\}$ by 
\begin{equation*}
(gf)(g^\prime)=f(g^{-1}g^\prime). 
\end{equation*}
\end{lem}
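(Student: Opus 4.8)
The plan is to verify in turn the four assertions packaged into the statement: that $\Phi_n^\prime$ lands in the prescribed subspace, that it is injective, that it is surjective, and that it is $\PSL(2,\bb{R})$-equivariant. Every verification rests on two ingredients already at hand: the cocycle identity $\alpha(gg^\prime,z)=\alpha(g,g^\prime z)\alpha(g^\prime,z)$, and the identification of the eigenspace $\{f\mid\pi^\prime(H_0)f=nf\}$ with the space of $f$ obeying a right $\PSO(2)$-transformation rule.

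First I would record the base-point computation. Write $X_0=\begin{pmatrix}&\frac{1}{2}\\-\frac{1}{2}&\end{pmatrix}$, so that $H_0=-iX_0$ and $k_\theta:=\exp(\theta X_0)$ is the rotation fixing $i\in\bb{H}^2$. A one-line calculation gives $\alpha(k_\theta,i)=(\cos\frac{\theta}{2}-i\sin\frac{\theta}{2})^2=e^{-i\theta}$, and since $k_\theta i=i$, combining this with the cocycle identity yields $\Phi_n^\prime(h)(gk_\theta)=e^{in\theta}\Phi_n^\prime(h)(g)$ for all $g$ and $\theta$. I would then check that this transformation rule is exactly equivalent to the eigenvalue condition $\pi^\prime(H_0)f=nf$: with $\phi(\theta)=f(gk_\theta)$ the definition of $\pi^\prime$ gives $\phi^\prime(\theta)=(\pi^\prime(X_0)f)(gk_\theta)$, so $\pi^\prime(H_0)f=nf$ amounts to the ordinary differential equation $\phi^\prime=in\phi$ with solution $\phi(\theta)=e^{in\theta}f(g)$, while differentiating the transformation rule at $\theta=0$ recovers the eigenvalue condition. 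This places $\Phi_n^\prime(h)$ in the prescribed subspace.

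Injectivity is immediate from transitivity: as $\PSL(2,\bb{R})$ acts transitively on $\bb{H}^2$ and $\alpha(g,i)\neq0$, the vanishing of $\Phi_n^\prime(h)$ forces $h(gi)=0$ for all $g$, hence $h=0$. For surjectivity I would construct the inverse explicitly by $h(gi):=\alpha(g,i)^nf(g)$; the real content is well-definedness, for which one notes that any two preimages of a point of $\bb{H}^2$ differ by some $k_\theta$ and then the base-point computation together with the transformation rule shows $\alpha(gk_\theta,i)^nf(gk_\theta)=\alpha(g,i)^nf(g)$. Smoothness of $h$ follows because $g\mapsto\alpha(g,i)^nf(g)$ is a smooth right-$\PSO(2)$-invariant function on $\PSL(2,\bb{R})$ and the orbit map $g\mapsto gi$ is a submersion admitting smooth local sections, so the invariant function descends smoothly to $\bb{H}^2$; one then reads off $\Phi_n^\prime(h)=f$. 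Finally, equivariance unwinds to the single cocycle identity $\alpha(g^{-1}g^\prime,i)=\alpha(g^{-1},g^\prime i)\alpha(g^\prime,i)$, which matches the two prefactors produced by expanding the two group actions appearing in the definitions.

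The main obstacle I anticipate is the surjectivity step, specifically the careful verification that the candidate inverse is well-defined on $\bb{H}^2$ and genuinely $C^\infty$; the remaining assertions are short cocycle manipulations, and the only other point needing a genuine argument is the standard equivalence between the Lie-algebra eigenvalue condition $\pi^\prime(H_0)f=nf$ and the group-level $\PSO(2)$-transformation rule.
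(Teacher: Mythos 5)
Your proposal is correct and follows essentially the same route as the paper's proof: the base-point computation $\alpha(r_\theta,i)=e^{-i\theta}$, the ODE argument converting $\pi^\prime(H_0)f=nf$ into the rule $f(gr_\theta)=e^{in\theta}f(g)$, the well-defined descent $h(gi):=\alpha(g,i)^nf(g)$ for surjectivity, and the single cocycle identity for equivariance. The only difference is that you spell out the smoothness of the descended function $h$ via local sections of the submersion $g\mapsto gi$, a point the paper leaves implicit.
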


\begin{proof}
Put $r_\theta=\exp\theta
\begin{pmatrix}
&\frac{1}{2}\\
-\frac{1}{2}&
\end{pmatrix}
=\pm
\begin{pmatrix}
\cos\frac{\theta}{2}&\sin\frac{\theta}{2}\\
-\sin\frac{\theta}{2}&\cos\frac{\theta}{2}
\end{pmatrix}
$. Since $H_0=i
\begin{pmatrix}
&-\frac{1}{2}\\
\frac{1}{2}&
\end{pmatrix}
$, we have 
\begin{align*}
\left(\pi^\prime(H_0)\Phi_n^\prime(h)\right)(g)&=i\left.\frac{d}{dt}\Phi_n^\prime(h)\left(g\exp t
\begin{pmatrix}
&-\frac{1}{2}\\
\frac{1}{2}&
\end{pmatrix}
\right)\right|_{t=0}=i\left.\frac{d}{dt}\Phi_n^\prime(h)(gr_{-t})\right|_{t=0}\\
&=i\left.\frac{d}{dt}\alpha(gr_{-t},i)^{-n}h(gi)\right|_{t=0}\\
&=i\left.\frac{d}{dt}\alpha(g,i)^{-n}\alpha(r_{-t},i)^{-n}h(gi)\right|_{t=0}\\
&=i\left.\frac{d}{dt}\alpha(g,i)^{-n}e^{-int}h(gi)\right|_{t=0}=n\Phi_n^\prime(h)(g). 
\end{align*}
Hence the map $\Phi_n^\prime$ is defined and it is obviously injective. To show the surjectivity, take any $f\in C^\infty(\PSL(2,\bb{R}),\bb{C})$ such that $\pi^\prime(H_0)f=nf$. Since 
\begin{equation*}
\frac{d}{d\theta}f(gr_\theta)=i\left(\pi^\prime(H_0)f\right)(gr_\theta)=inf(gr_\theta), 
\end{equation*}
we get $f(gr_\theta)=e^{in\theta}f(g)$. Since 
\begin{align*}
\alpha(gr_\theta,i)^nf(gr_\theta)=\alpha(g,i)^n\alpha(r_\theta,i)^ne^{in\theta}f(g)=\alpha(g,i)^nf(g), 
\end{align*}
there exists $h\colon\bb{H}^2\to\bb{C}$ such that $h(gi)=\alpha(g,i)^nf(g)$. Thus $\Phi_n^\prime(h)=f$, which shows the surjectivity. Finally the equivariance follows from the following computation: 
\begin{align*}
\Phi_n^\prime(gh)(g^\prime)&=\alpha(g^\prime,i)^{-n}(gh)(g^\prime i)=\alpha(g^\prime,i)^{-n}\alpha\left(g^{-1},g^\prime i\right)^{-n}h\left(g^{-1}g^\prime i\right)\\
&=\alpha\left(g^{-1}g^\prime,i\right)^{-n}h\left(g^{-1}g^\prime i\right)=\Phi_n^\prime(h)(g^{-1}g^\prime)=\left(g\Phi_n^\prime(h)\right)(g^\prime). 
\end{align*}
\end{proof}

\begin{cor}
For $n\in\bb{Z}_{\geq1}$, we have an isomorphism 
\begin{align*}
\Phi_n&\colon\left\{h\in C^\infty(\bb{H}^2,\bb{C})\ \middle|\ h(\gamma z)=\alpha(\gamma,z)^nh(z)\ \text{{\rm for all $\gamma\in\Gamma$ and $z\in\bb{H}^2$}}\right\}\\
&\stackrel{\sim}{\to}\left\{f\in C^\infty(P,\bb{C})\ \middle|\ \pi(H_0)f=nf\right\}
\end{align*}
defined by 
\begin{equation*}
\Phi_n(h)(\Gamma g)=\alpha(g,i)^{-n}h(gi). 
\end{equation*}
Note that the former set is isomorphic to $\Gamma(L^{\otimes n})$ by Corollary \ref{gammah}. 
\end{cor}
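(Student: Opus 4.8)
The plan is to deduce this corollary from Lemma \ref{phiprime} by passing to $\Gamma$-invariant parts, exactly as the isomorphism \eqref{hcpsl} on $\PSL(2,\bb{R})$ was specialized to one between $C^\infty(\Sigma,\bb{C})$ and $\{f\in C^\infty(P,\bb{C})\mid\pi(H_0)f=0\}$.

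First I would recall that Lemma \ref{phiprime} provides a $\PSL(2,\bb{R})$-equivariant isomorphism $\Phi_n^\prime$ from $C^\infty(\bb{H}^2,\bb{C})$, equipped with the action $(gh)(z)=\alpha(g^{-1},z)^{-n}h(g^{-1}z)$, onto the eigenspace $\{f\in C^\infty(\PSL(2,\bb{R}),\bb{C})\mid\pi^\prime(H_0)f=nf\}$, equipped with the left-translation action $(gf)(g^\prime)=f(g^{-1}g^\prime)$. Since $\Phi_n^\prime$ intertwines the actions of all of $\PSL(2,\bb{R})$, it intertwines in particular the actions of the subgroup $\Gamma$, and therefore restricts to an isomorphism between the subspaces of $\Gamma$-fixed vectors on the two sides. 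All that then remains is to identify these two fixed-point subspaces explicitly and to check that the restricted map is the stated $\Phi_n$.

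On the source side, $h$ is $\Gamma$-fixed precisely when $\alpha(\gamma^{-1},z)^{-n}h(\gamma^{-1}z)=h(z)$ for all $\gamma\in\Gamma$ and $z\in\bb{H}^2$. Substituting $\gamma z$ for $z$ and using the cocycle identity $1=\alpha(\gamma^{-1}\gamma,z)=\alpha(\gamma^{-1},\gamma z)\alpha(\gamma,z)$ to rewrite $\alpha(\gamma^{-1},\gamma z)^{-1}=\alpha(\gamma,z)$, this condition is seen to be equivalent to the automorphy relation $h(\gamma z)=\alpha(\gamma,z)^nh(z)$, which is exactly the condition defining the left-hand set of the corollary. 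On the target side, a function $f$ is $\Gamma$-fixed iff $f(\gamma^{-1}g)=f(g)$ for all $\gamma$, i.e. iff $f$ is left $\Gamma$-invariant and hence descends to $P=\Gamma\backslash\PSL(2,\bb{R})$; conversely every element of $C^\infty(P,\bb{C})$ arises this way. The eigenvalue condition $\pi^\prime(H_0)f=nf$ involves only differentiation in the right variable, so it is unaffected by this descent and becomes $\pi(H_0)f=nf$.

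Finally I would observe that the defining formula $\Phi_n^\prime(h)(g)=\alpha(g,i)^{-n}h(gi)$ is itself left $\Gamma$-invariant in $g$ once $h$ satisfies the automorphy relation: expanding $\alpha(\gamma g,i)=\alpha(\gamma,gi)\alpha(g,i)$ together with $h(\gamma\cdot gi)=\alpha(\gamma,gi)^nh(gi)$ makes the factors $\alpha(\gamma,gi)^{\pm n}$ cancel. Hence $\Phi_n^\prime(h)$ descends to the well-defined function $\Phi_n(h)(\Gamma g)=\alpha(g,i)^{-n}h(gi)$ on $P$, completing the identification. I do not expect a genuine obstacle: the argument is a routine descent of Lemma \ref{phiprime} along $\Gamma$, and the only point requiring care is the repeated, correctly oriented use of the cocycle identity to translate between the two invariance conditions.
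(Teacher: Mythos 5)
Your proposal is correct and follows exactly the paper's own (one-line) argument: take $\Gamma$-invariant parts of both sides of the isomorphism in Lemma \ref{phiprime}. The details you supply --- identifying the $\Gamma$-fixed vectors on each side via the cocycle identity and checking that $\Phi_n^\prime(h)$ descends to $P$ --- are precisely the routine verifications the paper leaves implicit.
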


\begin{proof}
Take $\Gamma$-invariant parts of both sides of the isomorphism in Lemma \ref{phiprime}. 
\end{proof}

Let $H_-=i
\begin{pmatrix}
\frac{1}{2}&\\
&-\frac{1}{2}
\end{pmatrix}
+
\begin{pmatrix}
&\frac{1}{2}\\
\frac{1}{2}&
\end{pmatrix}
\in\sl(2,\bb{R})\otimes\bb{C}$. 

\begin{lem}\label{formula}
We have 
\begin{equation*}
\left(\pi^\prime(H_-)\Phi_n^\prime(h)\right)(g)=2\alpha(g,i)^{-n+1}(\Im gi)^2\frac{\partial h}{\partial\bar{z}}(gi)
\end{equation*}
for $n\in\bb{Z}_{\geq1}$, $h\in C^\infty(\bb{H}^2,\bb{C})$ and $g\in\PSL(2,\bb{R})$. 
\end{lem}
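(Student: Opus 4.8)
The plan is to reduce the whole statement to a direct differentiation of $\Phi_n^\prime(h)(g)=\alpha(g,i)^{-n}h(gi)$ along the one-parameter subgroups generated by a real basis of $\sl(2,\bb{R})$. First I would rewrite $H_-$ in terms of the basis $H,E,F$: a short matrix computation gives
\[
H_-=\sqrt{2}\,iH+E+F,
\]
so that by complex linearity of $\pi^\prime$ on $\sl(2,\bb{R})\otimes\bb{C}$ we get $\pi^\prime(H_-)=\sqrt{2}\,i\,\pi^\prime(H)+\pi^\prime(E)+\pi^\prime(F)$. As a consistency check one verifies $[H_0,H_-]=-H_-$, so $\pi^\prime(H_-)$ lowers the $H_0$-weight by one, matching the exponent $-n+1$ on the right-hand side. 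Thus it suffices to evaluate $\pi^\prime(X)\Phi_n^\prime(h)$ for each real generator $X\in\{H,E,F\}$ and then take the stated linear combination.

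For real $X$ one has
\[
(\pi^\prime(X)\Phi_n^\prime(h))(g)=\left.\frac{d}{dt}\right|_{t=0}\alpha(ge^{tX},i)^{-n}h(ge^{tX}i).
\]
Here I would split $\alpha(ge^{tX},i)=\alpha(g,e^{tX}i)\,\alpha(e^{tX},i)$ by the cocycle identity, and use that the M\"obius action is holomorphic with $\frac{d}{dz}(g\cdot z)=\alpha(g,z)^{-1}$, so that $\frac{d}{dt}\big|_{0}(g\cdot e^{tX}i)=\alpha(g,i)^{-1}v_X$, where $v_X:=\frac{d}{dt}\big|_{0}(e^{tX}i)\in\bb{C}$. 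Writing $\mu=ci+d$ for $g=\pm\begin{pmatrix}a&b\\c&d\end{pmatrix}$, so that $\alpha(g,i)=\mu^2$, the product and chain rules (the latter in Wirtinger form $\frac{d}{dt}h(\zeta(t))=\frac{\partial h}{\partial z}\zeta^\prime+\frac{\partial h}{\partial\bar z}\overline{\zeta^\prime}$) express $(\pi^\prime(X)\Phi_n^\prime(h))(g)$ as an explicit combination of $h(gi)$, $\frac{\partial h}{\partial z}(gi)$ and $\frac{\partial h}{\partial\bar z}(gi)$ whose coefficients depend only on $\mu$, $c$ and on the two scalars $v_X$ and $\gamma_X:=\frac{d}{dt}\big|_{0}\alpha(e^{tX},i)$. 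Computing the one-parameter subgroups directly gives
\[
v_H=\tfrac{i}{\sqrt2},\quad v_E=v_F=\tfrac12,\qquad \gamma_H=-\tfrac{1}{\sqrt2},\quad \gamma_E=0,\quad \gamma_F=i.
\]

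The heart of the argument is then a cancellation built into the choice of $H_-$. Grouping the answer by $h(gi)$, $\frac{\partial h}{\partial z}(gi)$ and $\frac{\partial h}{\partial\bar z}(gi)$, the coefficient of $\frac{\partial h}{\partial z}(gi)$ is proportional to $\sum_X c_X v_X$, the coefficient of $h(gi)$ is a linear combination of $\sum_X c_X v_X$ and $\sum_X c_X\gamma_X$, and the coefficient of $\frac{\partial h}{\partial\bar z}(gi)$ is proportional to $\sum_X c_X\overline{v_X}$, where $c_H=\sqrt2\,i$ and $c_E=c_F=1$. One checks
\[
\sum_X c_X v_X=\sqrt2\,i\cdot\tfrac{i}{\sqrt2}+\tfrac12+\tfrac12=0,\qquad \sum_X c_X\gamma_X=\sqrt2\,i\cdot\left(-\tfrac{1}{\sqrt2}\right)+0+i=0,
\]
so the $h$ and $\partial_z h$ contributions drop out entirely, while $\sum_X c_X\overline{v_X}=\sqrt2\,i\cdot\left(-\tfrac{i}{\sqrt2}\right)+\tfrac12+\tfrac12=2$ survives, leaving $(\pi^\prime(H_-)\Phi_n^\prime(h))(g)=2\,\mu^{-2n}\bar\mu^{-2}\,\frac{\partial h}{\partial\bar z}(gi)$.

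Finally I would put this in the stated form using $\alpha(g,i)^{-n+1}=\mu^{-2n+2}=\mu^{-2n}\mu^{2}$ together with the standard identity $\Im(gi)=|ci+d|^{-2}=(\mu\bar\mu)^{-1}$, so that $(\Im gi)^2=\mu^{-2}\bar\mu^{-2}$ and $2\,\mu^{-2n}\bar\mu^{-2}\frac{\partial h}{\partial\bar z}(gi)=2\,\alpha(g,i)^{-n+1}(\Im gi)^2\frac{\partial h}{\partial\bar z}(gi)$. The only genuinely delicate point is the bookkeeping of the Wirtinger derivatives and complex conjugates: the $\frac{\partial h}{\partial\bar z}$ term appears precisely because $H_-$ is the weight-lowering operator, and the vanishing of the two sums above is exactly what forces the holomorphic part $\frac{\partial h}{\partial z}$ and the automorphy-factor contribution to cancel.
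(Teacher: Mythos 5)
Your proof is correct; I checked all the ingredients. The decomposition $H_-=\sqrt{2}\,iH+E+F$ holds, the six scalars are right ($v_H=\tfrac{i}{\sqrt2}$, $v_E=v_F=\tfrac12$, $\gamma_H=-\tfrac{1}{\sqrt2}$, $\gamma_E=0$, $\gamma_F=i$), and, writing $\mu=ci+d$, your output $2\mu^{-2n}\bar\mu^{-2}\frac{\partial h}{\partial\bar z}(gi)$ agrees with the stated right-hand side because $\alpha(g,i)^{-n+1}(\Im gi)^2=\mu^{-2n+2}(\mu\bar\mu)^{-2}=\mu^{-2n}\bar\mu^{-2}$. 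You also handle the one genuinely delicate point correctly: since $\pi^\prime$ is extended $\bb{C}$-linearly to $\sl(2,\bb{R})\otimes\bb{C}$, the Wirtinger conjugate lands only on $v_X$ and not on the coefficients $c_X$, so the surviving sum is $\sum_Xc_X\overline{v_X}=2$ rather than $\overline{\sum_Xc_Xv_X}=0$. Your route is the same in spirit as the paper's (direct differentiation of $\Phi_n^\prime(h)$ along one-parameter subgroups plus the Wirtinger chain rule), but the decomposition of $H_-$ differs: the paper writes $H_-=i\begin{pmatrix}\frac12&\\&-\frac12\end{pmatrix}+\begin{pmatrix}&1\\&\end{pmatrix}-iH_0$ and absorbs the rotational direction into the eigenvalue identity $\pi^\prime(H_0)\Phi_n^\prime(h)=n\Phi_n^\prime(h)$ already established in Lemma \ref{phiprime}, so it only differentiates along the two flows $t\mapsto g\exp\bigl(t\,\mathrm{diag}(\tfrac12,-\tfrac12)\bigr)$ and $t\mapsto g\begin{pmatrix}1&t\\&1\end{pmatrix}$, whereas you work in the real basis $H,E,F$ and differentiate along all three flows, including $e^{tF}$. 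The paper's choice buys one fewer explicit derivative computation by recycling the weight property; your choice buys a more systematic bookkeeping, since factoring through the cocycle identity compresses all dependence on $X$ into the scalars $v_X,\gamma_X$ and reduces the lemma to two transparent cancellations, $\sum_Xc_Xv_X=0$ and $\sum_Xc_X\gamma_X=0$, which explain \emph{why} the $h(gi)$ and $\frac{\partial h}{\partial z}(gi)$ contributions must drop out. Either way the computation closes, so the proposal stands as a complete proof.
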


\begin{proof}
Since $H_0=i
\begin{pmatrix}
&-\frac{1}{2}\\
\frac{1}{2}&
\end{pmatrix}
$, we have $H_-=i
\begin{pmatrix}
\frac{1}{2}&\\
&-\frac{1}{2}
\end{pmatrix}
+
\begin{pmatrix}
&1\\
&
\end{pmatrix}
-iH_0$, hence 
\begin{equation*}
\pi^\prime(H_-)\Phi_n^\prime(h)=\left(i\pi^\prime
\begin{pmatrix}
\frac{1}{2}&\\
&-\frac{1}{2}
\end{pmatrix}
+\pi^\prime
\begin{pmatrix}
&1\\
&
\end{pmatrix}
\right)\Phi_n^\prime(h)-in\Phi_n^\prime(h). 
\end{equation*}
We have 
\begin{align*}
\left(\pi^\prime
\begin{pmatrix}
\frac{1}{2}&\\
&-\frac{1}{2}
\end{pmatrix}
\Phi_n^\prime(h)\right)(g)&=\left.\frac{d}{dt}\Phi_n^\prime(h)\left(g
\begin{pmatrix}
e^\frac{t}{2}&\\
&e^{-\frac{t}{2}}
\end{pmatrix}
\right)\right|_{t=0}\\
&=\left.\frac{d}{dt}\alpha\left(g
\begin{pmatrix}
e^\frac{t}{2}&\\
&e^{-\frac{t}{2}}
\end{pmatrix}
,i\right)^{-n}h\left(g
\begin{pmatrix}
e^\frac{t}{2}&\\
&e^{-\frac{t}{2}}
\end{pmatrix}
i\right)\right|_{t=0}\\
&=\left.\frac{d}{dt}\alpha\left(g,e^ti\right)^{-n}e^{nt}h\left(ge^ti\right)\right|_{t=0}. 
\end{align*}
Let $g=\pm
\begin{pmatrix}
a&b\\
c&d
\end{pmatrix}
$ and then 
\begin{equation*}
\left.\frac{d}{dt}\alpha\left(g,e^ti\right)^{-n}\right|_{t=0}=\left.\frac{d}{dt}\left(ce^ti+d\right)^{-2n}\right|_{t=0}=-2nci(ci+d)^{-2n-1}. 
\end{equation*}
Recall that for a $C^\infty$ function $k$ on an open set $U$ of $\bb{C}$ and a $C^\infty$ curve $c(t)$ in $U$, we have 
\begin{equation*}
\frac{d}{dt}k(c(t))=\frac{\partial k}{\partial z}(c(t))\frac{dc}{dt}(t)+\frac{\partial k}{\partial\bar{z}}(c(t))\ol{\frac{dc}{dt}(t)}. 
\end{equation*}
Using this and the formula $\frac{d}{dz}\frac{az+b}{cz+d}=\frac{1}{(cz+d)^2}$, we have 
\begin{align*}
\left.\frac{d}{dt}h\left(ge^ti\right)\right|_{t=0}&=\frac{\partial h}{\partial z}(gi)\left.\frac{d}{dt}ge^ti\right|_{t=0}+\frac{\partial h}{\partial\bar{z}}(gi)\ol{\left.\frac{d}{dt}ge^ti\right|_{t=0}}\\
&=\frac{\partial h}{\partial z}(gi)\frac{\partial g}{\partial z}(i)\left.\frac{d}{dt}e^ti\right|_{t=0}+\frac{\partial h}{\partial\bar{z}}(gi)\ol{\frac{\partial g}{\partial z}(i)\left.\frac{d}{dt}e^ti\right|_{t=0}}\\
&=\frac{\partial h}{\partial z}(gi)(ci+d)^{-2}i-\frac{\partial h}{\partial\bar{z}}(gi)(-ci+d)^{-2}i. 
\end{align*}
Therefore 
\begin{align*}
&\left(\pi^\prime
\begin{pmatrix}
\frac{1}{2}&\\
&-\frac{1}{2}
\end{pmatrix}
\Phi_n^\prime(h)\right)(g)\\
&=-2nci(ci+d)^{-2n-1}h(gi)+(ci+d)^{-2n}nh(gi)\\
&\quad+(ci+d)^{-2n}\left(\frac{\partial h}{\partial z}(gi)(ci+d)^{-2}i-\frac{\partial h}{\partial\bar{z}}(gi)(-ci+d)^{-2}i\right). 
\end{align*}
To compute the next part, first we have 
\begin{align*}
\left(\pi^\prime
\begin{pmatrix}
&1\\
&
\end{pmatrix}
\Phi_n^\prime(h)\right)(g)&=\left.\frac{d}{dt}\Phi_n^\prime(h)\left(g
\begin{pmatrix}
1&t\\
&1
\end{pmatrix}
\right)\right|_{t=0}\\
&=\left.\frac{d}{dt}\alpha\left(g
\begin{pmatrix}
1&t\\
&1
\end{pmatrix}
,i\right)^{-n}h\left(g
\begin{pmatrix}
1&t\\
&1
\end{pmatrix}
i\right)\right|_{t=0}\\
&=\left.\frac{d}{dt}\alpha\left(g,(i+t)\right)^{-n}h\left(g(i+t)\right)\right|_{t=0}. 
\end{align*}
Since 
\begin{equation*}
\left.\frac{d}{dt}\alpha\left(g,(i+t)\right)^{-n}\right|_{t=0}=\left.\frac{d}{dt}\left(c(i+t)+d\right)^{-2n}\right|_{t=0}=-2nc(ci+d)^{-2n-1}
\end{equation*}
and 
\begin{align*}
\left.\frac{d}{dt}h\left(g(i+t)\right)\right|_{t=0}&=\frac{\partial h}{\partial z}(gi)\left.\frac{d}{dt}g(i+t)\right|_{t=0}+\frac{\partial h}{\partial\bar{z}}(gi)\ol{\left.\frac{d}{dt}g(i+t)\right|_{t=0}}\\
&=\frac{\partial h}{\partial z}(gi)\frac{\partial g}{\partial z}(i)+\frac{\partial h}{\partial\bar{z}}(gi)\ol{\frac{\partial g}{\partial z}(i)}\\
&=\frac{\partial h}{\partial z}(gi)(ci+d)^{-2}+\frac{\partial h}{\partial\bar{z}}(gi)(-ci+d)^{-2}, 
\end{align*}
we get 
\begin{align*}
&\left(\pi^\prime
\begin{pmatrix}
&1\\
&
\end{pmatrix}
\Phi_n^\prime(h)\right)(g)\\
&=-2nc(ci+d)^{-2n-1}h(gi)\\
&\quad+(ci+d)^{-2n}\left(\frac{\partial h}{\partial z}(gi)(ci+d)^{-2}+\frac{\partial h}{\partial\bar{z}}(gi)(-ci+d)^{-2}\right). 
\end{align*}
Using $\Im gi=\Im\frac{ai+b}{ci+d}=\Im\frac{(ai+b)(-ci+d)}{c^2+d^2}=\frac{1}{c^2+d^2}$, 
\begin{align*}
&\left(\pi^\prime(H_-)\Phi_n^\prime(h)\right)(g)\\
&=2nc(ci+d)^{-2n-1}h(gi)+(ci+d)^{-2n}inh(gi)\\
&\quad+(ci+d)^{-2n}\left(-\frac{\partial h}{\partial z}(gi)(ci+d)^{-2}+\frac{\partial h}{\partial\bar{z}}(gi)(-ci+d)^{-2}\right)\\
&\quad-2nc(ci+d)^{-2n-1}h(gi)\\
&\quad+(ci+d)^{-2n}\left(\frac{\partial h}{\partial z}(gi)(ci+d)^{-2}+\frac{\partial h}{\partial\bar{z}}(gi)(-ci+d)^{-2}\right)-in\Phi_n^\prime(h)(g)\\
&=2(ci+d)^{-2n}(-ci+d)^{-2}\frac{\partial h}{\partial\bar{z}}(gi)=2(ci+d)^{-2n+2}(c^2+d^2)^{-2}\frac{\partial h}{\partial\bar{z}}(gi)\\
&=2\alpha(g,i)^{-n+1}(\Im gi)^2\frac{\partial h}{\partial\bar{z}}(gi). 
\end{align*}
\end{proof}

Let $H^0(\Sigma;L^{\otimes n})=\left\{\xi\in\Gamma(L^{\otimes n})\ \middle|\ \bar{\partial}\xi=0\right\}$ be the set of holomorphic sections of $L^{\otimes n}$. 

\begin{cor}
For $n\in\bb{Z}_{\geq1}$, we have 
\begin{equation*}
H^0(\Sigma;L^{\otimes n})\simeq\left\{f\in C^\infty(P,\bb{C})\ \middle|\ \pi(H_0)f=nf,\ \pi(H_-)f=0\right\}. 
\end{equation*}
\end{cor}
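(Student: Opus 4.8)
The plan is to realize the desired isomorphism as the restriction of the isomorphism $\Phi_n$ from the preceding corollary to the subspace of holomorphic sections, and to check that under $\Phi_n$ the holomorphicity condition $\bar\partial\xi=0$ translates exactly into the equation $\pi(H_-)f=0$.

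First I would recall that by the preceding corollary, together with Corollary \ref{gammah}, the map $\Phi_n$ identifies $\Gamma(L^{\otimes n})$ with $\left\{f\in C^\infty(P,\bb{C})\ \middle|\ \pi(H_0)f=nf\right\}$: concretely a section $\xi$ is sent to $f=\Phi_n(h)$, where $q^*\xi=h\,dz^{\otimes n}$ and $\Phi_n(h)(\Gamma g)=\alpha(g,i)^{-n}h(gi)$. Thus it suffices to show that for such a pair $(\xi,f)$ one has $\bar\partial\xi=0$ if and only if $\pi(H_-)f=0$.

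Next, by the last assertion of Corollary \ref{gammah} we have $q^*\bar\partial\xi=\frac{\partial h}{\partial\bar{z}}d\bar{z}\otimes dz^{\otimes n}$, so $\bar\partial\xi=0$ is equivalent to $\frac{\partial h}{\partial\bar{z}}\equiv0$ on $\bb{H}^2$. On the other hand, $\Phi_n(h)$ is the descent to $P$ of the function $\Phi_n'(h)$ on $\PSL(2,\bb{R})$, which is genuinely left $\Gamma$-invariant (this invariance follows from the automorphy $h(\gamma z)=\alpha(\gamma,z)^nh(z)$ together with the cocycle identity for $\alpha$). Hence the formula of Lemma \ref{formula} descends and yields $(\pi(H_-)f)(\Gamma g)=2\alpha(g,i)^{-n+1}(\Im gi)^2\frac{\partial h}{\partial\bar{z}}(gi)$. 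The scalar $2\alpha(g,i)^{-n+1}(\Im gi)^2$ is everywhere nonzero, and as $g$ ranges over $\PSL(2,\bb{R})$ the point $gi$ ranges over all of $\bb{H}^2$ by transitivity of the action $\PSL(2,\bb{R})\curvearrowright\bb{H}^2$; therefore $\pi(H_-)f=0$ holds if and only if $\frac{\partial h}{\partial\bar{z}}$ vanishes identically. Combining with the previous equivalence gives $\bar\partial\xi=0\iff\pi(H_-)f=0$, so $\Phi_n$ restricts to the claimed isomorphism.

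I do not expect any serious obstacle: the statement is a formal consequence of the already-established isomorphism $\Phi_n$ and the two computational inputs, Corollary \ref{gammah} and Lemma \ref{formula}. The only points demanding a little care are the descent from $\Phi_n'$ on $\PSL(2,\bb{R})$ to $\Phi_n$ on $P$ — one must verify the left $\Gamma$-invariance of $\Phi_n'(h)$ so that the upstairs formula of Lemma \ref{formula} descends to an identity for $\pi(H_-)f$ on $P$ — and the use of the nonvanishing prefactor together with transitivity to pass from vanishing of $\frac{\partial h}{\partial\bar{z}}$ at the points $gi$ to its vanishing on all of $\bb{H}^2$.
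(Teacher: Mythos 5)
Your proposal is correct and follows essentially the same route as the paper: the paper's proof likewise applies Lemma \ref{formula} (descended to $P$, which is exactly the $\Gamma$-invariance check you spell out), concludes that the restriction of $\Phi_n$ gives an isomorphism between $\left\{h\in C^\infty(\bb{H}^2,\bb{C})^\Gamma\ \middle|\ \frac{\partial h}{\partial\bar{z}}=0\right\}$ and $\left\{f\in C^\infty(P,\bb{C})\ \middle|\ \pi(H_0)f=nf,\ \pi(H_-)f=0\right\}$, and then invokes Corollary \ref{gammah} to identify the former with $H^0(\Sigma;L^{\otimes n})$. Your additional remarks on the nonvanishing prefactor and transitivity of $\PSL(2,\bb{R})\curvearrowright\bb{H}^2$ simply make explicit what the paper leaves implicit.
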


\begin{proof}
By Lemma \ref{formula} we have 
\begin{equation*}
\left(\pi(H_-)\Phi_n(h)\right)(\Gamma g)=2\alpha(g,i)^{-n+1}(\Im gi)^2\frac{\partial h}{\partial\bar{z}}(gi)
\end{equation*}
for all $h\in C^\infty(\bb{H}^2,\bb{C})^\Gamma$ and $g\in\PSL(2,\bb{R})$. Hence the restriction of $\Phi_n$ gives an isomorphism 
\begin{equation*}
\left\{h\in C^\infty(\bb{H}^2,\bb{C})^\Gamma\ \middle|\ \frac{\partial h}{\partial\bar{z}}=0\right\}\simeq\left\{f\in C^\infty(P,\bb{C})\ \middle|\ \pi(H_0)f=nf,\ \pi(H_-)f=0\right\}. 
\end{equation*}
The former set is isomorphic to $H^0(\Sigma;L^{\otimes n})$ by Corollary \ref{gammah}. 
\end{proof}

Let $u_n$ be a nonzero element of $(D_n^+)_{(n)}$. Hence $(D_n^+)_{(n)}=\bb{C}u_n$. 

\begin{lem}
For $n\in\bb{Z}_{\geq1}$, we have an isomorphism 
\begin{align*}
\Hom_{\PSL(2,\bb{R})}\left(D_n^+,L^2(P)\right)&\simeq\left\{f\in C^\infty(P,\bb{C})\ \middle|\ \pi(H_0)f=nf,\ \pi(H_-)f=0\right\}\\
\varphi&\mapsto\varphi(u_n). 
\end{align*}
\end{lem}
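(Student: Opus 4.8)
The plan is to follow the proof of Lemma~\ref{homiso} almost verbatim, replacing the weight-zero vector $u_0$ of the principal/complementary series by the lowest weight vector $u_n$ of $D_n^+$, and replacing the Casimir condition used there to place $f$ inside $W_\mu$ by an analogous argument placing $f$ inside $W_{n(n-1)/2}$.

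First I would check that the map is well defined. Since $(D_n^+)_{(n)}=\bb{C}u_n$ is finite dimensional it consists of $C^\infty$ vectors (Corollary 4.4.3.3 of Warner \cite{Warner1}), so $\varphi(u_n)\in C^\infty(L^2(P))=C^\infty(P,\bb{C})$ by Proposition~\ref{cinfinityvect}. As $u_n$ lies in the $r_n$-isotypic component we have $\pi(H_0)u_n=nu_n$, and since $[H_0,H_-]=-H_-$ the operator $\pi(H_-)$ lowers the $\PSO(2)$-weight by one, so $\pi(H_-)u_n\in(D_n^+)_{(n-1)}=0$ by \eqref{dimmm}. Because $\varphi$ is bounded and $\PSL(2,\bb{R})$-equivariant it intertwines the derived $\sl(2,\bb{R})\otimes\bb{C}$-actions on $C^\infty$ vectors, whence $\pi(H_0)\varphi(u_n)=n\varphi(u_n)$ and $\pi(H_-)\varphi(u_n)=0$; thus $\varphi(u_n)$ lies in the target space. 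Injectivity is immediate from the irreducibility of $D_n^+$: the kernel of $\varphi$ is a closed invariant subspace, so if it contains the nonzero vector $u_n$ it is all of $D_n^+$.

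For surjectivity, take $f$ in the target space. The essential step is to show $f\in W_{n(n-1)/2}$. A direct computation in $U(\sl(2,\bb{R})\otimes\bb{C})$ rewrites the Casimir element as
\begin{equation*}
\Omega=\tfrac12 H_0^2-\tfrac12 H_0+\tfrac12\,\ol{H_-}\,H_-,
\end{equation*}
placing the lowering operator $H_-$ on the right. Applying $\pi(\Omega)$ to $f$ and using $\pi(H_0)f=nf$ together with $\pi(H_-)f=0$ gives $\pi(\Omega)f=\frac{n(n-1)}{2}f$. Since the orthogonal projections onto the $W_\mu$ are bounded and equivariant, they preserve $C^\infty$ vectors and commute with $\pi(\Omega)$ there, so decomposing $f$ along \eqref{decompooo} forces $f\in W_{n(n-1)/2}$ (note $k\mapsto k(k-1)$ is injective on $\bb{Z}_{\geq1}$, so this summand contains only copies of $D_n^+$ and $D_n^-$). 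Finally I would fix $\varphi_i\in\Hom_{\PSL(2,\bb{R})}(D_n^+,L^2(P))$, $1\le i\le m$, realizing $\bigoplus_{i=1}^m\im\varphi_i$ as the $D_n^+$-isotypic part of $W_{n(n-1)/2}$; this is part of the orthogonal decomposition of $W_{n(n-1)/2}$ into irreducibles, which is finite because the multiplicities are finite. The relation $\pi(H_0)f=nf$ places $f$ in the weight-$n$ part of $W_{n(n-1)/2}$; by \eqref{dimmm} the weight-$n$ part of every $D_n^-$-copy vanishes (as $n\ge1$), while that of each $D_n^+$-copy is the one-dimensional lowest weight line $\bb{C}\varphi_i(u_n)$. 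Hence $f=\sum_{i=1}^m a_i\varphi_i(u_n)$ for some $a_i\in\bb{C}$, and $\varphi=\sum_{i=1}^m a_i\varphi_i$ satisfies $\varphi(u_n)=f$.

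The main obstacle is the surjectivity, and within it the step showing $f\in W_{n(n-1)/2}$: unlike in Lemma~\ref{homiso}, the hypotheses on $f$ are not phrased directly in terms of $\Omega$, so one must first rewrite $\Omega$ so that the lowest weight conditions $\pi(H_0)f=nf$ and $\pi(H_-)f=0$ pin down its eigenvalue, and then use the sign of the weight to discard the $D_n^-$ contributions. Everything else is a routine transcription of the principal-series argument.
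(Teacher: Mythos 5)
Your proof is correct, but the key surjectivity step takes a genuinely different route from the paper's. The paper never touches the Casimir element: it fixes a decomposition $L^2(P)=\bigoplus_i V_i$ into irreducibles, notes that $\pi(H_0)f=nf$ forces each component $f_i$ into $(V_i)_{(n)}$, and then kills every component with $V_i\not\simeq D_n^+$ by one of two mechanisms --- either $(V_i)_{(n)}=0$ (the trivial representation, $D_m^+$ with $m>n$, all $D_m^-$), or $\pi(H_-)\colon(V_i)_{(n)}\stackrel{\sim}{\to}(V_i)_{(n-1)}$ is an isomorphism (principal, complementary, $D_m^+$ with $m<n$), a fact cited from Taylor. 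You instead rewrite $\Omega=\frac{1}{2}H_0^2-\frac{1}{2}H_0+\frac{1}{2}\ol{H_-}H_-$ --- which is indeed correct in the paper's normalization: expanding with $H_0=i(F-E)$, $\ol{H_-}=-H_+$, $H_\pm=i\sqrt{2}H\mp(E+F)$ recovers $FE+H^2+EF$, and it gives the eigenvalue $\frac{n(n-1)}{2}$ on $u_n$, matching the table in Section \ref{iurops} --- deduce $\pi(\Omega)f=\frac{n(n-1)}{2}f$, and use the Casimir-parameter decomposition \eqref{decompooo} to place $f$ in $W_{\frac{n(n-1)}{2}}$ before invoking the weight structure. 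Your route is more self-contained: it replaces the citation to Taylor on injectivity of the lowering operator across the various series by an explicit enveloping-algebra identity together with facts the paper already establishes (projections onto the $W_\mu$ are bounded and equivariant, hence preserve $C^\infty$ vectors and commute with $\pi(\Omega)$ there). The paper's route needs no Casimir identity and no grouping by Casimir parameter, only the weight multiplicities \eqref{dimmm} and the behaviour of $H_-$ on each series.

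One small slip: your parenthetical claim that injectivity of $k\mapsto k(k-1)$ implies $W_{\frac{n(n-1)}{2}}$ contains only copies of $D_n^+$ and $D_n^-$ fails for $n=1$, where $W_0\simeq\bb{C}\oplus g_\Sigma(D_1^+\oplus D_1^-)$ also contains the trivial representation. This does not break the argument --- the trivial representation has no weight-$n$ vectors for $n\geq1$, so it contributes nothing to the weight-$n$ part of $W_0$ --- but your final step should list it alongside the $D_n^-$ copies as a summand whose weight-$n$ part vanishes.
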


\begin{proof}
The map is defined and injective as in the proof of Lemma \ref{homiso}. To show the surjectivity, take $f\in C^\infty(P,\bb{C})$ such that $\pi(H_0)f=nf$ and $\pi(H_-)f=0$. Let $W_n^\prime$ be the isotypic component of $D_n^+$ in $L^2(P)$. Fix an orthogonal direct sum decomposition $L^2(P)=\bigoplus_iV_i$ into irreducible unitary subrepresentations. Write $f=\sum_if_i$ according to the decomposition. By the condition $\pi(H_0)f=nf$, we have $f\in L^2(P)_{(n)}=\bigoplus_i(V_i)_{(n)}$ and hence $f_i\in(V_i)_{(n)}$. 

First note that $f_i=0$ when $(V_i)_{(n)}=0$, ie when $V_i$ is isomorphic to $D_m^+$ for $m>n$, $D_m^-$ for all $m\geq1$ or $\bb{C}$. Since $0=\pi(H_-)f=\sum_i\pi(H_-)f_i$, we have $\pi(H_-)f_i=0$. When $V_i$ is isomorphic to principal, complementary series or $D_m^+$ for $m=1,\ldots,n-1$, we have $\pi(H_-)\colon(V_i)_{(n)}\stackrel{\sim}{\to}(V_i)_{(n-1)}$. (See for example Section 2 of Chapter 8 of Taylor \cite{Taylor}.) Hence $f_i=0$. This shows $f\in W_n^\prime$. 

The rest of the proof is similar to the proof of Lemma \ref{homiso}. 
\end{proof}

\begin{cor}
For $n\in\bb{Z}_{\geq1}$, we have 
\begin{equation*}
\Hom_{\PSL(2,\bb{R})}\left(D_n^+,L^2(P)\right)\simeq H^0(\Sigma;L^{\otimes n}). 
\end{equation*}
\end{cor}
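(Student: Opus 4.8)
The plan is to deduce this Corollary immediately by composing the two isomorphisms established just above it. The preceding Lemma provides an isomorphism
\begin{equation*}
\Hom_{\PSL(2,\bb{R})}\left(D_n^+,L^2(P)\right)\stackrel{\sim}{\to}\left\{f\in C^\infty(P,\bb{C})\ \middle|\ \pi(H_0)f=nf,\ \pi(H_-)f=0\right\},\quad\varphi\mapsto\varphi(u_n),
\end{equation*}
while the Corollary immediately preceding the Lemma gives an isomorphism of exactly the same function space with $H^0(\Sigma;L^{\otimes n})$. Since both statements name the intermediary space verbatim, chaining them produces the asserted isomorphism $\Hom_{\PSL(2,\bb{R})}\left(D_n^+,L^2(P)\right)\simeq H^0(\Sigma;L^{\otimes n})$, and that is the entire content of the proof.

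It is worth recording where the two inputs get their force, since all the real work lives there. The description of the Hom-space rests on a highest/lowest-weight analysis: a $C^\infty$ vector $f$ with $\pi(H_0)f=nf$ and $\pi(H_-)f=0$ is a lowest-$\PSO(2)$-weight vector, and the dimension table of $\PSO(2)$-isotypic components in \eqref{dimmm} together with the fact that $\pi(H_-)$ acts injectively on $(V_i)_{(n)}$ for the other admissible constituents (as cited from Taylor \cite{Taylor}) forces $f$ to lie in the $D_n^+$-isotypic component. The identification of that function space with $H^0(\Sigma;L^{\otimes n})$ comes from Lemma \ref{formula}, which computes $\pi(H_-)\Phi_n(h)$ as a multiple of $\partial h/\partial\bar{z}$, so that the condition $\pi(H_-)f=0$ corresponds precisely to $\bar{\partial}$-holomorphicity of the associated section of $L^{\otimes n}$ under Corollary \ref{gammah}.

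The main obstacle has therefore already been cleared before this statement: there is essentially nothing left to prove here beyond observing that the target spaces of the two previous isomorphisms coincide. The only point requiring a moment of care is that the isomorphism of the preceding Corollary is realized by the restriction of $\Phi_n$ (identifying holomorphic data), whereas the Lemma uses evaluation $\varphi\mapsto\varphi(u_n)$; but since both land in the same explicitly described subspace of $C^\infty(P,\bb{C})$, no compatibility between the two constructions needs to be checked, and the composite isomorphism is well defined without further argument.
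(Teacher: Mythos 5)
Your proposal is correct and matches the paper's (implicit) argument exactly: the paper states this corollary without further proof, as the immediate composition of the preceding Lemma, which identifies $\Hom_{\PSL(2,\bb{R})}\left(D_n^+,L^2(P)\right)$ with $\left\{f\in C^\infty(P,\bb{C})\ \middle|\ \pi(H_0)f=nf,\ \pi(H_-)f=0\right\}$ via $\varphi\mapsto\varphi(u_n)$, and the Corollary before it, which identifies that same function space with $H^0(\Sigma;L^{\otimes n})$ via the restriction of $\Phi_n$. Your observation that no compatibility between the two identifications needs checking, since the claim is only an abstract isomorphism of vector spaces, is also the right point to make.
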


Let $g_\Sigma$ be the genus of $\Sigma$. We have $g_\Sigma\geq2$. By the Riemann--Roch theorem, 
\begin{equation*}
\dim H^0(\Sigma;L^{\otimes n})=
\begin{cases}
g_\Sigma&n=1\\
(2n-1)(g_\Sigma-1)&n\geq2. 
\end{cases}
\end{equation*}

\subsubsection{Contribution of antiholomorphic discrete series}
For a complex vector space $V$, let $\ol{V}$ be the complex conjugate of $V$. Let $\ol{v}$ denote the element corresponding to an element $v\in V$. For complex vector spaces $V$, $W$ and a linear map $f\colon V\to W$, define $\ol{f}\colon\ol{V}\to\ol{W}$ by $\ol{f}(\ol{v})=\ol{f(v)}$. Then $\ol{f}$ is a linear map. For a complex Hilbert space $\mca{H}$, the complex conjugate $\ol{\mca{H}}$ is naturally a Hilbert space with the inner product $\langle\ol{v},\ol{w}\rangle=\ol{\langle v,w\rangle}$ for $\ol{v},\ol{w}\in\ol{\mca{H}}$. For a unitary representation $\PSL(2,\bb{R})\stackrel{\pi}{\curvearrowright}\mca{H}$, we have a unitary representation $\PSL(2,\bb{R})\stackrel{\ol{\pi}}{\curvearrowright}\ol{\mca{H}}$ defined by $\ol{\pi}(g)=\ol{\pi(g)}$. 

By looking at realizations of $D_n^+$ and $D_n^-$, we see that $\ol{D_n^+}$ is unitarily equivalent to $D_n^-$. 

Let $W\subset L^2(P)$ be a closed invariant subspace unitarily equivalent to $D_n^\pm$. The complex conjugate $\ol{W}$ is unitarily equivalent to $\left\{\ol{f}\in L^2(P)\ \middle|\ f\in W\right\}$ by the map $\ol{f}\mapsto\ol{f}$. Hence $\left\{\ol{f}\in L^2(P)\ \middle|\ f\in W\right\}$ is unitarily equivalent to $D_n^\mp$. This shows the multiplicity of $D_n^+$ in $L^2(P)$ is the same as that of $D_n^-$.

\subsubsection{Conclusion}\label{conclu}
By the previous two sections, we have 
\begin{equation*}
W_0\simeq\bb{C}\oplus g_\Sigma(D_1^+\oplus D_1^-)
\end{equation*}
and 
\begin{equation*}
W_{\frac{n(n-1)}{2}}\simeq(2n-1)(g_\Sigma-1)(D_n^+\oplus D_n^-)
\end{equation*}
for $n\in\bb{Z}_{\geq2}$. 

\begin{thm}\label{unidecompkk}
We have 
\begin{align*}
L^2(P)\simeq&\bigoplus_{-2\mu\in\sigma\left(\ol{\Delta_\Sigma}\right)\setminus\{0\}}m_{-2\mu}\mca{H}_\mu\oplus\bb{C}\oplus g_\Sigma(D_1^+\oplus D_1^-)\\
&\qquad\oplus\bigoplus_{n=2}^\infty(2n-1)(g_\Sigma-1)(D_n^+\oplus D_n^-), 
\end{align*}
where $g_\Sigma$ is the genus of the surface $\Sigma=P/\PSO(2)$ and $m_\nu$ is the multiplicity of the eigenvalue $\nu$ of the closure $\ol{\Delta_\Sigma}$ of the Laplacian of $\Sigma$. 
\end{thm}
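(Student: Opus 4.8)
The plan is to assemble the theorem directly from the orthogonal decomposition \eqref{decompooo} together with the three multiplicity computations established in the preceding subsections. I would begin by recalling that, since $P$ is compact, $L^2(P)$ is an orthogonal Hilbert direct sum of irreducible unitary subrepresentations, each occurring with finite multiplicity, and that by the classification of Section \ref{iurops} every irreducible constituent has Casimir parameter lying in $(-\infty,0]\cup\{n(n-1)/2\mid n\in\bb{Z}_{\geq2}\}$. Grouping the constituents according to their Casimir parameter produces exactly \eqref{decompooo}, so the task reduces to determining the multiplicity of each irreducible type inside the corresponding $W_\mu$.

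For the multiplicity of a fixed irreducible representation $\mca{H}$ in $L^2(P)$, I would use that it equals $\dim_\bb{C}\Hom_{\PSL(2,\bb{R})}(\mca{H},L^2(P))$, which follows from Schur's lemma applied to the finite orthogonal sum of copies of $\mca{H}$ inside $L^2(P)$. Substituting the three identifications already proved — namely $\Hom_{\PSL(2,\bb{R})}(\mca{H}_\mu,L^2(P))\simeq\{h\in C^\infty(\Sigma,\bb{C})\mid\Delta_\Sigma h=-2\mu h\}$ for $\mu<0$ (Lemma \ref{homiso} and its corollary), $\Hom_{\PSL(2,\bb{R})}(D_n^+,L^2(P))\simeq H^0(\Sigma;L^{\otimes n})$ for $n\geq1$ (built from Lemmas \ref{phiprime} and \ref{formula}), and the equality of the multiplicities of $D_n^+$ and $D_n^-$ established in the antiholomorphic discrete series subsection — reduces everything to two dimension counts. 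These are supplied by the spectral theory of $\ol{\Delta_\Sigma}$, giving the finite multiplicity $m_{-2\mu}$ and the fact that $W_\mu\neq0$ precisely when $-2\mu\in\sigma(\ol{\Delta_\Sigma})\setminus\{0\}$, and by Riemann--Roch, giving $\dim H^0(\Sigma;L^{\otimes n})=g_\Sigma$ for $n=1$ and $(2n-1)(g_\Sigma-1)$ for $n\geq2$. This yields $W_\mu\simeq m_{-2\mu}\mca{H}_\mu$, $W_0\simeq\bb{C}\oplus g_\Sigma(D_1^+\oplus D_1^-)$, and $W_{n(n-1)/2}\simeq(2n-1)(g_\Sigma-1)(D_n^+\oplus D_n^-)$, as already collected in Section \ref{conclu}.

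Finally I would reindex the continuous part: for $\mu<0$ the condition $W_\mu\neq0$ is equivalent to $-2\mu\in\sigma(\ol{\Delta_\Sigma})\setminus\{0\}$, so $\bigoplus_{\mu<0}W_\mu=\bigoplus_{-2\mu\in\sigma(\ol{\Delta_\Sigma})\setminus\{0\}}m_{-2\mu}\mca{H}_\mu$; plugging the three formulas into \eqref{decompooo} and relabeling gives the stated isomorphism verbatim. Since the genuinely substantive work — the cocycle and frame computations, the identification of the Hom-spaces with Laplace eigenspaces and spaces of holomorphic sections, and the invocations of Riemann--Roch and the spectral theorem — has already been carried out, I do not expect a serious obstacle; the theorem is a bookkeeping synthesis. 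The one point deserving care is the passage from abstract multiplicities to the $\dim\Hom$ count, i.e.\ confirming that the finite-multiplicity and essential self-adjointness inputs legitimately let me treat each isotypic component as a genuine finite direct sum and pass the relevant elements into the space of $C^\infty$ vectors, exactly as was done in the proof of Lemma \ref{homiso}.
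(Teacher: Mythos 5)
Your proposal is correct and follows the same route as the paper: the paper's Theorem \ref{unidecompkk} is precisely the bookkeeping synthesis of the decomposition \eqref{decompooo}, the identification $\Hom_{\PSL(2,\bb{R})}\left(\mca{H}_\mu,L^2(P)\right)$ with the Laplace eigenspace, the identification $\Hom_{\PSL(2,\bb{R})}\left(D_n^+,L^2(P)\right)\simeq H^0(\Sigma;L^{\otimes n})$ with its Riemann--Roch dimension count, and the conjugation argument equating the multiplicities of $D_n^+$ and $D_n^-$, exactly as you assemble them. Your closing caution about passing elements into the space of $C^\infty$ vectors is the same point the paper handles inside the proof of Lemma \ref{homiso}.
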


\section{Computation of $H^*\left(\mf{an};C^\infty(\mca{H})\otimes\bb{C}_\lambda\right)$ without a Hodge decomposition}
In this section we will compute $H^*\left(\mf{an};C^\infty(\mca{H})\otimes\bb{C}_\lambda\right)$ for an irreducible unitary representation $\PSL(2,\bb{R})\curvearrowright\mca{H}$ and $\lambda\in\bb{C}$, by using the Hochschild--Serre spectral sequence: 
\begin{equation*}
E_2^{p,q}\simeq H^p\left(\mf{an}/\mf{n};H^q(\mf{n};C^\infty(\mca{H})\otimes\bb{C}_\lambda)\right)\Rightarrow H^{p+q}\left(\mf{an};C^\infty(\mca{H})\otimes\bb{C}_\lambda\right). 
\end{equation*}

\subsection{Hochschild--Serre spectral sequence}\label{hochschildserre}
We review the Hochschild--Serre spectral sequence \cite{HS} in this section. Let $K\subset L$ be a field extension. Let $\mf{g}$ be a Lie algebra over $K$, $\mf{h}$ be a subalgebra of $\mf{g}$ and $\pi\colon\mf{g}\to\End_L(V)$ be a representation on a vector space $V$ over $L$. The Chevalley--Eilenberg complex $C^*(\mf{g};V)=\bigwedge^*\mf{g}^*\otimes V$ is a vector space over $L$. 

The Hochschild--Serre spectral sequence is a spectral sequence arising form a filtration of $C^*(\mf{g};V)$, which computes the cohomology $H^*(\mf{g};V)$. Set 
\begin{equation*}
F^pC^{p+q}(\mf{g};V)=\left\{\varphi\in C^{p+q}(\mf{g};V)\ \middle|\ \iota_{X_1}\cdots\iota_{X_{q+1}}\varphi=0\ \text{if $X_1,\ldots,X_{q+1}\in\mf{h}$}\right\}. 
\end{equation*}
Then we have 
\begin{equation*}
F^pC^{p+q}(\mf{g};V)\supset F^{p+1}C^{p+q}(\mf{g};V)
\end{equation*}
and 
\begin{equation*}
C^n(\mf{g};V)=F^0C^n(\mf{g};V)\supset F^1C^n(\mf{g};V)\supset\cdots\supset F^nC^n(\mf{g};V)\supset F^{n+1}C^n(\mf{g};V)=0. 
\end{equation*}
The differential preserves the filtration: 
\begin{equation*}
dF^pC^{p+q}(\mf{g};V)\subset F^pC^{p+q+1}(\mf{g};V). 
\end{equation*}

We get a spectral sequence $E^{p,q}_r$ from the filtered complex $F^*C^*(\mf{g};V)$. The spectral sequence converges to $H^*(\mf{g};V)$. This spectral sequence is defined as follows. Put $C^*=C^*(\mf{g};V)$ and define 
\begin{align*}
Z^{p,q}_r&=\ker\left(d\colon F^pC^{p+q}\to C^{p+q+1}/F^{p+r}C^{p+q+1}\right), \\
B^{p,q}_r&=\im\left(d\colon F^{p-r+1}C^{p+q-1}\to C^{p+q}\right)\cap F^pC^{p+q}, \\
E^{p,q}_r&=Z^{p,q}_r\left/\middle(B^{p,q}_r+Z^{p+1,q-1}_{r-1}\right). 
\end{align*}
The differential $d\colon E^{p,q}_r\to E^{p+r,q-r+1}_r$ is induced from $d\colon Z^{p,q}_r\to Z^{p+r,q-r+1}_r$ and 
\begin{equation*}
H^*(E_r)\simeq E_{r+1}. 
\end{equation*}

Assume that $\mf{h}$ is an ideal of $\mf{g}$. Define 
\begin{equation*}
(L_X\varphi)(X_1,\ldots,X_p)=\pi(X)\left(\varphi(X_1,\ldots,X_p)\right)-\sum_{i=1}^p\varphi(X_1,\ldots,[X,X_i],\ldots,X_p)
\end{equation*}
for $X\in\mf{g}$, $\varphi\in C^p(\mf{h};V)$ and $X_1,\ldots,X_p\in\mf{h}$. Since $L_{[X,Y]}=[L_X,L_Y]$ for $X,Y\in\mf{g}$, this gives a representation $\mf{g}\curvearrowright C^*(\mf{h};V)$. The differential $d\colon C^p(\mf{h};V)\to C^{p+1}(\mf{h};V)$ commutes with $L_X$ for all $X\in\mf{g}$. Hence we get $\mf{g}\curvearrowright H^*(\mf{h};V)$. By the formula $L_X=d\iota(X)+\iota(X)d$ for $X\in\mf{h}$, we actually have a representation $\mf{g}/\mf{h}\curvearrowright H^*(\mf{h};V)$. 

\begin{prop}[Hochschild--Serre \cite{HS}]
If $\mf{h}$ is an ideal of $\mf{g}$, we have 
\begin{equation*}
E^{p,q}_2\simeq H^p\left(\mf{g}/\mf{h};H^q(\mf{h};V)\right). 
\end{equation*}
\end{prop}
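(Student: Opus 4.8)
The plan is to identify the three successive pages of the spectral sequence explicitly, using a vector-space splitting to make the bigraded structure visible. First I would choose a vector-space complement $\mf{m}$ to $\mf{h}$ in $\mf{g}$, giving $\mf{g}=\mf{h}\oplus\mf{m}$ and an identification $\mf{m}\simeq\mf{g}/\mf{h}$ of vector spaces (not of Lie algebras). Dually this yields $\bigwedge^n\mf{g}^*\simeq\bigoplus_{p+q=n}\bigwedge^p\mf{m}^*\otimes\bigwedge^q\mf{h}^*$, and a direct contraction check shows that $F^pC^{p+q}(\mf{g};V)$ is precisely the sum of the summands with at least $p$ factors from $\mf{m}^*$. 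Hence the associated graded is
\begin{equation*}
E_0^{p,q}=F^pC^{p+q}/F^{p+1}C^{p+q}\simeq\bigwedge^p(\mf{g}/\mf{h})^*\otimes\bigwedge^q\mf{h}^*\otimes V=\bigwedge^p(\mf{g}/\mf{h})^*\otimes C^q(\mf{h};V),
\end{equation*}
and one checks this identification is canonical.

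Next I would identify the differential $d_0$ induced on $E_0$, which has bidegree $(0,1)$. Evaluating $d\varphi$ on $p$ vectors from $\mf{m}$ and $q+1$ vectors from $\mf{h}$ and discarding everything landing in higher filtration, the crucial input is that $\mf{h}$ is an ideal: every bracket $[X,Y]$ with at least one of $X,Y$ in $\mf{h}$ again lies in $\mf{h}$. This forces the action terms that remove an $\mf{m}$-argument, together with all bracket terms except those in which both removed arguments lie in $\mf{h}$, to vanish in the associated graded. What survives is exactly the Chevalley--Eilenberg differential of $\mf{h}$ with coefficients in $V$, up to the global sign $(-1)^p$ from commuting past the $\mf{m}$-factors. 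Therefore $d_0=(-1)^p\,\id\otimes d_{\mf{h}}$ and
\begin{equation*}
E_1^{p,q}\simeq\bigwedge^p(\mf{g}/\mf{h})^*\otimes H^q(\mf{h};V).
\end{equation*}

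Finally I would compute $d_1$, of bidegree $(1,0)$, by the same contraction bookkeeping, now evaluating on $p+1$ vectors from $\mf{m}$ and $q$ from $\mf{h}$. Three families of terms survive: the action term $\pi(X)$ for an $\mf{m}$-argument $X$; the bracket terms $[X,Y]$ with $X\in\mf{m}$, $Y\in\mf{h}$ (landing in $\mf{h}$ by the ideal property); and the $\mf{m}$-component of the bracket of two $\mf{m}$-arguments. The first two assemble precisely into the operator $L_X$ on $C^*(\mf{h};V)$ defined above, while the third produces the Chevalley--Eilenberg bracket term for $\mf{g}/\mf{h}$. Thus $d_1$ is the Chevalley--Eilenberg differential of $\mf{g}/\mf{h}$ with coefficients in the module $H^q(\mf{h};V)$, whose module structure is the one induced by $L_X$; here the identity $L_X=d\iota(X)+\iota(X)d$ for $X\in\mf{h}$ is exactly what guarantees this action descends to cohomology and that $d_1$ is well defined on $E_1$. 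Taking cohomology in the $p$-direction then gives $E_2^{p,q}\simeq H^p(\mf{g}/\mf{h};H^q(\mf{h};V))$.

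The main obstacle I expect is not any single computation but the sign- and index-bookkeeping in the two contraction analyses, together with verifying independence of the splitting $\mf{m}$: the presentations of $E_0$ and the shape of $d_0$ are written using $\mf{m}$, so I would need to confirm that the resulting $d_1$, and ultimately $E_2$, are canonical. The conceptual heart is isolating precisely where the ideal hypothesis enters, namely through the mixed brackets $[\mf{m},\mf{h}]\subset\mf{h}$, since without it the off-diagonal bracket terms would fail to interact with the filtration in the way required for $d_0$ and $d_1$ to take the stated form.
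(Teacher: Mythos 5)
Your proposal is correct in outline, but be aware that the paper itself offers no proof of this proposition: its ``proof'' is literally the citation to the Corollary to Theorem 4 of Hochschild--Serre \cite{HS}, so what you have written is a reconstruction of the original Hochschild--Serre argument rather than a variant of anything carried out in the paper. The closest the paper comes is its sketch of the relative statement $E_1^{p,q}\simeq H^q\left(\mf{h};\bigwedge^p(\mf{g}/\mf{h})^*\otimes V\right)$ (Proposition \ref{hochserr}), and the comparison is instructive: there the identification of $E_0^{p,q}$ is made canonically, by iterated contraction $\varphi\mapsto\iota_{X_q}\cdots\iota_{X_1}\varphi$, which lands in $\bigwedge^p(\mf{g}/\mf{h})^*\otimes V$-valued cochains automatically because $\varphi\in F^pC^{p+q}$; this bypasses entirely the independence-of-splitting issue you flag, whereas your choice of a complement $\mf{m}$ buys a completely explicit bigrading in which $d_0$ and $d_1$ can be computed term by term. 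In the ideal case the two descriptions of $E_1$ agree because the adjoint action of $\mf{h}$ on $\mf{g}/\mf{h}$ is then trivial, so $H^q\left(\mf{h};\bigwedge^p(\mf{g}/\mf{h})^*\otimes V\right)\simeq\bigwedge^p(\mf{g}/\mf{h})^*\otimes H^q(\mf{h};V)$, which is exactly where the ideal hypothesis enters your $E_1$ as well. One point in your $d_1$ computation should be made explicit: the claim that only three families of terms survive is valid only for representatives that are \emph{pure} of bidegree $(p,q)$ with respect to the splitting, i.e.\ lifted from $E_0^{p,q}=\bigwedge^p\mf{m}^*\otimes\bigwedge^q\mf{h}^*\otimes V$ (such a lift does lie in $Z_1^{p,q}$, since purity plus $d_0$-closedness forces $d\varphi\in F^{p+1}$). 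For a general element of $Z_1^{p,q}$ the differential also produces terms removing an $\mf{h}$-argument --- the $\pi(Y_j)$ terms and the $[Y_i,Y_j]$ bracket terms --- and you must argue either that these vanish for pure representatives (they do, because they would require $\varphi$ to accept all $p+1$ arguments from $\mf{m}$) or that they contribute only $d_0$-exact corrections. With that verification spelled out, your plan is a complete and correct proof of the statement that the paper delegates to \cite{HS}.
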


\begin{proof}
See Corollary to Theorem 4 in \cite{HS}. 
\end{proof}

\subsection{A general consideration}\label{gencons}
Let $\sl(2,\bb{R})\stackrel{\pi}{\curvearrowright}\mca{H}$ be a complex representation and $\mf{an}\stackrel{\xi}{\curvearrowright}V$ be a real or complex representation such that $\xi(\mf{n})=0$. The Hochschild--Serre spectral sequence gives 
\begin{equation*}
E_2^{p,q}\simeq H^p\left(\mf{an}/\mf{n};H^q(\mf{n};\mca{H}\otimes V)\right)\Rightarrow H^{p+q}(\mf{an};\mca{H}\otimes V). 
\end{equation*}
The tensor product $\mca{H}\otimes V$ is taken over $\bb{C}$ when $V$ is complex. The $E_2$ sheet is 
\begin{equation*}
\begin{tikzpicture}
\matrix(m)[matrix of math nodes,
nodes in empty cells,
nodes={minimum width=5ex,minimum height=5ex,outer sep=-5pt},
column sep=1ex,row sep=1ex]{
H^0\left(\mf{an}/\mf{n};H^1(\mf{n};\mca{H}\otimes V)\right)&H^1\left(\mf{an}/\mf{n};H^1(\mf{n};\mca{H}\otimes V)\right)\\
H^0\left(\mf{an}/\mf{n};H^0(\mf{n};\mca{H}\otimes V)\right)&H^1\left(\mf{an}/\mf{n};H^0(\mf{n};\mca{H}\otimes V)\right). \\
};
\end{tikzpicture}
\end{equation*}
Since there are no nontrivial differentials at $E_2$, the spectral sequence degenerates at $E_2$ and therefore $E_2\simeq E_\infty$. Hence we get 
\begin{align}
H^0(\mf{an};\mca{H}\otimes V)&\simeq H^0\left(\mf{an}/\mf{n};H^0(\mf{n};\mca{H}\otimes V)\right), \label{666}\\
H^1(\mf{an};\mca{H}\otimes V)&\simeq H^0\left(\mf{an}/\mf{n};H^1(\mf{n};\mca{H}\otimes V)\right)\oplus H^1\left(\mf{an}/\mf{n};H^0(\mf{n};\mca{H}\otimes V)\right), \label{777}\\
H^2(\mf{an};\mca{H}\otimes V)&\simeq H^1\left(\mf{an}/\mf{n};H^1(\mf{n};\mca{H}\otimes V)\right). \label{888}
\end{align}
There exists an exact sequence 
\begin{align*}
0&\to H^0\left(\mf{an}/\mf{n};H^q(\mf{n};\mca{H}\otimes V)\right)\\
&\to H^q(\mf{n};\mca{H}\otimes V)\stackrel{L_H}{\to}H^q(\mf{n};\mca{H}\otimes V)\\
&\to H^1\left(\mf{an}/\mf{n};H^q(\mf{n};\mca{H}\otimes V)\right)\to0
\end{align*}
for $q=0,1$. So we need to compute the kernel and cokernel of the map 
\begin{equation}\label{liederilh}
H^q(\mf{n};\mca{H}\otimes V)\stackrel{L_H}{\to}H^q(\mf{n};\mca{H}\otimes V)
\end{equation}
to compute the cohomology $H^*(\mf{an};\mca{H}\otimes V)$. We have an isomorphism 
\begin{equation}\label{isoco}
\left(C^*(\mf{n};\mca{H}\otimes V),d\right)\simeq\left(C^*(\mf{n};\mca{H})\otimes V,d\otimes\id\right)
\end{equation}
of cochain complexes since $\xi(\mf{n})=0$. Therefore 
\begin{equation*}
H^*(\mf{n};\mca{H}\otimes V)\simeq H^*(\mf{n};\mca{H})\otimes V. 
\end{equation*}
We have 
\begin{equation*}
L_H=L_H\otimes\id+\id\otimes\xi(H)
\end{equation*}
through the isomorphism \eqref{isoco}, hence on the cohomologies. Assume that $\xi(H)$ is a scalar multiplication. The map \eqref{liederilh} becomes 
\begin{equation*}
\left(L_H+\xi(H)\right)\otimes\id\colon H^q(\mf{n};\mca{H})\otimes V\to H^q(\mf{n};\mca{H})\otimes V. 
\end{equation*}
Hence 
\begin{align}
H^0\left(\mf{an}/\mf{n};H^q(\mf{n};\mca{H}\otimes V)\right)&\simeq\ker\left(H^q(\mf{n};\mca{H})\stackrel{L_H+\xi(H)}{\longrightarrow}H^q(\mf{n};\mca{H})\right)\otimes V, \label{111111}\\
H^1\left(\mf{an}/\mf{n};H^q(\mf{n};\mca{H}\otimes V)\right)&\simeq\coker\left(H^q(\mf{n};\mca{H})\stackrel{L_H+\xi(H)}{\longrightarrow}H^q(\mf{n};\mca{H})\right)\otimes V. \label{111222}
\end{align}
So it suffices to compute the kernel and cokernel of the map 
\begin{equation*}
L_H+\xi(H)\colon H^q(\mf{n};\mca{H})\to H^q(\mf{n};\mca{H})
\end{equation*}
to compute the $E_2$ of the spectral sequence. 

When $H^q(\mf{n};\mca{H})$ is finite dimensional, the kernel and cokernel of $L_H+\xi(H)$ on $H^q(\mf{n};\mca{H})$ have the same dimension. Therefore $H^0\left(\mf{an}/\mf{n};H^q(\mf{n};\mca{H}\otimes V)\right)$ and $H^1\left(\mf{an}/\mf{n};H^q(\mf{n};\mca{H}\otimes V)\right)$ are isomorphic, but not canonically.

\subsection{The $\mf{n}$-invariant distributions of irreducible unitary representations}\label{tnidoiu}
Let $\PSL(2,\bb{R})\stackrel{\pi}{\curvearrowright}\mca{H}$ be an irreducible unitary representation, $\sl(2,\bb{R})\stackrel{\pi}{\curvearrowright}C^\infty(\mca{H})$ be its derivative representation on the space of $C^\infty$ vectors and $\mf{an}\curvearrowright\bb{C}_\lambda$ for $\lambda\in\bb{C}$ be the representation defined in Section \ref{drws}. We will compute $H^*\left(\mf{an};C^\infty(\mca{H})\otimes\bb{C}_\lambda\right)$. By \eqref{111111} and \eqref{111222} we need to compute the kernel and cokernel of 
\begin{equation*}
L_H+\lambda\colon H^q(\mf{n};C^\infty(\mca{H}))\to H^q(\mf{n};C^\infty(\mca{H})). 
\end{equation*}
So we need to compute the $\mf{an}$-module $H^*(\mf{n};C^\infty(\mca{H}))$. 

First we have 
\begin{align}\label{vvvaaa}
H^0\left(\mf{n};C^\infty(\mca{H})\right)&=C^\infty(\mca{H})^{\pi(\mf{n})}\nonumber\\
&=
\begin{cases}
\bb{C}_0&\mca{H}=\bb{C}\\
0&\text{otherwise}. 
\end{cases}
\end{align}
This can be proved by using a realization of the representation $\mca{H}$. 

To compute $H^1(\mf{n};C^\infty(\mca{H}))$ we use the following computation of the $\mf{n}$-invariant distributions. 

\begin{prop}[Flaminio--Forni \cite{FF}]\label{ffoorr}
We have 
\begin{equation*}
\left(C^\infty(\mca{H})/\pi(\mf{n})C^\infty(\mca{H})\right)^*\simeq
\begin{cases}
\bb{C}_0&\mca{H}=\bb{C}\\
\bb{C}_{-\frac{n}{\sqrt{2}}}&\mca{H}=D_n^\pm\\
\bb{C}_{-\frac{1+\sqrt{1+8\mu}}{2\sqrt{2}}}\oplus\bb{C}_{-\frac{1-\sqrt{1+8\mu}}{2\sqrt{2}}}&\mca{H}=\mca{H}_\mu\quad\left(\mu\neq-\frac{1}{8}\right)\\
\bb{C}^2_\sharp&\mca{H}=\mca{H}_{-\frac{1}{8}}
\end{cases}
\end{equation*}
as $\mf{an}$-modules, where the left hand side denotes the space of all linear maps from $C^\infty(\mca{H})/\pi(\mf{n})C^\infty(\mca{H})$ to $\bb{C}$, and $\bb{C}^2_\sharp=\bb{C}^2$ is a two dimensional representation of $\mf{an}$ defined by 
\begin{equation*}
H\mapsto-\frac{1}{2\sqrt{2}}
\begin{pmatrix}
1&\\
1&1
\end{pmatrix}
, \quad E\mapsto0. 
\end{equation*}
\end{prop}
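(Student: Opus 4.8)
The plan is to compute the $\mf{an}$-module $Q:=C^\infty(\mca{H})/\pi(\mf{n})C^\infty(\mca{H})$ directly and then dualize, using that $\mf{n}=\bb{R}E$ is one dimensional, so $\pi(\mf{n})C^\infty(\mca{H})=\pi(E)C^\infty(\mca{H})$ and $Q=H^1(\mf{n};C^\infty(\mca{H}))$. First I would note that $\mf{n}$ is an ideal of $\mf{an}$ and that $\pi(E)C^\infty(\mca{H})$ is an $\mf{an}$-submodule of $C^\infty(\mca{H})$, since $[\mf{an},E]\subset\mf{n}$. Hence $Q$ is an $\mf{an}$-module on which $\mf{n}$ acts trivially; equivalently $E$ acts as $0$ and $H$ descends to a single operator $\ol{H}$ on $Q$. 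Dualizing, $Q^*$ carries the contragredient action, with $E\mapsto0$ and $H$ acting by $-\ol{H}^{\,*}$, so its eigenvalues are the negatives of those of $\ol{H}$. This already accounts for the uniform appearance of $E\mapsto0$ throughout the statement.

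The key algebraic input is the Casimir. Since $\Omega$ acts on $C^\infty(\mca{H})$ by the Casimir parameter $\mu$ (Section \ref{iurops}) and $\Omega=2EF+H^2-\frac{1}{\sqrt{2}}H$ by the computation in Section \ref{notation}, the term $2EF=2\pi(E)\pi(F)$ maps $C^\infty(\mca{H})$ into $\pi(E)C^\infty(\mca{H})$ and therefore vanishes on $Q$. Thus $\ol{H}$ satisfies the quadratic relation
\begin{equation*}
\ol{H}^2-\frac{1}{\sqrt{2}}\ol{H}-\mu=0,
\end{equation*}
whose roots are $\frac{1\pm\sqrt{1+8\mu}}{2\sqrt{2}}$. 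Consequently the eigenvalues of $H$ on $Q^*$ lie among $-\frac{1\pm\sqrt{1+8\mu}}{2\sqrt{2}}$, and substituting the Casimir parameters from the table in Section \ref{iurops} reproduces exactly the values in the statement: $\mu=0$ gives the candidates $0$ and $\frac{1}{\sqrt{2}}$ for the trivial representation; $\mu=\frac{n(n-1)}{2}$ gives $\sqrt{1+8\mu}=2n-1$ and the pair $-\frac{n}{\sqrt{2}}$, $\frac{n-1}{\sqrt{2}}$ for $D_n^\pm$; for $\mca{H}_\mu$ with $\mu\neq-\frac{1}{8}$ the two roots are distinct and equal $-\frac{1\pm\sqrt{1+8\mu}}{2\sqrt{2}}$; and for $\mu=-\frac{1}{8}$ they coincide in the repeated eigenvalue $-\frac{1}{2\sqrt{2}}$ of $\bb{C}^2_\sharp$.

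It remains to determine $\dim Q$ and which of the candidate eigenvalues are actually realized, and this is the analytic heart of the argument, for which I would pass to explicit realizations of the irreducible unitary representations as in Flaminio--Forni \cite{FF}. For the trivial representation $C^\infty(\mca{H})=\bb{C}$ with $E=0$, so $Q=\bb{C}$ with $\ol{H}=0$ and only the root $0$ occurs. For the principal and complementary series one uses a line model in which $N$ acts by translation; the $\mf{n}$-coinvariants are then spanned by two homogeneous distributions, morally the constant one and the one concentrated at the point at infinity, so $\dim Q=2$ and both roots are realized. For the discrete series $D_n^\pm$ one uses the holomorphic model, in which the $\PSO(2)$-weights are bounded below by $n$ (cf. \eqref{dimmm}); this boundedness forces $\dim Q=1$, and only the root yielding the weight $-\frac{n}{\sqrt{2}}$ on $Q^*$ survives, the second candidate $\frac{n-1}{\sqrt{2}}$ being excluded.

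The main obstacle is this last step, and within it the exceptional parameter $\mu=-\frac{1}{8}$. There the quadratic has a double root, so the Casimir relation alone cannot decide whether $\ol{H}$ is semisimple, giving two copies of $\bb{C}_{-1/(2\sqrt{2})}$, or a single Jordan block, giving $\bb{C}^2_\sharp$. Distinguishing the two requires tracking how the two homogeneous distributions of the case $\mu\neq-\frac{1}{8}$ collide and produce a logarithmic term as $\mu\to-\frac{1}{8}$, which is precisely the delicate analysis of invariant distributions for the horocycle flow carried out in \cite{FF}; this is what forces the non-semisimple normal form of $\bb{C}^2_\sharp$. Pinning down the exact dimensions, rather than merely the eigenvalues, and exhibiting this single Jordan block is the part that genuinely needs the representation-theoretic input and cannot be extracted from the Casimir relation.
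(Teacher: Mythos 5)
Your proposal is correct in outline, but it is worth saying plainly that the paper does not prove this proposition at all: it quotes the statement wholesale from Flaminio--Forni, adding only a remark that bridges their result (about \emph{continuous} $E$-invariant functionals, i.e.\ distributions) to the algebraic dual appearing here, via the observation that any linear functional killing $\pi(\mf{n})C^\infty(\mca{H})$ is automatically continuous. Measured against that, your Casimir argument is a genuine addition: since $\Omega=2EF+H^2-\frac{1}{\sqrt{2}}H$ acts by the scalar $\mu$ and $2\pi(E)\pi(F)$ lands in $\pi(E)C^\infty(\mca{H})$, the induced operator $\ol{H}$ on $Q=C^\infty(\mca{H})/\pi(\mf{n})C^\infty(\mca{H})$ does satisfy $\ol{H}^2-\frac{1}{\sqrt{2}}\ol{H}-\mu=0$, and this correctly pins the possible $H$-eigenvalues on $Q^*$ to $-\frac{1\pm\sqrt{1+8\mu}}{2\sqrt{2}}$, matching all four cases of the table, including the double root at $\mu=-\frac{1}{8}$. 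This conceptual explanation of the eigenvalues is not in the paper. What your argument cannot do --- and you say so honestly --- is determine $\dim Q$, decide which of the two roots occurs for $D_n^\pm$, or distinguish the semisimple normal form from the Jordan block at $\mu=-\frac{1}{8}$; for all of that you, like the paper, fall back on \cite{FF}. So both routes end at the same external dependency; yours buys a structural explanation, the paper's buys brevity.

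Two corrections to the write-up. First, $Q$ is the space of $\mf{n}$-coinvariants $H_0(\mf{n};C^\infty(\mca{H}))$, not $H^1(\mf{n};C^\infty(\mca{H}))$: by the paper's own lemma these differ by the twist $\mf{n}^*\simeq\bb{R}_{-\frac{1}{\sqrt{2}}}$, so identifying them as $\mf{an}$-modules would shift every eigenvalue by $-\frac{1}{\sqrt{2}}$. Your subsequent computations use $Q$ itself, so nothing downstream breaks, but the aside should be deleted. Second, when you import dimension counts from \cite{FF}, note that what they compute is the space of continuous invariant functionals; to conclude $\dim Q^*$ (equivalently $\dim Q$, i.e.\ the codimension of $\pi(E)C^\infty(\mca{H})$) from that, you need exactly the automatic-continuity point made in the paper's remark, or else a direct computation of the image of $\pi(E)$ in the explicit models --- a priori the algebraic dual could be strictly larger than the continuous one. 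Your stated plan of computing the coinvariants in the line and holomorphic models amounts to the latter, but the step should be made explicit, since it is precisely where ``invariant distributions'' and ``dual of the coinvariants'' get identified.
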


\begin{rem}
Equip $C^\infty(\mca{H})$ with the $C^\infty$ topology. Consider the space 
\begin{equation*}
\mca{D}(\mca{H})=\left\{\varphi\colon C^\infty(\mca{H})\to\bb{C}\ \middle|\ \text{$\varphi$ is linear and continuous}\right\}
\end{equation*}
of distributions and the space 
\begin{equation*}
\mca{D}_E(\mca{H})=\left\{\varphi\in\mca{D}(\mca{H})\ \middle|\ E\varphi=0\right\}
\end{equation*}
of $E$-invariant distributions, where $E\varphi\in\mca{D}(\mca{H})$ is defined by $(E\varphi)(f)=-\varphi\left(\pi(E)f\right)$ for $f\in C^\infty(\mca{H})$. Then we have 
\begin{equation*}
\mca{D}_E(\mca{H})\simeq\left(C^\infty(\mca{H})/\pi(\mf{n})C^\infty(\mca{H})\right)^*
\end{equation*}
since the computation in Section 3.1 and 3.2 of Flaminio--Forni \cite{FF} shows any linear $\varphi\colon C^\infty(\mca{H})\to\bb{C}$ such that $E\varphi=0$ is automatically continuous. 
\end{rem}

It follows that 
\begin{align*}
H_0\left(\mf{n};C^\infty(\mca{H})\right)&=
C^\infty(\mca{H})/\pi(\mf{n})C^\infty(\mca{H})\\
&\simeq
\begin{cases}
\bb{C}_0&\mca{H}=\bb{C}\\
\bb{C}_\frac{n}{\sqrt{2}}&\mca{H}=D_n^\pm\\
\bb{C}_\frac{1+\sqrt{1+8\mu}}{2\sqrt{2}}\oplus\bb{C}_\frac{1-\sqrt{1+8\mu}}{2\sqrt{2}}&\mca{H}=\mca{H}_\mu\quad\left(\mu\neq-\frac{1}{8}\right)\\
\bb{C}^2_\flat&\mca{H}=\mca{H}_{-\frac{1}{8}}, 
\end{cases}
\end{align*}
where $\bb{C}^2_\flat=\bb{C}^2$ is a two dimensional representation of $\mf{an}$ defined by 
\begin{equation*}
H\mapsto\frac{1}{2\sqrt{2}}
\begin{pmatrix}
1&1\\
&1
\end{pmatrix}
, \quad E\mapsto0. 
\end{equation*}

To go from homology to cohomology, we use the following isomorphism, which is a special case of a more general isomorphism. 

\begin{lem}
We have 
\begin{equation*}
\mf{n}\otimes H^1\left(\mf{n};C^\infty(\mca{H})\right)\simeq H_0\left(\mf{n};C^\infty(\mca{H})\right)
\end{equation*}
as $\mf{an}$-modules. 
\end{lem}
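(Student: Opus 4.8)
The plan is to use that $\mf{n}$ is one-dimensional and abelian, so that for any $\mf{n}$-module $W$ the Chevalley--Eilenberg complex $C^*(\mf{n};W)$ is concentrated in degrees $0$ and $1$ and collapses to the single operator $\pi(E)\colon W\to W$. Writing $E^*\in\mf{n}^*$ for the dual basis of $E$, the differential $d\colon C^0(\mf{n};W)=W\to C^1(\mf{n};W)=\mf{n}^*\otimes W$ is $w\mapsto E^*\otimes\pi(E)w$, so that
\begin{equation*}
H^1(\mf{n};W)=\coker\left(d\colon W\to\mf{n}^*\otimes W\right)\simeq\mf{n}^*\otimes\left(W/\pi(\mf{n})W\right)=\mf{n}^*\otimes H_0(\mf{n};W).
\end{equation*}
This identity, which is Poincar\'{e} duality for the (co)homology of the one-dimensional Lie algebra $\mf{n}$, is the ``more general isomorphism'' alluded to; I would first record it and then tensor with $\mf{n}$. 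First I would set $W=C^\infty(\mca{H})$ at the very end, so that all intermediate steps are purely algebraic and insensitive to the infinite dimensionality of $C^\infty(\mca{H})$.

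The step requiring the most care is checking that the displayed isomorphism is one of $\mf{an}$-modules, where $\mf{an}$ acts via the operators $L_X$ of Section \ref{hochschildserre} (legitimate since $\mf{n}$ is an ideal of $\mf{an}$). I would verify this at the cochain level: for $\varphi=E^*\otimes w$, so $\varphi(E)=w$, the formula for $L_X$ together with $[H,E]=\frac{1}{\sqrt{2}}E$ and $[E,E]=0$ gives
\begin{equation*}
L_H(E^*\otimes w)=E^*\otimes\left(\pi(H)w-\tfrac{1}{\sqrt{2}}w\right),\qquad L_E(E^*\otimes w)=E^*\otimes\pi(E)w.
\end{equation*}
Since the coadjoint action yields $H\cdot E^*=-\frac{1}{\sqrt{2}}E^*$ and $E\cdot E^*=0$ (because $\ad_H E=\frac{1}{\sqrt{2}}E$ and $\ad_E E=0$), these are exactly the tensor-product actions on $\mf{n}^*\otimes W$. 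Hence $L_X$ on $C^1(\mf{n};W)$ coincides with the tensor action on $\mf{n}^*\otimes W$ for every $X\in\mf{an}$, and this descends to an $\mf{an}$-isomorphism $H^1(\mf{n};W)\simeq\mf{n}^*\otimes H_0(\mf{n};W)$, the action on $H_0(\mf{n};W)=W/\pi(\mf{n})W$ being induced from the module action on $W$.

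Finally I would tensor with $\mf{n}$ to obtain
\begin{equation*}
\mf{n}\otimes H^1(\mf{n};W)\simeq\mf{n}\otimes\mf{n}^*\otimes H_0(\mf{n};W)
\end{equation*}
as $\mf{an}$-modules, and compute the weights: the adjoint action makes $\mf{n}\simeq\bb{C}_{\frac{1}{\sqrt{2}}}$ and $\mf{n}^*\simeq\bb{C}_{-\frac{1}{\sqrt{2}}}$, so $H$ acts on $\mf{n}\otimes\mf{n}^*$ by $\frac{1}{\sqrt{2}}-\frac{1}{\sqrt{2}}=0$ and $E$ by $0$, i.e. $\mf{n}\otimes\mf{n}^*\simeq\bb{C}_0$ is the trivial module. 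Therefore $\mf{n}\otimes H^1(\mf{n};W)\simeq H_0(\mf{n};W)$, and taking $W=C^\infty(\mca{H})$ gives the lemma. No genuine obstacle arises; the only subtlety is the equivariance bookkeeping in the middle paragraph, together with the elementary weight computation showing $\mf{n}\otimes\mf{n}^*$ is trivial.
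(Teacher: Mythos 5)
Your proof is correct and follows essentially the same route as the paper: both exploit the one-dimensionality of $\mf{n}$ and the canonical pairing $\mf{n}\otimes\mf{n}^*\simeq\bb{R}$ to identify the cokernel of $d\colon C^0(\mf{n};W)\to C^1(\mf{n};W)$ with $H_0(\mf{n};W)$, the paper packaging this as a commutative diagram of two-term complexes where you instead record $H^1(\mf{n};W)\simeq\mf{n}^*\otimes H_0(\mf{n};W)$ and then cancel $\mf{n}\otimes\mf{n}^*$. Your explicit check that $L_H$ and $L_E$ agree with the tensor-product action on $\mf{n}^*\otimes W$ supplies the equivariance that the paper only asserts; the lone cosmetic slip is writing $\mf{n}\simeq\bb{C}_{\frac{1}{\sqrt{2}}}$ where $\bb{R}_{\frac{1}{\sqrt{2}}}$ (or $\mf{n}\otimes\bb{C}$) is meant, which is harmless since everything is tensored against complex modules.
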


\begin{proof}
Recall that the Chevalley--Eilenberg complex for Lie algebra homology is defined by 
\begin{equation*}
C_*\left(\mf{n};C^\infty(\mca{H})\right)=\bigwedge^*\mf{n}\otimes C^\infty(\mca{H})
\end{equation*}
and 
\begin{align*}
&\partial\left(X_1\wedge\cdots\wedge X_p\otimes f\right)=\sum_{i=1}^p(-1)^{i+1}X_1\wedge\cdots\wedge\widehat{X_i}\wedge\cdots\wedge X_p\otimes X_if\\
&\qquad\qquad+\sum_{i<j}(-1)^{i+j}[X_i,X_j]\wedge X_1\wedge\cdots\wedge\widehat{X_i}\wedge\cdots\wedge\widehat{X_j}\wedge\cdots\wedge X_p\otimes f. 
\end{align*}
Consider the diagram 
\begin{equation*}
\begin{tikzcd}
0&0\\
\mf{n}\otimes C^1\left(\mf{n};C^\infty(\mca{H})\right)\ar[u]\ar[r]&C_0\left(\mf{n};C^\infty(\mca{H})\right)\ar[u]\\
\mf{n}\otimes C^0\left(\mf{n};C^\infty(\mca{H})\right)\ar[u,"\id\otimes d"]\ar[r]&C_1\left(\mf{n};C^\infty(\mca{H})\right),\ar[u,"\partial"]
\end{tikzcd}
\end{equation*}
where the upper horizontal arrow is defined by 
\begin{align*}
\mf{n}\otimes\mf{n}^*\otimes C^\infty(\mca{H})&\stackrel{\sim}{\to}C^\infty(\mca{H})\\
X\otimes\varphi\otimes f&\mapsto\varphi(X)f
\end{align*}
and the lower horizontal arrow is 
\begin{equation*}
\id\colon\mf{n}\otimes C^\infty(\mca{H})\to\mf{n}\otimes C^\infty(\mca{H}). 
\end{equation*}
Then this is commutative and all maps are $\mf{an}$-equivariant. Hence we get the isomorphism of the lemma. 
\end{proof}

Using this we get isomorphisms 
\begin{align*}
H^1\left(\mf{n};C^\infty(\mca{H})\right)&\simeq H_0\left(\mf{n};C^\infty(\mca{H})\right)\otimes\mf{n}^*\\
&\simeq H_0\left(\mf{n};C^\infty(\mca{H})\right)\otimes\bb{R}_{-\frac{1}{\sqrt{2}}}\\
&\simeq H_0\left(\mf{n};C^\infty(\mca{H})\right)\otimes\bb{C}_{-\frac{1}{\sqrt{2}}}
\end{align*}
of $\mf{an}$-modules. Therefore 
\begin{equation*}
H^1\left(\mf{n};C^\infty(\mca{H})\right)\simeq
\begin{cases}
\bb{C}_{-\frac{1}{\sqrt{2}}}&\mca{H}=\bb{C}\\
\bb{C}_{-\frac{1-n}{\sqrt{2}}}&\mca{H}=D_n^\pm\\
\bb{C}_\frac{-1+\sqrt{1+8\mu}}{2\sqrt{2}}\oplus\bb{C}_\frac{-1-\sqrt{1+8\mu}}{2\sqrt{2}}&\mca{H}=\mca{H}_\mu\quad\left(\mu\neq-\frac{1}{8}\right)\\
\bb{C}^2_\natural&\mca{H}=\mca{H}_{-\frac{1}{8}}, 
\end{cases}
\end{equation*}
where $\bb{C}^2_\natural=\bb{C}^2$ is a two dimensional representation of $\mf{an}$ defined by 
\begin{equation*}
H\mapsto
\begin{pmatrix}
-\frac{1}{2\sqrt{2}}&\frac{1}{2\sqrt{2}}\\
&-\frac{1}{2\sqrt{2}}
\end{pmatrix}
, \quad E\mapsto0. 
\end{equation*}
By \eqref{111111} and \eqref{111222}, we have 
\begin{align*}
H^0\left(\mf{an}/\mf{n};H^q(\mf{n};C^\infty(\mca{H})\otimes\bb{C}_\lambda)\right)&\simeq\ker\left(H^q(\mf{n};C^\infty(\mca{H}))\stackrel{L_H+\lambda}{\longrightarrow}H^q(\mf{n};C^\infty(\mca{H}))\right)\otimes\bb{C}_\lambda, \\
H^1\left(\mf{an}/\mf{n};H^q(\mf{n};C^\infty(\mca{H})\otimes\bb{C}_\lambda)\right)&\simeq\coker\left(H^q(\mf{n};C^\infty(\mca{H}))\stackrel{L_H+\lambda}{\longrightarrow}H^q(\mf{n};C^\infty(\mca{H}))\right)\otimes\bb{C}_\lambda. 
\end{align*}
Hence the $E_2$ of the spectral sequence is computed as follows. 

\begin{lem}\label{compres}
\begin{align*}
H^0\left(\mf{an}/\mf{n};H^0\left(\mf{n};C^\infty(\bb{C})\otimes\bb{C}_\lambda\right)\right)&\simeq H^1\left(\mf{an}/\mf{n};H^0\left(\mf{n};C^\infty(\bb{C})\otimes\bb{C}_\lambda\right)\right)\\
&\simeq
\begin{cases}
0&\lambda\neq0\\
\bb{C}&\lambda=0
\end{cases}
\end{align*}
\begin{equation*}
H^0\left(\mf{an}/\mf{n};H^0\left(\mf{n};C^\infty(\mca{H})\otimes\bb{C}_\lambda\right)\right)\simeq H^1\left(\mf{an}/\mf{n};H^0\left(\mf{n};C^\infty(\mca{H})\otimes\bb{C}_\lambda\right)\right)=0
\end{equation*}
for a nontrivial irreducible unitary representation $\mca{H}$
\begin{align*}
H^0\left(\mf{an}/\mf{n};H^1\left(\mf{n};C^\infty(\bb{C})\otimes\bb{C}_\lambda\right)\right)&\simeq H^1\left(\mf{an}/\mf{n};H^1\left(\mf{n};C^\infty(\bb{C})\otimes\bb{C}_\lambda\right)\right)\\
&\simeq
\begin{cases}
0&\lambda\neq\frac{1}{\sqrt{2}}\\
\bb{C}&\lambda=\frac{1}{\sqrt{2}}
\end{cases}
\end{align*}
\begin{align*}
H^0\left(\mf{an}/\mf{n};H^1\left(\mf{n};C^\infty(D_n^\pm)\otimes\bb{C}_\lambda\right)\right)&\simeq H^1\left(\mf{an}/\mf{n};H^1\left(\mf{n};C^\infty(D_n^\pm)\otimes\bb{C}_\lambda\right)\right)\\
&\simeq
\begin{cases}
0&\lambda\neq\frac{1-n}{\sqrt{2}}\\
\bb{C}&\lambda=\frac{1-n}{\sqrt{2}}
\end{cases}
\end{align*}
\begin{align*}
H^0\left(\mf{an}/\mf{n};H^1\left(\mf{n};C^\infty(\mca{H}_\mu)\otimes\bb{C}_\lambda\right)\right)&\simeq H^1\left(\mf{an}/\mf{n};H^1\left(\mf{n};C^\infty(\mca{H}_\mu)\otimes\bb{C}_\lambda\right)\right)\\
&\simeq
\begin{cases}
0&\lambda\neq\frac{1\pm\sqrt{1+8\mu}}{2\sqrt{2}}\\
\bb{C}&\lambda=\frac{1\pm\sqrt{1+8\mu}}{2\sqrt{2}}
\end{cases}
\end{align*}
for $\mu\neq-\frac{1}{8}$
\begin{align*}
H^0\left(\mf{an}/\mf{n};H^1\left(\mf{n};C^\infty\left(\mca{H}_{-\frac{1}{8}}\right)\otimes\bb{C}_\lambda\right)\right)&\simeq H^1\left(\mf{an}/\mf{n};H^1\left(\mf{n};C^\infty\left(\mca{H}_{-\frac{1}{8}}\right)\otimes\bb{C}_\lambda\right)\right)\\
&\simeq
\begin{cases}
0&\lambda\neq\frac{1}{2\sqrt{2}}\\
\bb{C}&\lambda=\frac{1}{2\sqrt{2}}
\end{cases}
\end{align*}
\end{lem}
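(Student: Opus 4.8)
The plan is to substitute the $\mf{an}$-module structures on $H^0(\mf{n};C^\infty(\mca{H}))$ and $H^1(\mf{n};C^\infty(\mca{H}))$, computed just above via Flaminio--Forni's Proposition \ref{ffoorr}, directly into the formulas \eqref{111111} and \eqref{111222}. These identify the desired $E_2$-terms with
\[
\ker\left(H^q(\mf{n};C^\infty(\mca{H}))\stackrel{L_H+\lambda}{\longrightarrow}H^q(\mf{n};C^\infty(\mca{H}))\right)\otimes\bb{C}_\lambda
\]
and the corresponding cokernel tensored with $\bb{C}_\lambda$, where $L_H+\lambda$ is an endomorphism of the finite-dimensional space $H^q(\mf{n};C^\infty(\mca{H}))$. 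Because $\mf{n}=[\mf{an},\mf{an}]$ acts trivially on cohomology, this $\mf{an}$-action is governed entirely by the single operator $L_H$, and every module in question is at most two-dimensional, so the whole statement reduces to a short list of linear-algebra computations.

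First I would dispose of the $q=0$ row using \eqref{vvvaaa}: the space $H^0(\mf{n};C^\infty(\mca{H}))$ is $\bb{C}_0$ when $\mca{H}=\bb{C}$ and $0$ otherwise. On $\bb{C}_0$ the operator $L_H$ vanishes, so $L_H+\lambda$ is multiplication by $\lambda$, whose kernel and cokernel are $0$ for $\lambda\neq0$ and $\bb{C}$ for $\lambda=0$; for a nontrivial $\mca{H}$ the source is $0$ outright. This gives the first two displayed formulas. For the $q=1$ row with $\mca{H}$ equal to $\bb{C}$, $D_n^\pm$, or $\mca{H}_\mu$ with $\mu\neq-\frac{1}{8}$, the module $H^1(\mf{n};C^\infty(\mca{H}))$ is a sum of one-dimensional pieces $\bb{C}_\alpha$, on each of which $L_H+\lambda$ is multiplication by $\alpha+\lambda$. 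Reading off the weights $\alpha$ from the list preceding the lemma (namely $-\frac{1}{\sqrt{2}}$, $-\frac{1-n}{\sqrt{2}}$, and $\frac{-1\pm\sqrt{1+8\mu}}{2\sqrt{2}}$), the kernel and cokernel are nonzero precisely at $\lambda=-\alpha$, which produces the stated critical values $\frac{1}{\sqrt{2}}$, $\frac{1-n}{\sqrt{2}}$, and $\frac{1\pm\sqrt{1+8\mu}}{2\sqrt{2}}$.

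The only case requiring more than a scalar computation is $\mca{H}=\mca{H}_{-\frac{1}{8}}$, where $H^1(\mf{n};C^\infty(\mca{H}_{-1/8}))\simeq\bb{C}^2_\natural$ and $H$ acts through a single Jordan block with eigenvalue $-\frac{1}{2\sqrt{2}}$. Here I would write out the $2\times2$ matrix of $L_H+\lambda$ and observe that it is invertible for $\lambda\neq\frac{1}{2\sqrt{2}}$, whereas at $\lambda=\frac{1}{2\sqrt{2}}$ it has rank one, so that both its kernel and cokernel are one-dimensional; this yields the final displayed formula. Throughout, the coincidence between the $H^0$ and $H^1$ entries is automatic, since each $L_H+\lambda$ is an endomorphism of a finite-dimensional space and hence has kernel and cokernel of equal dimension, exactly as recorded at the end of Section \ref{gencons}. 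I do not expect a genuine obstacle in this lemma: all of the substantive work, namely the determination of the $\mf{an}$-module structure of $H^*(\mf{n};C^\infty(\mca{H}))$, has already been carried out, and the only point demanding any care is the non-semisimple action of $H$ on $\bb{C}^2_\natural$ in the endpoint case $\mu=-\frac{1}{8}$.
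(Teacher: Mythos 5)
Your proposal is correct and follows essentially the same route as the paper: the paper presents Lemma \ref{compres} as an immediate consequence of substituting the $\mf{an}$-module structures of $H^0(\mf{n};C^\infty(\mca{H}))$ from \eqref{vvvaaa} and of $H^1(\mf{n};C^\infty(\mca{H}))$ (obtained from Flaminio--Forni via the twist by $\mf{n}^*\simeq\bb{C}_{-\frac{1}{\sqrt{2}}}$) into the kernel/cokernel formulas \eqref{111111} and \eqref{111222}, exactly as you do. Your handling of the one non-semisimple case $\bb{C}^2_\natural$, and your appeal to the equality of kernel and cokernel dimensions in finite dimensions (noted at the end of Section \ref{gencons}), match the paper's intended argument.
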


\subsection{Computation results of $H^*\left(\mf{an};C^\infty(\mca{H})\otimes\bb{C}_\lambda\right)$ for an irreducible unitary representation $\mca{H}$}\label{cohcomp}
By Lemma \ref{compres} and the isomorphisms \eqref{666}, \eqref{777}, \eqref{888}, we get the following computation results of $H^*\left(\mf{an};C^\infty(\mca{H})\otimes\bb{C}_\lambda\right)$ for an irreducible unitary representation $\mca{H}$ and $\lambda\in\bb{C}$.

\subsubsection*{The trivial representation $\bb{C}$}
\begin{equation*}
H^*\left(\mf{an};\bb{C}_\lambda\right)=0\quad\text{for}\ \ \lambda\neq0,\frac{1}{\sqrt{2}}
\end{equation*}
\begin{equation*}
H^i\left(\mf{an};\bb{C}_0\right)\simeq
\begin{cases}
\bb{C}&i=0,1\\
0&\text{otherwise}
\end{cases}
\end{equation*}
\begin{equation*}
H^i\left(\mf{an};\bb{C}_\frac{1}{\sqrt{2}}\right)\simeq
\begin{cases}
\bb{C}&i=1,2\\
0&\text{otherwise}
\end{cases}
\end{equation*}

\subsubsection*{The discrete series representations $D_n^\pm$\ \ $(n\in\bb{Z}_{\geq1})$}
\begin{equation*}
H^*\left(\mf{an};C^\infty(D_n^\pm)\otimes\bb{C}_\lambda\right)=0\quad\text{for}\ \ \lambda\neq\frac{1-n}{\sqrt{2}}
\end{equation*}
\begin{equation*}
H^i\left(\mf{an};C^\infty(D_n^\pm)\otimes\bb{C}_\frac{1-n}{\sqrt{2}}\right)\simeq
\begin{cases}
\bb{C}&i=1,2\\
0&\text{otherwise}
\end{cases}
\end{equation*}

\subsubsection*{The principal and complementary series representations $\mca{H}_\mu$\ \ $(\mu<0)$}
\begin{equation*}
H^*\left(\mf{an};C^\infty(\mca{H}_\mu)\otimes\bb{C}_\lambda\right)=0\quad\text{for}\ \ \lambda\neq\frac{1\pm\sqrt{1+8\mu}}{2\sqrt{2}}
\end{equation*}
\begin{equation*}
H^i\left(\mf{an};C^\infty(\mca{H}_\mu)\otimes\bb{C}_\frac{1\pm\sqrt{1+8\mu}}{2\sqrt{2}}\right)\simeq
\begin{cases}
\bb{C}&i=1,2\\
0&\text{otherwise}
\end{cases}
\end{equation*}

\section{Hodge decomposition of a generic part}\label{firsthodge99}
In this section we will construct an operator $\delta$ of degree $-1$ on $C^*(\mf{an};\mca{H}\otimes V)$ such that 
\begin{equation*}
d\delta+\delta d=\id\otimes\pi(\Omega)\otimes\id+\id\otimes\id\otimes\xi\left(-H^2+\frac{1}{\sqrt{2}}H\right), 
\end{equation*}
where $\sl(2,\bb{R})\stackrel{\pi}{\curvearrowright}\mca{H}$ and $\mf{an}\stackrel{\xi}{\curvearrowright}V$ are representations such that $\xi(\mf{n})=0$, and $\Omega$ is the Casimir element of $\sl(2,\bb{R})$. 

Applying to $\mca{H}=C^\infty(P,\bb{C})$ and $V=\bb{C}_\lambda$, we obtain a ``Hodge decomposition'' of $C^*\left(\mf{an};C^\infty(P,\bb{C})\otimes\bb{C}_\lambda\right)$, which shows most part of (ie a large direct summand of) $C^*\left(\mf{an};C^\infty(P,\bb{C})\otimes\bb{C}_\lambda\right)$ does not contribute to the cohomology. The rest of $C^*\left(\mf{an};C^\infty(P,\bb{C})\otimes\bb{C}_\lambda\right)$ might have a nontrivial cohomology and we will give other ``Hodge decompositions'' to it in Sections \ref{easy} and \ref{discrete series} (except for some cases). 

However, to compute $H^*(\mca{F};\bb{C}_\lambda)$, together with the results in Section \ref{cohcomp}, we need only the Hodge decomposition given in this section, which is used to prove an infinite direct sum (a generic part) has a trivial cohomology. So Section \ref{beginning} completes the computation of $H^*(\mca{F};\bb{C}_\lambda)$, though the computation results are listed in Section \ref{comprrrr}.

\subsection{A construction of an operator $\delta$ on $C^*(\mf{an};\mca{H}\otimes V)$}\label{construc}
In this section we consider the Chevalley--Eilenberg complex $\left(C^*(\mf{an};\mca{H}\otimes V),d\right)$ for representations $\sl(2,\bb{R})\curvearrowright\mca{H}$ and $\mf{an}\curvearrowright V$ such that $\mf{n}$ acts trivially on $V$. We will construct an operator $\delta$ on $C^*(\mf{an};\mca{H}\otimes V)$ which decreases the degree by $1$ such that $d\delta+\delta d$ has a simple form. To make the construction universal, ie independent of the choices of $\mca{H}$ and $V$, we first introduce the algebra 
\begin{equation*}
\mca{A}=C(\mf{an}\oplus\mf{an}^*)\otimes U(\sl(2,\bb{R}))\otimes U(\mf{an}), 
\end{equation*}
where $C(\mf{an}\oplus\mf{an}^*)$ is the Clifford algebra of $\mf{an}\oplus\mf{an}^*$ with respect to the symmetric bilinear form coming from the duality of $\mf{an}$ and $\mf{an}^*$. (A precise definition is given below.) Then the algebra $\mca{A}$ has a natural representation on $C^*(\mf{an};\mca{H}\otimes V)$ and the differential $d$ is expressed as an action of an element $\hat{d}_f$ of $\mca{A}$, which is independent of $\mca{H}$ and $V$. We will define an element $\hat{\delta}_f$ of $\mca{A}$ and $\delta$ as the action of the element $\hat{\delta}_f$ on $C^*(\mf{an};\mca{H}\otimes V)$ through the representation. We compute the element $\hat{d}_f\hat{\delta}_f+\hat{\delta}_f\hat{d}_f$ in $\mca{A}$ and then get a formula for $d\delta+\delta d$ on $C^*(\mf{an};\mca{H}\otimes V)$. 

Let $\mf{an}$ be the Lie algebra of $AN$ and let $\mf{an}^*=\Hom(\mf{an},\bb{R})$. Consider the symmetric bilinear form $(\cdot|\cdot)\colon(\mf{an}\oplus\mf{an}^*)\times(\mf{an}\oplus\mf{an}^*)\to\bb{R}$ defined by 
\begin{equation*}
(X+\varphi|Y+\psi)=\frac{1}{2}\left(\psi(X)+\varphi(Y)\right)
\end{equation*}
for $X$, $Y\in\mf{an}$ and $\varphi$, $\psi\in\mf{an}^*$. Let $C(\mf{an}\oplus\mf{an}^*)$ denote the Clifford algebra of $\left(\mf{an}\oplus\mf{an}^*,(\cdot|\cdot)\right)$. This is defined as the quotient of the tensor algebra over $\bb{R}$ by an ideal: 
\begin{align*}
C(\mf{an}\oplus\mf{an}^*)&=T(\mf{an}\oplus\mf{an}^*)/\left(v\otimes v-(v|v), v\in\mf{an}\oplus\mf{an}^*\right)\\
&=T(\mf{an}\oplus\mf{an}^*)/\left(v\otimes w+w\otimes v-2(v|w), v,w\in\mf{an}\oplus\mf{an}^*\right). 
\end{align*}

Define the maps 
\begin{align*}
\varepsilon(\varphi)\colon\bigwedge^*\mf{an}^*&\to\bigwedge^*\mf{an}^*\\
\psi&\mapsto\varphi\wedge\psi
\end{align*}
for $\varphi\in\mf{an}^*$ and 
\begin{align*}
\iota(X)\colon\bigwedge^*\mf{an}^*&\to\bigwedge^*\mf{an}^*\\
\psi&\mapsto\psi(X,\cdots)
\end{align*}
for $X\in\mf{an}$. 

\begin{lem}\label{cliffrule}
We have $\varepsilon(\varphi)^2=0$, $\iota(X)^2=0$ and $\iota(X)\varepsilon(\varphi)+\varepsilon(\varphi)\iota(X)=\varphi(X)$ for all $\varphi\in\mf{an}^*$ and $X\in\mf{an}$. 
\end{lem}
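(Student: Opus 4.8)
The three relations are exactly the standard Clifford relations satisfied by exterior and interior multiplication on $\bigwedge^*\mf{an}^*$, so the plan is to verify each directly from the definitions, keeping careful track of the normalization $\frac{1}{p!q!}$ of the wedge product fixed in the conventions. The first two are immediate and I would dispatch them at once. For $\varepsilon(\varphi)^2=0$ I would note that $\varepsilon(\varphi)^2\psi=\varphi\wedge\varphi\wedge\psi$ for every $\psi$, and $\varphi\wedge\varphi=0$ since $\varphi$ has degree $1$ (concretely $(\varphi\wedge\varphi)(X,Y)=\varphi(X)\varphi(Y)-\varphi(Y)\varphi(X)=0$). For $\iota(X)^2=0$ I would evaluate on arbitrary arguments: $(\iota(X)^2\psi)(Y_1,\ldots)=\psi(X,X,Y_1,\ldots)=0$ by the antisymmetry of $\psi$.

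The remaining identity is the antiderivation rule for contraction against a $1$-form, and this is where the bookkeeping lies. First I would record the auxiliary expansion that, for $\varphi\in\mf{an}^*$ and $\psi\in\bigwedge^p\mf{an}^*$,
\[
(\varphi\wedge\psi)(Z_1,\ldots,Z_{p+1})=\sum_{j=1}^{p+1}(-1)^{j-1}\varphi(Z_j)\,\psi(Z_1,\ldots,\widehat{Z_j},\ldots,Z_{p+1}).
\]
This follows from the defining sum over $\mf{S}_{p+1}$ by grouping the permutations according to the preimage of the first slot: for fixed $j=\sigma(1)$ the sign of the leading permutation is $(-1)^{j-1}$, the antisymmetry of $\psi$ absorbs the remaining orderings with matching sign, and the resulting factor $p!$ cancels the $\frac{1}{1!\,p!}$ in front. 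Applying this with $Z_1=X$ and $Z_{i+1}=Y_i$ gives
\[
(\varphi\wedge\psi)(X,Y_1,\ldots,Y_p)=\varphi(X)\psi(Y_1,\ldots,Y_p)+\sum_{i=1}^{p}(-1)^{i}\varphi(Y_i)\,\psi(X,Y_1,\ldots,\widehat{Y_i},\ldots,Y_p).
\]

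Next I would apply the same auxiliary formula to $\varepsilon(\varphi)\iota(X)\psi=\varphi\wedge(\iota(X)\psi)$, where $\iota(X)\psi\in\bigwedge^{p-1}\mf{an}^*$, obtaining
\[
(\varphi\wedge\iota(X)\psi)(Y_1,\ldots,Y_p)=\sum_{i=1}^{p}(-1)^{i-1}\varphi(Y_i)\,(\iota(X)\psi)(Y_1,\ldots,\widehat{Y_i},\ldots,Y_p),
\]
and then use $(\iota(X)\psi)(\,\cdots)=\psi(X,\cdots)$ to recognize this as the negative of the summation in the previous display. Since the first term $\varphi(X)\psi(Y_1,\ldots,Y_p)$ equals $(\varphi(X)\psi)(Y_1,\ldots,Y_p)$, combining the two computations yields $\bigl(\iota(X)\varepsilon(\varphi)+\varepsilon(\varphi)\iota(X)\bigr)\psi=\varphi(X)\psi$ on every homogeneous $\psi$, hence on all of $\bigwedge^*\mf{an}^*$ by linearity. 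The only genuine obstacle is sign- and normalization-tracking in the auxiliary expansion; once I have confirmed that the $\frac{1}{p!q!}$ convention reproduces the familiar alternating-sign expansion over the slot occupied by the $1$-form, the antiderivation identity drops out directly and the rest is formal.
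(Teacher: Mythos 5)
Your proof is correct and follows essentially the same route as the paper: both dispose of the first two identities directly and reduce the anticommutator identity to the expansion $\iota(X)(\varphi\wedge\psi)=\varphi(X)\psi-\varphi\wedge\iota(X)\psi$, after which the cross terms cancel on every homogeneous $\psi$. The only difference is that the paper invokes this expansion as known, whereas you derive it from the $\frac{1}{p!q!}$ wedge convention --- a matter of added detail, not of method.
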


\begin{proof}
The first two equations are obvious and for the third one, take any $\psi\in\bigwedge^*\mf{an}^*$ and compute as follows: 
\begin{align*}
(\iota(X)\varepsilon(\varphi)+\varepsilon(\varphi)\iota(X))\psi&=\iota(X)(\varphi\wedge\psi)+\varphi\wedge\iota(X)\psi\\
&=\varphi(X)\psi-\varphi\wedge\iota(X)\psi+\varphi\wedge\iota(X)\psi\\
&=\varphi(X)\psi. 
\end{align*}
\end{proof}

Since the linear map 
\begin{align*}
\mf{an}\oplus\mf{an}^*&\to\End\left(\bigwedge^*\mf{an}^*\right)\\
X+\varphi&\mapsto\iota(X)+\varepsilon(\varphi)
\end{align*}
satisfies 
\begin{align*}
(\iota(X)+\varepsilon(\varphi))(\iota(X)+\varepsilon(\varphi))-(X+\varphi|X+\varphi)&=\iota(X)\varepsilon(\varphi)+\varepsilon(\varphi)\iota(X)-\varphi(X)\\
&=0, 
\end{align*}
it extends to an algebra homomorphism 
\begin{equation*}
\sigma\colon C\left(\mf{an}\oplus\mf{an}^*\right)\to\End\left(\bigwedge^*\mf{an}^*\right), 
\end{equation*}
which is called the spin representation of $C\left(\mf{an}\oplus\mf{an}^*\right)$. It is well-known that $\sigma$ is an isomorphism (at least over $\bb{C}$ but the same proof works over $\bb{R}$). 

Consider the $\bb{R}$-algebra 
\begin{equation*}
\mca{A}=C(\mf{an}\oplus\mf{an}^*)\otimes U(\mf{sl}(2,\bb{R}))\otimes U(\mf{an}), 
\end{equation*}
where $U(\sl(2,\bb{R}))$ (resp. $U(\mf{an})$) is the universal enveloping algebra of $\sl(2,\bb{R})$ (resp. $\mf{an}$) over $\bb{R}$. Let $\sl(2,\bb{R})\stackrel{\pi}{\curvearrowright}\mca{H}$ and $\mf{an}\stackrel{\xi}{\curvearrowright}V$ be representations such that $\xi(\mf{n})=0$. Then we get a representation 
\begin{equation*}
\mca{A}\stackrel{\sigma\otimes\pi\otimes\xi}{\curvearrowright}\bigwedge^*\mf{an}^*\otimes\mca{H}\otimes V=C^*\left(\mf{an};\mca{H}\otimes V\right). 
\end{equation*}
Recall the elements $H$ and $E$ of $\mf{an}$ in Section \ref{notation}. Let $\theta_H$, $\theta_E$ be the dual basis of $H$, $E$ in $\mf{an}^*$. Let $d$ (resp. $d_\star$) be the differential of $C^*\left(\mf{an};\mca{H}\otimes V\right)$ (resp. $C^*(\mf{an};\bb{R})$, where $\bb{R}$ is the trivial representation of $\mf{an}$). Note that we have $C^*\left(\mf{an};\mca{H}\otimes V\right)=C^*(\mf{an};\bb{R})\otimes\mca{H}\otimes V$. Then the differential $d$ is expressed as follows: 
\begin{align}\label{dformula}
d&=d_\star\otimes\id\otimes\id+\varepsilon(\theta_H)\otimes(\pi(H)\otimes\id+\id\otimes\xi(H))\nonumber\\
&\qquad\qquad\qquad\ +\varepsilon(\theta_E)\otimes(\pi(E)\otimes\id+\id\otimes\xi(E))\nonumber\\
&=d_\star\otimes\id\otimes\id+\varepsilon(\theta_H)\otimes(\pi(H)\otimes\id+\id\otimes\xi(H))+\varepsilon(\theta_E)\otimes\pi(E)\otimes\id. 
\end{align}
See for example Maruhashi--Tsutaya \cite{MTIII}. Let $\hat{d}_\star$ be the unique element of $C(\mf{an}\oplus\mf{an}^*)$ such that $\sigma(\hat{d}_\star)=d_\star$. Consider the elements 
\begin{equation*}
\hat{d}=\hat{d}_\star\otimes1\otimes1+\theta_H\otimes(H\otimes1+1\otimes H)+\theta_E\otimes(E\otimes1+1\otimes E)
\end{equation*}
and 
\begin{equation*}
\hat{d}_f=\hat{d}_\star\otimes1\otimes1+\theta_H\otimes(H\otimes1+1\otimes H)+\theta_E\otimes E\otimes1
\end{equation*}
of $\mca{A}$. These elements represent the differential $d$: 
\begin{gather*}
(\sigma\otimes\pi\otimes\xi)(\hat{d})=d,\quad(\sigma\otimes\pi\otimes\xi)(\hat{d}_f)=d. 
\end{gather*}
They are independent of the choice of $\mca{H}$ and $V$. We call $\hat{d}_f$ the fake differential. 

Recall the inner product $B_\theta$ of $\sl(2,\bb{R})$ defined in Section \ref{notation}. Its restriction to $\mf{an}$ gives a linear isomorphism $\kappa\colon\mf{an}\to\mf{an}^*$. Consider a map 
\begin{align*}
\mf{an}\oplus\mf{an}^*&\to\mf{an}\oplus\mf{an}^*\\
X+\varphi&\mapsto\kappa^{-1}(\varphi)+\kappa(X). 
\end{align*}
It extends to the unique antihomomorphism 
\begin{equation*}
T(\mf{an}\oplus\mf{an}^*)\to C(\mf{an}\oplus\mf{an}^*)
\end{equation*}
of algebras. Since we have 
\begin{align*}
&(\kappa^{-1}(\varphi)+\kappa(X))(\kappa^{-1}(\varphi)+\kappa(X))\\
&=\kappa^{-1}(\varphi)\kappa(X)+\kappa(X)\kappa^{-1}(\varphi)=\kappa(X)(\kappa^{-1}(\varphi))\\
&=B_\theta\left(\kappa^{-1}(\varphi),X\right)=\varphi(X)=(X+\varphi|X+\varphi), 
\end{align*}
the antihomomorphism factors through $C(\mf{an}\oplus\mf{an}^*)$ and we get an involutive antiautomorphism 
\begin{equation*}
\cdot^\top\colon C(\mf{an}\oplus\mf{an}^*)\to C(\mf{an}\oplus\mf{an}^*). 
\end{equation*}
Since $H$, $E$ is an orthonormal basis of $\mf{an}$, we have $\kappa(H)=\theta_H$, $\kappa(E)=\theta_E$, hence $H^\top=\theta_H$, $E^\top=\theta_E$ in $C(\mf{an}\oplus\mf{an}^*)$. 

The inner product of $\mf{an}$ induces a natural inner product of $\bigwedge^*\mf{an}^*$. For a linear map $T\colon\bigwedge^*\mf{an}^*\to\bigwedge^*\mf{an}^*$, let $T^\top\colon\bigwedge^*\mf{an}^*\to\bigwedge^*\mf{an}^*$ denote the adjoint of $T$ with respect to the inner product. 

\begin{lem}
We have $\sigma(x)^\top=\sigma\left(x^\top\right)$ for all $x\in C(\mf{an}\oplus\mf{an}^*)$. 
\end{lem}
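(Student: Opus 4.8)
The plan is to exploit that $\sigma$ is an algebra homomorphism while both flavors of $\cdot^\top$ are antiautomorphisms, reducing the claim to a short computation on the generating subspace $\mf{an}\oplus\mf{an}^*$ of the Clifford algebra.

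First I would observe that the two maps $x\mapsto\sigma(x)^\top$ and $x\mapsto\sigma\left(x^\top\right)$ are both $\bb{R}$-linear antihomomorphisms from $C(\mf{an}\oplus\mf{an}^*)$ to $\End\left(\bigwedge^*\mf{an}^*\right)$. For the first, since $\sigma$ is a homomorphism and taking adjoints reverses composition, $\sigma(xy)^\top=\left(\sigma(x)\sigma(y)\right)^\top=\sigma(y)^\top\sigma(x)^\top$. For the second, since $\cdot^\top$ on $C(\mf{an}\oplus\mf{an}^*)$ is an antiautomorphism and $\sigma$ is a homomorphism, $\sigma\left((xy)^\top\right)=\sigma\left(y^\top x^\top\right)=\sigma\left(y^\top\right)\sigma\left(x^\top\right)$. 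Two antihomomorphisms out of an algebra generated by $\mf{an}\oplus\mf{an}^*$ agree as soon as they agree on this generating subspace (by induction on word length together with linearity), so it suffices to check the identity for $x\in\mf{an}$ and for $x\in\mf{an}^*$ separately.

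Next I would unwind what the claim says on each type of generator. For $X\in\mf{an}$ we have $\sigma(X)=\iota(X)$ and $X^\top=\kappa(X)\in\mf{an}^*$, so $\sigma\left(X^\top\right)=\varepsilon(\kappa(X))$; thus the identity on this generator reads $\iota(X)^\top=\varepsilon(\kappa(X))$. For $\varphi\in\mf{an}^*$ we have $\sigma(\varphi)=\varepsilon(\varphi)$ and $\varphi^\top=\kappa^{-1}(\varphi)\in\mf{an}$, so the identity reads $\varepsilon(\varphi)^\top=\iota\left(\kappa^{-1}(\varphi)\right)$. These two statements are adjoint to one another, so it is enough to prove either one.

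The remaining computational core is the classical fact that, with respect to the inner product on $\bigwedge^*\mf{an}^*$ induced from $\mf{an}$, contraction by $X$ is the adjoint of exterior multiplication by $\kappa(X)$, i.e. $\langle\varepsilon(\kappa(X))\alpha,\beta\rangle=\langle\alpha,\iota(X)\beta\rangle$ for all $\alpha,\beta\in\bigwedge^*\mf{an}^*$. Since $H,E$ is an orthonormal basis we have $\kappa(H)=\theta_H$ and $\kappa(E)=\theta_E$, so the monomials $1,\theta_H,\theta_E,\theta_H\wedge\theta_E$ form an orthonormal basis of $\bigwedge^*\mf{an}^*$, and one verifies the adjointness relation directly on these basis elements. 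I do not expect any genuine obstacle here: the only real content is this standard wedge–contraction adjointness, and everything else is the formal antihomomorphism reduction.
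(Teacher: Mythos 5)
Your proof is correct. There is nothing in the paper to compare it against: the paper's ``proof'' of this lemma is only a citation to Maruhashi--Tsutaya \cite{MTIII}, so your argument is a self-contained substitute rather than a variant of an argument given here. The reduction is exactly right: $x\mapsto\sigma(x)^\top$ is an antihomomorphism because $\sigma$ is a homomorphism and adjoints reverse composition, $x\mapsto\sigma\left(x^\top\right)$ is one because $\cdot^\top$ is an antiautomorphism of $C(\mf{an}\oplus\mf{an}^*)$ (which the paper established when constructing $\cdot^\top$), both are linear and unital, and $C(\mf{an}\oplus\mf{an}^*)$ is generated as a unital algebra by $\mf{an}\oplus\mf{an}^*$, so agreement on that subspace suffices. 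On generators the claim collapses, as you say, to the adjointness $\iota(X)^\top=\varepsilon(\kappa(X))$, and the two generator identities are indeed adjoint to one another, so only one needs checking. The one point you leave as ``verify directly'' does go through with the paper's conventions, including signs: with the orthonormal basis $1,\theta_H,\theta_E,\theta_H\wedge\theta_E$ and the paper's $\frac{1}{p!q!}$ wedge convention one gets $\iota(H)(\theta_H\wedge\theta_E)=\theta_E$ and $\iota(E)(\theta_H\wedge\theta_E)=-\theta_H$, which matches $\varepsilon(\theta_H)\theta_E=\theta_H\wedge\theta_E$ and $\varepsilon(\theta_E)\theta_H=-\theta_H\wedge\theta_E$, so the matrices of $\iota(H)$ and $\iota(E)$ are precisely the transposes of those of $\varepsilon(\theta_H)$ and $\varepsilon(\theta_E)$; general $X\in\mf{an}$ then follows by linearity of $X\mapsto\iota(X)$ and $X\mapsto\varepsilon(\kappa(X))$.
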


\begin{proof}
See Maruhashi--Tsutaya \cite{MTIII}. 
\end{proof}

We define an element $\hat{\delta}_f$ of $\mca{A}$ by 
\begin{equation*}
\hat{\delta}_f=\hat{d}_\star^\top\otimes1\otimes1+H\otimes\left(\left(H+\frac{1}{\sqrt{2}}\right)\otimes1+1\otimes(-H)\right)+E\otimes2F\otimes1
\end{equation*}
and an operator $\delta$ on $C^*(\mf{an};\mca{H}\otimes V)$ by 
\begin{align*}
\delta&=(\sigma\otimes\pi\otimes\xi)(\hat{\delta}_f)\\
&=d_\star^\top\otimes\id\otimes\id+\iota(H)\otimes\left(\left(\pi(H)+\frac{1}{\sqrt{2}}\right)\otimes\id-\id\otimes\xi(H)\right)+\iota(E)\otimes2\pi(F)\otimes\id. 
\end{align*}
The operator $\delta$ decreases the degree by $1$. 

\begin{rem}
The $C(\mf{an}\oplus\mf{an}^*)$-components of $\hat{\delta}_f$ are obtained from those of $\hat{d}_f$ by applying $\cdot^\top$. The $U(\sl(2,\bb{R}))$-components of $\hat{\delta}_f$ appear in the expression 
\begin{equation*}
\Omega=2FE+\left(H+\frac{1}{\sqrt{2}}\right)H
\end{equation*}
of the Casimir element of $\sl(2,\bb{R})$. 
\end{rem}

If $\mca{H}^\prime$ is an $\sl(2,\bb{R})$-invariant subspace of $\mca{H}$, then $C^*(\mf{an};\mca{H}^\prime\otimes V)$ is a subcomplex of $C^*(\mf{an};\mca{H}\otimes V)$ and invariant under $\delta$. 

For $X\in\mf{an}$, let $L_X\colon\bigwedge^*\mf{an}^*\to\bigwedge^*\mf{an}^*$ be the Lie derivative by $X$ defined in Section \ref{hochschildserre}. Let $\hat{L}_X$ be the unique element of $C(\mf{an}\oplus\mf{an}^*)$ such that $\sigma(\hat{L}_X)=L_X$. 

\begin{thm}\label{mtrm}
We have 
\begin{align*}
\hat{d}_f\hat{\delta}_f+\hat{\delta}_f\hat{d}_f&=\left(\hat{d}_\star\hat{d}_\star^\top+\hat{d}_\star^\top\hat{d}_\star+\frac{1}{\sqrt{2}}\hat{L}_H\right)\otimes1\otimes1\\
&\quad+1\otimes\Omega\otimes1+1\otimes1\otimes\left(-H^2+\frac{1}{\sqrt{2}}H\right)
\end{align*}
in $\mca{A}$, where $\Omega\in U(\sl(2,\bb{R}))$ is the Casimir element of $\sl(2,\bb{R})$. 
\end{thm}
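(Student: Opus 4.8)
The plan is to verify the identity by a single direct computation inside the associative algebra $\mca{A}=C(\mf{an}\oplus\mf{an}^*)\otimes U(\sl(2,\bb{R}))\otimes U(\mf{an})$, where both sides already live, so that nothing needs to be transported to $C^*(\mf{an};\mca{H}\otimes V)$. First I would write $\hat{d}_f$ and $\hat{\delta}_f$ as sums of four simple tensors each and expand the anticommutator bilinearly into sixteen terms $\{d_i,\delta_j\}=d_i\delta_j+\delta_j d_i$. In every such term the three tensor slots multiply independently, and since the $U(\sl(2,\bb{R}))$- and $U(\mf{an})$-entries sit in different slots they commute with one another; thus the only noncommutativity to track is the Clifford relations in the first slot, the $\sl(2,\bb{R})$-relations in the second, and the (commutative) powers of $H$ in the third. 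I would then organize the sixteen terms according to the Clifford degree of their first slot, separating the seven terms that involve $\hat{d}_\star$ or $\hat{d}_\star^\top$ from the nine terms in which both Clifford entries have degree one.

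The key structural input is a Clifford-algebra form of Cartan's homotopy formula: for $X\in\mf{an}$ one has $\hat{d}_\star X+X\hat{d}_\star=\hat{L}_X$ in $C(\mf{an}\oplus\mf{an}^*)$. This follows by applying the isomorphism $\sigma$ to the identity $L_X=d_\star\iota(X)+\iota(X)d_\star$ on $\bigwedge^*\mf{an}^*$, using $\sigma(X)=\iota(X)$, $\sigma(\hat{d}_\star)=d_\star$ and $\sigma(\hat{L}_X)=L_X$; applying the antiautomorphism $\cdot^\top$ together with $H^\top=\theta_H$, $E^\top=\theta_E$ yields the transposed version $\hat{d}_\star^\top\theta_H+\theta_H\hat{d}_\star^\top=\hat{L}_H^\top$ and its analogue for $E$. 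I would then compute the two relevant Lie derivatives explicitly as degree-two Clifford elements, $\hat{L}_H=-\frac{1}{\sqrt{2}}\theta_E E$ and $\hat{L}_E=\frac{1}{\sqrt{2}}\theta_H E$, which come from the coadjoint action $L_H\theta_E=-\frac{1}{\sqrt{2}}\theta_E$, $L_E\theta_E=\frac{1}{\sqrt{2}}\theta_H$, $L_H\theta_H=L_E\theta_H=0$. Since $L_H$ is diagonal with real eigenvalues in the orthonormal basis $\theta_H,\theta_E$, it is self-adjoint, so $\hat{L}_H^\top=\hat{L}_H$, whereas $\hat{L}_E^\top=\frac{1}{\sqrt{2}}\theta_E H$.

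With these in hand the remaining work is bookkeeping. The seven terms containing $\hat{d}_\star$ or $\hat{d}_\star^\top$ produce, through the lemma, the pure-Clifford part $\hat{d}_\star\hat{d}_\star^\top+\hat{d}_\star^\top\hat{d}_\star$, the summand $\frac{1}{\sqrt{2}}\hat{L}_H\otimes1\otimes1$, and several $\hat{L}_H$- and $\hat{L}_E$-terms carrying nontrivial $U(\sl(2,\bb{R}))$- or $U(\mf{an})$-factors. Among the nine terms with both Clifford entries of degree one, the isotropy relations $H^2=E^2=\theta_H^2=\theta_E^2=0$, the pairings $H\theta_H+\theta_H H=E\theta_E+\theta_E E=1$, and the cross relations $H\theta_E+\theta_E H=E\theta_H+\theta_H E=0$ reduce each term either to a scalar-Clifford contribution or to zero; the noncommutativity of the $\sl(2,\bb{R})$-entries in the surviving ones releases the brackets $[E,F]=\frac{1}{\sqrt{2}}H$, $[H,F]=-\frac{1}{\sqrt{2}}F$ and $[E,H]=-\frac{1}{\sqrt{2}}E$, and these, assembled with the diagonal contributions, build exactly $\Omega=2FE+H^2+\frac{1}{\sqrt{2}}H$ in the second slot and $-H^2+\frac{1}{\sqrt{2}}H$ in the third.

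The main obstacle, and the only genuinely delicate point, is the appearance of the degree-two Clifford elements $\theta_H E$, $\theta_E H$ and $\theta_E E$ coming from the terms whose two $\sl(2,\bb{R})$-entries do not commute; a priori these do not match the shape of the right-hand side, whose Clifford content is only $\hat{d}_\star\hat{d}_\star^\top+\hat{d}_\star^\top\hat{d}_\star+\frac{1}{\sqrt{2}}\hat{L}_H$. The resolution is to recognize them, via the explicit formulas above, as $\sqrt{2}\,\hat{L}_E$, $\sqrt{2}\,\hat{L}_E^\top$ and $-\sqrt{2}\,\hat{L}_H$, after which they cancel exactly against the $\hat{L}_E$-, $\hat{L}_E^\top$- and $\hat{L}_H$-contributions produced by the $\hat{d}_\star$-terms; here the self-adjointness $\hat{L}_H^\top=\hat{L}_H$ is precisely what identifies the two $\hat{L}_H\otimes1\otimes H$ terms so that they annihilate, and what lets the leftover $+2\hat{L}_H\otimes H\otimes1$ cancel the $-2\hat{L}_H\otimes H\otimes1$ arising from $\theta_E E$. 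Once these cancellations are checked, only the three summands on the right-hand side survive, which completes the proof.
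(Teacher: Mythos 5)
Your proposal is correct and follows essentially the same route as the paper's proof: expand the anticommutator bilinearly in $\mca{A}$, apply the Clifford--Cartan formulas $\hat{d}_\star X+X\hat{d}_\star=\hat{L}_X$ and their transposes, use $\hat{L}_H=-\frac{1}{\sqrt{2}}\theta_EE$, $\hat{L}_E=\frac{1}{\sqrt{2}}\theta_HE$ and $\hat{L}_H^\top=\hat{L}_H$, and cancel the bracket terms $\theta_HE\otimes2[H,F]\otimes1$, $\theta_EH\otimes[E,H]\otimes1$, $\theta_EE\otimes2[E,F]\otimes1$ against the Lie-derivative terms so that only $\frac{1}{\sqrt{2}}\hat{L}_H\otimes1\otimes1$, $1\otimes\Omega\otimes1$ and $1\otimes1\otimes\left(-H^2+\frac{1}{\sqrt{2}}H\right)$ survive. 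The only cosmetic difference is that you re-derive the auxiliary identities (via the spin representation and the coadjoint action) which the paper instead quotes from Maruhashi--Tsutaya \cite{MTIII}.
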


Before starting the proof we list some properties we use in the proof: 
\begin{itemize}
\item We have 
\begin{equation}\label{liederivativei}
\hat{d}_\star X+X\hat{d}_\star=\hat{L}_X\quad\text{and}\quad\hat{d}_\star^\top\kappa(X)+\kappa(X)\hat{d}_\star^\top=\hat{L}_X^\top
\end{equation}
in $C(\mf{an}\oplus\mf{an}^*)$ for all $X\in\mf{an}$. 
\item We have 
\begin{equation}\label{liederivativefo}
\hat{L}_X=-\theta_H[X,H]-\theta_E[X,E]
\end{equation}
in $C(\mf{an}\oplus\mf{an}^*)$ for all $X\in\mf{an}$. 
\item Since $\hat{L}_H=-\frac{1}{\sqrt{2}}\theta_EE$, we have 
\begin{equation}\label{symmmm}
\hat{L}_H^\top=\hat{L}_H. 
\end{equation}
\end{itemize}
See Maruhashi--Tsutaya \cite{MTIII} for the proofs of the first and second. 

\begin{proof}[Proof of Theorem \ref{mtrm}]
By the definitions 
\begin{gather*}
\hat{d}_f=\hat{d}_\star\otimes1\otimes1+\theta_H\otimes H\otimes1+\theta_H\otimes1\otimes H+\theta_E\otimes E\otimes1,\\
\hat{\delta}_f=\hat{d}_\star^\top\otimes1\otimes1+H\otimes\left(H+\frac{1}{\sqrt{2}}\right)\otimes1+H\otimes1\otimes(-H)+E\otimes2F\otimes1
\end{gather*}
and \eqref{liederivativei}, \eqref{symmmm}, we have 
\begin{align*}
&\hat{d}_f\hat{\delta}_f+\hat{\delta}_f\hat{d}_f\\
&=\left(\hat{d}_\star\hat{d}_\star^\top+\hat{d}_\star^\top\hat{d}_\star\right)\otimes1\otimes1+\textcolor{green}{\hat{L}_H\otimes\left(H+\frac{1}{\sqrt{2}}\right)\otimes1}+\textcolor{blue}{\hat{L}_H\otimes1\otimes(-H)}\\
&\quad+\hat{L}_E\otimes2F\otimes1+\textcolor{green}{\hat{L}_H^\top\otimes H\otimes1}+\textcolor{blue}{\hat{L}_H^\top\otimes1\otimes H}+\hat{L}_E^\top\otimes E\otimes1\\
&\quad+1\otimes H\left(H+\frac{1}{\sqrt{2}}\right)\otimes1+\textcolor{red}{1\otimes H\otimes(-H)}+\theta_HE\otimes2[H,F]\otimes1\\
&\quad+1\otimes\left(\textcolor{red}{H}+\frac{1}{\sqrt{2}}\right)\otimes H+1\otimes1\otimes(-H^2)+\theta_EH\otimes[E,H]\otimes1\\
&\quad+\theta_EE\otimes2[E,F]\otimes1+1\otimes2FE\otimes1\\
&=\left(\hat{d}_\star\hat{d}_\star^\top+\hat{d}_\star^\top\hat{d}_\star\right)\otimes1\otimes1+\hat{L}_H\otimes\left(2H+\frac{1}{\sqrt{2}}\right)\otimes1\\
&\quad+\hat{L}_E\otimes2F\otimes1+\hat{L}_E^\top\otimes E\otimes1\\
&\quad+1\otimes H\left(H+\frac{1}{\sqrt{2}}\right)\otimes1+1\otimes\frac{1}{\sqrt{2}}\otimes H+1\otimes1\otimes(-H^2)+1\otimes2FE\otimes1\\
&\quad+\theta_HE\otimes2[H,F]\otimes1+\theta_EH\otimes[E,H]\otimes1+\theta_EE\otimes2[E,F]\otimes1. 
\end{align*}
By the formulas 
\begin{gather*}
\hat{L}_H=-\frac{1}{\sqrt{2}}\theta_EE,\quad\hat{L}_E=\frac{1}{\sqrt{2}}\theta_HE,\\
\Omega=2FE+H^2+\frac{1}{\sqrt{2}}H,\\
[H,E]=\frac{1}{\sqrt{2}}E,\quad[H,F]=-\frac{1}{\sqrt{2}}F,\quad[E,F]=\frac{1}{\sqrt{2}}H, 
\end{gather*}
we get 
\begin{align*}
&\hat{d}_f\hat{\delta}_f+\hat{\delta}_f\hat{d}_f\\
&=\left(\hat{d}_\star\hat{d}_\star^\top+\hat{d}_\star^\top\hat{d}_\star\right)\otimes1\otimes1-\frac{1}{\sqrt{2}}\theta_EE\otimes\left(2H+\frac{1}{\sqrt{2}}\right)\otimes1\\
&\quad+\frac{1}{\sqrt{2}}\theta_HE\otimes2F\otimes1+\frac{1}{\sqrt{2}}\theta_EH\otimes E\otimes1\\
&\quad+1\otimes\Omega\otimes1+1\otimes1\otimes\left(-H^2+\frac{1}{\sqrt{2}}H\right)\\
&\quad+\theta_HE\otimes\left(-\sqrt{2}F\right)\otimes1+\theta_EH\otimes\left(-\frac{1}{\sqrt{2}}E\right)\otimes1+\theta_EE\otimes\sqrt{2}H\otimes1\\
&=\left(\hat{d}_\star\hat{d}_\star^\top+\hat{d}_\star^\top\hat{d}_\star\right)\otimes1\otimes1-\frac{1}{\sqrt{2}}\theta_EE\otimes\frac{1}{\sqrt{2}}\otimes1\\
&\quad+1\otimes\Omega\otimes1+1\otimes1\otimes\left(-H^2+\frac{1}{\sqrt{2}}H\right)\\
&=\left(\hat{d}_\star\hat{d}_\star^\top+\hat{d}_\star^\top\hat{d}_\star+\frac{1}{\sqrt{2}}\hat{L}_H\right)\otimes1\otimes1+1\otimes\Omega\otimes1+1\otimes1\otimes\left(-H^2+\frac{1}{\sqrt{2}}H\right). 
\end{align*}
\end{proof}

\begin{prop}\label{yuuui}
We have 
\begin{equation*}
\hat{d}_\star\hat{d}_\star^\top+\hat{d}_\star^\top\hat{d}_\star+\frac{1}{\sqrt{2}}\hat{L}_H=0
\end{equation*}
in $C(\mf{an}\oplus\mf{an}^*)$. 
\end{prop}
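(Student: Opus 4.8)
The plan is to use that the spin representation $\sigma$ is an algebra isomorphism $C(\mf{an}\oplus\mf{an}^*)\stackrel{\sim}{\to}\End\left(\bigwedge^*\mf{an}^*\right)$, so the asserted identity in the Clifford algebra is equivalent to the operator identity
\begin{equation*}
d_\star d_\star^\top+d_\star^\top d_\star+\frac{1}{\sqrt{2}}L_H=0
\end{equation*}
on the $4$-dimensional space $\bigwedge^*\mf{an}^*$, where $d_\star=\sigma(\hat{d}_\star)$, $L_H=\sigma(\hat{L}_H)$, and $d_\star^\top=\sigma\left(\hat{d}_\star^\top\right)$ is the genuine adjoint of $d_\star$ by the Lemma just above (stating $\sigma(x)^\top=\sigma(x^\top)$). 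Since $\bigwedge^*\mf{an}^*$ has the orthonormal basis $1,\theta_H,\theta_E,\theta_H\wedge\theta_E$, I would simply compute all three operators on this basis and compare.

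First I would pin down $d_\star$. As $d_\star$ is the Chevalley--Eilenberg differential of $C^*(\mf{an};\bb{R})$ for the trivial module and $[H,E]=\frac{1}{\sqrt{2}}E$, the structural part of \eqref{dformula} gives $d_\star\theta_H=0$ and $d_\star\theta_E=-\frac{1}{\sqrt{2}}\theta_H\wedge\theta_E$, while $d_\star$ vanishes in degrees $0$ and $2$. Equivalently $\hat{d}_\star=-\frac{1}{\sqrt{2}}\theta_H\theta_E E$ in $C(\mf{an}\oplus\mf{an}^*)$, whence $\hat{d}_\star^\top=-\frac{1}{\sqrt{2}}\theta_E E H$ by applying $\cdot^\top$ (which sends $H\mapsto\theta_H$, $E\mapsto\theta_E$, $\theta_H\mapsto H$, $\theta_E\mapsto E$ and reverses order). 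Taking the adjoint with respect to the orthonormal basis, the only nonzero value of $d_\star^\top$ is $d_\star^\top(\theta_H\wedge\theta_E)=-\frac{1}{\sqrt{2}}\theta_E$. Finally $L_H\theta_H=0$ and $L_H\theta_E=-\frac{1}{\sqrt{2}}\theta_E$, so $L_H$ multiplies $\theta_E$ and $\theta_H\wedge\theta_E$ by $-\frac{1}{\sqrt{2}}$ and annihilates $1$ and $\theta_H$, in agreement with $\hat{L}_H=-\frac{1}{\sqrt{2}}\theta_E E$.

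Evaluating $\Delta_\star=d_\star d_\star^\top+d_\star^\top d_\star$ on the basis then gives $\Delta_\star 1=\Delta_\star\theta_H=0$; on $\theta_E$ only $d_\star^\top d_\star$ contributes, yielding $d_\star^\top\left(-\frac{1}{\sqrt{2}}\theta_H\wedge\theta_E\right)=\frac{1}{2}\theta_E$; and on $\theta_H\wedge\theta_E$ only $d_\star d_\star^\top$ contributes, yielding $d_\star\left(-\frac{1}{\sqrt{2}}\theta_E\right)=\frac{1}{2}\theta_H\wedge\theta_E$. Thus $\Delta_\star=-\frac{1}{\sqrt{2}}L_H$ on every basis vector, which is exactly the desired identity after transporting back through $\sigma$.

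I expect no conceptual obstacle here; the only care needed is in two routine bookkeeping points: fixing the Chevalley--Eilenberg sign so that $d_\star\theta_E=-\frac{1}{\sqrt{2}}\theta_H\wedge\theta_E$, and reading off $d_\star^\top$ as a true adjoint. As a cross-check one may instead stay inside $C(\mf{an}\oplus\mf{an}^*)$: from $\hat{d}_\star=-\frac{1}{\sqrt{2}}\theta_H\theta_E E$ and $\hat{d}_\star^\top=-\frac{1}{\sqrt{2}}\theta_E E H$ one computes the two products using $\theta_E^2=E^2=0$, $\theta_E E+E\theta_E=1$, $H\theta_H+\theta_H H=1$, and the anticommutativity of each of $\theta_H,H$ with each of $\theta_E,E$. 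The key reductions are $\theta_E E\theta_E E=\theta_E E$ and $\theta_H\theta_E EH=\theta_E\theta_H HE$, which yield $\hat{d}_\star\hat{d}_\star^\top+\hat{d}_\star^\top\hat{d}_\star=\frac{1}{2}\theta_E E$; adding $\frac{1}{\sqrt{2}}\hat{L}_H=-\frac{1}{2}\theta_E E$ gives $0$. In either route the main (and only) difficulty is sign management once $\hat{d}_\star$ has been identified.
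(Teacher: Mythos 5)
Your proposal is correct, and both of its computations check out against the relations in the paper. Your primary route, however, is genuinely different from the paper's: the paper proves the identity entirely inside $C(\mf{an}\oplus\mf{an}^*)$, starting from $\hat{d}_\star=-\frac{1}{\sqrt{2}}\theta_H\theta_EE$ (Equation \eqref{dfor}) and using the Clifford relations to reduce
\begin{equation*}
2\left(\hat{d}_\star\hat{d}_\star^\top+\hat{d}_\star^\top\hat{d}_\star\right)=\theta_H\theta_EE\theta_EEH+\theta_EEH\theta_H\theta_EE
\end{equation*}
to $\theta_EE=-\sqrt{2}\hat{L}_H$ --- which is precisely your ``cross-check,'' down to the same two key reductions $\theta_EE\theta_EE=\theta_EE$ and $\theta_H\theta_EEH=\theta_E\theta_HHE$. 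Your main argument instead transports the identity through the spin representation: since $\sigma$ is an algebra isomorphism and $\sigma\left(x^\top\right)=\sigma(x)^\top$, the Clifford identity is equivalent to the operator identity $d_\star d_\star^\top+d_\star^\top d_\star+\frac{1}{\sqrt{2}}L_H=0$ on $\bigwedge^*\mf{an}^*$, which you verify on the orthonormal basis $1,\theta_H,\theta_E,\theta_H\wedge\theta_E$; your values for $d_\star$, for the adjoint $d_\star^\top$, and for $L_H$ all agree with \eqref{fffooo} and with $\hat{L}_H=-\frac{1}{\sqrt{2}}\theta_EE$. What your route buys is transparency: it exhibits the proposition as the statement that the Hodge Laplacian of the complex $C^*(\mf{an};\bb{R})$ equals $-\frac{1}{\sqrt{2}}L_H$, and the verification is a finite matrix computation in which sign errors are easy to catch. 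Its extra inputs are the injectivity of $\sigma$ (recorded in the paper as well known) and the lemma identifying $\sigma\left(\hat{d}_\star^\top\right)$ with the genuine adjoint. The paper's in-algebra computation avoids invoking injectivity of $\sigma$ and stays consistent with the universal-element formalism used for Theorem \ref{mtrm}, at the cost of more delicate sign bookkeeping inside the Clifford algebra.
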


\begin{proof}
The element $\hat{d}_\star$ can be expressed as follows: 
\begin{equation}\label{dfor}
\hat{d}_\star=-\theta_H\theta_E[H,E]=-\frac{1}{\sqrt{2}}\theta_H\theta_EE. 
\end{equation}
See Maruhashi--Tsutaya \cite{MTIII}. Hence we have 
\begin{align*}
2\left(\hat{d}_\star\hat{d}_\star^\top+\hat{d}_\star^\top\hat{d}_\star\right)&=\theta_H\theta_EE\theta_EEH+\theta_EEH\theta_H\theta_EE\\
&=\theta_H\theta_EEH-\theta_EE\theta_HH\theta_EE+\theta_EE\theta_EE\\
&=\theta_H\theta_EEH+\theta_E\theta_HEH\theta_EE+\theta_EE\\
&=\theta_H\theta_EEH-\theta_E\theta_HE\theta_EHE+\theta_EE\\
&=\theta_H\theta_EEH-\theta_E\theta_HHE+\theta_EE\\
&=\theta_EE\\
&=-\sqrt{2}\hat{L}_H. 
\end{align*}
\end{proof}

By Theorem \ref{mtrm} and Proposition \ref{yuuui} we get the following theorem. 

\begin{thm}\label{mmtthh}
We have 
\begin{equation*}
\hat{d}_f\hat{\delta}_f+\hat{\delta}_f\hat{d}_f=1\otimes\Omega\otimes1+1\otimes1\otimes\left(-H^2+\frac{1}{\sqrt{2}}H\right)
\end{equation*}
in $\mca{A}$, hence 
\begin{equation*}
d\delta+\delta d=\id\otimes\pi(\Omega)\otimes\id+\id\otimes\id\otimes\left(-\xi(H)^2+\frac{1}{\sqrt{2}}\xi(H)\right)
\end{equation*}
on $C^*(\mf{an};\mca{H}\otimes V)$. 
\end{thm}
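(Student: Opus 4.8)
The plan is to derive the identity in $\mca{A}$ as an immediate consequence of Theorem \ref{mtrm} and Proposition \ref{yuuui}, and then to transport it to an identity of operators on $C^*(\mf{an};\mca{H}\otimes V)$ by applying the representation $\sigma\otimes\pi\otimes\xi$.

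First I would take the formula of Theorem \ref{mtrm}, which expresses $\hat{d}_f\hat{\delta}_f+\hat{\delta}_f\hat{d}_f$ as the sum of the three terms $\left(\hat{d}_\star\hat{d}_\star^\top+\hat{d}_\star^\top\hat{d}_\star+\frac{1}{\sqrt{2}}\hat{L}_H\right)\otimes1\otimes1$, $1\otimes\Omega\otimes1$ and $1\otimes1\otimes\left(-H^2+\frac{1}{\sqrt{2}}H\right)$. By Proposition \ref{yuuui} the element $\hat{d}_\star\hat{d}_\star^\top+\hat{d}_\star^\top\hat{d}_\star+\frac{1}{\sqrt{2}}\hat{L}_H$ equals $0$ in the Clifford algebra $C(\mf{an}\oplus\mf{an}^*)$, so the first term vanishes in $\mca{A}$. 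This leaves exactly $1\otimes\Omega\otimes1+1\otimes1\otimes\left(-H^2+\frac{1}{\sqrt{2}}H\right)$, which is the first displayed equation.

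Next I would apply the algebra homomorphism $\sigma\otimes\pi\otimes\xi\colon\mca{A}\to\End\left(C^*(\mf{an};\mca{H}\otimes V)\right)$ to both sides. Since $(\sigma\otimes\pi\otimes\xi)(\hat{d}_f)=d$ and $(\sigma\otimes\pi\otimes\xi)(\hat{\delta}_f)=\delta$, and since this map respects products and sums, the left-hand side is sent to $d\delta+\delta d$. On the right-hand side the factor $1$ in each tensor slot is sent to the identity operator, the Casimir $\Omega\in U(\sl(2,\bb{R}))$ is sent to $\pi(\Omega)$, and $-H^2+\frac{1}{\sqrt{2}}H\in U(\mf{an})$ is sent to $-\xi(H)^2+\frac{1}{\sqrt{2}}\xi(H)$; here I use that $\xi$ extends to an algebra homomorphism on $U(\mf{an})$, so that $\xi(H^2)=\xi(H)^2$. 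This yields the second displayed equation.

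The entire computational content has already been absorbed into Theorem \ref{mtrm} (the Clifford and enveloping-algebra bookkeeping) and Proposition \ref{yuuui} (the purely Clifford cancellation), so there is no real obstacle remaining; the theorem is a one-line combination of the two. The only points requiring care are that $\sigma\otimes\pi\otimes\xi$ is genuinely an algebra homomorphism, so that the anticommutator $\hat{d}_f\hat{\delta}_f+\hat{\delta}_f\hat{d}_f$ is carried to $d\delta+\delta d$, and that the symbol $-H^2+\frac{1}{\sqrt{2}}H$ is understood as an element of $U(\mf{an})$ evaluated through the algebra map $\xi$, rather than through the bare Lie-algebra representation.
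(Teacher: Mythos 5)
Your proposal is correct and follows exactly the paper's own route: the paper derives Theorem \ref{mmtthh} precisely by combining Theorem \ref{mtrm} with Proposition \ref{yuuui} (which kills the term $\left(\hat{d}_\star\hat{d}_\star^\top+\hat{d}_\star^\top\hat{d}_\star+\frac{1}{\sqrt{2}}\hat{L}_H\right)\otimes1\otimes1$) and then applying the algebra homomorphism $\sigma\otimes\pi\otimes\xi$. Your added remarks on why the anticommutator is preserved and on reading $-H^2+\frac{1}{\sqrt{2}}H$ inside $U(\mf{an})$ are correct points of care that the paper leaves implicit.
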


\begin{prop}\label{squarezero}
We have $\hat{\delta}_f^2=0$ as an element of $\mca{A}$, hence $\delta^2=0$ on $C^*(\mf{an};\mca{H}\otimes V)$. 
\end{prop}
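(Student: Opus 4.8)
The plan is to prove the algebra-level identity $\hat{\delta}_f^2=0$ in $\mca{A}$; the operator identity $\delta^2=0$ on $C^*(\mf{an};\mca{H}\otimes V)$ then follows at once, since $\delta=(\sigma\otimes\pi\otimes\xi)(\hat{\delta}_f)$ and $\sigma\otimes\pi\otimes\xi$ is an algebra homomorphism, so that $\delta^2=(\sigma\otimes\pi\otimes\xi)(\hat{\delta}_f^2)$. Because $\mca{A}$ is an ordinary (not graded) tensor product of algebras, multiplication is componentwise, $(\gamma\otimes u\otimes v)(\gamma'\otimes u'\otimes v')=\gamma\gamma'\otimes uu'\otimes vv'$, so I would write $\hat{\delta}_f$ as the sum of its four homogeneous terms and expand $\hat{\delta}_f^2$ into sixteen products, organizing the twelve off-diagonal ones into six symmetric pairs.

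Before expanding I would record the relations to be used. In $C(\mf{an}\oplus\mf{an}^*)$ the defining relation $vw+wv=2(v|w)$ together with the form $(\cdot|\cdot)$ gives, for the basis vectors, $H^2=E^2=0$, $HE=-EH$, $H\theta_E=-\theta_E H$ and $E\theta_E=1-\theta_E E$ (using $(H|H)=(E|E)=(H|E)=(H|\theta_E)=0$ while $(E|\theta_E)=\tfrac12$). Transposing \eqref{dfor} yields $\hat{d}_\star^\top=-\tfrac1{\sqrt2}\theta_E E H$, and from $\hat{d}_\star^2=0$ (which holds because $\sigma(\hat{d}_\star)=d_\star$ and $d_\star^2=0$, with $\sigma$ injective) one gets $(\hat{d}_\star^\top)^2=(\hat{d}_\star^2)^\top=0$. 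On the $U(\sl(2,\bb{R}))$ factor I would need only $[H,F]=-\tfrac1{\sqrt2}F$, and the scalar $\tfrac1{\sqrt2}$ is central.

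With these in hand the computation is almost entirely a matter of killing terms by nilpotency. The four diagonal products vanish: $(\hat{d}_\star^\top)^2=0$ and the Clifford factors $H^2,H^2,E^2$ are zero. Among the cross pairs, every pair whose Clifford factor contains the product $H\cdot H$ dies immediately, disposing of the pair built from the two $H$-terms and of their interactions with $\hat{d}_\star^\top$; here one checks $\hat{d}_\star^\top H=H\hat{d}_\star^\top=0$ directly, and the pair coupling the $E$-term to the $H$-term carrying the $U(\mf{an})$-factor $-H$ cancels on the nose. The only surviving contributions come from the $E$-term paired with $\hat{d}_\star^\top$ and with the $H$-term carrying the $\sl(2,\bb{R})$-factor $H+\tfrac1{\sqrt2}$: the former collects $\hat{d}_\star^\top E+E\hat{d}_\star^\top=-\tfrac1{\sqrt2}EH$, giving $-\sqrt2\,EH\otimes F\otimes1$, while the latter collects the commutator $2[H,F]=-\sqrt2\,F$ against the Clifford factor $HE$, giving $-\sqrt2\,HE\otimes F\otimes1$. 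The crux of the proof — the step I expect to carry the real content rather than mere bookkeeping — is that these two leftover pieces are proportional to $EH\otimes F\otimes1$ and $HE\otimes F\otimes1$ and hence cancel by the Clifford relation $EH+HE=0$. This is the same mechanism that produced the cancellations in the proof of Theorem \ref{mtrm}, so I would expect the sign and coefficient bookkeeping there to transfer directly.
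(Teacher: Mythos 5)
Your proof is correct and follows essentially the same route as the paper: expand $\hat{\delta}_f^2$ term by term, kill everything by the Clifford nilpotency relations, and observe that the two surviving terms, $-\sqrt{2}\,EH\otimes F\otimes 1$ from the $\hat{d}_\star^\top$--$E$ pair and $-\sqrt{2}\,HE\otimes F\otimes 1$ from the $2[H,F]$ commutator, cancel via $EH+HE=0$. The only cosmetic difference is that you verify the anticommutator identities $\hat{d}_\star^\top H+H\hat{d}_\star^\top=0$ and $\hat{d}_\star^\top E+E\hat{d}_\star^\top=-\tfrac{1}{\sqrt{2}}EH$ directly from the explicit formula $\hat{d}_\star^\top=-\tfrac{1}{\sqrt{2}}\theta_E EH$, whereas the paper obtains them by transposing \eqref{dphi} together with \eqref{fffooo}.
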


In the proof we use the following properties: 
\begin{itemize}
\item We have $\hat{d}_\star^2=0$ in $C(\mf{an}\oplus\mf{an}^*)$. 
\item There is an injective algebra homomorphism 
\begin{equation*}
j\colon\bigwedge^*\mf{an}^*\hookrightarrow C(\mf{an}\oplus\mf{an}^*)
\end{equation*}
which is the identity on $\mf{an}^*$. 
\item We have 
\begin{equation}\label{dphi}
\hat{d}_\star\varphi+\varphi\hat{d}_\star=j(d_\star\varphi)
\end{equation}
in $C(\mf{an}\oplus\mf{an}^*)$ for all $\varphi\in\mf{an}^*$. See Maruhashi--Tsutaya \cite{MTIII}. 
\item We have 
\begin{equation}\label{fffooo}
d_\star\theta_H=0,\quad d_\star\theta_E=-\frac{1}{\sqrt{2}}\theta_H\wedge\theta_E. 
\end{equation}
\end{itemize}

\begin{proof}[Proof of Proposition \ref{squarezero}]
By \eqref{dphi} and \eqref{fffooo} we get 
\begin{equation*}
\hat{d}_\star^\top H+H\hat{d}_\star^\top=0,\quad\hat{d}_\star^\top E+E\hat{d}_\star^\top=-\frac{1}{\sqrt{2}}EH. 
\end{equation*}
Using the definition 
\begin{equation*}
\hat{\delta}_f=\hat{d}_\star^\top\otimes1\otimes1+H\otimes\left(H+\frac{1}{\sqrt{2}}\right)\otimes1+H\otimes1\otimes(-H)+E\otimes2F\otimes1, 
\end{equation*}
we have 
\begin{equation*}
\hat{\delta}_f^2=-\frac{1}{\sqrt{2}}EH\otimes2F\otimes1+HE\otimes2[H,F]\otimes1=0. 
\end{equation*}
\end{proof}

\begin{rem}
We can also show $\hat{d}^2=0$ and $\hat{d}_f^2=0$ in $\mca{A}$. 
\end{rem}

\subsection{Hodge decomposition of $C^*(\mf{an};\mca{H}\otimes\bb{C}_\lambda)$ for generic $\mca{H}$}\label{Hooddg}
In this section we give a ``Hodge decomposition'' of $C^*(\mf{an};\mca{H}\otimes\bb{C}_\lambda)$ for ``generic'' $\mca{H}$ by using the operator $\delta$ constructed in Section \ref{construc}. 

Recall that for $\lambda\in\bb{C}$ the representation $\mf{an}\curvearrowright\bb{C}_\lambda$ is defined by 
\begin{gather*}
\bb{C}_\lambda=\bb{C},\quad H1=\lambda1,\quad E1=0. 
\end{gather*}
Let $\sl(2,\bb{R})\stackrel{\pi}{\curvearrowright}\mca{H}$ be a representation and consider the Chevalley--Eilenberg complex $\left(C^*(\mf{an};\mca{H}\otimes\bb{C}_\lambda),d\right)$. Then by Theorem \ref{mmtthh} we have an operator $\delta$ on $C^*(\mf{an};\mca{H}\otimes\bb{C}_\lambda)$ of degree $-1$ such that 
\begin{align*}
d\delta+\delta d&=\id\otimes\pi(\Omega)\otimes\id+\id\otimes\id\otimes\left(-\lambda^2+\frac{1}{\sqrt{2}}\lambda\right)\\
&=\id\otimes\left(\pi(\Omega)-\lambda^2+\frac{1}{\sqrt{2}}\lambda\right)\otimes\id. 
\end{align*}
Assume that the operator 
\begin{equation*}
\pi(\Omega)-\lambda^2+\frac{1}{\sqrt{2}}\lambda\colon\mca{H}\to\mca{H}
\end{equation*}
is bijective. Put 
$Q=\id\otimes\left(\pi(\Omega)-\lambda^2+\frac{1}{\sqrt{2}}\lambda\right)\otimes\id$, so that $d\delta+\delta d=Q$. Then we have $dQ=Qd$ and $\delta Q=Q\delta$ since $\Omega$ lies in the center of $U(\sl(2,\bb{R}))$. Hence we get $dQ^{-1}=Q^{-1}d$ and $\delta Q^{-1}=Q^{-1}\delta$. Put 
\begin{equation*}
\delta^\prime=Q^{-1}\delta\colon C^*(\mf{an};\mca{H}\otimes\bb{C}_\lambda)\to C^*(\mf{an};\mca{H}\otimes\bb{C}_\lambda). 
\end{equation*}
So we have 
\begin{gather*}
d\delta^\prime+\delta^\prime d=\id,\quad(\delta^\prime)^2=0,\\
\im\delta^\prime=\im\delta,\quad\ker\delta^\prime=\ker\delta, 
\end{gather*}
which imply 
\begin{gather*}
C^*(\mf{an};\mca{H}\otimes\bb{C}_\lambda)=\im d\oplus\im\delta^\prime,\\
\ker d=\im d,\quad\ker\delta^\prime=\im\delta^\prime,\\
H^*(\mf{an};\mca{H}\otimes\bb{C}_\lambda)=0. 
\end{gather*}

When the operator $\pi(\Omega)-\lambda^2+\frac{1}{\sqrt{2}}\lambda$ is not necessarily bijective on $\mca{H}$, let 
\begin{equation*}
\mca{H}^0=\ker\left(\pi(\Omega)-\lambda^2+\frac{1}{\sqrt{2}}\lambda\right). 
\end{equation*}
Then $\mca{H}^0$ is an $\sl(2,\bb{R})$-invariant subspace of $\mca{H}$. Assume that there exists an $\sl(2,\bb{R})$-invariant subspace $\mca{H}^\prime$ of $\mca{H}$ such that $\mca{H}=\mca{H}^0\oplus\mca{H}^\prime$ and the operator 
\begin{equation*}
\pi(\Omega)-\lambda^2+\frac{1}{\sqrt{2}}\lambda\colon\mca{H}^\prime\to\mca{H}^\prime
\end{equation*}
is bijective. Then we have 
\begin{equation*}
C^*(\mf{an};\mca{H}\otimes\bb{C}_\lambda)=C^*(\mf{an};\mca{H}^0\otimes\bb{C}_\lambda)\oplus C^*(\mf{an};\mca{H}^\prime\otimes\bb{C}_\lambda)
\end{equation*}
and $H^*(\mf{an};\mca{H}^\prime\otimes\bb{C}_\lambda)=0$. Hence 
\begin{align*}
H^*(\mf{an};\mca{H}\otimes\bb{C}_\lambda)&=H^*(\mf{an};\mca{H}^0\otimes\bb{C}_\lambda)\oplus H^*(\mf{an};\mca{H}^\prime\otimes\bb{C}_\lambda)\\
&=H^*(\mf{an};\mca{H}^0\otimes\bb{C}_\lambda). 
\end{align*}
Note that we have $d\delta+\delta d=0$ on $C^*(\mf{an};\mca{H}^0\otimes\bb{C}_\lambda)$.

\subsection{Computation of $H^*(\mca{F};\bb{C}_\lambda)$}\label{beginning}
To compute the cohomology $H^*\left(\mca{F};\bb{C}_\lambda\right)$ for $\lambda\in\bb{C}$, we use the isomorphism 
\begin{equation*}
H^*\left(\mca{F};\bb{C}_\lambda\right)\simeq H^*\left(\mf{an};C^\infty(P,\bb{C})\otimes_\bb{C}\bb{C}_\lambda\right)
\end{equation*}
in Proposition \ref{drce}. Note that $C^\infty(P,\bb{C})$ is not just a representation of $\mf{an}$ but also that of $\sl(2,\bb{R})$. Let $\sl(2,\bb{R})\stackrel{\pi}{\curvearrowright}C^\infty(P,\bb{C})$ denote the representation. We apply things in Section \ref{Hooddg}. First we put 
\begin{equation*}
C^\infty(P,\bb{C})^0=\left\{f\in C^\infty(P,\bb{C})\ \middle|\ \pi(\Omega)f=\left(\lambda^2-\frac{1}{\sqrt{2}}\lambda\right)f\right\}
\end{equation*}
following a notation in Section \ref{Hooddg}. 

Recall that we have a unitary representation $\PSL(2,\bb{R})\curvearrowright L^2(P)$. By \eqref{decompooo} in Section \ref{decompl2} we have a decomposition 
\begin{equation*}
L^2(P)=\bigoplus_{\mu\in\Upsilon}W_\mu=W_{\lambda^2-\frac{1}{\sqrt{2}}\lambda}\oplus\bigoplus_{\mu\in\Upsilon_\lambda}W_\mu, 
\end{equation*}
where 
\begin{equation*}
\Upsilon=\left\{-\frac{\nu}{2}\ \middle|\ \nu\in\sigma(\ol{\Delta_\Sigma})\right\}\cup\left\{\frac{n(n-1)}{2}\ \middle|\ n\in\bb{Z}_{\geq1}\right\}, 
\end{equation*}
\begin{equation*}
\Upsilon_\lambda=\Upsilon\setminus\left\{\lambda^2-\frac{1}{\sqrt{2}}\lambda\right\}
\end{equation*}
and $W_\mu\subset L^2(P)$ is the sum of the irreducible unitary subrepresentations of $L^2(P)$ with Casimir parameter $\mu$. By taking the space of $C^\infty$ vectors of both sides, we get a decomposition 
\begin{equation*}
C^\infty(P,\bb{C})=C^\infty\left(W_{\lambda^2-\frac{1}{\sqrt{2}}\lambda}\right)\oplus C^\infty\left(\bigoplus_{\mu\in\Upsilon_\lambda}W_\mu\right)
\end{equation*}
into $\sl(2,\bb{R})$-invariant subspaces. We use this as the decomposition $\mca{H}=\mca{H}^0\oplus\mca{H}^\prime$ in Section \ref{Hooddg}. First note that we have 
\begin{equation*}
C^\infty(P,\bb{C})^0=C^\infty\left(W_{\lambda^2-\frac{1}{\sqrt{2}}\lambda}\right). 
\end{equation*}
So we need to prove the following to apply results in Section \ref{Hooddg}. 

\begin{lem}\label{biject}
The operator 
\begin{equation*}
\pi(\Omega)-\lambda^2+\frac{1}{\sqrt{2}}\lambda\colon C^\infty\left(\bigoplus_{\mu\in\Upsilon_\lambda}W_\mu\right)\to C^\infty\left(\bigoplus_{\mu\in\Upsilon_\lambda}W_\mu\right)
\end{equation*}
is bijective. 
\end{lem}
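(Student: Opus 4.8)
The plan is to diagonalize the operator by Casimir parameters and then produce a bounded, equivariant inverse. Write $\mu_0=\lambda^2-\frac{1}{\sqrt{2}}\lambda$, so that the operator in question is $S=\pi(\Omega)-\mu_0$. By the discussion in Section \ref{iurops}, the Casimir element $\Omega$ acts on $C^\infty(\mca{H})$ as the scalar given by the Casimir parameter of $\mca{H}$, for every irreducible unitary representation $\mca{H}$. Consequently $\pi(\Omega)$ acts on each $C^\infty(W_\mu)$ as multiplication by $\mu$, and $S$ acts there as multiplication by $\mu-\mu_0$. For $\mu\in\Upsilon_\lambda$ we have $\mu\neq\mu_0$ by the definition of $\Upsilon_\lambda$, so $S$ is a nonzero scalar on each summand; injectivity of $S$ on $C^\infty\left(\bigoplus_{\mu\in\Upsilon_\lambda}W_\mu\right)$ then follows by projecting onto the components $W_\mu$, using that the orthogonal projections $P_\mu$ are bounded and $\PSL(2,\bb{R})$-equivariant, hence preserve $C^\infty$ vectors and commute with $S$.

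For surjectivity I would construct the inverse $T$ spectrally, as the operator acting on $W_\mu$ by multiplication by $(\mu-\mu_0)^{-1}$. The quantitative input I need is that $\Upsilon$ is closed and discrete in $\bb{R}$: by the properties of $\sigma(\ol{\Delta_\Sigma})$ recalled in Section \ref{decompl2} (contained in $\bb{R}_{\geq0}$, infinite, with no finite accumulation point), the values $-\nu/2$ accumulate only at $-\infty$ while the values $\frac{n(n-1)}{2}$ accumulate only at $+\infty$, so every bounded subset of $\bb{R}$ meets $\Upsilon$ in a finite set. From this I would deduce $\inf_{\mu\in\Upsilon_\lambda}|\mu-\mu_0|>0$: if $\mu_0\notin\bb{R}$ this is immediate from $|\Im\mu_0|>0$, and if $\mu_0\in\bb{R}$ it follows because only finitely many $\mu\in\Upsilon$ lie near $\mu_0$ and none of the $\mu\in\Upsilon_\lambda$ equals $\mu_0$. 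Hence $T$, defined componentwise, extends to a bounded operator on the Hilbert space $\bigoplus_{\mu\in\Upsilon_\lambda}W_\mu$.

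Finally I would verify that $T$ preserves $C^\infty$ vectors and inverts $S$. Since $T$ is built from the central element $\Omega$ (equivalently, it acts as a scalar on each Casimir-isotypic component $W_\mu$, and these components are $\PSL(2,\bb{R})$-invariant), $T$ commutes with the $\PSL(2,\bb{R})$-action. A bounded equivariant operator sends smooth vectors to smooth vectors, since if $g\mapsto\pi(g)v$ is $C^\infty$ then so is $g\mapsto\pi(g)Tv=T\pi(g)v$, being the composition with the continuous linear map $T$. Thus $T$ restricts to an operator on $C^\infty\left(\bigoplus_{\mu\in\Upsilon_\lambda}W_\mu\right)$, and the identities $ST=TS=\id$ there are checked componentwise, using again $\pi(\Omega)P_\mu=\mu P_\mu$ on $C^\infty$ vectors.

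I expect the genuine content to be the separation estimate $\inf_{\mu\in\Upsilon_\lambda}|\mu-\mu_0|>0$, together with the verification that the spectrally defined inverse stays within the smooth vectors; the scalar algebra on each individual summand is immediate, and the \textbf{main obstacle} is precisely in passing from the summand-wise statement to the full infinite direct sum while controlling both boundedness and smoothness of $T$.
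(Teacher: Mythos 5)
Your proof is correct, and its skeleton coincides with the paper's: both arguments diagonalize $\pi(\Omega)-\lambda^2+\frac{1}{\sqrt{2}}\lambda$ over the Casimir-isotypic summands $W_\mu$, both extract the separation estimate $\inf_{\mu\in\Upsilon_\lambda}\left\lvert\mu-\lambda^2+\frac{1}{\sqrt{2}}\lambda\right\rvert>0$ from the fact that $\Upsilon$ has no accumulation points in $\bb{R}$ (your case split according to whether $\lambda^2-\frac{1}{\sqrt{2}}\lambda$ is real is the right bookkeeping), and both define the inverse componentwise. Where you genuinely diverge is in how the inverse is shown to land back in the smooth vectors. The paper runs everything through its Lemma \ref{smooooth}, the explicit description $C^\infty\left(\bigoplus_i\mca{H}_i\right)=\left\{\sum_if_i\ \middle|\ f_i\in C^\infty(\mca{H}_i),\ \sum_i\lVert Xf_i\rVert^2<\infty\ \text{for all }X\in U(\sl(2,\bb{R}))\right\}$, which is proved by a Taylor-remainder estimate; once that description is available, multiplying by the uniformly bounded scalars $\left(\mu-\lambda^2+\frac{1}{\sqrt{2}}\lambda\right)^{-1}$ visibly preserves the defining square-summability conditions, so the paper's proof of the lemma is essentially two lines. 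You instead observe that your spectrally defined $T$ is a bounded $\PSL(2,\bb{R})$-equivariant operator on the Hilbert space and that any such operator preserves $C^\infty$ vectors, since $g\mapsto\pi(g)Tv=T\pi(g)v$ is the composition of a smooth map with a continuous linear one; the same soft principle also justifies your use of the projections $P_\mu$ to get the componentwise action and the identities $ST=TS=\id$. Your route is more self-contained at this step---it needs no quantitative description of the smooth vectors of an infinite direct sum---whereas the paper's choice is natural in context because Lemma \ref{smooooth} is needed anyway, e.g.\ to produce the splitting $C^\infty(P,\bb{C})=C^\infty\left(W_{\lambda^2-\frac{1}{\sqrt{2}}\lambda}\right)\oplus C^\infty\left(\bigoplus_{\mu\in\Upsilon_\lambda}W_\mu\right)$ in Section \ref{beginning} and again in the computation of $H^*(P;\bb{C})$.
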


To prove this lemma we use the following description of the space of $C^\infty$ vectors. 

\begin{lem}\label{smooooth}
For $i\in\bb{Z}_{\geq1}$ let $\PSL(2,\bb{R})\curvearrowright\mca{H}_i$ be a unitary representation and $\PSL(2,\bb{R})\stackrel{\pi}{\curvearrowright}\mca{H}=\bigoplus_{i=1}^\infty\mca{H}_i$ be the direct sum. Then we have 
\begin{equation*}
C^\infty(\mca{H})=\left\{\sum_{i=1}^\infty f_i\ \middle|\ f_i\in C^\infty(\mca{H}_i),\ \sum_{i=1}^\infty\lVert Xf_i\rVert^2<\infty\ \ \text{for all $X\in U(\sl(2,\bb{R}))$}\right\}. 
\end{equation*}
Moreover, if $f\in C^\infty(\mca{H})$ and $f=\sum_{i=1}^\infty f_i$ in $\mca{H}$ with $f_i\in\mca{H}_i$, we have $Xf=\sum_{i=1}^\infty Xf_i$ in $\mca{H}$ for all $X\in U(\sl(2,\bb{R}))$. 
\end{lem}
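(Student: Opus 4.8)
The plan is to work throughout with the characterization of a smooth vector $v$ as one whose orbit map $\gamma_v\colon\PSL(2,\bb{R})\to\mca{H}$, $g\mapsto\pi(g)v$, is $C^\infty$, together with the key identity $\tilde{X}\gamma_v=\gamma_{Xv}$ for smooth $v$ and $X\in U(\sl(2,\bb{R}))$, where $\tilde{X}$ is the left-invariant differential operator on $\PSL(2,\bb{R})$ attached to $X$ (this follows by differentiating $\pi(g\exp tX)v$ and iterating). The structural input is that each $\mca{H}_i$, being a closed invariant subspace of a \emph{unitary} representation, reduces $\pi$; hence the orthogonal projection $P_i\colon\mca{H}\to\mca{H}_i$ commutes with every $\pi(g)$, and therefore with the derived action of $U(\sl(2,\bb{R}))$ on smooth vectors. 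I would prove the two inclusions separately and read off the ``moreover'' assertion from the easier one. Note first that taking $X=1$ in the summability condition gives $\sum_i\lVert f_i\rVert^2<\infty$, so $f=\sum_i f_i$ always makes sense in $\mca{H}$.

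For the inclusion $\subseteq$, take $f\in C^\infty(\mca{H})$ and write $f=\sum_i f_i$ with $f_i=P_if\in\mca{H}_i$. Since $P_i$ is bounded and commutes with $\pi$, the orbit map of $f_i$ is $P_i\circ\gamma_f$, which is $C^\infty$; thus $f_i\in C^\infty(\mca{H}_i)$, and differentiating yields $Xf_i=P_i(Xf)$ for every $X\in U(\sl(2,\bb{R}))$. By the Pythagorean identity for the orthogonal decomposition $\mca{H}=\bigoplus_i\mca{H}_i$ we then get $\sum_i\lVert Xf_i\rVert^2=\sum_i\lVert P_i(Xf)\rVert^2=\lVert Xf\rVert^2<\infty$, which establishes both the summability condition and, since $Xf=\sum_iP_i(Xf)=\sum_iXf_i$, the final assertion of the lemma.

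The reverse inclusion $\supseteq$ is the substantive point and the main obstacle. Suppose $f_i\in C^\infty(\mca{H}_i)$ satisfy $\sum_i\lVert Xf_i\rVert^2<\infty$ for all $X\in U(\sl(2,\bb{R}))$; I must show $f=\sum_i f_i$ is smooth. Let $s_N=\sum_{i\le N}f_i$, a finite sum of smooth vectors, and consider $\Phi_N=\gamma_{s_N}$, which converges pointwise to $\gamma_f$. For $X\in U(\sl(2,\bb{R}))$ the identity above gives $(\tilde{X}\Phi_N)(g)=\pi(g)(Xs_N)=\sum_{i\le N}\pi(g)(Xf_i)$, and here is the crux: because $\pi$ is unitary, $\lVert\pi(g)(Xf_i)\rVert=\lVert Xf_i\rVert$ independently of $g$, while the summands lie in the mutually orthogonal $\mca{H}_i$. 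Hence $\lVert(\tilde{X}\Phi_{N'}-\tilde{X}\Phi_N)(g)\rVert^2=\sum_{N<i\le N'}\lVert Xf_i\rVert^2$, which is independent of $g$ and tends to $0$, so $\tilde{X}\Phi_N$ is uniformly Cauchy on all of $\PSL(2,\bb{R})$ for every $X$. Since the left-invariant vector fields trivialize $T\PSL(2,\bb{R})$, ranging $X$ over $U(\sl(2,\bb{R}))$ captures every iterated derivative, and the standard theorem on termwise differentiation of uniformly convergent sequences of Banach-space-valued $C^\infty$ maps shows that $\Phi=\lim_N\Phi_N=\gamma_f$ is $C^\infty$; therefore $f=\Phi(e)$ is a smooth vector. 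The decisive simplification throughout this last step is unitarity, which makes each $\lVert\pi(g)(Xf_i)\rVert$ constant in $g$ and, via orthogonality, reduces the uniform tail estimate over the non-compact group to the scalar series $\sum_{i>N}\lVert Xf_i\rVert^2$; the only care needed is to invoke the termwise differentiation theorem in its vector-valued form.
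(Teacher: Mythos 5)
Your proof is correct, and while the easy inclusion (projecting a smooth vector onto each summand and using the Pythagorean identity) coincides with the paper's argument, your treatment of the substantive inclusion $\supseteq$ takes a genuinely different route. The paper fixes $X\in\sl(2,\bb{R})$ and estimates the difference quotient of $f$ termwise: writing $c(t)=\pi(e^{tX})f_i$, it uses a Taylor remainder formula obtained from the vector-valued fundamental theorem of calculus to get $\bigl\lVert\frac{\pi(e^{tX})f_i-f_i}{t}-Xf_i\bigr\rVert\leq\frac{\lvert t\rvert}{2}\lVert X^2f_i\rVert$, sums the squares over $i$, and concludes that $Xf$ exists and equals $\sum_iXf_i$; it then iterates over $U(\sl(2,\bb{R}))$ and ends with the assertion ``from this we can show that $g\mapsto\pi(g)f$ is a $C^\infty$ map,'' leaving that final implication (existence of all iterated strong derivatives implies smoothness of the orbit map) unproved. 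You instead work with the orbit maps $\Phi_N$ of the partial sums $s_N$, which are smooth because finite sums of smooth vectors are smooth, and show that for every $X\in U(\sl(2,\bb{R}))$ the sequence $\tilde{X}\Phi_N=\gamma_{Xs_N}$ is uniformly Cauchy on all of $\PSL(2,\bb{R})$ --- the tail $\sum_{N<i\leq N'}\lVert Xf_i\rVert^2$ being independent of $g$ by exactly the same two ingredients the paper uses (unitarity and orthogonality of the $\mca{H}_i$). Invoking termwise differentiation for Banach-space-valued maps along the left-invariant frame then gives smoothness of $\gamma_f$ outright. The paper's route buys an explicit quantitative rate in $t$ for the convergence of difference quotients; yours buys a self-contained conclusion, since the uniform-convergence theorem directly yields that the limit orbit map is $C^\infty$ and thus closes the gap the paper only gestures at. Both proofs correctly identify $Xf=\sum_iXf_i$, so the ``moreover'' clause comes out the same way.
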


\begin{proof}
Let $p_i\colon\mca{H}\to\mca{H}_i$ be the projection. Let $f\in C^\infty(\mca{H})$ and $f=\sum_{i=1}^\infty f_i$ in $\mca{H}$ with $f_i\in\mca{H}_i$. Since $p_i$ is bounded and $\PSL(2,\bb{R})$-equivariant, the component $f_i=p_i(f)$ is an element of $C^\infty(\mca{H}_i)$. Take $X\in\sl(2,\bb{R})$. We have $Xf=\sum_{i=1}^\infty g_i$ for some $g_i\in\mca{H}_i$. Since $\pi$ is continuous, we have $\pi(e^{tX})f=\sum_{i=1}^\infty\pi(e^{tX})f_i$ for $t\in\bb{R}$. Hence 
\begin{equation*}
\sum_{i=1}^\infty g_i=Xf=\lim_{t\to0}\frac{\pi(e^{tX})f-f}{t}=\lim_{t\to0}\sum_{i=1}^\infty\frac{\pi(e^{tX})f_i-f_i}{t}. 
\end{equation*}
Applying $p_i$ to this equation, we get 
\begin{equation*}
g_i=\lim_{t\to0}\frac{\pi(e^{tX})f_i-f_i}{t}=Xf_i. 
\end{equation*}
Thus $Xf=\sum_{i=1}^\infty Xf_i$. We see that this holds for all $X\in U(\sl(2,\bb{R}))$, by repeating the argument. In particular, we have $\sum_{i=1}^\infty\lVert Xf_i\rVert^2<\infty$. 

Conversely let $f=\sum_{i=1}^\infty f_i\in\mca{H}$ be such that $f_i\in C^\infty(\mca{H}_i)$ and $\sum_{i=1}^\infty\lVert Xf_i\rVert^2<\infty$ for all $X\in U(\sl(2,\bb{R}))$. Take $X\in\sl(2,\bb{R})$. By assumption we have an element $\sum_{i=1}^\infty Xf_i\in\mca{H}$. We prove that $Xf$ exists and $Xf=\sum_{i=1}^\infty Xf_i$. First we have 
\begin{equation*}
\frac{\pi(e^{tX})f-f}{t}-\sum_{i=1}^\infty Xf_i=\sum_{i=1}^\infty\left(\frac{\pi(e^{tX})f_i-f_i}{t}-Xf_i\right). 
\end{equation*}
Put $c(t)=\pi(e^{tX})f_i$. Then $c\colon\bb{R}\to\mca{H}_i$ is a $C^\infty$ map. The fundamental theorem of calculus holds for functions with values in a Fr\'{e}chet space, hence a Hilbert space in particular. See for example 2.2.2 of Hamilton \cite{Ham1}. So we get the following form of Taylor's theorem by integration by parts: 
\begin{align*}
c(t)-c(0)&=\int_0^tc^\prime(s)ds=tc^\prime(t)-\int_0^tsc^{\prime\prime}(s)ds\\
&=tc^\prime(0)+t\left(c^\prime(t)-c^\prime(0)\right)-\int_0^tsc^{\prime\prime}(s)ds\\
&=tc^\prime(0)+t\int_0^tc^{\prime\prime}(s)ds-\int_0^tsc^{\prime\prime}(s)ds\\
&=tc^\prime(0)+\int_0^t(t-s)c^{\prime\prime}(s)ds. 
\end{align*}
Thus we have 
\begin{align*}
\left\lVert\frac{\pi(e^{tX})f_i-f_i}{t}-Xf_i\right\rVert&=\left\lVert\int_0^t\left(1-\frac{s}{t}\right)\pi(e^{sX})X^2f_ids\right\rVert\\
&\leq\left\lvert\int_0^t\left(1-\frac{s}{t}\right)\left\lVert X^2f_i\right\rVert ds\right\rvert\\
&=\left\lVert X^2f_i\right\rVert\left\lvert\left[s-\frac{s^2}{2t}\right]_0^t\right\rvert\\
&=\left\lVert X^2f_i\right\rVert\frac{\lvert t\rvert}{2}. 
\end{align*}
Therefore, 
\begin{align*}
\left\lVert\frac{\pi(e^{tX})f-f}{t}-\sum_{i=1}^\infty Xf_i\right\rVert^2&=\sum_{i=1}^\infty\left\lVert\frac{\pi(e^{tX})f_i-f_i}{t}-Xf_i\right\rVert^2\\
&\leq\frac{t^2}{4}\sum_{i=1}^\infty\left\lVert X^2f_i\right\rVert^2\to0\quad(t\to0). 
\end{align*}
Hence we proved $Xf=\sum_{i=1}^\infty Xf_i$. By repeating the argument, we see that this holds for all $X\in U(\sl(2,\bb{R}))$. From this we can show that the map 
\begin{align*}
\PSL(2,\bb{R})&\to\mca{H}\\
g&\mapsto\pi(g)f
\end{align*}
is a $C^\infty$ map, hence $f\in C^\infty(\mca{H})$. 
\end{proof}

\begin{proof}[Proof of Lemma \ref{biject}]
We have 
\begin{equation*}
C^\infty\left(\bigoplus_{\mu\in\Upsilon_\lambda}W_\mu\right)=\left\{\sum_\mu f_\mu\ \middle|\ 
\begin{gathered}
f_\mu\in C^\infty(W_\mu),\\
\sum_\mu\lVert Xf_\mu\rVert^2<\infty\ \text{for all $X\in U(\sl(2,\bb{R}))$}
\end{gathered}
\right\}. 
\end{equation*}
by Lemma \ref{smooooth} and 
\begin{align*}
\pi(\Omega)-\lambda^2+\frac{1}{\sqrt{2}}\lambda\colon C^\infty\left(\bigoplus_{\mu\in\Upsilon_\lambda}W_\mu\right)&\to C^\infty\left(\bigoplus_{\mu\in\Upsilon_\lambda}W_\mu\right)\\
\sum_\mu f_\mu&\mapsto\sum_\mu\left(\mu-\lambda^2+\frac{1}{\sqrt{2}}\lambda\right)f_\mu. 
\end{align*}
Since the set $\Upsilon$ has no accumulation points in $\bb{R}$, the quantity $\left(\mu-\lambda^2+\frac{1}{\sqrt{2}}\lambda\right)^{-1}$ is bounded for $\mu\in\Upsilon_\lambda$. By this and the description of $C^\infty\left(\bigoplus_{\mu\in\Upsilon_\lambda}W_\mu\right)$, we can define an operator 
\begin{align*}
C^\infty\left(\bigoplus_{\mu\in\Upsilon_\lambda}W_\mu\right)&\to C^\infty\left(\bigoplus_{\mu\in\Upsilon_\lambda}W_\mu\right)\\
\sum_\mu f_\mu&\mapsto\sum_\mu\left(\mu-\lambda^2+\frac{1}{\sqrt{2}}\lambda\right)^{-1}f_\mu, 
\end{align*}
which is the inverse of $\pi(\Omega)-\lambda^2+\frac{1}{\sqrt{2}}\lambda$. 
\end{proof}

Therefore, by Section \ref{Hooddg}, we obtain the following. 

\begin{thm}\label{36kwlxisee}
For any $\lambda\in\bb{C}$, we have a decomposition 
\begin{equation*}
C^*(\mf{an};C^\infty(P,\bb{C})\otimes\bb{C}_\lambda)=C_\lambda\oplus C^*\left(\mf{an};C^\infty\left(\bigoplus_{\mu\in\Upsilon_\lambda}W_\mu\right)\otimes\bb{C}_\lambda\right)
\end{equation*}
of the cochain complex, where we put 
\begin{equation*}
C_\lambda=C^*\left(\mf{an};C^\infty\left(W_{\lambda^2-\frac{1}{\sqrt{2}}\lambda}\right)\otimes\bb{C}_\lambda\right)
\end{equation*}
and a ``Hodge decomposition'' 
\begin{equation*}
C^*\left(\mf{an};C^\infty\left(\bigoplus_{\mu\in\Upsilon_\lambda}W_\mu\right)\otimes\bb{C}_\lambda\right)=\im d\oplus\im\delta. 
\end{equation*}
Hence 
\begin{equation*}
H^*\left(\mf{an};C^\infty\left(\bigoplus_{\mu\in\Upsilon_\lambda}W_\mu\right)\otimes\bb{C}_\lambda\right)=0
\end{equation*}
and 
\begin{equation*}
H^*(\mf{an};C^\infty(P,\bb{C})\otimes\bb{C}_\lambda)=H^*(C_\lambda)=H^*\left(\mf{an};C^\infty\left(W_{\lambda^2-\frac{1}{\sqrt{2}}\lambda}\right)\otimes\bb{C}_\lambda\right). 
\end{equation*}
\end{thm}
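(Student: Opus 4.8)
The plan is to apply the general Hodge-decomposition machinery of Section~\ref{Hooddg} to the representation $\sl(2,\bb{R})\stackrel{\pi}{\curvearrowright}\mca{H}=C^\infty(P,\bb{C})$ with coefficient $\bb{C}_\lambda$, taking as the distinguished splitting $\mca{H}=\mca{H}^0\oplus\mca{H}'$ the one induced by the spectral decomposition of $L^2(P)$. First I would verify that
\[
\mca{H}^0=C^\infty\left(W_{\lambda^2-\frac{1}{\sqrt{2}}\lambda}\right),\qquad\mca{H}'=C^\infty\left(\bigoplus_{\mu\in\Upsilon_\lambda}W_\mu\right)
\]
are both $\sl(2,\bb{R})$-invariant and that $C^\infty(P,\bb{C})=\mca{H}^0\oplus\mca{H}'$. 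This is precisely the decomposition set up at the start of Section~\ref{beginning}: starting from \eqref{decompooo} one passes to $C^\infty$ vectors, and $U(\sl(2,\bb{R}))$-stability of each summand is exactly what Lemma~\ref{smooooth} guarantees, since it shows the spaces of $C^\infty$ vectors respect the orthogonal decomposition of $L^2(P)$ and are closed under the $\sl(2,\bb{R})$-action term by term.

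Next I would match $\mca{H}^0$ with the eigenspace $\ker\bigl(\pi(\Omega)-\lambda^2+\frac{1}{\sqrt{2}}\lambda\bigr)$ that plays the role of $\mca{H}^0$ in Section~\ref{Hooddg}. This is immediate from the definition of $C^\infty(P,\bb{C})^0$ together with the fact that $\Omega$ acts on $C^\infty(W_\mu)$ as the scalar $\mu$, so $W_{\lambda^2-\frac{1}{\sqrt{2}}\lambda}$ is exactly the summand on which $\pi(\Omega)=\lambda^2-\frac{1}{\sqrt{2}}\lambda$. The remaining hypothesis of Section~\ref{Hooddg}, namely bijectivity of $\pi(\Omega)-\lambda^2+\frac{1}{\sqrt{2}}\lambda$ on the complementary summand $\mca{H}'$, is then supplied directly by Lemma~\ref{biject}.

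With these two inputs, the stated conclusions follow mechanically. Since $\mca{H}=\mca{H}^0\oplus\mca{H}'$ is a direct sum of $\sl(2,\bb{R})$-modules, the Chevalley--Eilenberg complex splits as $C^*(\mf{an};C^\infty(P,\bb{C})\otimes\bb{C}_\lambda)=C_\lambda\oplus C^*(\mf{an};\mca{H}'\otimes\bb{C}_\lambda)$, which is the first displayed decomposition. On the $\mca{H}'$ summand, Theorem~\ref{mmtthh} specialized to $\xi(H)=\lambda$ gives $d\delta+\delta d=Q$ with $Q=\id\otimes\bigl(\pi(\Omega)-\lambda^2+\frac{1}{\sqrt{2}}\lambda\bigr)\otimes\id$ invertible, so $\delta'=Q^{-1}\delta$ satisfies $d\delta'+\delta'd=\id$ and $\im\delta'=\im\delta$; this produces the Hodge decomposition $\im d\oplus\im\delta$, forces the cohomology of that summand to vanish, and hence identifies $H^*(\mf{an};C^\infty(P,\bb{C})\otimes\bb{C}_\lambda)$ with $H^*(C_\lambda)$.

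Essentially all the work is contained in the lemmas being invoked, so the theorem itself presents little obstacle beyond careful assembly. The one point genuinely worth flagging is that inverting $Q$ on the infinite direct sum $\mca{H}'$ must remain inside the space of $C^\infty$ vectors, and this is exactly where the discreteness of $\sigma(\ol{\Delta_\Sigma})$ — equivalently of $\Upsilon$ — enters: it makes $\bigl(\mu-\lambda^2+\frac{1}{\sqrt{2}}\lambda\bigr)^{-1}$ uniformly bounded over $\mu\in\Upsilon_\lambda$, so scaling each component by this bounded factor preserves the convergence conditions $\sum_\mu\lVert Xf_\mu\rVert^2<\infty$ that characterize $C^\infty(\bigoplus_{\mu\in\Upsilon_\lambda}W_\mu)$ in Lemma~\ref{smooooth}. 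I would be explicit that this boundedness step is the true analytic content, the formal Hodge-theoretic consequences following automatically once it is secured.
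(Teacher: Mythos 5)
Your proposal is correct and takes essentially the same route as the paper: the same splitting $C^\infty(P,\bb{C})=C^\infty\left(W_{\lambda^2-\frac{1}{\sqrt{2}}\lambda}\right)\oplus C^\infty\left(\bigoplus_{\mu\in\Upsilon_\lambda}W_\mu\right)$ obtained from \eqref{decompooo} via Lemma \ref{smooooth}, the invertibility supplied by Lemma \ref{biject}, and the Section \ref{Hooddg} mechanism $\delta^\prime=Q^{-1}\delta$ with $d\delta^\prime+\delta^\prime d=\id$ and $\im\delta^\prime=\im\delta$. Your closing observation that the real analytic content is the uniform boundedness of $\left(\mu-\lambda^2+\frac{1}{\sqrt{2}}\lambda\right)^{-1}$ over $\mu\in\Upsilon_\lambda$, coming from the discreteness of $\Upsilon$, is precisely what the paper's proof of Lemma \ref{biject} establishes.
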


Let us see when $C_\lambda=0$. Put $\Lambda=\left\{\lambda\in\bb{C}\ \middle|\ \lambda^2-\frac{1}{\sqrt{2}}\lambda\in\Upsilon\right\}$. Then 
\begin{equation*}
C_\lambda=0\quad\Leftrightarrow\quad W_{\lambda^2-\frac{1}{\sqrt{2}}\lambda}=0\quad\Leftrightarrow\quad\lambda^2-\frac{1}{\sqrt{2}}\lambda\not\in\Upsilon\quad\Leftrightarrow\quad\lambda\not\in\Lambda. 
\end{equation*}
Recall 
\begin{equation*}
\Upsilon=\left\{-\frac{\nu}{2}\ \middle|\ \nu\in\sigma(\ol{\Delta_\Sigma})\right\}\cup\left\{\frac{n(n-1)}{2}\ \middle|\ n\in\bb{Z}_{\geq1}\right\}. 
\end{equation*}
Since 
\begin{equation*}
\lambda^2-\frac{1}{\sqrt{2}}\lambda=-\frac{\nu}{2}\quad\Leftrightarrow\quad\lambda=\frac{1\pm\sqrt{1-4\nu}}{2\sqrt{2}}, 
\end{equation*}
\begin{equation*}
\lambda^2-\frac{1}{\sqrt{2}}\lambda=\frac{n(n-1)}{2}\quad\Leftrightarrow\quad\lambda=\frac{n}{\sqrt{2}},\frac{1-n}{\sqrt{2}}, 
\end{equation*}
we have 
\begin{equation*}
\Lambda=\left\{\frac{1\pm\sqrt{1-4\nu}}{2\sqrt{2}}\ \middle|\ \nu\in\sigma(\ol{\Delta_\Sigma})\right\}\cup\frac{1}{\sqrt{2}}\bb{Z}. 
\end{equation*}

\begin{cor}
We have 
\begin{equation*}
H^*(\mca{F};\bb{C}_\lambda)\simeq H^*(\mf{an};C^\infty(P,\bb{C})\otimes\bb{C}_\lambda)=0
\end{equation*}
for any 
\begin{equation*}
\lambda\not\in\Lambda=\left\{\frac{1\pm\sqrt{1-4\nu}}{2\sqrt{2}}\ \middle|\ \nu\in\sigma(\ol{\Delta_\Sigma})\right\}\cup\frac{1}{\sqrt{2}}\bb{Z}. 
\end{equation*}
\end{cor}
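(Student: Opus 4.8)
The plan is to assemble the statement directly from the two structural results already in place: the identification of the foliated de Rham cohomology with a Lie algebra cohomology, and the Hodge-theoretic splitting of that Chevalley--Eilenberg complex. First I would invoke Proposition \ref{drce} with $\mf{g}=\mf{an}$ and $V=\bb{C}_\lambda$ to obtain the isomorphism $H^*(\mca{F};\bb{C}_\lambda)\simeq H^*(\mf{an};C^\infty(P,\bb{C})\otimes_\bb{C}\bb{C}_\lambda)$, which reduces the entire question to computing the cohomology on the right-hand side. This handles the first half of the claimed statement and leaves only the vanishing to be established.

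Next I would feed this complex into Theorem \ref{36kwlxisee}. That theorem splits $C^*(\mf{an};C^\infty(P,\bb{C})\otimes\bb{C}_\lambda)$ as the direct sum of the finite piece $C_\lambda=C^*\!\left(\mf{an};C^\infty\!\left(W_{\lambda^2-\frac{1}{\sqrt{2}}\lambda}\right)\otimes\bb{C}_\lambda\right)$ and a generic complement assembled from the $W_\mu$ with $\mu\in\Upsilon_\lambda$; the latter carries the contracting homotopy coming from $\delta'=Q^{-1}\delta$ and therefore has vanishing cohomology. Consequently $H^*(\mf{an};C^\infty(P,\bb{C})\otimes\bb{C}_\lambda)\simeq H^*(C_\lambda)$, and it remains only to show $C_\lambda=0$ under the hypothesis on $\lambda$.

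To finish I would translate the condition $\lambda\notin\Lambda$. Using $\Lambda=\{\lambda\in\bb{C}\mid\lambda^2-\tfrac{1}{\sqrt{2}}\lambda\in\Upsilon\}$ together with the two quadratic equivalences $\lambda^2-\tfrac{1}{\sqrt{2}}\lambda=-\tfrac{\nu}{2}\Leftrightarrow\lambda=\tfrac{1\pm\sqrt{1-4\nu}}{2\sqrt{2}}$ and $\lambda^2-\tfrac{1}{\sqrt{2}}\lambda=\tfrac{n(n-1)}{2}\Leftrightarrow\lambda\in\{\tfrac{n}{\sqrt{2}},\tfrac{1-n}{\sqrt{2}}\}$, the assumption $\lambda\notin\Lambda$ is exactly $\lambda^2-\tfrac{1}{\sqrt{2}}\lambda\notin\Upsilon$. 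By the spectral decomposition \eqref{decompooo} this forces $W_{\lambda^2-\frac{1}{\sqrt{2}}\lambda}=0$, so $C_\lambda=0$ and hence $H^*(C_\lambda)=0$, which gives the asserted vanishing.

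Since every ingredient is already available, the corollary itself presents no genuine obstacle; all of the real difficulty lives upstream. The substantive steps were the construction and verification of $\delta$ in Theorem \ref{mmtthh} and the bijectivity of $\pi(\Omega)-\lambda^2+\tfrac{1}{\sqrt{2}}\lambda$ on the generic part in Lemma \ref{biject}, which in turn rests on the fact that $\Upsilon$ has no accumulation points in $\bb{R}$, so that $(\mu-\lambda^2+\tfrac{1}{\sqrt{2}}\lambda)^{-1}$ stays bounded over $\mu\in\Upsilon_\lambda$. If one wished to prove the corollary without Theorem \ref{36kwlxisee}, the delicate point would be precisely this control of the infinite direct sum, namely ensuring that the homotopy $\delta'$ is a genuine operator on $C^\infty$ vectors rather than a merely formal inverse.
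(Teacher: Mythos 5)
Your proposal is correct and follows exactly the paper's own route: Proposition \ref{drce} to pass to $H^*\left(\mf{an};C^\infty(P,\bb{C})\otimes\bb{C}_\lambda\right)$, Theorem \ref{36kwlxisee} to reduce to $H^*(C_\lambda)$, and the two quadratic equivalences to show that $\lambda\not\in\Lambda$ is the same as $\lambda^2-\frac{1}{\sqrt{2}}\lambda\not\in\Upsilon$, hence $W_{\lambda^2-\frac{1}{\sqrt{2}}\lambda}=0$ and $C_\lambda=0$. Your closing remark about where the real difficulty lives (the contracting homotopy $\delta'$ and Lemma \ref{biject}) also matches the paper's division of labor, so there is nothing to correct.
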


Let us give a decomposition of $C_\lambda=C^*\left(\mf{an};C^\infty\left(W_{\lambda^2-\frac{1}{\sqrt{2}}\lambda}\right)\otimes\bb{C}_\lambda\right)$ for $\lambda\in\Lambda$. By Section \ref{conclu}, we have 
\begin{align*}
C_0&=C^*\left(\mf{an};C^\infty(W_0)\right)\\
&\simeq C^*(\mf{an};\bb{C})\oplus g_\Sigma C^*(\mf{an};C^\infty(D_1^+))\oplus g_\Sigma C^*(\mf{an};C^\infty(D_1^-))
\end{align*}
and 
\begin{align*}
C_\frac{1}{\sqrt{2}}&=C^*\left(\mf{an};C^\infty(W_0)\otimes\bb{C}_\frac{1}{\sqrt{2}}\right)\\
&\simeq C^*\left(\mf{an};\bb{C}_\frac{1}{\sqrt{2}}\right)\oplus g_\Sigma C^*\left(\mf{an};C^\infty(D_1^+)\otimes\bb{C}_\frac{1}{\sqrt{2}}\right)\oplus g_\Sigma C^*\left(\mf{an};C^\infty(D_1^-)\otimes\bb{C}_\frac{1}{\sqrt{2}}\right), 
\end{align*}
where $g_\Sigma$ is the genus of the surface $\Sigma=\Gamma\backslash\PSL(2,\bb{R})/\PSO(2)$. For $n\in\bb{Z}_{\geq2}$, we have 
\begin{align*}
C_\frac{n}{\sqrt{2}}&=C^*\left(\mf{an};C^\infty\left(W_\frac{n(n-1)}{2}\right)\otimes\bb{C}_\frac{n}{\sqrt{2}}\right)\\
&\simeq(2n-1)(g_\Sigma-1)\left(C^*\left(\mf{an};C^\infty(D_n^+)\otimes\bb{C}_\frac{n}{\sqrt{2}}\right)\oplus C^*\left(\mf{an};C^\infty(D_n^-)\otimes\bb{C}_\frac{n}{\sqrt{2}}\right)\right)
\end{align*}
and 
\begin{align}
C_\frac{1-n}{\sqrt{2}}&=C^*\left(\mf{an};C^\infty\left(W_\frac{n(n-1)}{2}\right)\otimes\bb{C}_\frac{1-n}{\sqrt{2}}\right)\nonumber\\
&\simeq(2n-1)(g_\Sigma-1)\left(C^*\left(\mf{an};C^\infty(D_n^+)\otimes\bb{C}_\frac{1-n}{\sqrt{2}}\right)\oplus C^*\left(\mf{an};C^\infty(D_n^-)\otimes\bb{C}_\frac{1-n}{\sqrt{2}}\right)\right). \label{ywolxi}
\end{align}
For $\nu\in\sigma(\ol{\Delta_\Sigma})\setminus\{0\}$, we have 
\begin{align*}
C_\frac{1\pm\sqrt{1-4\nu}}{2\sqrt{2}}&=C^*\left(\mf{an};C^\infty\left(W_{-\frac{\nu}{2}}\right)\otimes\bb{C}_\frac{1\pm\sqrt{1-4\nu}}{2\sqrt{2}}\right)\\
&\simeq m_\nu C^*\left(\mf{an};C^\infty\left(\mca{H}_{-\frac{\nu}{2}}\right)\otimes\bb{C}_\frac{1\pm\sqrt{1-4\nu}}{2\sqrt{2}}\right), 
\end{align*}
where $m_\nu$ is the multiplicity of the eigenvalue $\nu$ of $\ol{\Delta_\Sigma}$. 

By these decompositions of $C_\lambda$ for $\lambda\in\Lambda$ and the cohomology computation results in Section \ref{cohcomp}, we can now compute $H^*(\mca{F};\bb{C}_\lambda)$ for all $\lambda\in\Lambda$. The results are listed in Section \ref{comprrrr}. However, we can not just compute the cohomology but also give ``Hodge decompositions'' of the cochain complexes for most $\lambda\in\bb{C}$. So before going to Section \ref{comprrrr}, we will give ``Hodge decompositions'' that are not covered by Section \ref{Hooddg}, in the next two sections. 

Among the complexes $C^*(\mf{an};\bb{C}_\lambda)$, those that are not covered by the Hodge decomposition in Section \ref{Hooddg} are 
\begin{equation*}
C^*(\mf{an};\bb{C})\quad\text{and}\quad C^*\left(\mf{an};\bb{C}_\frac{1}{\sqrt{2}}\right). 
\end{equation*}
Constructions of Hodge decompositions for these are easy and given in Section \ref{easy}. 

Among $C^*\left(\mf{an};C^\infty(D_n^\pm)\otimes\bb{C}_\lambda\right)$, those that are not covered by the Hodge decompositions in Section \ref{Hooddg} are 
\begin{equation*}
C^*\left(\mf{an};C^\infty(D_n^\pm)\otimes\bb{C}_\frac{n}{\sqrt{2}}\right)\quad\text{and}\quad C^*\left(\mf{an};C^\infty(D_n^\pm)\otimes\bb{C}_\frac{1-n}{\sqrt{2}}\right). 
\end{equation*}
Hodge decompositions for these complexes will be constructed in Section \ref{discrete series}. 

Among $C^*\left(\mf{an};C^\infty\left(\mca{H}_{-\frac{\nu}{2}}\right)\otimes\bb{C}_\lambda\right)$ for $\nu<0$, those that are not covered by the Hodge decompositions in Section \ref{Hooddg} are 
\begin{equation*}
C^*\left(\mf{an};C^\infty\left(\mca{H}_{-\frac{\nu}{2}}\right)\otimes\bb{C}_\frac{1\pm\sqrt{1-4\nu}}{2\sqrt{2}}\right). 
\end{equation*}
But we did not find a ``Hodge decomposition'' for these complexes.

\section{Hodge decompositions of $C^*(\mf{an};\bb{C})$ and $C^*\left(\mf{an};\bb{C}_\frac{1}{\sqrt{2}}\right)$}\label{easy}
In this section we construct ``Hodge decompositions'' of $C^*(\mf{an};\bb{C})$ and $C^*\left(\mf{an};\bb{C}_\frac{1}{\sqrt{2}}\right)$. Since they are finite dimensional, the constructions are easy. We omit the detailed calculations in this section. 

Note that a ``Hodge decomposition'' of $C^*(\mf{an};\bb{C}_\lambda)$ for $\lambda\neq0,\frac{1}{\sqrt{2}}$ is already given in Section \ref{Hooddg}.

\subsection{$C^*(\mf{an};\bb{C})$}
Consider the following basis of $C^*(\mf{an};\bb{C})$: 
\begin{gather*}
1,\ \ \ \theta_H,\ \ \ \theta_E,\ \ \ \theta_H\wedge\theta_E. 
\end{gather*}
We have 
\begin{gather*}
d1=0,\quad d\theta_H=0,\quad d\theta_E=-\frac{1}{\sqrt{2}}\theta_H\wedge\theta_E,\quad d(\theta_H\wedge\theta_E)=0
\end{gather*}
for the differential $d$ of $C^*(\mf{an};\bb{C})$. Hence 
\begin{equation*}
d_0=
\begin{pmatrix}
0\\
0
\end{pmatrix}
,\quad d_1=
\begin{pmatrix}
0&-\frac{1}{\sqrt{2}}
\end{pmatrix}
, 
\end{equation*}
where $d_i$ is the differential on $C^i(\mf{an};\bb{C})$. Let $\delta_i\colon C^i(\mf{an};\bb{C})\to C^{i-1}(\mf{an};\bb{C})$ and $p_i\colon C^i(\mf{an};\bb{C})\to C^i(\mf{an};\bb{C})$ be linear maps. Then $\delta_i$ and $p_i$ satisfy 
\begin{gather*}
d\delta+\delta d=\id-p,\quad p^2=p,\quad dp=0,\quad\delta p=0,\quad pd=0,\quad p\delta=0,\quad\delta^2=0
\end{gather*}
if and only if 
\begin{gather*}
\delta_1=
\begin{pmatrix}
0&0
\end{pmatrix}
,\quad\delta_2=
\begin{pmatrix}
a\\
-\sqrt{2}
\end{pmatrix}
,\\
p_0=
\begin{pmatrix}
1
\end{pmatrix}
,\quad p_1=
\begin{pmatrix}
1&\frac{1}{\sqrt{2}}a\\
0&0
\end{pmatrix}
,\quad p_2=
\begin{pmatrix}
0
\end{pmatrix}
\end{gather*}
for some $a\in\bb{C}$. Using these $\delta$ and $p$, we get a ``Hodge decomposition'' of $C^*(\mf{an};\bb{C})$: 
\begin{align*}
C^*(\mf{an};\bb{C})&=\im p\oplus\im d\oplus\im\delta\\
\ker d&=\im p\oplus\im d\\
\ker\delta&=\im p\oplus\im\delta\\
\ker d\cap\ker\delta&=\im p\\
\im d\oplus\im\delta&=\ker p. 
\end{align*}
Hence 
\begin{equation*}
H^i(\mf{an};\bb{C})\simeq\im p_i=
\begin{cases}
\bb{C}1&i=0\\
\bb{C}\theta_H&i=1\\
0&\text{otherwise}. 
\end{cases}
\end{equation*}

As in Section \ref{construc} it is possible to give a degree free expression of $\delta$ by using the Clifford algebra. Let $C(\mf{an}\oplus\mf{an}^*)$ be the Clifford algebra of $\mf{an}\oplus\mf{an}^*$ and $\sigma$ be the representation $C(\mf{an}\oplus\mf{an}^*)\otimes\bb{C}\curvearrowright C^*(\mf{an};\bb{C})$ obtained from the spin representation $C(\mf{an}\oplus\mf{an}^*)\curvearrowright C^*(\mf{an};\bb{R})$. Define elements $\hat{\delta}$ and $\hat{p}$ of $C(\mf{an}\oplus\mf{an}^*)\otimes\bb{C}$ by 
\begin{align*}
\hat{\delta}&=\left(\sqrt{2}\theta_E-a\theta_H\right)HE\\
\hat{p}&=1-\frac{1}{\sqrt{2}}\left(\sqrt{2}\theta_E-a\theta_H\right)E. 
\end{align*}
Then we have $\sigma(\hat{\delta})=\delta$ and $\sigma(\hat{p})=p$.

\subsection{$C^*\left(\mf{an};\bb{C}_\frac{1}{\sqrt{2}}\right)$}
Consider the following basis of $C^*\left(\mf{an};\bb{C}_\frac{1}{\sqrt{2}}\right)$: 
\begin{gather*}
1,\ \ \ \theta_H,\ \ \ \theta_E,\ \ \ \theta_H\wedge\theta_E. 
\end{gather*}
We have 
\begin{gather*}
d1=\frac{1}{\sqrt{2}}\theta_H,\quad d\theta_H=0,\quad d\theta_E=0,\quad d(\theta_H\wedge\theta_E)=0
\end{gather*}
for the differential $d$ of $C^*\left(\mf{an};\bb{C}_\frac{1}{\sqrt{2}}\right)$. Hence 
\begin{equation*}
d_0=
\begin{pmatrix}
\frac{1}{\sqrt{2}}\\
0
\end{pmatrix}
,\quad d_1=
\begin{pmatrix}
0&0
\end{pmatrix}
, 
\end{equation*}
where $d_i$ is the differential on $C^i\left(\mf{an};\bb{C}_\frac{1}{\sqrt{2}}\right)$. Let $\delta_i\colon C^i\left(\mf{an};\bb{C}_\frac{1}{\sqrt{2}}\right)\to C^{i-1}\left(\mf{an};\bb{C}_\frac{1}{\sqrt{2}}\right)$ and $p_i\colon C^i\left(\mf{an};\bb{C}_\frac{1}{\sqrt{2}}\right)\to C^i\left(\mf{an};\bb{C}_\frac{1}{\sqrt{2}}\right)$ be linear maps. Then $\delta_i$ and $p_i$ satisfy 
\begin{gather*}
d\delta+\delta d=\id-p,\quad p^2=p,\quad dp=0,\quad\delta p=0,\quad pd=0,\quad p\delta=0,\quad\delta^2=0
\end{gather*}
if and only if 
\begin{gather*}
\delta_1=
\begin{pmatrix}
\sqrt{2}&a
\end{pmatrix}
,\quad\delta_2=
\begin{pmatrix}
0\\
0
\end{pmatrix}
,\\
p_0=
\begin{pmatrix}
0
\end{pmatrix}
,\quad p_1=
\begin{pmatrix}
0&-\frac{1}{\sqrt{2}}a\\
0&1
\end{pmatrix}
,\quad p_2=
\begin{pmatrix}
1
\end{pmatrix}
\end{gather*}
for some $a\in\bb{C}$. Using these $\delta$ and $p$, we get a ``Hodge decomposition'' of $C^*\left(\mf{an};\bb{C}_\frac{1}{\sqrt{2}}\right)$: 
\begin{align*}
C^*\left(\mf{an};\bb{C}_\frac{1}{\sqrt{2}}\right)&=\im p\oplus\im d\oplus\im\delta\\
\ker d&=\im p\oplus\im d\\
\ker\delta&=\im p\oplus\im\delta\\
\ker d\cap\ker\delta&=\im p\\
\im d\oplus\im\delta&=\ker p. 
\end{align*}
Hence 
\begin{equation*}
H^i\left(\mf{an};\bb{C}_\frac{1}{\sqrt{2}}\right)\simeq\im p_i=
\begin{cases}
\bb{C}\left(-\frac{1}{\sqrt{2}}a\theta_H+\theta_E\right)&i=1\\
\bb{C}(\theta_H\wedge\theta_E)&i=2\\
0&\text{otherwise}. 
\end{cases}
\end{equation*}

Let $\sigma$ be the representation $C(\mf{an}\oplus\mf{an}^*)\otimes\bb{C}\curvearrowright C^*\left(\mf{an};\bb{C}_\frac{1}{\sqrt{2}}\right)$ obtained from the spin representation $C(\mf{an}\oplus\mf{an}^*)\curvearrowright C^*(\mf{an};\bb{R})$. Define elements $\hat{\delta}$ and $\hat{p}$ of $C(\mf{an}\oplus\mf{an}^*)\otimes\bb{C}$ by 
\begin{align*}
\hat{\delta}&=HE\left(\sqrt{2}\theta_E-a\theta_H\right)\\
\hat{p}&=\frac{1}{\sqrt{2}}\left(\sqrt{2}\theta_E-a\theta_H\right)E. 
\end{align*}
Then $\sigma(\hat{\delta})=\delta$ and $\sigma(\hat{p})=p$.

\section{Hodge decomposition of $C^*(\mf{an};C^\infty(D_n^\pm)\otimes\bb{C}_\lambda)$}\label{discrete series}
In this section we consider the cochain complex $C^*(\mf{an};C^\infty(D_n^\pm)\otimes\bb{C}_\lambda)$ and give a ``Hodge decomposition'' of it. There are three sections here but Section \ref{1-n2n} is just a restatement of the Hodge decomposition given in Section \ref{Hooddg} and not necessary because Sections \ref{1-n2} and \ref{1-n22} cover all the cases. We included Section \ref{1-n2n} just to compare it with Section \ref{1-n2} and Section \ref{1-n22}.

\subsection{$\lambda\neq\frac{1-n}{\sqrt{2}},\frac{n}{\sqrt{2}}$}\label{1-n2n}
First we consider the case when $\lambda\neq\frac{1-n}{\sqrt{2}},\frac{n}{\sqrt{2}}$. We just apply things in Section \ref{construc} and Section \ref{Hooddg}. Let $\sl(2,\bb{R})\stackrel{\pi}{\curvearrowright}C^\infty(D_n^\pm)$ denote the derivative of the discrete series representation. Consider the operator
\begin{align*}
\delta&=d_\star^\top\otimes\id\otimes\id+\iota(H)\otimes\left(\pi(H)+\frac{1}{\sqrt{2}}-\lambda\right)\otimes\id+\iota(E)\otimes2\pi(F)\otimes\id
\end{align*}
of degree $-1$ on $C^*(\mf{an};C^\infty(D_n^\pm)\otimes\bb{C}_\lambda)$, constructed in Section \ref{construc}. Since 
\begin{align*}
\pi(\Omega)-\lambda^2+\frac{1}{\sqrt{2}}\lambda&=\frac{n(n-1)}{2}-\lambda^2+\frac{1}{\sqrt{2}}\lambda\\
&=-\left(\lambda-\frac{1-n}{\sqrt{2}}\right)\left(\lambda-\frac{n}{\sqrt{2}}\right)\neq0, 
\end{align*}
the operator 
\begin{equation*}
\pi(\Omega)-\lambda^2+\frac{1}{\sqrt{2}}\lambda\colon C^\infty(D_n^\pm)\to C^\infty(D_n^\pm)
\end{equation*}
is bijective. So by Section \ref{Hooddg} we have 
\begin{equation*}
C^*(\mf{an};C^\infty(D_n^\pm)\otimes\bb{C}_\lambda)=\im d\oplus\im\delta
\end{equation*}
and 
\begin{equation*}
H^*(\mf{an};C^\infty(D_n^\pm)\otimes\bb{C}_\lambda)=0. 
\end{equation*}

\subsection{$\lambda\neq\frac{1-n}{\sqrt{2}}$}\label{1-n2}
Here we will construct a ``Hodge decomposition'' of $C^*\left(\mf{an};C^\infty(D_n^\pm)\otimes\bb{C}_\lambda\right)$ when $\lambda\neq\frac{1-n}{\sqrt{2}}$. So a newly covered case is $\lambda=\frac{n}{\sqrt{2}}$. Even for already covered cases, the construction here is different from the one in Sections \ref{construc} and \ref{Hooddg}. 

First we consider a bit more general cochain complex $C^*(\mf{an};\mca{H}\otimes V)$ just to make the argument clearer and apply the results to $C^*\left(\mf{an};C^\infty(D_n^\pm)\otimes\bb{C}_\lambda\right)$ later.

\subsubsection{A general consideration}\label{geneconsss}
Let $\sl(2,\bb{R})\stackrel{\pi}{\curvearrowright}\mca{H}$ be a representation on a complex vector space $\mca{H}$. Assume that $\pi(E)\colon\mca{H}\to\mca{H}$ is injective and there exists a linear map $\varphi\colon\mca{H}\to\bb{C}$ such that $\ker\varphi=\pi(E)\mca{H}$. Let $\mf{an}\stackrel{\xi}{\curvearrowright}V$ be a real (resp. complex) representation such that $\xi(\mf{n})=0$. Consider the tensor product $\mf{an}\curvearrowright\mca{H}\otimes_\bb{R}V$ (resp. $\mca{H}\otimes_\bb{C}V$) of the representations. We will construct a ``Hodge decomposition'' of the Chevalley--Eilenberg complex $\left(C^*(\mf{an};\mca{H}\otimes V),d\right)$ assuming one more condition which amounts to $\lambda\neq\frac{1-n}{\sqrt{2}}$. 

Let $\mca{H}^*=\Hom_\bb{C}(\mca{H},\bb{C})$ be the complex dual and $\sl(2,\bb{R})\stackrel{\pi}{\curvearrowright}\mca{H}^*$ be the representation defined by $(\pi(X)\psi)f=-\psi(\pi(X)f)$ for $X\in\sl(2,\bb{R})$, $\psi\in\mca{H}^*$ and $f\in\mca{H}$. Consider the zeroth cohomology 
\begin{align*}
H^0(\mf{n};\mca{H}^*)&=\left\{\psi\in\mca{H}^*\ \middle|\ \pi(E)\psi=0\right\}\\
&=\left\{\psi\in\mca{H}^*\ \middle|\ \psi(\pi(E)\mca{H})=0\right\}. 
\end{align*}
Since the subspace $\pi(E)\mca{H}$ is of codimension $1$ or $0$ in $\mca{H}$ by the assumption $\ker\varphi=\pi(E)\mca{H}$, we have $H^0(\mf{n};\mca{H}^*)=\bb{C}\varphi$. Since $\mf{n}=\bb{R}E$ is an ideal of $\mf{an}$, the representation $\sl(2,\bb{R})\stackrel{\pi}{\curvearrowright}\mca{H}^*$ restricts to a representation $\mf{an}\curvearrowright H^0(\mf{n};\mca{H}^*)$. It follows that $\pi(H)\varphi=\nu\varphi$ for some $\nu\in\bb{C}$. 

For any $f\in\mca{H}$, we have 
\begin{equation*}
\varphi\left((\pi(H)+\nu)f\right)=-\left(\pi(H)\varphi\right)(f)+\nu\varphi(f)=0. 
\end{equation*}
So we can consider an element $\pi(E)^{-1}(\pi(H)+\nu)f\in\mca{H}$ by the assumptions. Define an operator $S\colon\mca{H}\to\mca{H}$ by 
\begin{equation*}
S=\pi(E)^{-1}(\pi(H)+\nu). 
\end{equation*}

Consider the operator 
\begin{equation}\label{deltaneq}
\delta=\iota(H)\otimes\id\otimes\id-\iota(E)\otimes S\otimes\id
\end{equation}
on $C^*(\mf{an};\mca{H}\otimes V)=\bigwedge^*\mf{an}^*\otimes\mca{H}\otimes V$ of degree $-1$. We have $\delta^2=0$. To compute $d\delta+\delta d$ we need some formulas. First we obviously have $\pi(E)S=\pi(H)+\nu$. 

\begin{lem}\label{spie}
We have $S\pi(E)=\pi(H)+\nu+\frac{1}{\sqrt{2}}$. 
\end{lem}

\begin{proof}
We have 
\begin{align*}
S\pi(E)&=\pi(E)^{-1}(\pi(H)+\nu)\pi(E)=\pi(E)^{-1}\left(\pi(H)\pi(E)+\nu\pi(E)\right)\\
&=\pi(E)^{-1}\left(\pi\left([H,E]\right)+\pi(E)\pi(H)+\nu\pi(E)\right)\\
&=\pi(E)^{-1}\left(\pi\left(\frac{1}{\sqrt{2}}E\right)+\pi(E)\pi(H)+\nu\pi(E)\right)\\
&=\frac{1}{\sqrt{2}}+\pi(H)+\nu. 
\end{align*}
\end{proof}

Thus 
\begin{equation}\label{piesb}
[\pi(E),S]=-\frac{1}{\sqrt{2}}. 
\end{equation}

Since $\mf{n}=\bb{R}E$ is an ideal of $\mf{an}$, the representation $\sl(2,\bb{R})\stackrel{\pi}{\curvearrowright}\mca{H}$ restricts to $\mf{an}\curvearrowright\pi(E)\mca{H}$. Hence the operator $\pi(E)^{-1}\pi(H)$ is defined on $\pi(E)\mca{H}$. 

\begin{lem}\label{commmmp}
We have $\pi(E)^{-1}\pi(H)-\pi(H)\pi(E)^{-1}=\frac{1}{\sqrt{2}}\pi(E)^{-1}$ on $\pi(E)\mca{H}$. 
\end{lem}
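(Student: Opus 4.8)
The plan is to prove the identity by evaluating both sides on an arbitrary element of $\pi(E)\mca{H}$. Write such an element as $v=\pi(E)w$ with $w\in\mca{H}$; since $\pi(E)$ is injective, $w$ is uniquely determined and $\pi(E)^{-1}v=w$ by definition. The one algebraic input I will need is the commutation relation $[H,E]=\frac{1}{\sqrt{2}}E$ from Section \ref{notation}, which gives $\pi(H)\pi(E)-\pi(E)\pi(H)=\frac{1}{\sqrt{2}}\pi(E)$ as operators on $\mca{H}$.

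First I would compute $\pi(H)v$. Rewriting the commutation relation as $\pi(H)\pi(E)=\pi(E)\bigl(\pi(H)+\frac{1}{\sqrt{2}}\bigr)$ and applying it to $w$ yields $\pi(H)v=\pi(E)\bigl(\pi(H)w+\frac{1}{\sqrt{2}}w\bigr)$, which exhibits $\pi(H)v$ as lying in $\pi(E)\mca{H}$ (reconfirming that $\pi(H)$ preserves $\pi(E)\mca{H}$, as $\mf{n}$ is an ideal) and gives $\pi(E)^{-1}\pi(H)v=\pi(H)w+\frac{1}{\sqrt{2}}w$. On the other hand $\pi(H)\pi(E)^{-1}v=\pi(H)w$ directly.

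Subtracting the two expressions, the $\pi(H)w$ terms cancel and I am left with $\frac{1}{\sqrt{2}}w=\frac{1}{\sqrt{2}}\pi(E)^{-1}v$, which is the claim. There is no real obstacle here; the only point requiring a moment's care is the well-definedness of the operators, namely that $\pi(E)^{-1}$ makes sense on $\pi(E)\mca{H}$ (guaranteed by the standing injectivity assumption on $\pi(E)$) and that $\pi(H)$ indeed maps $\pi(E)\mca{H}$ into itself so that $\pi(E)^{-1}\pi(H)$ is defined on that subspace, both of which are either assumed or re-derived in the computation above. Alternatively, one can obtain the same identity formally by composing the commutation relation with $\pi(E)^{-1}$ on $\pi(E)\mca{H}$, but the elementwise verification is the most transparent.
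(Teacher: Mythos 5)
Your proof is correct and is essentially the same as the paper's: the paper also sets $f^\prime=\pi(E)f$, applies the commutation relation $\pi(H)\pi(E)f=\pi(E)\pi(H)f+\frac{1}{\sqrt{2}}\pi(E)f$, and composes with $\pi(E)^{-1}$ to get the identity elementwise. Your added remarks on well-definedness match what the paper establishes just before the lemma.
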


\begin{proof}
For $f\in\mca{H}$, put $f^\prime=\pi(E)f$. We have 
\begin{equation*}
\pi(H)\pi(E)f=\pi(E)\pi(H)f+\frac{1}{\sqrt{2}}\pi(E)f, 
\end{equation*}
hence 
\begin{equation*}
\pi(E)^{-1}\pi(H)\pi(E)f=\pi(H)f+\frac{1}{\sqrt{2}}f, 
\end{equation*}
that is, 
\begin{equation*}
\pi(E)^{-1}\pi(H)f^\prime=\pi(H)\pi(E)^{-1}f^\prime+\frac{1}{\sqrt{2}}\pi(E)^{-1}f^\prime. 
\end{equation*}
\end{proof}

\begin{lem}\label{pihsb}
We have $[\pi(H),S]=-\frac{1}{\sqrt{2}}S$. 
\end{lem}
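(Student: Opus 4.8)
The plan is to prove the commutation relation $[\pi(H),S]=-\frac{1}{\sqrt{2}}S$ directly from the definition $S=\pi(E)^{-1}(\pi(H)+\nu)$, exactly in the style of the preceding Lemmas \ref{spie} and \ref{commmmp}. The only commutation relation available is $[H,E]=\frac{1}{\sqrt{2}}E$ in $\mf{sl}(2,\bb{R})$, which gives $\pi(H)\pi(E)=\pi(E)\pi(H)+\frac{1}{\sqrt{2}}\pi(E)$, and the conjugated form on $\pi(E)\mca{H}$ established in Lemma \ref{commmmp}. I would compute $\pi(H)S$ and $S\pi(H)$ separately and subtract.

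First I would expand $\pi(H)S=\pi(H)\pi(E)^{-1}(\pi(H)+\nu)$. Using Lemma \ref{commmmp}, which tells us $\pi(E)^{-1}\pi(H)=\pi(H)\pi(E)^{-1}+\frac{1}{\sqrt{2}}\pi(E)^{-1}$ on $\pi(E)\mca{H}$, I can rewrite $\pi(H)\pi(E)^{-1}=\pi(E)^{-1}\pi(H)-\frac{1}{\sqrt{2}}\pi(E)^{-1}$. Hence
\begin{equation*}
\pi(H)S=\left(\pi(E)^{-1}\pi(H)-\tfrac{1}{\sqrt{2}}\pi(E)^{-1}\right)(\pi(H)+\nu).
\end{equation*}
On the other hand $S\pi(H)=\pi(E)^{-1}(\pi(H)+\nu)\pi(H)=\pi(E)^{-1}\pi(H)(\pi(H)+\nu)$, since $\nu$ is a scalar commuting with $\pi(H)$. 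Subtracting, the $\pi(E)^{-1}\pi(H)(\pi(H)+\nu)$ terms cancel and I am left with
\begin{equation*}
[\pi(H),S]=-\tfrac{1}{\sqrt{2}}\pi(E)^{-1}(\pi(H)+\nu)=-\tfrac{1}{\sqrt{2}}S,
\end{equation*}
which is exactly the claim.

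The only subtlety — and the one place I would be careful — is the domain issue: Lemma \ref{commmmp} is stated only on $\pi(E)\mca{H}$, so I must check that every intermediate expression is applied to elements of $\pi(E)\mca{H}$, or equivalently that the operator identities compose on the correct subspaces. Since $S$ and $\pi(H)$ map $\mca{H}$ into $\mca{H}$ and the factor $\pi(E)^{-1}$ always acts on elements already in the image of $\pi(E)$ (by the definition of $S$, which produces $\pi(E)^{-1}$ applied to $(\pi(H)+\nu)f$, an element annihilated by $\varphi$ and hence in $\ker\varphi=\pi(E)\mca{H}$), the application of Lemma \ref{commmmp} is legitimate throughout. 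This domain bookkeeping is the main obstacle, but it is routine once one tracks that $\varphi\left((\pi(H)+\nu)f\right)=0$ guarantees everything lands in $\pi(E)\mca{H}$; beyond that the proof is a two-line algebraic cancellation.
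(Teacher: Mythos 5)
Your proof is correct and is essentially the paper's own argument: both use Lemma \ref{commmmp} to commute $\pi(H)$ past $\pi(E)^{-1}$ in $\pi(H)S$ and then subtract $S\pi(H)$, with the only cosmetic difference that you cancel the $\pi(E)^{-1}\pi(H)(\pi(H)+\nu)$ terms directly (using that $\nu$ is a scalar) while the paper expands both products fully before subtracting. Your domain remark — that everything $\pi(E)^{-1}$ is applied to lies in $\ker\varphi=\pi(E)\mca{H}$ — is also the correct justification, implicit in the paper.
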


\begin{proof}
By Lemma \ref{commmmp}, we have 
\begin{align*}
\pi(H)S&=\pi(H)\pi(E)^{-1}(\pi(H)+\nu)\\
&=\left(\pi(E)^{-1}\pi(H)-\frac{1}{\sqrt{2}}\pi(E)^{-1}\right)(\pi(H)+\nu)\\
&=\pi(E)^{-1}\left(\pi(H)^2+\left(\nu-\frac{1}{\sqrt{2}}\right)\pi(H)-\frac{1}{\sqrt{2}}\nu\right). 
\end{align*}
Since $S\pi(H)=\pi(E)^{-1}(\pi(H)+\nu)\pi(H)$, we get 
\begin{equation*}
[\pi(H),S]=\pi(E)^{-1}\left(-\frac{1}{\sqrt{2}}\pi(H)-\frac{1}{\sqrt{2}}\nu\right)=-\frac{1}{\sqrt{2}}S. 
\end{equation*}
\end{proof}

Recall that the differential $d$ of $C^*(\mf{an};\mca{H}\otimes V)$ can be expressed as 
\begin{equation*}
d=d_\star\otimes\id\otimes\id+\varepsilon(\theta_H)\otimes\pi(H)\otimes\id+\varepsilon(\theta_H)\otimes\id\otimes\xi(H)+\varepsilon(\theta_E)\otimes\pi(E)\otimes\id
\end{equation*}
by the assumption $\xi(E)=0$, where $d_\star$ is the differential of $C^*(\mf{an};\bb{R})$. See \eqref{dformula} in Section \ref{construc}. 

\begin{thm}\label{ddeltadeltadiii}
We have 
\begin{equation*}
d\delta+\delta d=\id\otimes\id\otimes\xi(H)-\left(\nu+\frac{1}{\sqrt{2}}\right)\id. 
\end{equation*}
\end{thm}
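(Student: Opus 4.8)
The plan is to prove the identity by a direct, term-by-term computation organized so that the Clifford, the $\mca{H}$, and the $V$ factors are handled separately. Writing $d$ as in \eqref{dformula} and $\delta$ as in \eqref{deltaneq}, both operators are sums of elementary tensors $A\otimes B\otimes C$ acting on $\bigwedge^*\mf{an}^*\otimes\mca{H}\otimes V$, where $A$ is one of the exterior operators $\varepsilon(\theta_H),\varepsilon(\theta_E),\iota(H),\iota(E),d_\star$, the factor $B$ acts on $\mca{H}$, and $C$ acts on $V$. Since the grading lives only on the first factor, composition multiplies the three factors with no Koszul signs, so $d\delta+\delta d$ expands bilinearly. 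Labeling the four summands of $d$ as $d_1,d_2,d_3,d_4$ and the two of $\delta$ as $\delta_1,\delta_2$ in the order written, I would compute each of the eight sums $d_i\delta_j+\delta_j d_i$.

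For the purely exterior pairings I would invoke Lemma \ref{cliffrule}, namely $\varepsilon(\theta_H)\iota(H)+\iota(H)\varepsilon(\theta_H)=1$, $\varepsilon(\theta_E)\iota(E)+\iota(E)\varepsilon(\theta_E)=1$, and the vanishing of the cross anticommutators. For the terms containing $d_\star$ I would use \eqref{liederivativei} together with \eqref{liederivativefo}, which after applying the spin representation give $d_\star\iota(H)+\iota(H)d_\star=-\tfrac{1}{\sqrt2}\varepsilon(\theta_E)\iota(E)$ and $d_\star\iota(E)+\iota(E)d_\star=\tfrac{1}{\sqrt2}\varepsilon(\theta_H)\iota(E)$. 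On the $\mca{H}$ factor the needed inputs are the defining identity $\pi(E)S=\pi(H)+\nu$, Lemma \ref{spie} giving $S\pi(E)=\pi(H)+\nu+\tfrac1{\sqrt2}$, and Lemma \ref{pihsb} giving $[\pi(H),S]=-\tfrac1{\sqrt2}S$.

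Carrying out the collection, several families cancel or combine: the two contributions proportional to $\varepsilon(\theta_H)\iota(E)\otimes S$ — one from the $d_\star$–$\iota(E)$ anticommutator, the other from $[\pi(H),S]$ — cancel exactly; pairing the $\varepsilon(\theta_H)\otimes\pi(H)$ part of $d$ with $\iota(H)$ produces $\id\otimes\pi(H)\otimes\id$, which combines with the $-(\pi(H)+\nu)$ coming from $d_4\delta_2+\delta_2 d_4$ to leave $-\nu\,\id$; and pairing the $\varepsilon(\theta_H)\otimes\xi(H)$ part of $d$ with $\iota(H)$ yields the surviving $\id\otimes\id\otimes\xi(H)$, while its pairing with $\iota(E)$ vanishes. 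Assembling these gives exactly $\id\otimes\id\otimes\xi(H)-\bigl(\nu+\tfrac1{\sqrt2}\bigr)\id$.

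I expect the delicate step to be the term $d_4\delta_2+\delta_2 d_4$. Here the two exterior operators $\varepsilon(\theta_E)\iota(E)$ and $\iota(E)\varepsilon(\theta_E)$ are distinct — they sum to $1$ rather than anticommuting to $0$ — so one cannot simply factor the $\mca{H}$-part out. Writing the sum as $-\varepsilon(\theta_E)\iota(E)\otimes\pi(E)S\otimes\id-\iota(E)\varepsilon(\theta_E)\otimes S\pi(E)\otimes\id$ and splitting off the $\tfrac1{\sqrt2}$ gap between $S\pi(E)$ and $\pi(E)S$ (Lemma \ref{spie}) produces a leftover $-\tfrac1{\sqrt2}\,\iota(E)\varepsilon(\theta_E)\otimes\id\otimes\id$. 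This is matched by the $d_\star$-induced term $-\tfrac1{\sqrt2}\,\varepsilon(\theta_E)\iota(E)\otimes\id\otimes\id$, and the two combine through $\varepsilon(\theta_E)\iota(E)+\iota(E)\varepsilon(\theta_E)=1$ into the constant $-\tfrac1{\sqrt2}\,\id$. Thus the coefficient $\tfrac1{\sqrt2}$ in the statement originates jointly from the shift in Lemma \ref{spie} and from the Lie derivative $L_H$ contributed by $d_\star$, and tracking these two sources consistently is the crux of the argument.
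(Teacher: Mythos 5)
Your proposal is correct and takes essentially the same route as the paper: a direct expansion of $d\delta+\delta d$ into elementary tensors, resolved by Lemma \ref{cliffrule}, Lemma \ref{spie}, Lemma \ref{pihsb}, the commutator \eqref{piesb} and the Lie-derivative formulas \eqref{liederivativei}, \eqref{liederivativefo}, with all the cancellations you identify (the $\varepsilon(\theta_H)\iota(E)\otimes S$ terms, the $\pi(H)$ terms, and the two $\tfrac{1}{\sqrt{2}}$ Clifford leftovers) occurring exactly as you describe. The only difference is bookkeeping: you split the $\tfrac{1}{\sqrt{2}}$ gap between $S\pi(E)$ and $\pi(E)S$ and recombine via $\varepsilon(\theta_E)\iota(E)+\iota(E)\varepsilon(\theta_E)=\id$, whereas the paper keeps $S\pi(E)$ intact and cancels $L_H$ against the $[\pi(E),S]$ term — the same computation, grouped differently.
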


\begin{proof}
Recalling the definition 
\begin{equation*}
\delta=\iota(H)\otimes\id\otimes\id-\iota(E)\otimes S\otimes\id
\end{equation*}
and using Lemmas \ref{cliffrule}, \ref{spie} and \ref{pihsb}, Formulas \eqref{liederivativei} and \eqref{liederivativefo} in Section \ref{construc} and Equation \eqref{piesb}, we get 
\begin{align*}
d\delta+\delta d&=L_H\otimes\id\otimes\id+\id\otimes\pi(H)\otimes\id+\id\otimes\id\otimes\xi(H)-L_E\otimes S\otimes\id\\
&\quad-\varepsilon(\theta_H)\iota(E)\otimes[\pi(H),S]\otimes\id-\varepsilon(\theta_E)\iota(E)\otimes\pi(E)S\otimes\id\\
&\quad+\left(\varepsilon(\theta_E)\iota(E)-\id\right)\otimes S\pi(E)\otimes\id\\
&=\left(-\frac{1}{\sqrt{2}}\varepsilon(\theta_E)\iota(E)\right)\otimes\id\otimes\id+\id\otimes\pi(H)\otimes\id+\id\otimes\id\otimes\xi(H)\\
&\quad-\frac{1}{\sqrt{2}}\varepsilon(\theta_H)\iota(E)\otimes S\otimes\id-\varepsilon(\theta_H)\iota(E)\otimes\left(-\frac{1}{\sqrt{2}}S\right)\otimes\id\\
&\quad-\varepsilon(\theta_E)\iota(E)\otimes[\pi(E),S]\otimes\id-\id\otimes\left(\pi(H)+\nu+\frac{1}{\sqrt{2}}\right)\otimes\id\\
&=\id\otimes\id\otimes\xi(H)-\left(\nu+\frac{1}{\sqrt{2}}\right)\id. 
\end{align*}
\end{proof}

If $\xi(H)$ is a scalar multiplication, we have 
\begin{equation*}
d\delta+\delta d=\left(\xi(H)-\nu-\frac{1}{\sqrt{2}}\right)\id. 
\end{equation*}
Consequently we get the following theorem. 

\begin{thm}
Let $\sl(2,\bb{R})\stackrel{\pi}{\curvearrowright}\mca{H}$ be a representation on a complex vector space $\mca{H}$ such that $\pi(E)\colon\mca{H}\to\mca{H}$ is injective and there exists a linear map $\varphi\colon\mca{H}\to\bb{C}$ with $\ker\varphi=\pi(E)\mca{H}$. Take $\nu\in\bb{C}$ satisfying $\pi(H)\varphi=\nu\varphi$. Let $\mf{an}\stackrel{\xi}{\curvearrowright}V$ be a real (resp. complex) representation such that $\xi(E)=0$ and $\xi(H)$ acts as a scalar multiplication satisfying $\xi(H)\neq\nu+\frac{1}{\sqrt{2}}$. Define an operator $\delta$ on $C^*(\mf{an};\mca{H}\otimes V)$ by \eqref{deltaneq}. Then we have 
\begin{equation*}
C^*(\mf{an};\mca{H}\otimes V)=\im d\oplus\im\delta
\end{equation*}
by 
\begin{equation*}
x=\left(\xi(H)-\nu-\frac{1}{\sqrt{2}}\right)^{-1}d\delta x+\left(\xi(H)-\nu-\frac{1}{\sqrt{2}}\right)^{-1}\delta dx
\end{equation*}
and 
\begin{equation*}
\ker d=\im d,\quad\ker\delta=\im\delta. 
\end{equation*}
Hence $H^*(\mf{an};\mca{H}\otimes V)=0$. 
\end{thm}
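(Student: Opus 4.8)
The plan is to deduce the entire statement from the chain-homotopy identity already established in Theorem \ref{ddeltadeltadiii}, so that essentially no new computation is needed. Since $\xi(H)$ is assumed to act as a scalar multiplication, Theorem \ref{ddeltadeltadiii} specializes to
\begin{equation*}
d\delta+\delta d=c\cdot\id,\qquad c=\xi(H)-\nu-\frac{1}{\sqrt{2}},
\end{equation*}
and the hypothesis $\xi(H)\neq\nu+\frac{1}{\sqrt{2}}$ guarantees $c\neq0$. Thus $c^{-1}\delta$ is a genuine contracting homotopy for $d$, and everything to be proved is a formal consequence of this identity together with the relations $d^2=0$ and $\delta^2=0$ (the latter noted in Section \ref{geneconsss}). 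I would first record the decomposition formula: for any $x\in C^*(\mf{an};\mca{H}\otimes V)$,
\begin{equation*}
x=c^{-1}(d\delta+\delta d)x=c^{-1}d(\delta x)+c^{-1}\delta(dx),
\end{equation*}
which is exactly the displayed identity in the statement and already shows $C^*(\mf{an};\mca{H}\otimes V)=\im d+\im\delta$.

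Next I would establish the two equalities $\ker d=\im d$ and $\ker\delta=\im\delta$. The inclusions $\im d\subseteq\ker d$ and $\im\delta\subseteq\ker\delta$ hold because $d^2=0$ and $\delta^2=0$. For the reverse inclusions I would feed the vanishing of one term into the decomposition formula: if $dx=0$ then the formula collapses to $x=c^{-1}d(\delta x)\in\im d$, giving $\ker d\subseteq\im d$; symmetrically, $\delta x=0$ forces $x=c^{-1}\delta(dx)\in\im\delta$, giving $\ker\delta\subseteq\im\delta$. Finally I would check directness of the sum: if $y\in\im d\cap\im\delta$, then by the previous step $y\in\ker d\cap\ker\delta$, so $dy=0$ and $\delta y=0$, and plugging this into the decomposition formula yields $y=c^{-1}d(\delta y)+c^{-1}\delta(dy)=0$. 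Hence $\im d\cap\im\delta=0$ and $C^*(\mf{an};\mca{H}\otimes V)=\im d\oplus\im\delta$. The cohomology vanishing $H^*(\mf{an};\mca{H}\otimes V)=\ker d/\im d=0$ is then immediate from $\ker d=\im d$.

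There is no genuine obstacle in this final step: all of the representation-theoretic content is packaged into Theorem \ref{ddeltadeltadiii} and into the construction of $S=\pi(E)^{-1}(\pi(H)+\nu)$, which in turn rests on the hypotheses that $\pi(E)$ is injective with $\ker\varphi=\pi(E)\mca{H}$ and $\pi(H)\varphi=\nu\varphi$. The remaining argument is simply the standard observation that a cochain complex admitting a contracting homotopy up to a nonzero scalar is acyclic and splits canonically as the direct sum of the image of its differential and the image of the homotopy. The only point to be slightly careful about is to verify that the scalar $c$ is indeed nonzero under the stated hypothesis, which is where the condition $\xi(H)\neq\nu+\frac{1}{\sqrt{2}}$ (equivalently, in the application to $D_n^\pm$, the condition $\lambda\neq\frac{1-n}{\sqrt{2}}$) is used.
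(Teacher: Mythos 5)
Your proof is correct and takes essentially the same route as the paper: there the theorem is stated as an immediate consequence of Theorem \ref{ddeltadeltadiii} (which gives $d\delta+\delta d=\left(\xi(H)-\nu-\frac{1}{\sqrt{2}}\right)\id$ once $\xi(H)$ is a scalar), and your write-up simply makes explicit the routine formal algebra — using $d^2=0$, $\delta^2=0$, and the nonvanishing of the scalar — that the paper leaves implicit. No gap remains; the verification of directness and of $\ker d=\im d$, $\ker\delta=\im\delta$ is exactly what is needed.
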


\subsubsection{Application to $C^*\left(\mf{an};C^\infty(D_n^\pm)\otimes\bb{C}_\lambda\right)$}
Let $\sl(2,\bb{R})\stackrel{\pi}{\curvearrowright}C^\infty(D_n^\pm)$ be the derivative of the discrete series representation. The operator $\pi(E)$ is injective by \eqref{vvvaaa} in Section \ref{tnidoiu}. By Proposition \ref{ffoorr}, there exists a linear map $\varphi\colon C^\infty(D_n^\pm)\to\bb{C}$ such that $\ker\varphi=\pi(E)C^\infty(D_n^\pm)$ and $\pi(H)\varphi=-\frac{n}{\sqrt{2}}\varphi$. So $\nu=-\frac{n}{\sqrt{2}}$ and we define an operator 
\begin{equation*}
S=\pi(E)^{-1}\left(\pi(H)-\frac{n}{\sqrt{2}}\right)
\end{equation*}
on $C^\infty(D_n^\pm)$ and an operator 
\begin{equation*}
\delta=\iota(H)\otimes\id\otimes\id-\iota(E)\otimes S\otimes\id
\end{equation*}
on $C^*\left(\mf{an};C^\infty(D_n^\pm)\otimes\bb{C}_\lambda\right)$. By Theorem \ref{ddeltadeltadiii} we have 
\begin{equation*}
d\delta+\delta d=\left(\lambda-\frac{1-n}{\sqrt{2}}\right)\id. 
\end{equation*}

\begin{thm}\label{hodge2}
If $\lambda\neq\frac{1-n}{\sqrt{2}}$, we have 
\begin{equation*}
C^*\left(\mf{an};C^\infty(D_n^\pm)\otimes\bb{C}_\lambda\right)=\im d\oplus\im\delta
\end{equation*}
by 
\begin{equation*}
x=\left(\lambda-\frac{1-n}{\sqrt{2}}\right)^{-1}d\delta x+\left(\lambda-\frac{1-n}{\sqrt{2}}\right)^{-1}\delta dx
\end{equation*}
and 
\begin{equation*}
\ker d=\im d,\quad\ker\delta=\im\delta. 
\end{equation*}
In particular, $H^*\left(\mf{an};C^\infty(D_n^\pm)\otimes\bb{C}_\lambda\right)=0$. 
\end{thm}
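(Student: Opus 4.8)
The plan is to verify that the operator $\delta$ defined by \eqref{deltaneq} indeed satisfies the algebraic identity $d\delta+\delta d=\left(\lambda-\frac{1-n}{\sqrt{2}}\right)\id$ on the complex $C^*\left(\mf{an};C^\infty(D_n^\pm)\otimes\bb{C}_\lambda\right)$, and then to deduce the Hodge decomposition formally from invertibility of the scalar. The whole statement is essentially an instance of the preceding general Theorem \ref{ddeltadeltadiii}, so the first task is to check that the hypotheses of the general consideration in Section \ref{geneconsss} are met in the present situation.

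First I would identify the representation $\sl(2,\bb{R})\stackrel{\pi}{\curvearrowright}C^\infty(D_n^\pm)$ as the $\mca{H}$ of the general setup. The injectivity of $\pi(E)$ follows from \eqref{vvvaaa} in Section \ref{tnidoiu}, which computes $H^0(\mf{n};C^\infty(\mca{H}))=C^\infty(\mca{H})^{\pi(\mf{n})}=0$ for any nontrivial irreducible unitary representation, and $D_n^\pm$ is nontrivial. The existence of a linear functional $\varphi$ with $\ker\varphi=\pi(E)C^\infty(D_n^\pm)$ together with the eigenvalue relation $\pi(H)\varphi=-\frac{n}{\sqrt{2}}\varphi$ comes directly from the Flaminio--Forni computation, Proposition \ref{ffoorr}: the space $\left(C^\infty(D_n^\pm)/\pi(\mf{n})C^\infty(D_n^\pm)\right)^*$ of $\mf{n}$-invariant distributions is one-dimensional and isomorphic to $\bb{C}_{-n/\sqrt{2}}$ as an $\mf{an}$-module. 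This pins down $\nu=-\frac{n}{\sqrt{2}}$.

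Next I would set $\xi=\bb{C}_\lambda$, so $\xi(E)=0$ and $\xi(H)=\lambda$ is a scalar, and simply invoke Theorem \ref{ddeltadeltadiii}, which gives $d\delta+\delta d=\id\otimes\id\otimes\xi(H)-\left(\nu+\frac{1}{\sqrt{2}}\right)\id=\left(\lambda-\left(-\frac{n}{\sqrt{2}}+\frac{1}{\sqrt{2}}\right)\right)\id=\left(\lambda-\frac{1-n}{\sqrt{2}}\right)\id$. Under the hypothesis $\lambda\neq\frac{1-n}{\sqrt{2}}$ the scalar $c:=\lambda-\frac{1-n}{\sqrt{2}}$ is nonzero, so one may write $x=c^{-1}d\delta x+c^{-1}\delta dx$ for every $x$, exhibiting $x$ as a sum of an element of $\im d$ and an element of $\im\delta$. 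The inclusions $\ker d\supset\im d$ and $\ker\delta\supset\im\delta$ are automatic from $d^2=0$ and $\delta^2=0$ (the latter being part of the construction, cf. Proposition \ref{squarezero}), and the reverse inclusions follow because $x\in\ker d$ forces $x=c^{-1}d\delta x\in\im d$ and similarly for $\delta$; the directness of the sum $\im d\oplus\im\delta$ and the vanishing $H^*=0$ are then formal consequences.

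The only genuine content beyond bookkeeping is the verification of Theorem \ref{ddeltadeltadiii} itself, but that has already been proved in the general consideration, relying on Lemmas \ref{spie}, \ref{commmmp} and \ref{pihsb} about the operator $S=\pi(E)^{-1}(\pi(H)+\nu)$, on the Clifford-algebra relations of Lemma \ref{cliffrule}, and on the Lie-derivative formulas \eqref{liederivativei} and \eqref{liederivativefo}. The main obstacle, therefore, is not the present proof but ensuring that the abstract hypotheses transfer cleanly; in particular one must be careful that $S$ is well defined, i.e.\ that $(\pi(H)+\nu)f$ lands in $\pi(E)C^\infty(D_n^\pm)=\ker\varphi$ for every $f$, which is exactly the computation $\varphi\!\left((\pi(H)+\nu)f\right)=0$ carried out at the start of Section \ref{geneconsss}. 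Once that is in hand, the proof is a direct specialization and no further estimates or convergence arguments are needed.
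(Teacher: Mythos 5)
Your proposal is correct and is essentially identical to the paper's own proof: the paper likewise checks injectivity of $\pi(E)$ via \eqref{vvvaaa}, extracts $\varphi$ and $\nu=-\frac{n}{\sqrt{2}}$ from Proposition \ref{ffoorr}, and then specializes Theorem \ref{ddeltadeltadiii} with $\xi(H)=\lambda$ to get $d\delta+\delta d=\left(\lambda-\frac{1-n}{\sqrt{2}}\right)\id$, from which the decomposition follows formally. One cosmetic point: $\delta^2=0$ for the operator \eqref{deltaneq} is the direct observation made in Section \ref{geneconsss} (it follows from $\iota(H)\iota(E)=-\iota(E)\iota(H)$), not Proposition \ref{squarezero}, which concerns the different operator $\hat{\delta}_f$ of Section \ref{construc}.
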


\subsection{$\lambda=\frac{1-n}{\sqrt{2}}$}\label{1-n22}
Finally we will construct a ``Hodge decomposition'' of $C^*\left(\mf{an};C^\infty(D_n^\pm)\otimes\bb{C}_{\frac{1-n}{\sqrt{2}}}\right)$. In this case the cohomology is nontrivial as we saw in Section \ref{cohcomp}.

\subsubsection{A general consideration}\label{2generc}
Let $\sl(2,\bb{R})\stackrel{\pi}{\curvearrowright}\mca{H}$ be a representation on a complex vector space $\mca{H}$. Assume that $\pi(E)\colon\mca{H}\to\mca{H}$ is injective and there exist a linear map $\varphi\colon\mca{H}\to\bb{C}$ and an element $h\in\mca{H}$ such that $\ker\varphi=\pi(E)\mca{H}$ and $\varphi(h)=1$. Let $\mf{an}\stackrel{\xi}{\curvearrowright}V$ be a real or complex representation such that $\xi(\mf{n})=0$. In this section we will construct a ``Hodge decomposition'' of $C^*(\mf{an};\mca{H}\otimes V)$ assuming one more condition which amounts to $\lambda=\frac{1-n}{\sqrt{2}}$. 

Since $f-\varphi(f)h\in\ker\varphi=\pi(E)\mca{H}$ for all $f\in\mca{H}$, we can define an operator $T\colon\mca{H}\to\mca{H}$ by 
\begin{equation*}
Tf=\pi(E)^{-1}(f-\varphi(f)h). 
\end{equation*}
Consider an operator 
\begin{equation*}
\delta=\iota(E)\otimes T\otimes\id
\end{equation*}
on $C^*(\mf{an};\mca{H}\otimes V)=\bigwedge^*\mf{an}^*\otimes\mca{H}\otimes V$ of degree $-1$. Obviously $\delta^2=0$. We will prove this $\delta$ gives a ``Hodge decomposition'' of $C^*(\mf{an};\mca{H}\otimes V)$ under one more condition. First we have $\pi(E)T=\id-\varphi h$ and $T\pi(E)=\id$, hence $[\pi(E),T]=-\varphi h$. As we have shown in Section \ref{geneconsss}, the assumption $\ker\varphi=\pi(E)\mca{H}$ implies that there exists $\nu\in\bb{C}$ such that $\pi(H)\varphi=\nu\varphi$. Since 
\begin{equation*}
\varphi\left(\pi(H)h+\nu h\right)=-(\pi(H)\varphi)(h)+\nu=0, 
\end{equation*}
the image of the operator $\id-\varphi h$ is contained in $\pi(E)\mca{H}$. So we can define an element $h^\prime=\pi(E)^{-1}\left(\pi(H)h+\nu h\right)\in\mca{H}$. 

\begin{lem}\label{pihtb}
We have 
\begin{equation*}
[\pi(H),T]=-\varphi h^\prime-\frac{1}{\sqrt{2}}T. 
\end{equation*}
\end{lem}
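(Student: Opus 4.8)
The plan is to verify this operator identity by evaluating both sides on an arbitrary $f\in\mca{H}$ and comparing, exactly as in the adjacent Lemmas \ref{spie} and \ref{pihsb}. The two ingredients I would line up first are the scalar relation $\varphi(\pi(H)f)=-(\pi(H)\varphi)(f)=-\nu\varphi(f)$, which is immediate from the definition of the dual representation together with $\pi(H)\varphi=\nu\varphi$, and Lemma \ref{commmmp}, which controls the failure of $\pi(H)$ to commute with $\pi(E)^{-1}$ on the subspace $\pi(E)\mca{H}$. With the scalar relation in hand, the term $T\pi(H)f=\pi(E)^{-1}\bigl(\pi(H)f-\varphi(\pi(H)f)h\bigr)$ simplifies at once to $\pi(E)^{-1}\bigl(\pi(H)f+\nu\varphi(f)h\bigr)$.

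Next I would compute $\pi(H)Tf=\pi(H)\pi(E)^{-1}\bigl(f-\varphi(f)h\bigr)$. The decisive observation is that $f-\varphi(f)h\in\ker\varphi=\pi(E)\mca{H}$, so Lemma \ref{commmmp} genuinely applies to this element and permits replacing $\pi(H)\pi(E)^{-1}$ by $\pi(E)^{-1}\pi(H)-\frac{1}{\sqrt{2}}\pi(E)^{-1}$ on it. The second summand contributes $-\frac{1}{\sqrt{2}}\pi(E)^{-1}\bigl(f-\varphi(f)h\bigr)=-\frac{1}{\sqrt{2}}Tf$, which is precisely the expected $-\frac{1}{\sqrt{2}}T$ term in the statement, so that term is accounted for before any cancellation is needed.

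Subtracting $T\pi(H)f$ from $\pi(H)Tf$, the two copies of $\pi(E)^{-1}\pi(H)f$ cancel, and the remaining pieces assemble into $-\varphi(f)\pi(E)^{-1}\bigl(\pi(H)h+\nu h\bigr)=-\varphi(f)h'$ by the very definition of $h'$. This yields $[\pi(H),T]f=-\varphi(f)h'-\frac{1}{\sqrt{2}}Tf$, which is the claimed identity once $\varphi h'$ is read as the rank-one operator $f\mapsto\varphi(f)h'$, matching the convention already used for $\varphi h$ in defining $T$.

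I do not anticipate a real obstacle: this is essentially bookkeeping of the same flavour as Lemma \ref{pihsb}. The only points demanding care are, first, that Lemma \ref{commmmp} is valid only on $\pi(E)\mca{H}$, so one must invoke $\ker\varphi=\pi(E)\mca{H}$ to license its use on $f-\varphi(f)h$; and second, keeping the signs and the two rank-one operators $\varphi h$ and $\varphi h'$ straight through the subtraction. It is also worth recording, as the paragraph preceding the lemma already does, that $h'$ is well defined for the same reason $T$ is, since $\varphi\bigl(\pi(H)h+\nu h\bigr)=0$ places $\pi(H)h+\nu h$ inside $\pi(E)\mca{H}$.
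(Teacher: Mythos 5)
Your proof is correct and is essentially the paper's own argument: both compute $\pi(H)T$ via Lemma \ref{commmmp} (applicable because $f-\varphi(f)h\in\ker\varphi=\pi(E)\mca{H}$), simplify $T\pi(H)$ using $\pi(H)\varphi=\nu\varphi$, and subtract, with the remaining terms assembling into $-\varphi h^\prime-\frac{1}{\sqrt{2}}T$ by the definition of $h^\prime$. The only difference---evaluating on an arbitrary vector $f$ rather than writing the rank-one operator identities directly---is cosmetic.
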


\begin{proof}
By Lemma \ref{commmmp} we have 
\begin{align*}
\pi(H)T&=\pi(H)\pi(E)^{-1}\left(\id-\varphi h\right)\\
&=\pi(E)^{-1}\pi(H)\left(\id-\varphi h\right)-\frac{1}{\sqrt{2}}\pi(E)^{-1}\left(\id-\varphi h\right)\\
&=\pi(E)^{-1}\left(\pi(H)-\varphi\pi(H)h\right)-\frac{1}{\sqrt{2}}T. 
\end{align*}
On the other hand 
\begin{align*}
T\pi(H)&=\pi(E)^{-1}\left(\id-\varphi h\right)\pi(H)=\pi(E)^{-1}\left(\pi(H)+\left(\pi(H)\varphi\right)h\right)\\
&=\pi(E)^{-1}\left(\pi(H)+\varphi\nu h\right), 
\end{align*}
hence 
\begin{equation*}
[\pi(H),T]=\pi(E)^{-1}\left(-\varphi\pi(H)h-\varphi\nu h\right)-\frac{1}{\sqrt{2}}T=-\varphi h^\prime-\frac{1}{\sqrt{2}}T. 
\end{equation*}
\end{proof}

\begin{lem}\label{ddelp}
We have 
\begin{equation*}
d\delta+\delta d=\id-p, 
\end{equation*}
where $p\colon C^*(\mf{an};\mca{H}\otimes V)\to C^*(\mf{an};\mca{H}\otimes V)$ is defined by 
\begin{equation*}
p=\varepsilon(\theta_H)\iota(E)\otimes\varphi h^\prime\otimes\id+\varepsilon(\theta_E)\iota(E)\otimes\varphi h\otimes\id. 
\end{equation*}
\end{lem}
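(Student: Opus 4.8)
The plan is to compute $d\delta+\delta d$ directly on $C^*(\mf{an};\mca{H}\otimes V)=\bigwedge^*\mf{an}^*\otimes\mca{H}\otimes V$, using the explicit expression \eqref{dformula} for the differential and the definition $\delta=\iota(E)\otimes T\otimes\id$. Since $d$ and $\delta$ are odd for the exterior grading while the operators $\pi(H)$, $\pi(E)$, $\xi(H)$, $T$ acting on $\mca{H}\otimes V$ are even, the expression $d\delta+\delta d$ is a graded anticommutator and splits into one anticommutator for each of the four summands of $d$. The two Clifford inputs I would use throughout are the special cases of Lemma \ref{cliffrule} giving $\varepsilon(\theta_H)\iota(E)+\iota(E)\varepsilon(\theta_H)=\theta_H(E)=0$ and $\varepsilon(\theta_E)\iota(E)+\iota(E)\varepsilon(\theta_E)=\theta_E(E)=1$.

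First I would treat the summand $d_\star\otimes\id\otimes\id$: its anticommutator with $\delta$ is $\{d_\star,\iota(E)\}\otimes T\otimes\id=L_E\otimes T\otimes\id$ by \eqref{liederivativei}, and by \eqref{liederivativefo} with $[E,H]=-\tfrac{1}{\sqrt{2}}E$ one has $L_E=\tfrac{1}{\sqrt{2}}\varepsilon(\theta_H)\iota(E)$. Next, for $\varepsilon(\theta_H)\otimes\pi(H)\otimes\id$ the vanishing anticommutator $\{\varepsilon(\theta_H),\iota(E)\}=0$ collapses the result to $\varepsilon(\theta_H)\iota(E)\otimes[\pi(H),T]\otimes\id$, and this is the step where Lemma \ref{pihtb} enters, rewriting $[\pi(H),T]=-\varphi h^\prime-\tfrac{1}{\sqrt{2}}T$. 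The summand $\varepsilon(\theta_H)\otimes\id\otimes\xi(H)$ produces $\{\varepsilon(\theta_H),\iota(E)\}\otimes T\otimes\xi(H)=0$, so $\xi(H)$ disappears entirely. Finally, for $\varepsilon(\theta_E)\otimes\pi(E)\otimes\id$ I would substitute $\iota(E)\varepsilon(\theta_E)=1-\varepsilon(\theta_E)\iota(E)$ together with $\pi(E)T=\id-\varphi h$ and $T\pi(E)=\id$; after simplification this summand yields $\id-\varepsilon(\theta_E)\iota(E)\otimes\varphi h\otimes\id$.

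The last step is to add the four contributions. The term $\tfrac{1}{\sqrt{2}}\varepsilon(\theta_H)\iota(E)\otimes T\otimes\id$ coming from $L_E$ cancels exactly against the $-\tfrac{1}{\sqrt{2}}T$ piece produced by Lemma \ref{pihtb}, and what survives is $\id$ minus the two rank-one terms $\varepsilon(\theta_H)\iota(E)\otimes\varphi h^\prime\otimes\id$ and $\varepsilon(\theta_E)\iota(E)\otimes\varphi h\otimes\id$, that is, $d\delta+\delta d=\id-p$. I expect the only genuine difficulty to be the sign and tensor-order bookkeeping in the graded anticommutators, since all the algebraic ingredients are already available; the decisive input is Lemma \ref{pihtb}, whose $-\tfrac{1}{\sqrt{2}}T$ term is precisely what cancels the Lie-derivative contribution. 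It is worth noting that this identity is purely formal and uses no hypothesis on $\xi(H)$, since $\xi(H)$ drops out; the condition $\lambda=\frac{1-n}{\sqrt{2}}$ mentioned at the start of the section is not needed here and will only be invoked afterwards, to check that $p$ is idempotent and to identify $\im p$ with the harmonic elements.
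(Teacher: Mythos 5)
Your proposal is correct and follows essentially the same route as the paper's proof: the same four-term splitting of the graded anticommutator, the same Clifford relations $\theta_H(E)=0$ and $\theta_E(E)=1$, the identities $\pi(E)T=\id-\varphi h$ and $T\pi(E)=\id$, and Lemma \ref{pihtb} supplying the $-\tfrac{1}{\sqrt{2}}T$ term that cancels the contribution $L_E\otimes T\otimes\id=\tfrac{1}{\sqrt{2}}\varepsilon(\theta_H)\iota(E)\otimes T\otimes\id$. Only your closing side remark is slightly off: in the paper the idempotency $p^2=p$ likewise requires no hypothesis on $\xi(H)$ (it uses only $\varphi(h)=1$ and the Clifford relations); the condition $\xi(H)=\nu+\tfrac{1}{\sqrt{2}}$ enters afterwards only to prove $dp=0$ and $pd=0$.
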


\begin{proof}
Recalling 
\begin{equation*}
d=d_\star\otimes\id\otimes\id+\varepsilon(\theta_H)\otimes\pi(H)\otimes\id+\varepsilon(\theta_H)\otimes\id\otimes\xi(H)+\varepsilon(\theta_E)\otimes\pi(E)\otimes\id
\end{equation*}
and using Lemmas \ref{cliffrule} and \ref{pihtb} and Formulas \eqref{liederivativei} and \eqref{liederivativefo} in Section \ref{construc}, we have 
\begin{align*}
d\delta+\delta d&=L_E\otimes T\otimes\id+\varepsilon(\theta_H)\iota(E)\otimes[\pi(H),T]\otimes\id+\varepsilon(\theta_E)\iota(E)\otimes\pi(E)T\otimes\id\\
&\quad+(\id-\varepsilon(\theta_E)\iota(E))\otimes T\pi(E)\otimes\id\\
&=\frac{1}{\sqrt{2}}\varepsilon(\theta_H)\iota(E)\otimes T\otimes\id+\varepsilon(\theta_H)\iota(E)\otimes\left(-\varphi h^\prime-\frac{1}{\sqrt{2}}T\right)\otimes\id\\
&\quad+\varepsilon(\theta_E)\iota(E)\otimes(-\varphi h)\otimes\id+\id\\
&=\id-\varepsilon(\theta_H)\iota(E)\otimes\varphi h^\prime\otimes\id-\varepsilon(\theta_E)\iota(E)\otimes\varphi h\otimes\id\\
&=\id-p. 
\end{align*}
\end{proof}

\begin{lem}
We have $p^2=p$. Hence 
\begin{align*}
C^*(\mf{an};\mca{H}\otimes V)&=\im p\oplus\ker p\\
x&=px+(x-px). 
\end{align*}
\end{lem}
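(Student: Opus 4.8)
The plan is to establish the idempotency $p^2=p$ by a factorwise computation in the operator algebra acting on $C^*(\mf{an};\mca{H}\otimes V)=\bigwedge^*\mf{an}^*\otimes\mca{H}\otimes V$, and then to read off the direct sum decomposition from the standard splitting induced by an idempotent.

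First I would abbreviate the two summands of $p$ by writing $A_1=\varepsilon(\theta_H)\iota(E)$ and $A_2=\varepsilon(\theta_E)\iota(E)$ on the exterior factor, and $B_1=\varphi h^\prime$, $B_2=\varphi h$ for the rank one operators $f\mapsto\varphi(f)h^\prime$ and $f\mapsto\varphi(f)h$ on $\mca{H}$, so that $p=A_1\otimes B_1\otimes\id+A_2\otimes B_2\otimes\id$. Since operators on distinct tensor factors multiply factorwise, i.e. $(A\otimes B\otimes\id)(A'\otimes B'\otimes\id)=AA'\otimes BB'\otimes\id$, squaring expands into four terms:
\begin{equation*}
p^2=A_1^2\otimes B_1^2\otimes\id+A_1A_2\otimes B_1B_2\otimes\id+A_2A_1\otimes B_2B_1\otimes\id+A_2^2\otimes B_2^2\otimes\id.
\end{equation*}

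Next I would compute the two factors separately. On the exterior algebra, the Clifford relations of Lemma \ref{cliffrule} (namely $\iota(E)\varepsilon(\theta_E)=1-\varepsilon(\theta_E)\iota(E)$, $\iota(E)\varepsilon(\theta_H)=-\varepsilon(\theta_H)\iota(E)$, together with $\varepsilon(\cdot)^2=\iota(\cdot)^2=0$) yield $A_1^2=0$, $A_2A_1=0$, $A_1A_2=A_1$ and $A_2^2=A_2$. On $\mca{H}$, the composition rules for rank one maps together with $\varphi(h)=1$ give $B_2^2=B_2$ and $B_1B_2=B_1$, whereas the remaining two products are $B_1^2=\varphi(h^\prime)B_1$ and $B_2B_1=\varphi(h^\prime)B_2$, each carrying the scalar $\varphi(h^\prime)$ over which we have no control.

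The point I would stress is that these two troublesome products are paired exactly with the vanishing exterior factors $A_1^2=0$ and $A_2A_1=0$, so the $\varphi(h^\prime)$-terms drop out regardless of the value of $\varphi(h^\prime)$; what remains is $A_1\otimes B_1\otimes\id+A_2\otimes B_2\otimes\id=p$, proving $p^2=p$. Granting this, the decomposition $C^*(\mf{an};\mca{H}\otimes V)=\im p\oplus\ker p$ is the usual splitting by an idempotent: writing $x=px+(x-px)$ puts $px\in\im p$ and $x-px\in\ker p$ (since $p(x-px)=px-p^2x=0$), and the sum is direct because every $y\in\im p$ satisfies $py=y$. The computation is entirely mechanical; the only thing worth flagging is this robustness against the uncomputed scalar $\varphi(h^\prime)$, which is precisely what makes the Clifford prefactors chosen in the definition of $p$ exactly right.
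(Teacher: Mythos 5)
Your proof is correct and takes essentially the same route as the paper: the paper likewise expands $p^2$ into four terms, drops the two killed by the Clifford relations of Lemma \ref{cliffrule} (which is exactly where the uncontrolled scalar $\varphi(h^\prime)$ would have entered), and uses $\varphi(h)=1$ to reduce the surviving terms to $p$, after which the splitting $x=px+(x-px)$ is the standard idempotent decomposition.
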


\begin{proof}
Using Lemma \ref{cliffrule} and $\varphi(h)=1$, we have 
\begin{align*}
p^2&=\left(\varepsilon(\theta_H)\iota(E)\otimes\varphi h^\prime\otimes\id+\varepsilon(\theta_E)\iota(E)\otimes\varphi h\otimes\id\right)^2\\
&=\varepsilon(\theta_H)\iota(E)\varepsilon(\theta_E)\iota(E)\otimes\varphi(\varphi h)h^\prime\otimes\id+\varepsilon(\theta_E)\iota(E)\varepsilon(\theta_E)\iota(E)\otimes\varphi(\varphi h)h\otimes\id\\
&=\varepsilon(\theta_H)\iota(E)\otimes\varphi h^\prime\otimes\id+\varepsilon(\theta_E)\iota(E)\otimes\varphi h\otimes\id\\
&=p. 
\end{align*}
\end{proof}

Obviously $\ker d\cap\ker\delta\subset\ker(d\delta+\delta d)$. We consider whether the opposite inclusion holds. 

\begin{lem}\label{delp}
$\delta p=0$. 
\end{lem}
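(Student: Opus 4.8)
The plan is to verify $\delta p = 0$ by a direct computation, expanding both operators according to their definitions and factoring the result as a tensor product of three operators, one on each of the three factors $\bigwedge^*\mf{an}^*$, $\mca{H}$, and $V$. Writing
\begin{equation*}
\delta=\iota(E)\otimes T\otimes\id,\qquad p=\varepsilon(\theta_H)\iota(E)\otimes\varphi h^\prime\otimes\id+\varepsilon(\theta_E)\iota(E)\otimes\varphi h\otimes\id,
\end{equation*}
the composition $\delta p$ splits into two terms, each of which is a tensor product whose Clifford-algebra factor is $\iota(E)\varepsilon(\theta_H)\iota(E)$ in the first term and $\iota(E)\varepsilon(\theta_E)\iota(E)$ in the second. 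The $V$-factor is $\id$ throughout, so the whole statement reduces to computing these two Clifford factors together with the corresponding $\mca{H}$-factors $T(\varphi h^\prime)$ and $T(\varphi h)$.

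The first key step is to simplify the Clifford factors using Lemma \ref{cliffrule}, namely $\iota(X)\varepsilon(\psi)+\varepsilon(\psi)\iota(X)=\psi(X)$, together with $\iota(E)^2=0$. Since $\theta_H(E)=0$ one gets $\iota(E)\varepsilon(\theta_H)=-\varepsilon(\theta_H)\iota(E)$, whence $\iota(E)\varepsilon(\theta_H)\iota(E)=-\varepsilon(\theta_H)\iota(E)^2=0$; this annihilates the $h^\prime$-term entirely. For the second term, $\theta_E(E)=1$ gives $\iota(E)\varepsilon(\theta_E)=\id-\varepsilon(\theta_E)\iota(E)$, so $\iota(E)\varepsilon(\theta_E)\iota(E)=\iota(E)-\varepsilon(\theta_E)\iota(E)^2=\iota(E)$. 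Thus $\delta p$ collapses to the single term $\iota(E)\otimes T(\varphi h)\otimes\id$.

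The second key step is the operator-theoretic identity $T(\varphi h)=0$ on $\mca{H}$. Here $\varphi h$ denotes the rank-one operator $f\mapsto\varphi(f)h$, so that $T(\varphi h)f=\varphi(f)\,Th$, and by the definition $Tf=\pi(E)^{-1}(f-\varphi(f)h)$ together with the normalization $\varphi(h)=1$ from Section \ref{2generc} one computes $Th=\pi(E)^{-1}(h-\varphi(h)h)=\pi(E)^{-1}(0)=0$. Hence $T(\varphi h)=0$ and therefore $\delta p=0$.

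I do not expect a genuine obstacle here: the argument is a short formal manipulation once the three tensor factors are treated separately. The only point requiring care is the bookkeeping around the rank-one operators $\varphi h$ and $\varphi h^\prime$ and the order in which they compose with $T$ and $\pi(E)^{-1}$; in particular one must use $\varphi(h)=1$ at exactly the right place, and keep straight that $\varepsilon$ and $\iota$ act on $\bigwedge^*\mf{an}^*$ while $T$, $\pi(E)$, $\varphi$, $h$, $h^\prime$ act on $\mca{H}$, so that the two sides of each tensor product never interact.
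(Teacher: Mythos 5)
Your proposal is correct and follows essentially the same route as the paper: expand $\delta p$, use Lemma \ref{cliffrule} with $\theta_H(E)=0$ and $\theta_E(E)=1$ to kill the $h^\prime$-term and reduce the Clifford factor of the other term to $\iota(E)$, then use $\varphi(h)=1$ to see that $T\circ(\varphi h)=0$. The only cosmetic difference is that you evaluate $Th=0$ pointwise, while the paper keeps the operator identity $T\circ(\varphi h)=\pi(E)^{-1}\left(\varphi h-\varphi(\varphi h)h\right)=0$; these are the same computation.
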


\begin{proof}
Using Lemma \ref{cliffrule} and $\varphi(h)=1$, we have 
\begin{align*}
\delta p&=\left(\iota(E)\otimes T\otimes\id\right)\left(\varepsilon(\theta_H)\iota(E)\otimes\varphi h^\prime\otimes\id+\varepsilon(\theta_E)\iota(E)\otimes\varphi h\otimes\id\right)\\
&=\iota(E)\otimes\left(\pi(E)^{-1}\left(\id-\varphi h\right)\circ\varphi h\right)\otimes\id\\
&=\iota(E)\otimes\pi(E)^{-1}\left(\varphi h-\varphi(\varphi h)h\right)\otimes\id\\
&=0. 
\end{align*}
\end{proof}

\begin{lem}\label{dp}
If $\xi(H)=\nu+\frac{1}{\sqrt{2}}$, then $dp=0$. 
\end{lem}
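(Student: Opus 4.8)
The plan is to expand the composite $dp$ directly, using the explicit formula \eqref{dformula} for $d$ and the definition of $p$ from Lemma~\ref{ddelp}, and then collapse everything with the Clifford relations of Lemma~\ref{cliffrule}. Writing
\[
d=d_\star\otimes\id\otimes\id+\varepsilon(\theta_H)\otimes\pi(H)\otimes\id+\varepsilon(\theta_H)\otimes\id\otimes\xi(H)+\varepsilon(\theta_E)\otimes\pi(E)\otimes\id
\]
and $p=\varepsilon(\theta_H)\iota(E)\otimes\varphi h^\prime\otimes\id+\varepsilon(\theta_E)\iota(E)\otimes\varphi h\otimes\id$, the product $dp$ is a sum of eight terms. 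First I would discard the three that vanish outright: the two terms pairing an $\varepsilon(\theta_H)$-factor of $d$ with the $\varepsilon(\theta_H)\iota(E)$-factor of $p$ die because $\varepsilon(\theta_H)^2=0$, and the term pairing $\varepsilon(\theta_E)\otimes\pi(E)$ with $\varepsilon(\theta_E)\iota(E)\otimes\varphi h$ dies because $\varepsilon(\theta_E)^2=0$.

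Five terms remain, and I would group them into the three free of $d_\star$ and the two involving $d_\star$. For the first group, all three carry the common Clifford factor $\varepsilon(\theta_H)\varepsilon(\theta_E)\iota(E)$, after using $\varepsilon(\theta_E)\varepsilon(\theta_H)=-\varepsilon(\theta_H)\varepsilon(\theta_E)$ to normalise the sign of the term coming from $\varepsilon(\theta_E)\otimes\pi(E)$ times $\varepsilon(\theta_H)\iota(E)\otimes\varphi h^\prime$. On the $\mca{H}$-factor, the term from $\varepsilon(\theta_H)\otimes\pi(H)$ produces $\pi(H)(\varphi h)=\varphi\,\pi(H)h$, while the one from $\varepsilon(\theta_E)\otimes\pi(E)$ produces $-\pi(E)(\varphi h^\prime)=-\varphi(\pi(H)h+\nu h)$, using $\pi(E)h^\prime=\pi(H)h+\nu h$ from the definition of $h^\prime$ in Section~\ref{2generc}; their sum is $-\nu\,\varphi h$ since $\varphi(h)=1$. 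Adding the term from $\varepsilon(\theta_H)\otimes\id\otimes\xi(H)$, which contributes $\xi(H)\,\varphi h=(\nu+\tfrac{1}{\sqrt2})\varphi h$ precisely because of the hypothesis $\xi(H)=\nu+\tfrac{1}{\sqrt2}$, the whole first group collapses to $\tfrac{1}{\sqrt2}\,\varepsilon(\theta_H)\varepsilon(\theta_E)\iota(E)\otimes\varphi h\otimes\id$.

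For the two terms containing $d_\star$, I would substitute $d_\star=\sigma(\hat{d}_\star)=-\tfrac{1}{\sqrt2}\varepsilon(\theta_H)\varepsilon(\theta_E)\iota(E)$ from \eqref{dfor} and reduce with Lemma~\ref{cliffrule}. A short computation gives $\iota(E)\varepsilon(\theta_H)\iota(E)=0$ and $\iota(E)\varepsilon(\theta_E)\iota(E)=\iota(E)$, whence $d_\star\varepsilon(\theta_H)\iota(E)=0$ and $d_\star\varepsilon(\theta_E)\iota(E)=-\tfrac{1}{\sqrt2}\varepsilon(\theta_H)\varepsilon(\theta_E)\iota(E)$. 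Hence the $d_\star$-term carrying $\varphi h^\prime$ vanishes and the one carrying $\varphi h$ equals $-\tfrac{1}{\sqrt2}\varepsilon(\theta_H)\varepsilon(\theta_E)\iota(E)\otimes\varphi h\otimes\id$, cancelling the first group exactly, so $dp=0$. The only delicate points are the bookkeeping of signs from the anticommuting $\varepsilon$'s and the observation that the value $\xi(H)=\nu+\tfrac{1}{\sqrt2}$ is exactly what makes the $\xi(H)$-contribution conspire with the $-\nu$ coming from $h^\prime$; I expect this sign and normalisation tracking to be the main thing to carry out carefully, rather than any conceptual obstacle.
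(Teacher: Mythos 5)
Your proposal is correct and follows essentially the same route as the paper's own proof: expand $dp$ term by term using \eqref{dformula}, the definition of $p$, the formula $d_\star=-\tfrac{1}{\sqrt{2}}\varepsilon(\theta_H)\varepsilon(\theta_E)\iota(E)$ from \eqref{dfor}, and the Clifford relations, then observe that $\pi(E)h^\prime=\pi(H)h+\nu h$ together with the hypothesis $\xi(H)=\nu+\tfrac{1}{\sqrt{2}}$ forces all surviving terms to cancel. The only cosmetic remark is that your appeal to $\varphi(h)=1$ when summing the $\pi(H)$- and $\pi(E)$-contributions is unnecessary (that cancellation is pure operator arithmetic), but this does not affect the argument.
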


\begin{proof}
By \eqref{dfor} in Section \ref{construc} we have 
\begin{equation*}
d_\star=-\frac{1}{\sqrt{2}}\varepsilon(\theta_H)\varepsilon(\theta_E)\iota(E). 
\end{equation*}
Recalling $h^\prime=\pi(E)^{-1}\left(\pi(H)h+\nu h\right)$, we have 
\begin{align*}
&dp\\
&=\left(d_\star\otimes\id\otimes\id+\varepsilon(\theta_H)\otimes\pi(H)\otimes\id+\varepsilon(\theta_H)\otimes\id\otimes\xi(H)+\varepsilon(\theta_E)\otimes\pi(E)\otimes\id\right)\\
&\quad\ \left(\varepsilon(\theta_H)\iota(E)\otimes\varphi h^\prime\otimes\id+\varepsilon(\theta_E)\iota(E)\otimes\varphi h\otimes\id\right)\\
&=\varepsilon(\theta_E)\varepsilon(\theta_H)\iota(E)\otimes\varphi\pi(E)h^\prime\otimes\id-\frac{1}{\sqrt{2}}\varepsilon(\theta_H)\varepsilon(\theta_E)\iota(E)\varepsilon(\theta_E)\iota(E)\otimes\varphi h\otimes\id\\
&\quad+\varepsilon(\theta_H)\varepsilon(\theta_E)\iota(E)\otimes\varphi\pi(H)h\otimes\id+\varepsilon(\theta_H)\varepsilon(\theta_E)\iota(E)\otimes\varphi h\otimes\xi(H)\\
&=-\varepsilon(\theta_H)\varepsilon(\theta_E)\iota(E)\otimes\left(\varphi\pi(H)h+\nu\varphi h\right)\otimes\id-\frac{1}{\sqrt{2}}\varepsilon(\theta_H)\varepsilon(\theta_E)\iota(E)\otimes\varphi h\otimes\id\\
&\quad+\varepsilon(\theta_H)\varepsilon(\theta_E)\iota(E)\otimes\varphi\pi(H)h\otimes\id+\varepsilon(\theta_H)\varepsilon(\theta_E)\iota(E)\otimes\varphi h\otimes\xi(H)\\
&=\varepsilon(\theta_H)\varepsilon(\theta_E)\iota(E)\otimes\left(-\left(\nu+\frac{1}{\sqrt{2}}\right)\varphi h\otimes\id+\varphi h\otimes\xi(H)\right)\\
&=0. 
\end{align*}
\end{proof}

Therefore, by Lemmas \ref{ddelp}, \ref{delp} and \ref{dp}, if $\xi(H)=\nu+\frac{1}{\sqrt{2}}$, we have 
\begin{equation*}
\ker d\cap\ker\delta=\ker(d\delta+\delta d). 
\end{equation*}

Without assuming $\xi(H)=\nu+\frac{1}{\sqrt{2}}$ we have: 
\begin{itemize}
\item $\ker p\subset\im d+\im\delta$ by Lemma \ref{ddelp}
\item $p\delta=0$ immediately from the definitions
\item $\im d+\im\delta=\im d\oplus\im\delta$. 
\end{itemize}
For the third one, if $x\in\im d\cap\im\delta$, then $x\in\im p$ by Lemma \ref{ddelp} and $x\in\ker p$ by $p\delta=0$. Since $\ker p\cap\im p=0$, we have $x=0$. 

The following lemma gives a condition under which we have $\ker p=\im d\oplus\im\delta$.

\begin{lem}
If $\xi(H)=\nu+\frac{1}{\sqrt{2}}$, then $pd=0$. 
\end{lem}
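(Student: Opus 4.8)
The plan is to compute $pd$ directly, in exactly the same manner as in the proof of Lemma \ref{dp} but with $p$ now standing on the left, and to check that under the hypothesis $\xi(H)=\nu+\frac{1}{\sqrt2}$ everything cancels. I would start from the expressions
\begin{align*}
p&=\varepsilon(\theta_H)\iota(E)\otimes\varphi h^\prime\otimes\id+\varepsilon(\theta_E)\iota(E)\otimes\varphi h\otimes\id,\\
d&=d_\star\otimes\id\otimes\id+\varepsilon(\theta_H)\otimes\pi(H)\otimes\id+\varepsilon(\theta_H)\otimes\id\otimes\xi(H)+\varepsilon(\theta_E)\otimes\pi(E)\otimes\id,
\end{align*}
together with $d_\star=-\frac{1}{\sqrt2}\varepsilon(\theta_H)\varepsilon(\theta_E)\iota(E)$ from \eqref{dfor}, and expand $pd$ into the eight resulting summands.

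Two simplifications drive the whole computation. First, since $\im\pi(E)=\ker\varphi$ we have $\varphi\circ\pi(E)=0$, so $(\varphi h)\pi(E)=0$ and $(\varphi h^\prime)\pi(E)=0$; this kills every summand in which $\pi(E)$ appears immediately to the right of $\varphi h$ or $\varphi h^\prime$. Second, the Clifford relations of Lemma \ref{cliffrule}—in particular $\varepsilon(\theta_H)^2=\varepsilon(\theta_E)^2=0$, the identity $\iota(E)\varepsilon(\theta_H)=-\varepsilon(\theta_H)\iota(E)$ (because $\theta_H(E)=0$), and $\iota(E)\varepsilon(\theta_E)=\id-\varepsilon(\theta_E)\iota(E)$—let me reduce each remaining Clifford product. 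A short computation of this kind shows that the entire $\varepsilon(\theta_H)\iota(E)\otimes\varphi h^\prime\otimes\id$ summand of $p$ contributes nothing: its products with $d_\star$ and with $\varepsilon(\theta_H)$ vanish by $\varepsilon(\theta_H)^2=0$, while its product with $\varepsilon(\theta_E)$ meets the factor $(\varphi h^\prime)\pi(E)=0$. Hence only the $\varepsilon(\theta_E)\iota(E)\otimes\varphi h\otimes\id$ summand of $p$ survives.

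For that summand, the product $\varepsilon(\theta_E)\iota(E)\cdot\varepsilon(\theta_E)=\varepsilon(\theta_E)$ again meets $(\varphi h)\pi(E)=0$, so the $\varepsilon(\theta_E)\otimes\pi(E)$ term of $d$ drops out, and the three remaining Clifford products reduce to multiples of $\varepsilon(\theta_H)\varepsilon(\theta_E)\iota(E)$. Evaluating the $\mca{H}$-factors with $\varphi\circ\pi(H)=-\nu\varphi$ (which follows from $\pi(H)\varphi=\nu\varphi$ and the definition of the dual representation, giving $(\varphi h)\pi(H)=-\nu\,\varphi h$), I expect $pd$ to collapse to
\begin{equation*}
\varepsilon(\theta_H)\varepsilon(\theta_E)\iota(E)\otimes\varphi h\otimes\left(\xi(H)-\nu-\tfrac{1}{\sqrt2}\right),
\end{equation*}
which vanishes exactly when $\xi(H)=\nu+\frac{1}{\sqrt2}$, using that $\xi(H)$ is a scalar. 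The only delicate point is the Clifford bookkeeping—keeping the signs straight while moving $\iota(E)$ past $\varepsilon(\theta_H)$ and $\varepsilon(\theta_E)$—but there is no conceptual obstacle: the coefficient $-\frac{1}{\sqrt2}$ coming from $d_\star$ and the eigenvalue $-\nu$ coming from $\pi(H)$ are precisely balanced by the hypothesized value of $\xi(H)$.
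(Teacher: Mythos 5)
Your proposal is correct and follows essentially the same route as the paper's proof: expand $pd$ into its summands, kill the $\varphi h^\prime$ part and the $\pi(E)$ term via $\varepsilon(\theta_H)^2=0$ and $\varphi\circ\pi(E)=0$, and reduce the three surviving terms to a single multiple of $\varepsilon(\theta_H)\varepsilon(\theta_E)\iota(E)\otimes\varphi h\otimes\bigl(\xi(H)-\nu-\tfrac{1}{\sqrt{2}}\bigr)$, which matches the paper's final expression up to the sign absorbed in reordering $\varepsilon(\theta_E)\varepsilon(\theta_H)$. All the intermediate Clifford identities and the evaluation $(\varphi h)\circ\pi(H)=-\nu\,\varphi h$ you cite are exactly the ones the paper uses.
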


\begin{proof}
By the formula 
\begin{equation*}
d_\star=-\frac{1}{\sqrt{2}}\varepsilon(\theta_H)\varepsilon(\theta_E)\iota(E)
\end{equation*}
and $\varphi\left(\pi(E)\mca{H}\right)=0$, we have 
\begin{align*}
&pd\\
&=\left(\varepsilon(\theta_H)\iota(E)\otimes\varphi h^\prime\otimes\id+\varepsilon(\theta_E)\iota(E)\otimes\varphi h\otimes\id\right)\\
&\quad\ \left(d_\star\otimes\id\otimes\id+\varepsilon(\theta_H)\otimes\pi(H)\otimes\id+\varepsilon(\theta_H)\otimes\id\otimes\xi(H)+\varepsilon(\theta_E)\otimes\pi(E)\otimes\id\right)\\
&=-\frac{1}{\sqrt{2}}\varepsilon(\theta_E)\iota(E)\varepsilon(\theta_H)\varepsilon(\theta_E)\iota(E)\otimes\varphi h\otimes\id\\
&\quad+\varepsilon(\theta_E)\iota(E)\varepsilon(\theta_H)\otimes\varphi(\pi(H))h\otimes\id+\varepsilon(\theta_E)\iota(E)\varepsilon(\theta_H)\otimes\varphi h\otimes\xi(H)\\
&=\frac{1}{\sqrt{2}}\varepsilon(\theta_E)\varepsilon(\theta_H)\iota(E)\varepsilon(\theta_E)\iota(E)\otimes\varphi h\otimes\id\\
&\quad-\varepsilon(\theta_E)\varepsilon(\theta_H)\iota(E)\otimes\left(-\left(\pi(H)\varphi\right)h\right)\otimes\id-\varepsilon(\theta_E)\varepsilon(\theta_H)\iota(E)\otimes\varphi h\otimes\xi(H)\\
&=\frac{1}{\sqrt{2}}\varepsilon(\theta_E)\varepsilon(\theta_H)\iota(E)\otimes\varphi h\otimes\id\\
&\quad+\varepsilon(\theta_E)\varepsilon(\theta_H)\iota(E)\otimes\nu\varphi h\otimes\id-\varepsilon(\theta_E)\varepsilon(\theta_H)\iota(E)\otimes\varphi h\otimes\xi(H)\\
&=\varepsilon(\theta_E)\varepsilon(\theta_H)\iota(E)\otimes\left(\left(\nu+\frac{1}{\sqrt{2}}\right)\varphi h\otimes\id-\varphi h\otimes\xi(H)\right)\\
&=0. 
\end{align*}
\end{proof}

Thus if $\xi(H)=\nu+\frac{1}{\sqrt{2}}$, we have $\ker p=\im d\oplus\im\delta$. 

\begin{thm}
Let $\sl(2,\bb{R})\stackrel{\pi}{\curvearrowright}\mca{H}$ be a representation on a complex vector space $\mca{H}$ such that $\pi(E)\colon\mca{H}\to\mca{H}$ is injective and there exist a linear map $\varphi\colon\mca{H}\to\bb{C}$ and an element $h\in\mca{H}$ satisfying $\ker\varphi=\pi(E)\mca{H}$ and $\varphi(h)=1$. Define $\nu\in\bb{C}$ by $\pi(H)\varphi=\nu\varphi$. Let $\mf{an}\stackrel{\xi}{\curvearrowright}V$ be a real or complex representation such that $\xi(E)=0$ and $\xi(H)=\nu+\frac{1}{\sqrt{2}}$. Then we have the following ``Hodge decomposition'': 
\begin{align*}
C^*(\mf{an};\mca{H}\otimes V)&=\im p\oplus\im d\oplus\im\delta\\
x&=px+d\delta x+\delta dx,\\
\im p&=\ker(d\delta+\delta d)=\ker d\cap\ker\delta,\\
\ker p&=\im d\oplus\im\delta,\\
\ker d&=\im p\oplus\im d,\\
\ker\delta&=\im p\oplus\im\delta. 
\end{align*}
Hence 
\begin{equation*}
H^*(\mf{an};\mca{H}\otimes V)\simeq\im p. 
\end{equation*}
\end{thm}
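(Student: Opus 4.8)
The plan is to assemble the theorem from the lemmas already proved in this section, since all the computational content is contained in them. Write $\Delta = d\delta + \delta d$. The cornerstone is Lemma \ref{ddelp}, which gives $\Delta = \id - p$ for the explicit operator $p$, together with the idempotency $p^2 = p$ proved immediately afterward. Idempotency yields the splitting $C^*(\mf{an};\mca{H}\otimes V) = \im p \oplus \ker p$, and since $p$ is then the projection onto $\im p$ along $\ker p$, the complementary projection $\id - p = \Delta$ satisfies $\ker\Delta = \im p$. The pointwise decomposition $x = px + d\delta x + \delta d x$ is just $x = px + \Delta x$ rewritten, so it holds for every $x$.

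Next I would pin down the three summands. That $\im p = \ker(d\delta+\delta d)$ is the identity $\ker\Delta = \im p$ just observed. For the further equality $\ker\Delta = \ker d\cap\ker\delta$, the inclusion $\supset$ is immediate (if $dx=0$ and $\delta x=0$ then $\Delta x=0$), while for $\subset$ I would take $x\in\ker\Delta=\im p$, so $x=px$, and compute $dx=(dp)x=0$ and $\delta x=(\delta p)x=0$ using Lemma \ref{dp} (here the hypothesis $\xi(H)=\nu+\frac{1}{\sqrt{2}}$ enters) and Lemma \ref{delp}. For $\ker p = \im d\oplus\im\delta$, I would combine $\ker p=\im\Delta\subset\im d+\im\delta$ (Lemma \ref{ddelp}) with the reverse inclusion: $\im\delta\subset\ker p$ follows from $p\delta=0$ and $\im d\subset\ker p$ from the relation $pd=0$ (the last lemma, again using $\xi(H)=\nu+\frac{1}{\sqrt{2}}$); directness of $\im d\oplus\im\delta$ was noted already. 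These give $C^*(\mf{an};\mca{H}\otimes V) = \im p\oplus\im d\oplus\im\delta$.

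Then I would read off the two mixed equalities. For $\ker d = \im p\oplus\im d$: the inclusion $\supset$ uses $dx=(dp)x=0$ on $\im p$ (Lemma \ref{dp}) and $d^2=0$ on $\im d$; for $\subset$, if $dx=0$ then $\delta d x=0$, so the pointwise decomposition collapses to $x=px+d\delta x\in\im p\oplus\im d$. The equality $\ker\delta=\im p\oplus\im\delta$ is proved symmetrically using $\delta p=0$ (Lemma \ref{delp}) and $\delta^2=0$. Finally $H^*(\mf{an};\mca{H}\otimes V)\simeq\im p$ follows from $\ker d=\im p\oplus\im d$ by quotienting by $\im d$.

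The main point to emphasize is that no genuine obstacle remains at this stage: the hypothesis $\xi(H)=\nu+\frac{1}{\sqrt{2}}$ has already done its work inside Lemma \ref{dp} and the unlabeled $pd=0$ lemma, which are exactly the places where the ``harmonic'' projection $p$ is forced to commute correctly with $d$. The present theorem is therefore purely a bookkeeping step organizing those relations into the stated Hodge-type decomposition.
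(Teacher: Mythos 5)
Your proposal is correct and takes essentially the same approach as the paper: the paper states this theorem with no separate proof precisely because it is the bookkeeping assembly of Lemma \ref{ddelp} ($d\delta+\delta d=\id-p$), the idempotency of $p$, Lemma \ref{delp} ($\delta p=0$), Lemma \ref{dp} ($dp=0$), and the final unlabeled lemma ($pd=0$), which is exactly what you carry out. You also correctly identify that the hypothesis $\xi(H)=\nu+\frac{1}{\sqrt{2}}$ is consumed only inside the $dp=0$ and $pd=0$ lemmas, just as in the paper.
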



\subsubsection{Application to $C^*\left(\mf{an};C^\infty(D_n^\pm)\otimes\bb{C}_\frac{1-n}{\sqrt{2}}\right)$}
Let $\sl(2,\bb{R})\stackrel{\pi}{\curvearrowright}C^\infty(D_n^\pm)$ be the derivative of the discrete series representation. The operator $\pi(E)$ is injective by \eqref{vvvaaa} in Section \ref{tnidoiu}. By Proposition \ref{ffoorr}, there exists a linear map $\varphi\colon C^\infty(D_n^\pm)\to\bb{C}$ such that $\ker\varphi=\pi(E)C^\infty(D_n^\pm)$ and $\pi(H)\varphi=-\frac{n}{\sqrt{2}}\varphi$. Hence we have $\nu=-\frac{n}{\sqrt{2}}$. 

Let $D_n^\pm=\bigoplus_{k=\pm n}^{\pm\infty}(D_n^\pm)_{(k)}$ be the decomposition \eqref{macom} in Section \ref{gener}. See also \eqref{dimmm} in Section \ref{iurops}. The subspace $(D_n^\pm)_{(\pm k)}$ is $1$-dimensional for all $k\geq n$. Since $(D_n^\pm)_{(\pm k)}$ is finite dimensional, $(D_n^\pm)_{(\pm k)}\subset C^\infty(D_n^\pm)$. See Corollary 4.4.3.3 of Warner \cite{Warner1}. 

By the explicit formula of $\varphi$ in Section 3.1 of Flaminio--Forni \cite{FF}, we know $\varphi\left((D_n^\pm)_{(\pm n)}\right)\neq0$. So we can take $h\in(D_n^\pm)_{(\pm n)}$ such that $\varphi(h)=1$. 

Following the previous section, we define operators
\begin{equation*}
T=\pi(E)^{-1}\left(\id-\varphi h\right)
\end{equation*}
on $C^\infty(D_n^\pm)$, 
\begin{equation*}
\delta=\iota(E)\otimes T\otimes\id
\end{equation*}
on $C^*\left(\mf{an};C^\infty(D_n^\pm)\otimes\bb{C}_\frac{1-n}{\sqrt{2}}\right)$ and an element 
\begin{equation*}
h^\prime=\pi(E)^{-1}\left(\pi(H)h-\frac{n}{\sqrt{2}}h\right)\in C^\infty(D_n^\pm). 
\end{equation*}

\begin{lem}\label{hprimeih}
$h^\prime=\pm\sqrt{2}ih$. 
\end{lem}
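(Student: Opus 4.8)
I need to show that the element
\[
h^\prime=\pi(E)^{-1}\left(\pi(H)h-\tfrac{n}{\sqrt{2}}h\right)\in C^\infty(D_n^\pm)
\]
equals $\pm\sqrt{2}\,i\,h$, where $h$ is a chosen generator of the lowest (resp. highest) weight space $(D_n^\pm)_{(\pm n)}$, normalized by $\varphi(h)=1$.

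Let me think about what $h^\prime$ is. We have $h \in (D_n^\pm)_{(\pm n)}$, which is the bottom $\PSO(2)$-isotypic component. The quantity $\pi(H)h - \frac{n}{\sqrt 2}h$ lies in $\ker\varphi = \pi(E)C^\infty(D_n^\pm)$ (as was checked in the general setup: $\varphi(\pi(H)h + \nu h)=0$ with $\nu = -n/\sqrt 2$), so $h^\prime$ is well-defined. I need to identify it concretely.

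**The approach.** The cleanest route is to work with the explicit weight-vector structure. The element $H_0 \in \so(2)\otimes\bb C$ diagonalizes the $\PSO(2)$-action: $(D_n^\pm)_{(k)}$ is the eigenspace of $\pi(H_0)$ with eigenvalue $k$. The element $H$ (the boost) and $H_0$ (the rotation generator) are related through the complexified $\sl(2,\bb R)$, and there are standard raising/lowering operators relative to $H_0$. So the plan is:

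First, I would express $H$, $E$, and $F$ in terms of $H_0$ and the raising/lowering operators $X_\pm$ (the $H_0$-weight $\pm 1$ vectors in $\sl(2,\bb R)\otimes\bb C$) using the commutation relations from Section~\ref{notation}. Concretely, I expect $H$ and $E$ to be expressible as linear combinations of $H_0$, $X_+$, $X_-$, and this reduces the computation of $\pi(H)h$ and $\pi(E)^{-1}$ acting on a weight vector to the known action of $X_\pm$ on the weight basis of the discrete series. Since $h$ has weight $\pm n$ (the extreme weight), applying $\pi(H)$ produces a combination of vectors in weights $\pm n$ and $\pm n \pm 1$; subtracting $\frac{n}{\sqrt 2}h$ (the weight-$\pm n$ piece) should leave exactly a vector in $(D_n^\pm)_{(\pm(n+1))}$. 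Then $\pi(E)^{-1}$ brings this back down to weight $\pm n$, landing in $\bb C h$, and I read off the scalar.

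**Key steps in order.** (1) Write out the $H_0$-weight decomposition of $\sl(2,\bb R)\otimes\bb C$ and identify the weight-raising and weight-lowering elements; in particular express $H$ as (a multiple of a raising part) $+$ (a multiple of a lowering part), and similarly for $E$. (2) Recall or record the precise matrix coefficients of these raising/lowering operators on the weight basis of $D_n^\pm$ near the extreme weight; this is standard discrete-series data and can be cited from the realization used in the excerpt (e.g.\ the formulas underlying Section~2 of Chapter~8 of Taylor~\cite{Taylor}, as already invoked). (3) Compute $\pi(H)h$, isolate the weight-$\pm n$ component to confirm it is $\frac{n}{\sqrt 2}h$ (consistent with $\nu=-n/\sqrt 2$), and identify the remaining weight-$\pm(n+1)$ component. (4) Apply $\pi(E)^{-1}$ to that component and simplify, using $\varphi(h)=1$ and $h\in(D_n^\pm)_{(\pm n)}$ to fix the overall constant. (5) Track the sign: the $\pm$ should come from the contrast between $D_n^+$ (weights $\geq n$, lowering toward $n$) and $D_n^-$ (weights $\leq -n$), and the factor $\sqrt 2\,i$ should emerge from the $\frac{1}{2\sqrt 2}$ and $\frac12$ normalizations of $H$ and $E$ together with the $i$ in $H_0$.

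**The main obstacle.** The delicate point is bookkeeping of constants and the sign. The factors $\frac{1}{2\sqrt 2}$ (in $H$), $\frac12$ (in $E$, $F$), the $i$ hidden in $H_0 = i\begin{pmatrix}&-\frac12\\\frac12&\end{pmatrix}$, and the matrix coefficients of the raising/lowering operators all conspire, and it is easy to drop a factor of $2$, $\sqrt 2$, or $i$. I would guard against this by cross-checking the weight-$\pm n$ component of $\pi(H)h$ independently: it must equal $\frac{n}{\sqrt 2}h$ by the defining relation $\pi(H)\varphi = \nu\varphi$ with $\nu=-n/\sqrt 2$ (recall $\pi$ on $\mca H^*$ has the sign flip). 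Matching that known scalar fixes my normalization conventions for the raising/lowering coefficients, after which the weight-$\pm(n+1)$ part—and hence $h^\prime$—is determined without ambiguity. The sign $\pm$ in the conclusion should then fall out automatically from the $D_n^+$ versus $D_n^-$ case distinction.
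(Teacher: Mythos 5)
Your broad strategy (the $H_0$-weight decomposition, raising/lowering operators, and the extreme-weight property of $h$) is the same one the paper uses, but two of your concrete steps are false, and the computation as you planned it would break down. First, $H$ has no $H_0$-component: with the paper's raising/lowering elements $H_\pm=\sqrt{2}iH\mp2E\pm iH_0$ one solves $H=-\frac{i}{2\sqrt{2}}(H_++H_-)$, so, using $\pi(H_\mp)h=0$, the vector $\pi(H)h=-\frac{i}{2\sqrt{2}}\pi(H_\pm)h$ lies \emph{entirely} in weight $\pm(n+1)$; its weight-$\pm n$ component is $0$, not $\frac{n}{\sqrt{2}}h$ as you assert. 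Your ``cross-check'' conflates two different projections: $\pi(H)\varphi=-\frac{n}{\sqrt{2}}\varphi$ does give $\varphi(\pi(H)h)=\frac{n}{\sqrt{2}}$, but $\varphi$ is the projection onto $\bb{C}h$ along $\ker\varphi=\pi(E)C^\infty(D_n^\pm)$, which is \emph{not} the sum of the weight spaces of weight $\neq\pm n$, precisely because $E$ is not weight-homogeneous (indeed $E=\frac{i}{2}H_0-\frac{1}{4}H_++\frac{1}{4}H_-$ has components of all three weights). Second, and fatally for your step (4): since $\pi(H)h-\frac{n}{\sqrt{2}}h$ has components in \emph{both} weights $\pm n$ and $\pm(n+1)$, your plan to apply $\pi(E)^{-1}$ to its weight-$\pm(n+1)$ part cannot be executed. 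That part equals $\pi(H)h$ itself, and $\varphi(\pi(H)h)=\frac{n}{\sqrt{2}}\neq0$ shows it does not lie in $\im\pi(E)=\ker\varphi$, so $\pi(E)^{-1}$ of it is undefined; also $\pi(E)^{-1}$ does not map weight spaces to weight spaces. Only the full combination $\pi(H)h-\frac{n}{\sqrt{2}}h$ is in the image of $\pi(E)$.

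The repair is short and is exactly the paper's proof: evaluate the annihilation relation on $h$ directly instead of trying to work component by component. From $H_\mp=\sqrt{2}iH\pm2E\mp iH_0$, $\pi(H_\mp)h=0$ and $\pi(H_0)h=\pm nh$ you get $0=\sqrt{2}i\pi(H)h\pm2\pi(E)h-inh=\sqrt{2}i\left(\pi(H)h-\frac{n}{\sqrt{2}}h\right)\pm2\pi(E)h$, i.e.\ $\pi(H)h-\frac{n}{\sqrt{2}}h=\pm\sqrt{2}i\,\pi(E)h$, whence $h^\prime=\pi(E)^{-1}\left(\pm\sqrt{2}i\,\pi(E)h\right)=\pm\sqrt{2}ih$. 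Note that no discrete-series matrix coefficients are needed at all (so your step (2) and the attendant normalization worries disappear); the only representation-theoretic inputs are the identity expressing $H_\mp$ in terms of $H$, $E$, $H_0$, the fact that $h$ has weight $\pm n$, and $\pi(H_\mp)h=0$.
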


\begin{proof}
Recall the element $H_0=i
\begin{pmatrix}
&-\frac{1}{2}\\
\frac{1}{2}&
\end{pmatrix}$, which acts on $(D_n^\pm)_{(k)}$ by multiplication by $k$. Let $H_\pm=i
\begin{pmatrix}
\frac{1}{2}&\\
&-\frac{1}{2}
\end{pmatrix}
\mp
\begin{pmatrix}
&\frac{1}{2}\\
\frac{1}{2}&
\end{pmatrix}
\in\sl(2,\bb{R})\otimes\bb{C}$. We have $\pi(H_\mp)h=0$. See for example Section 2 of Chapter 8 of Taylor \cite{Taylor}. Writing 
\begin{equation*}
H_\pm=\sqrt{2}iH\mp2E\pm iH_0, 
\end{equation*}
we have 
\begin{align}
0&=\pi(H_\mp)h=\sqrt{2}i\pi(H)h\pm2\pi(E)h-inh\label{2ihheh}\\
&=\sqrt{2}i\left(\pi(H)h-\frac{n}{\sqrt{2}}h\right)\pm2\pi(E)h. \nonumber
\end{align}
Hence 
\begin{equation*}
h^\prime=\pi(E)^{-1}\left(\pm\sqrt{2}i\pi(E)h\right)=\pm\sqrt{2}ih. 
\end{equation*}
\end{proof}

By Lemma \ref{ddelp}, we have 
\begin{equation*}
d\delta+\delta d=\id-p, 
\end{equation*}
where 
\begin{align*}
p&=\pm\sqrt{2}i\varepsilon(\theta_H)\iota(E)\otimes\varphi h\otimes\id+\varepsilon(\theta_E)\iota(E)\otimes\varphi h\otimes\id\\
&=\varepsilon\left(\theta_E\pm\sqrt{2}i\theta_H\right)\iota(E)\otimes\varphi h\otimes\id. 
\end{align*}
Here $C^*\left(\mf{an};C^\infty(D_n^\pm)\otimes\bb{C}_\frac{1-n}{\sqrt{2}}\right)$ is regarded as $\left(\bigwedge^*\left(\mf{an}^*\otimes\bb{C}\right)\right)\otimes C^\infty(D_n^\pm)\otimes\bb{C}_\frac{1-n}{\sqrt{2}}$. 

The subspace $\im p$ is a graded subspace of $C^*\left(\mf{an};C^\infty(D_n^\pm)\otimes\bb{C}_\frac{1-n}{\sqrt{2}}\right)$. Let $\im p=\bigoplus_{i=0}^2(\im p)^i$ be the decomposition. 

\begin{lem}\label{impds}
We have 
\begin{align*}
(\im p)^0&=0,\\
(\im p)^1&=\bb{C}\left(\theta_E\pm\sqrt{2}i\theta_H\right)\otimes h\otimes1,\\
(\im p)^2&=\bb{C}\left(\theta_E\wedge\theta_H\otimes h\otimes1\right). 
\end{align*}
\end{lem}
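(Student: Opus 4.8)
The plan is to exploit the fact that the projection $p$ is a tensor product of three operators, namely
\[
p = A\otimes(\varphi h)\otimes\id,\qquad A := \varepsilon\left(\theta_E\pm\sqrt{2}i\theta_H\right)\iota(E),
\]
acting respectively on $\bigwedge^*(\mf{an}^*\otimes\bb{C})$, on $C^\infty(D_n^\pm)$, and on $\bb{C}_\frac{1-n}{\sqrt{2}}$. Since the image of a tensor product of linear maps is the tensor product of their images, I would reduce the computation of $\im p$ to computing the image of each factor separately. The factor $\varphi h$ is the rank-one endomorphism $f\mapsto\varphi(f)h$, whose image is $\bb{C}h$ because $\varphi\neq0$ and $\varphi(h)=1$; the factor $\id$ on the one-dimensional coefficient space has image $\bb{C}_\frac{1-n}{\sqrt{2}}=\bb{C}\cdot1$. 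Thus $\im p = (\im A)\otimes\bb{C}h\otimes\bb{C}$, and since $A$ preserves the exterior degree, the grading of $\im p$ is inherited from the grading of $\im A$.

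Next I would compute $A$ on the standard homogeneous basis of $\bigwedge^*(\mf{an}^*\otimes\bb{C})$, using $\iota(E)\theta_H=0$, $\iota(E)\theta_E=1$, and $\iota(E)(\theta_H\wedge\theta_E)=-\theta_H$ (under the paper's wedge convention), together with wedging by $\omega:=\theta_E\pm\sqrt{2}i\theta_H$. In degree $0$ one has $\iota(E)1=0$, hence $A(1)=0$ and $(\im A)^0=0$. In degree $1$, $A(\theta_H)=0$ while $A(\theta_E)=\varepsilon(\omega)1=\omega$, so $(\im A)^1=\bb{C}\omega$. In degree $2$, the computation $A(\theta_H\wedge\theta_E)=\varepsilon(\omega)(-\theta_H)=-\omega\wedge\theta_H=-\theta_E\wedge\theta_H=\theta_H\wedge\theta_E$ (the $\theta_H\wedge\theta_H$ term vanishing) gives $(\im A)^2=\bb{C}(\theta_H\wedge\theta_E)$.

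Tensoring these graded images with $\bb{C}h\otimes\bb{C}$ then yields the three stated equalities: $(\im p)^0=0$, $(\im p)^1=\bb{C}(\theta_E\pm\sqrt{2}i\theta_H)\otimes h\otimes1$, and $(\im p)^2=\bb{C}(\theta_E\wedge\theta_H\otimes h\otimes1)$. I do not expect a genuine obstacle here: the computation is elementary once $p$ is recognized as a pure tensor of operators. The only points requiring care are the sign in $\iota(E)(\theta_H\wedge\theta_E)=-\theta_H$, the nonvanishing $\varphi(h)=1$ (which both guarantees $\im(\varphi h)=\bb{C}h$ and legitimizes the choice of $h$ made in the preceding paragraph, via Lemma \ref{hprimeih}), and the identity $\im(A\otimes B\otimes C)=\im A\otimes\im B\otimes\im C$, which I would invoke as a standard fact about algebraic tensor products.
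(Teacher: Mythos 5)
Your proposal is correct and takes essentially the same route as the paper: the paper also works from the factorization $p=\varepsilon\left(\theta_E\pm\sqrt{2}i\theta_H\right)\iota(E)\otimes\varphi h\otimes\id$ and simply evaluates $p$ on general elements of each degree, which amounts to the same basis computation of $\varepsilon(\omega)\iota(E)$ (including the sign $\iota(E)(\theta_H\wedge\theta_E)=-\theta_H$) that you perform. Your only cosmetic difference is to invoke the identity $\im(A\otimes B\otimes C)=\im A\otimes\im B\otimes\im C$ explicitly rather than reading off the image from the formulas for $p$ applied to arbitrary cochains.
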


\begin{proof}
For $f,g\in C^\infty(D_n^\pm)$, we have 
\begin{equation*}
p\left(1\otimes f\otimes1\right)=0, 
\end{equation*}
\begin{equation*}
p\left(\theta_E\otimes f\otimes1+\theta_H\otimes g\otimes1\right)=\varphi(f)\left(\theta_E\pm\sqrt{2}i\theta_H\right)\otimes h\otimes1
\end{equation*}
and 
\begin{equation*}
p\left(\theta_E\wedge\theta_H\otimes f\otimes1\right)=\varphi(f)\left(\theta_E\wedge\theta_H\otimes h\otimes1\right), 
\end{equation*}
from which the assertions follow. 
\end{proof}

\begin{thm}\label{hodge3}
We have 
\begin{align*}
C^*\left(\mf{an};C^\infty(D_n^\pm)\otimes\bb{C}_\frac{1-n}{\sqrt{2}}\right)&=\im p\oplus\im d\oplus\im\delta,\\
\ker d&=\im p\oplus\im d. 
\end{align*}
In particular, $H^*\left(\mf{an};C^\infty(D_n^\pm)\otimes\bb{C}_\frac{1-n}{\sqrt{2}}\right)\simeq\im p$. 
\end{thm}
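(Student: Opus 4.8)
The plan is to obtain Theorem \ref{hodge3} as an immediate specialization of the abstract ``Hodge decomposition'' theorem proved at the close of Section \ref{2generc}, taking $\mca{H}=C^\infty(D_n^\pm)$ and $V=\bb{C}_\frac{1-n}{\sqrt{2}}$. The only genuine work is to confirm that the hypotheses of that abstract theorem hold for this particular pair $(\mca{H},V)$; once they do, the displayed conclusions follow verbatim and the isomorphism $H^*\simeq\im p$ is formal.

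First I would collect the structural facts about the discrete series assembled just before the statement. The operator $\pi(E)$ on $C^\infty(D_n^\pm)$ is injective by \eqref{vvvaaa}, and Proposition \ref{ffoorr} furnishes a functional $\varphi$ with $\ker\varphi=\pi(E)C^\infty(D_n^\pm)$ and $\pi(H)\varphi=-\frac{n}{\sqrt{2}}\varphi$, so the scalar of the abstract setup is $\nu=-\frac{n}{\sqrt{2}}$. The explicit form of $\varphi$ in Flaminio--Forni \cite{FF} shows $\varphi$ is nonzero on the line $(D_n^\pm)_{(\pm n)}\subset C^\infty(D_n^\pm)$, so a normalized $h$ with $\varphi(h)=1$ exists. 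This supplies exactly the data $(\varphi,h,\nu)$ demanded by the abstract theorem.

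The decisive check is the coefficient condition $\xi(H)=\nu+\frac{1}{\sqrt{2}}$. For $V=\bb{C}_\frac{1-n}{\sqrt{2}}$ we have $\xi(E)=0$ and $\xi(H)=\frac{1-n}{\sqrt{2}}$, and $\nu+\frac{1}{\sqrt{2}}=-\frac{n}{\sqrt{2}}+\frac{1}{\sqrt{2}}=\frac{1-n}{\sqrt{2}}$ matches on the nose; this is precisely the resonant value of $\lambda$ excluded from Section \ref{Hooddg}, which is why the cohomology here is nonzero. With this identity in force, Lemma \ref{ddelp} gives $d\delta+\delta d=\id-p$, while the lemmas of Section \ref{2generc} give $p^2=p$, $p\delta=\delta p=0$, and, using $\xi(H)=\nu+\frac{1}{\sqrt{2}}$, also $dp=pd=0$; together with $\delta^2=0$ these are exactly the relations feeding the abstract theorem, so I would simply invoke it to read off
\begin{equation*}
C^*\left(\mf{an};C^\infty(D_n^\pm)\otimes\bb{C}_\frac{1-n}{\sqrt{2}}\right)=\im p\oplus\im d\oplus\im\delta,\qquad\ker d=\im p\oplus\im d,
\end{equation*}
and hence $H^*\simeq\im p$, with $\im p$ already computed in Lemma \ref{impds}.

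The hard part is genuinely upstream of this statement: the content lives in the general consideration, where one must verify $d\delta+\delta d=\id-p$ and, critically, that the resonance $\xi(H)=\nu+\frac{1}{\sqrt{2}}$ forces both $dp=0$ and $pd=0$ so that $\ker p$ splits as $\im d\oplus\im\delta$. The one computation specific to $D_n^\pm$ that I would not skip is Lemma \ref{hprimeih}, $h^\prime=\pm\sqrt{2}ih$, obtained from the lowering/raising relation $\pi(H_\mp)h=0$; it is what collapses $p$ to the clean projector $\varepsilon(\theta_E\pm\sqrt{2}i\theta_H)\iota(E)\otimes\varphi h\otimes\id$ and makes $\im p$ explicit. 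Given all this, the theorem itself is a parameter match plus a citation of the abstract result.
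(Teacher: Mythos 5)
Your proposal is correct and is essentially the paper's own proof: the paper likewise obtains Theorem \ref{hodge3} by specializing the abstract theorem at the end of Section \ref{2generc} to $\mca{H}=C^\infty(D_n^\pm)$ and $V=\bb{C}_\frac{1-n}{\sqrt{2}}$, verifying the hypotheses via \eqref{vvvaaa}, Proposition \ref{ffoorr} and the explicit Flaminio--Forni functional, with the same resonance check $\xi(H)=-\frac{n}{\sqrt{2}}+\frac{1}{\sqrt{2}}=\frac{1-n}{\sqrt{2}}$. Your singling out of Lemma \ref{hprimeih} ($h^\prime=\pm\sqrt{2}ih$) as the one computation specific to the discrete series, which collapses $p$ to the explicit projector whose image is computed in Lemma \ref{impds}, also matches the paper exactly.
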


\section{Computation results of $H^*(\mca{F};\bb{C}_\lambda)$}\label{comprrrr}
Here we state the computation results. Let $\Gamma$ be a torsion free cocompact lattice of $\PSL(2,\bb{R})$, $\mca{F}$ be the orbit foliation of the action $\Gamma\backslash\PSL(2,\bb{R})\curvearrowleft AN$ by right multiplication and $g_\Sigma$ be the genus of the surface $\Sigma=\Gamma\backslash\PSL(2,\bb{R})/\PSO(2)$. Then we have 
\begin{equation*}
H^i(\mca{F};\bb{C})\simeq
\begin{cases}
\bb{C}&i=0\\
\bb{C}^{2g_\Sigma+1}&i=1\\
\bb{C}^{2g_\Sigma}&i=2\\
0&\text{otherwise}
\end{cases}
\end{equation*}
and 
\begin{equation*}
H^i\left(\mca{F};\bb{C}_\frac{1}{\sqrt{2}}\right)\simeq
\begin{cases}
\bb{C}&i=1,2\\
0&\text{otherwise}. 
\end{cases}
\end{equation*}
For $n\in\bb{Z}_{\geq2}$ we have 
\begin{equation*}
H^i\left(\mca{F};\bb{C}_\frac{1-n}{\sqrt{2}}\right)\simeq
\begin{cases}
\bb{C}^{2(2n-1)(g_\Sigma-1)}&i=1,2\\
0&\text{otherwise}. 
\end{cases}
\end{equation*}
Or equivalently, for $n\in\bb{Z}_{\leq-1}$, 
\begin{equation*}
H^i\left(\mca{F};\bb{C}_\frac{n}{\sqrt{2}}\right)\simeq
\begin{cases}
\bb{C}^{2(1-2n)(g_\Sigma-1)}&i=1,2\\
0&\text{otherwise}. 
\end{cases}
\end{equation*}

Let $\Delta_\Sigma$ be the Laplacian of $\Sigma$, which is an operator on $L^2(\Sigma)$ with domain $C^\infty(\Sigma,\bb{C})$. Let $\sigma(\ol{\Delta_\Sigma})$ be the spectrum of the closure $\ol{\Delta_\Sigma}$ of $\Delta_\Sigma$. We have $\sigma(\ol{\Delta_\Sigma})\subset[0,\infty)$. For $\nu\in\sigma(\ol{\Delta_\Sigma})$ let $m_\nu$ denote the multiplicity of the eigenvalue $\nu$. Then for $\nu\in\sigma(\ol{\Delta_\Sigma})\setminus\{0\}$, we have 
\begin{equation}\label{nohodge}
H^i\left(\mca{F};\bb{C}_\frac{1\pm\sqrt{1-4\nu}}{2\sqrt{2}}\right)\simeq
\begin{cases}
\bb{C}^{m_\nu}&i=1,2\\
0&\text{otherwise}. 
\end{cases}
\end{equation}

Set 
\begin{equation*}
\Lambda^\prime=\left\{\frac{n}{\sqrt{2}}\ \middle|\ n\in\bb{Z}_{\leq1}\right\}\cup\left\{\frac{1\pm\sqrt{1-4\nu}}{2\sqrt{2}}\ \middle|\ \nu\in\sigma(\ol{\Delta_\Sigma})\setminus\{0\}\right\}. 
\end{equation*}
See Figure \eqref{figure2}. 
\begin{figure}[hbtp]
\begin{center}
\includegraphics[keepaspectratio, scale=0.55]{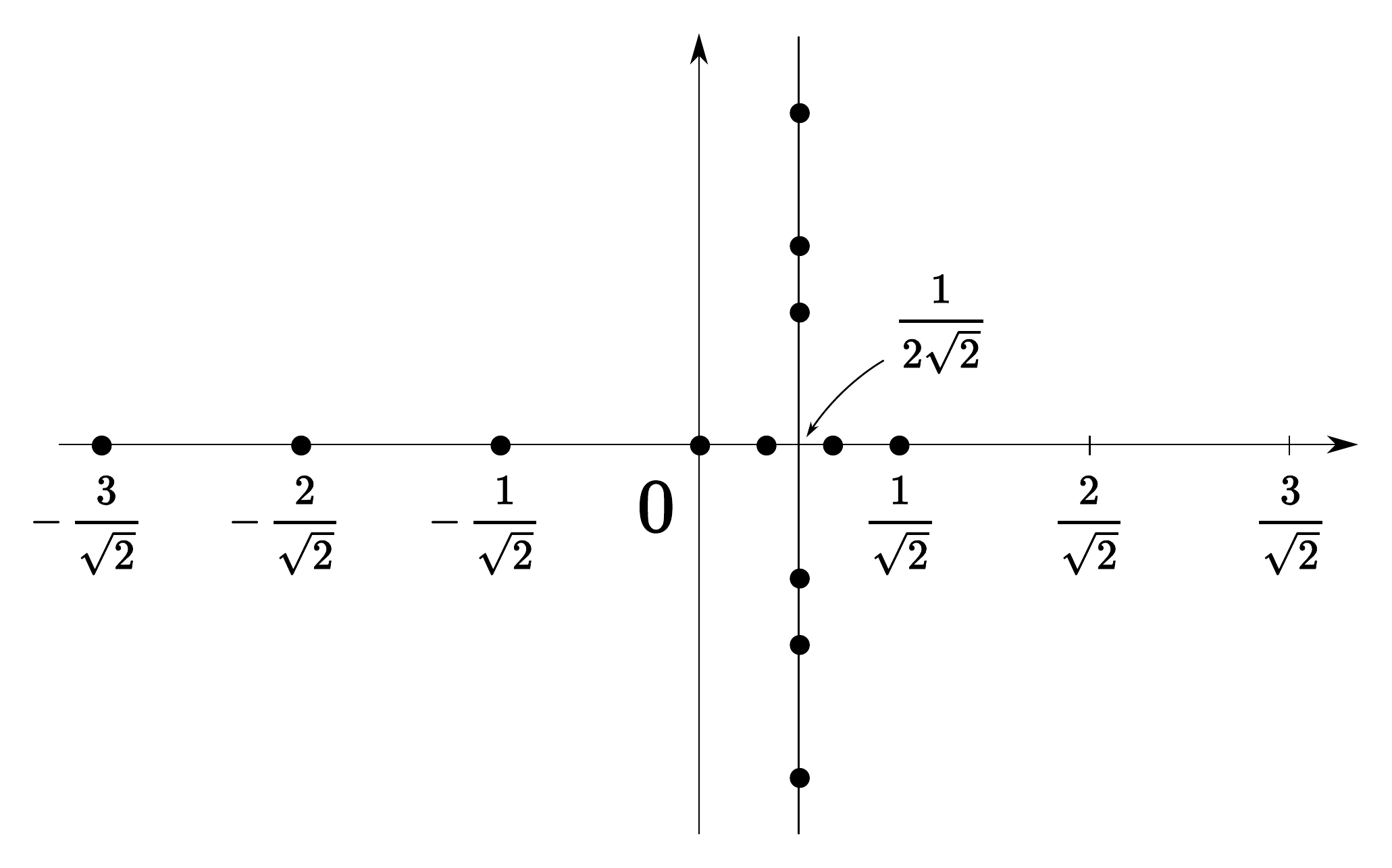}
\caption{$\Lambda^\prime$}
\label{figure2}
\end{center}
\end{figure}
Then 
\begin{equation*}
H^*\left(\mca{F};\bb{C}_\lambda\right)=0
\end{equation*}
for all $\lambda\not\in\Lambda^\prime$. Therefore, $H^*(\mca{F};\bb{C}_\lambda)$ is nontrivial if and only if $\lambda\in\Lambda^\prime$. 

We have ``Hodge decompositions'' except for the case \eqref{nohodge}. The ``Hodge decompositions'' are given by Theorem \ref{36kwlxisee}, Section \ref{easy}, Theorem \ref{hodge2}, Theorem \ref{hodge3} and Lemma \ref{impds}. For the case \eqref{nohodge}, the complex $C^*\left(\mf{an};C^\infty\left(\mca{H}_{-\frac{\nu}{2}}\right)\otimes\bb{C}_\frac{1\pm\sqrt{1-4\nu}}{2\sqrt{2}}\right)$ appears, for which we do not find a ``Hodge decomposition''. 

For $\lambda\in\bb{R}$, computation results of $H^*(\mca{F};\bb{R}_\lambda)$ are obtained from those of $H^*(\mca{F};\bb{C}_\lambda)$ and the isomorphism \eqref{realcomplex} in Section \ref{acofpcf}.

\section{Computation of the restriction map $H^*\left(\Gamma\backslash\PSL(2,\bb{R});\bb{C}\right)\to H^*(\mca{F};\bb{C})$}\label{restmapH}
The goal of this section is computation of the restriction map $H^*(P;\bb{C})\to H^*(\mca{F};\bb{C})$, which has an application as explained in Section \ref{haefp}. The authors know two ways to compute this map. The first one is as follows. We compute $H^*(P;\bb{C})$ by using the Serre spectral sequence of the circle bundle $P\to\Sigma$. Then we construct bases of $H^*(P;\bb{C})$ and $H^*(\mca{F};\bb{C})$ and compute how the restriction map maps the basis. This computation is relatively straightforward. We omit it in this paper. Instead, we will explain the second way of computation, which is less straightforward and is compatible with the way how we computed the cohomology $H^*(\mca{F};\bb{C})$. 

The scheme of the computation is as follows. We use the isomorphism 
\begin{equation*}
H^*(P;\bb{C})\simeq H^*\left(\sl(2,\bb{R});C^\infty(P,\bb{C})\right). 
\end{equation*}
We compute $H^*\left(\sl(2,\bb{R});C^\infty(\mca{H})\right)$ for each irreducible unitary representation $\mca{H}$ of $\PSL(2,\bb{R})$ by using the Hochschild--Serre spectral sequence for the pair $\mf{an}\subset\sl(2,\bb{R})$ and the computation results of $H^*\left(\mf{an};C^\infty(\mca{H})\otimes\bb{C}_\lambda\right)$ for $\lambda=0$, $\frac{1}{\sqrt{2}}$ in Section \ref{cohcomp}. We construct an operator $\delta$ on $C^*\left(\sl(2,\bb{R});C^\infty(P,\bb{C})\right)$ of degree $-1$ such that $d\delta+\delta d=\id\otimes\Omega$, where $\Omega$ is the Casimir element of $\sl(2,\bb{R})$. This shows that a generic part of the cohomology $H^*\left(\sl(2,\bb{R});C^\infty(P,\bb{C})\right)$ vanishes and only the part corresponding to $\bb{C}$ and $C^\infty(D_1^\pm)$ in $C^\infty(P,\bb{C})$ can be nontrivial. So the computation results of $H^*(\sl(2,\bb{R});\bb{C})$ and $H^*\left(\sl(2,\bb{R});C^\infty(D_1^\pm)\right)$ conclude the computation of $H^*(P;\bb{C})$. The computation of the restriction map $H^*(P;\bb{C})\to H^*(\mca{F};\bb{C})$ is based on the computation of the restriction maps 
\begin{equation*}
H^*(\sl(2,\bb{R});\bb{C})\to H^*(\mf{an};\bb{C})
\end{equation*}
and 
\begin{equation*}
H^*\left(\sl(2,\bb{R});C^\infty(D_1^\pm)\right)\to H^*\left(\mf{an};C^\infty(D_1^\pm)\right). 
\end{equation*}
Since we will show $H^*\left(\sl(2,\bb{R});C^\infty(D_1^\pm)\right)\to H^*\left(\mf{an};C^\infty(D_1^\pm)\right)$ is an isomorphism, the noninjectivity and nonsurjectivity come only from $H^*(\sl(2,\bb{R});\bb{C})\to H^*(\mf{an};\bb{C})$.

\subsection{Hochschild--Serre spectral sequence relative to a subalgebra}
Since we will use the Hochschild--Serre spectral sequence for the pair $\mf{an}\subset\sl(2,\bb{R})$, we explain here the Hochschild--Serre spectral sequence relative to a subalgebra. 

Let $K\subset L$ be a field extension, $\mf{g}$ be a Lie algebra over $K$, $\mf{h}$ be a subalgebra of $\mf{g}$ and $\pi\colon\mf{g}\to\End_L(V)$ be a representation on a vector space $V$ over $L$. Then we have a spectral sequence $E^{p,q}_r$ defined from the filtered complex $F^*C^*(\mf{g};V)$, which converges to $H^*(\mf{g};V)$. See Section \ref{hochschildserre}. Recall that the differentials are $d\colon E^{p,q}_r\to E^{p+r,q-r+1}_r$. 

Note that we have a representation $\mf{h}\curvearrowright\bigwedge^p(\mf{g}/\mf{h})^*\otimes V$ defined from the adjoint representation and the restriction of $\pi$. 

\begin{prop}[Hochschild--Serre \cite{HS}]\label{hochserr}
\begin{equation*}
E^{p,q}_1\simeq H^q\left(\mf{h};\bigwedge^p(\mf{g}/\mf{h})^*\otimes V\right). 
\end{equation*}
\end{prop}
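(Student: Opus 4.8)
The plan is to compute the $E_1$ page directly from the definition $E_1 \simeq H(E_0, d_0)$, where $E_0^{p,q} = F^pC^{p+q}(\mf g; V)/F^{p+1}C^{p+q}(\mf g; V)$ and $d_0$ is the differential induced by $d$ on the associated graded. The first step is to identify $E_0^{p,q}$ explicitly. I would choose (non-canonically) a vector space complement $\mf m$ of $\mf h$ in $\mf g$, so that $\mf g = \mf h \oplus \mf m$ with $\mf m \simeq \mf g/\mf h$. This induces a decomposition $\bigwedge^n\mf g^* \otimes V = \bigoplus_{p'+q'=n}\bigwedge^{p'}\mf m^* \otimes \bigwedge^{q'}\mf h^* \otimes V$, and the filtration degree of a summand is exactly $p'$: by the definition of $F^pC^{p+q}$ in Section \ref{hochschildserre}, a cochain lies in $F^pC^{p+q}$ iff it is supported in summands with $p' \geq p$. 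Hence $E_0^{p,q} \simeq \bigwedge^p\mf m^* \otimes \bigwedge^q\mf h^* \otimes V = \bigwedge^p(\mf g/\mf h)^* \otimes C^q(\mf h; V)$, and this identification is canonical since the subquotient does not see the auxiliary splitting.

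The heart of the argument is the computation of the induced differential $d_0 \colon E_0^{p,q} \to E_0^{p,q+1}$. I would take a representative $\varphi \in \bigwedge^p\mf m^*\otimes\bigwedge^q\mf h^*\otimes V$, apply the Chevalley--Eilenberg differential $d$, and extract the $E_0^{p,q+1}$-component by evaluating $d\varphi$ on $p$ vectors $X_1,\dots,X_p \in \mf m$ and $q+1$ vectors $Y_0,\dots,Y_q \in \mf h$. Sorting the terms of the Chevalley--Eilenberg formula by how many $\mf h$-arguments are fed into $\varphi$, a term contributes iff $\varphi$ receives exactly $q$ elements of $\mf h$ and $p$ elements of $\mf m$, since $\varphi$ vanishes otherwise. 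The surviving terms are (i) the coefficient terms $\pi(Y_i)\varphi(\dots\widehat{Y_i}\dots)$; (ii) the bracket terms $[Y_i,Y_j]$, which stay in $\mf h$ because $\mf h$ is a subalgebra; and (iii) the bracket terms $[Y_i,X_j]$, of which only the $\mf m$-component survives, because the $\mf h$-component would feed a $(q+1)$-st element of $\mf h$ into $\varphi$. The terms removing some $X_j$, as well as the brackets $[X_i,X_j]$, all drop out for the same counting reason.

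I would then recognize the resulting expression as exactly the Chevalley--Eilenberg differential of $\mf h$ with coefficients in the $\mf h$-module $\bigwedge^p(\mf g/\mf h)^*\otimes V$: term (i) is the coefficient action, term (ii) is the internal bracket of $\mf h$, and term (iii) — the $\mf m$-projection of $[Y_i,X_j]$ — is precisely the adjoint action of $\mf h$ on $\mf g/\mf h$, dualized on the $\bigwedge^p(\mf g/\mf h)^*$ factor. Consequently $(E_0^{p,\bullet}, d_0)$ is the complex $C^\bullet\left(\mf h; \bigwedge^p(\mf g/\mf h)^*\otimes V\right)$, and taking cohomology in $q$ yields $E_1^{p,q} \simeq H^q\left(\mf h; \bigwedge^p(\mf g/\mf h)^*\otimes V\right)$.

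The main obstacle I anticipate is the bookkeeping in the second step: keeping the signs straight while reordering the $\mf h$- and $\mf m$-arguments, and verifying that the $\mf m$-component of $[Y_i,X_j]$ really does assemble into the adjoint $\mf h$-action on $\mf g/\mf h$ with the correct sign. This is exactly where the fact that $\mf h$ need not be an ideal enters: for an ideal the $\mf m$-component would vanish and term (iii) would disappear, recovering the simpler situation of the absolute Hochschild--Serre sequence. A secondary point to confirm is that the identification of $d_0$ is independent of the chosen complement $\mf m$, which follows because $d_0$ is intrinsically defined on the subquotient, the splitting serving only as a device for writing cochains in coordinates.
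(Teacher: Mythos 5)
Your proposal is correct and follows essentially the same route as the paper: both identify the complex $\left(E_0^{p,\bullet},d_0\right)$ with the Chevalley--Eilenberg complex $C^\bullet\left(\mf{h};\bigwedge^p(\mf{g}/\mf{h})^*\otimes V\right)$ and then take cohomology, the only difference being that the paper writes the identification via the contraction map $\varphi\mapsto\iota_{X_q}\cdots\iota_{X_1}\varphi$ and cites Hochschild--Serre for the verification that it is an isomorphism of cochain complexes, whereas you carry out that verification explicitly using a splitting $\mf{g}=\mf{h}\oplus\mf{m}$. Your term-by-term analysis of which pieces of the Chevalley--Eilenberg differential survive on the associated graded --- in particular that only the $\mf{m}$-component of $[Y_i,X_j]$ contributes, assembling into the adjoint $\mf{h}$-action on $\mf{g}/\mf{h}$, while the $[X_i,X_j]$ terms drop --- is precisely the content of the step the paper delegates to the reference.
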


\begin{proof}
This is Corollary to Theorem 2 in \cite{HS}. Recall that 
\begin{equation*}
F^pC^{p+q}(\mf{g};V)=\left\{\varphi\in C^{p+q}(\mf{g};V)\ \middle|\ \iota_{X_1}\cdots\iota_{X_{q+1}}\varphi=0\ \text{if $X_1,\ldots,X_{q+1}\in\mf{h}$}\right\}. 
\end{equation*}
Consider the map 
\begin{align*}
F^pC^{p+q}(\mf{g};V)&\to C^q\left(\mf{h};\bigwedge^p(\mf{g}/\mf{h})^*\otimes V\right)\\
\varphi&\mapsto((X_1,\ldots,X_q)\mapsto\iota_{X_q}\cdots\iota_{X_1}\varphi), 
\end{align*}
which induces 
\begin{equation*}
E^{p,q}_0=F^pC^{p+q}(\mf{g};V)/F^{p+1}C^{p+q}(\mf{g};V)\to C^q\left(\mf{h};\bigwedge^p(\mf{g}/\mf{h})^*\otimes V\right). 
\end{equation*}
This map gives an isomorphism 
\begin{equation*}
E^{p,q}_0\simeq C^q\left(\mf{h};\bigwedge^p(\mf{g}/\mf{h})^*\otimes V\right)
\end{equation*}
of cochain complexes, hence the proposition follows by taking the cohomology. See \cite{HS} for the proof of the last sentence. 
\end{proof}

\subsection{Computation of $H^*\left(\sl(2,\bb{R});C^\infty(\mca{H})\right)$ for an irreducible unitary representation $\mca{H}$}\label{compirruni}
Let $\PSL(2,\bb{R})\curvearrowright\mca{H}$ be an irreducible unitary representation and $\sl(2,\bb{R})\curvearrowright C^\infty(\mca{H})$ be the derivative representation. We compute $H^*\left(\sl(2,\bb{R});C^\infty(\mca{H})\right)$ by using the Hochschild--Serre spectral sequence. 

Let $E_r^{p,q}$ be the Hochschild--Serre spectral sequence of $C^\infty(\mca{H})$ relative to the pair $\mf{an}\subset\sl(2,\bb{R})$, which converges to $H^*\left(\sl(2,\bb{R});C^\infty(\mca{H})\right)$. We have 
\begin{equation*}
\sl(2,\bb{R})/\mf{an}\simeq\bb{R}_{-\frac{1}{\sqrt{2}}},\quad\left(\sl(2,\bb{R})/\mf{an}\right)^*\simeq\bb{R}_\frac{1}{\sqrt{2}}
\end{equation*}
as representations of $\mf{an}$. By Proposition \ref{hochserr}, 
\begin{align*}
E_1^{0,q}&\simeq H^q\left(\mf{an};C^\infty(\mca{H})\right), \\
E_1^{1,q}&\simeq H^q\left(\mf{an};C^\infty(\mca{H})\otimes\left(\sl(2,\bb{R})/\mf{an}\right)^*\right)\\
&\simeq H^q\left(\mf{an};C^\infty(\mca{H})\otimes\bb{C}_\frac{1}{\sqrt{2}}\right). 
\end{align*}
Therefore, the $E_1$ sheet is 
\begin{equation*}
\begin{tikzpicture}
\matrix(m)[matrix of math nodes,
nodes in empty cells,
nodes={minimum width=5ex,minimum height=5ex,outer sep=-5pt},
column sep=1ex,row sep=1ex]{
H^2\left(\mf{an};C^\infty(\mca{H})\right)&H^2\left(\mf{an};C^\infty(\mca{H})\otimes\bb{C}_\frac{1}{\sqrt{2}}\right)\\
H^1\left(\mf{an};C^\infty(\mca{H})\right)&H^1\left(\mf{an};C^\infty(\mca{H})\otimes\bb{C}_\frac{1}{\sqrt{2}}\right)\\
H^0\left(\mf{an};C^\infty(\mca{H})\right)&\ H^0\left(\mf{an};C^\infty(\mca{H})\otimes\bb{C}_\frac{1}{\sqrt{2}}\right), \\
};
\end{tikzpicture}
\end{equation*}
where we have $d\colon E^{p,q}_1\to E^{p+1,q}_1$. We consider the spectral sequence separately depending on $\mca{H}$.

\subsubsection*{The trivial representation $\bb{C}$}
First we consider the case when $\mca{H}=\bb{C}$. In this case the $E_1$ sheet is 
\begin{equation*}
\begin{tikzpicture}
\matrix(m)[matrix of math nodes,
nodes in empty cells,
nodes={minimum width=5ex,minimum height=5ex,outer sep=-5pt},
column sep=1ex,row sep=1ex]{
0&\bb{C}\\
\bb{C}&\bb{C}\\
\bb{C}&0\\
};
\end{tikzpicture}
\end{equation*}
by Section \ref{cohcomp}. Since $\sl(2,\bb{R})=[\sl(2,\bb{R}),\sl(2,\bb{R})]$, we have $H^1\left(\sl(2,\bb{R});\bb{C}\right)=0$ directly from the definition. It follows that the only possibly nontrivial differential is actually nontrivial. Hence 
\begin{equation*}
H^i\left(\sl(2,\bb{R});\bb{C}\right)\simeq
\begin{cases}
\bb{C}&i=0,3\\
0&\text{otherwise}. 
\end{cases}
\end{equation*}

\subsubsection*{The discrete series representations $D_1^\pm$}
Next we consider the case when $\mca{H}=D_1^\pm$. The $E_1$ sheet is 
\begin{equation*}
\begin{tikzpicture}
\matrix(m)[matrix of math nodes,
nodes in empty cells,
nodes={minimum width=5ex,minimum height=5ex,outer sep=-5pt},
column sep=1ex,row sep=1ex]{
\bb{C}&0\\
\bb{C}&0\\
0&0\\
};
\end{tikzpicture}
\end{equation*}
by Section \ref{cohcomp}. So all the differentials of $E_1$ are zero. Therefore, 
\begin{equation*}
H^i\left(\sl(2,\bb{R});C^\infty(D_1^\pm)\right)\simeq
\begin{cases}
\bb{C}&i=1,2\\
0&\text{otherwise}. 
\end{cases}
\end{equation*}

\begin{prop}\label{restrichom}
The homomorphism 
\begin{equation*}
H^*\left(\sl(2,\bb{R});C^\infty(D_1^\pm)\right)\to H^*\left(\mf{an};C^\infty(D_1^\pm)\right)
\end{equation*}
induced by the restriction map is an isomorphism. 
\end{prop}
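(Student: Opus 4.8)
The plan is to identify the restriction map with the edge homomorphism of the Hochschild--Serre spectral sequence $E_r^{p,q}$ relative to the pair $\mf{an}\subset\sl(2,\bb{R})$ that was already set up above, and then to observe that this spectral sequence degenerates at $E_1$ in a particularly strong way. Since $\dim\left(\sl(2,\bb{R})/\mf{an}\right)=1$, we have $\bigwedge^p\left(\sl(2,\bb{R})/\mf{an}\right)^*=0$ for $p\geq2$, so $E_1^{p,q}=0$ whenever $p\geq2$. Moreover, by Proposition \ref{hochserr}, $E_1^{1,q}\simeq H^q\left(\mf{an};C^\infty(D_1^\pm)\otimes\bb{C}_\frac{1}{\sqrt{2}}\right)$, and this vanishes for all $q$ by the computation in Section \ref{cohcomp}, because $\frac{1}{\sqrt{2}}\neq\frac{1-1}{\sqrt{2}}=0$. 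Thus $E_1^{p,q}=0$ for every $p\neq0$, and the entire spectral sequence is concentrated in the single column $p=0$.

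Once the vanishing of all columns except $p=0$ is in place, the rest is formal. Every differential $d_r$ either starts or ends in a vanishing column, so $E_\infty^{0,q}=E_1^{0,q}=H^q\left(\mf{an};C^\infty(D_1^\pm)\right)$, while $E_\infty^{p,q}=0$ for $p\neq0$. Consequently the filtration of $H^n\left(\sl(2,\bb{R});C^\infty(D_1^\pm)\right)$ has a single nonzero graded piece, namely $E_\infty^{0,n}$, so the edge homomorphism $H^n\left(\sl(2,\bb{R});C^\infty(D_1^\pm)\right)\to E_\infty^{0,n}=H^n\left(\mf{an};C^\infty(D_1^\pm)\right)$ is an isomorphism for every $n$. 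As a byproduct this reproves the values of $H^*\left(\sl(2,\bb{R});C^\infty(D_1^\pm)\right)$ recorded just above.

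The one point that must be checked carefully --- and which I expect to be the main, though essentially routine, obstacle --- is that this edge homomorphism really is the restriction map. For this I would return to the description of $E_0^{0,q}$ in the proof of Proposition \ref{hochserr}: the isomorphism $E_0^{0,q}=F^0C^q/F^1C^q\simeq C^q\left(\mf{an};C^\infty(D_1^\pm)\right)$ is induced by $\varphi\mapsto\left((X_1,\ldots,X_q)\mapsto\iota_{X_q}\cdots\iota_{X_1}\varphi\right)$, which is precisely the restriction of a cochain on $\sl(2,\bb{R})$ to $\mf{an}$. Since $F^1C^q=\{\varphi\mid\iota_{X_1}\cdots\iota_{X_q}\varphi=0\text{ for }X_i\in\mf{an}\}$ is exactly the kernel of this restriction of cochains, the restriction cochain map factors through the quotient $C^*\left(\sl(2,\bb{R});\cdot\right)\to E_0^{0,*}$, and the map it induces on cohomology is, by definition, the composite $H^n\to E_\infty^{0,n}\hookrightarrow E_1^{0,n}$ defining the edge homomorphism. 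Combining this identification with the degeneration above yields that the restriction map is an isomorphism.
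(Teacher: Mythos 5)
Your proof is correct and is essentially the paper's argument: both rest on the Hochschild--Serre spectral sequence for the pair $\mf{an}\subset\sl(2,\bb{R})$ and on the same key input, namely $E_1^{1,q}\simeq H^q\left(\mf{an};C^\infty(D_1^\pm)\otimes\bb{C}_{\frac{1}{\sqrt{2}}}\right)=0$ from Section \ref{cohcomp} (together with $E_1^{p,q}=0$ for $p\geq2$), which collapses the spectral sequence onto the column $p=0$. The only difference is bookkeeping in the final identification: you show directly that the restriction map is the edge homomorphism, via the factorization of the cochain-level restriction through $C^*\left(\sl(2,\bb{R});C^\infty(D_1^\pm)\right)/F^1C^*\simeq C^*\left(\mf{an};C^\infty(D_1^\pm)\right)$, whereas the paper packages the same compatibility by mapping the spectral sequence to the trivially degenerate Hochschild--Serre spectral sequence of the pair $\mf{an}\subset\mf{an}$ and comparing $E_\infty$ terms; both devices are sound and give the same conclusion.
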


\begin{proof}
Let $F_r^{p,q}$ be the Hochschild--Serre spectral sequence of $C^*\left(\mf{an};C^\infty(D_1^\pm)\right)$ relative to the inclusion $\mf{an}\subset\mf{an}$. The restriction map 
\begin{equation*}
s\colon C^*\left(\sl(2,\bb{R});C^\infty(D_1^\pm)\right)\to C^*\left(\mf{an};C^\infty(D_1^\pm)\right)
\end{equation*}
is a cochain map and preserves the filtrations. Hence it induces a morphism $s\colon E_r^{p,q}\to F_r^{p,q}$ of spectral sequences. We show $s\colon E_1\to F_1$ is an isomorphism. The spectral sequence $F_r^{p,q}$ converges to $H^*\left(\mf{an};C^\infty(D_1^\pm)\right)$ and the $F_1$ sheet is 
\begin{equation*}
\begin{tikzpicture}
\matrix(m)[matrix of math nodes,
nodes in empty cells,
nodes={minimum width=5ex,minimum height=5ex,outer sep=-5pt},
column sep=1ex,row sep=1ex]{
H^2\left(\mf{an};C^\infty(D_1^\pm)\right)&0\\
H^1\left(\mf{an};C^\infty(D_1^\pm)\right)&0\\
H^0\left(\mf{an};C^\infty(D_1^\pm)\right)&0\\
};
\end{tikzpicture}
\end{equation*}
by Proposition \ref{hochserr}. Consider the following diagram 
\begin{equation*}
\begin{tikzcd}
E_1^{0,q}\ar[r,"\sim"]\ar[d,"s"']&H^q\left(\mf{an};C^\infty(D_1^\pm)\right)\ar[d,"\id"]\\
F_1^{0,q}\ar[r,"\sim"]&H^q\left(\mf{an};C^\infty(D_1^\pm)\right), 
\end{tikzcd}
\end{equation*}
where the horizontal arrows are those given by Proposition \ref{hochserr}. This is commutative by the description of the horizontal maps in the proof of Proposition \ref{hochserr}. Therefore, $s\colon E_1\to F_1$ is an isomorphism. This implies that $s\colon E_\infty\to F_\infty$ is also an isomorphism. Using the commutative diagram 
\begin{equation*}
\begin{tikzcd}
E_\infty^{0,q}\ar[r,"\sim"]\ar[d,"s"']&H^q\left(\sl(2,\bb{R});C^\infty(D_1^\pm)\right)\ar[d,"s"]\\
F_\infty^{0,q}\ar[r,"\sim"]&H^q\left(\mf{an};C^\infty(D_1^\pm)\right), 
\end{tikzcd}
\end{equation*}
we see $s\colon H^q\left(\sl(2,\bb{R});C^\infty(D_1^\pm)\right)\to H^q\left(\mf{an};C^\infty(D_1^\pm)\right)$ is an isomorphism. 
\end{proof}

\subsubsection*{The discrete series representations $D_n^\pm$ for $n\geq2$}
When $\mca{H}=D_n^\pm$ for some $n\geq2$, we have $E_1=0$ by Section \ref{cohcomp}. Hence 
\begin{equation*}
H^*\left(\sl(2,\bb{R});C^\infty(D_n^\pm)\right)=0. 
\end{equation*}

\subsubsection*{The principal and complementary series representations $\mca{H}_\mu$}
When $\mca{H}=\mca{H}_\mu$ for some $\mu<0$, we have $E_1=0$ by Section \ref{cohcomp}. Hence 
\begin{equation*}
H^*\left(\sl(2,\bb{R});C^\infty(\mca{H}_\mu)\right)=0. 
\end{equation*}

\subsection{A construction of an operator $\delta$ on $C^*\left(\sl(2,\bb{R});\mca{H}\otimes V\right)$}
In this section we will construct an operator $\delta$ of degree $-1$ on $C^*\left(\sl(2,\bb{R});\mca{H}\otimes V\right)$ such that 
\begin{equation*}
d\delta+\delta d=\id\otimes\Omega\otimes\id-\id\otimes\id\otimes\Omega, 
\end{equation*}
where $\mca{H}$ and $V$ are representations of $\sl(2,\bb{R})$ and $\Omega$ is the Casimir element of $\sl(2,\bb{R})$. The existence of this operator virtually says that a generic part of the cohomology $H^*\left(\sl(2,\bb{R});\mca{H}\otimes V\right)$ vanishes. This is an analogue of the operator $\delta$ on $C^*\left(\mf{an};\mca{H}\otimes V\right)$ constructed in Section \ref{firsthodge99} and turns out to be an extension of an operator defined on a smaller subspace of $C^*\left(\sl(2,\bb{R});\mca{H}\otimes V\right)$ which appears in 2.3 of Borel--Wallach \cite{BW}. Though we consider $C^*\left(\sl(2,\bb{R});\mca{H}\otimes V\right)$, we will need only the case when $V=\bb{R}$ for our application.

\subsubsection{The definition of $\hat{\delta}$ and computation of $\hat{d}\hat{\delta}+\hat{\delta}\hat{d}$}
Let $\sl(2,\bb{R})\stackrel{\pi}{\curvearrowright}\mca{H}$ be a representation on a complex vector space and $\sl(2,\bb{R})\stackrel{\xi}{\curvearrowright}V$ be a representation on a real vector space. Consider the tensor product $\sl(2,\bb{R})\curvearrowright\mca{H}\otimes V$ of the representations and the Chevalley--Eilenberg complex $C^*\left(\sl(2,\bb{R});\mca{H}\otimes V\right)=\bigwedge^*\sl(2,\bb{R})^*\otimes\mca{H}\otimes V$ of it. We equip $\sl(2,\bb{R})\oplus\sl(2,\bb{R})^*$ with the symmetric bilinear form defined by the duality between $\sl(2,\bb{R})$ and $\sl(2,\bb{R})^*$. (See Section \ref{construc}, where we defined a similar form for $\mf{an}\oplus\mf{an}^*$.) Let $C\left(\sl(2,\bb{R})\oplus\sl(2,\bb{R})^*\right)$ be the Clifford algebra of $\sl(2,\bb{R})\oplus\sl(2,\bb{R})^*$ and $U(\sl(2,\bb{R}))$ be the (real) universal enveloping algebra of $\sl(2,\bb{R})$. Consider the tensor product 
\begin{equation*}
\mca{A}=C\left(\sl(2,\bb{R})\oplus\sl(2,\bb{R})^*\right)\otimes U(\sl(2,\bb{R}))\otimes U(\sl(2,\bb{R}))
\end{equation*}
of the algebras. Let $C\left(\sl(2,\bb{R})\oplus\sl(2,\bb{R})^*\right)\stackrel{\sigma}{\curvearrowright}\bigwedge^*\sl(2,\bb{R})^*$ be the spin representation. Then the homomorphism 
\begin{equation*}
\sigma\colon C\left(\sl(2,\bb{R})\oplus\sl(2,\bb{R})^*\right)\to\End\left(\bigwedge^*\sl(2,\bb{R})^*\right)
\end{equation*}
is an isomorphism of algebras. We have a representation 
\begin{equation*}
\mca{A}\stackrel{\sigma\otimes\pi\otimes\xi}{\curvearrowright} C^*\left(\sl(2,\bb{R});\mca{H}\otimes V\right). 
\end{equation*}
Let $\hat{d}_\star\in C\left(\sl(2,\bb{R})\oplus\sl(2,\bb{R})^*\right)$ be the unique element such that $\sigma(\hat{d}_\star)$ is the differential of the cochain complex $C^*(\sl(2,\bb{R});\bb{R})=\bigwedge^*\sl(2,\bb{R})^*$. Recall the basis $E$, $H$, $F$ of $\sl(2,\bb{R})$ and let $\theta_E$, $\theta_H$, $\theta_F\in\sl(2,\bb{R})^*$ be the dual basis of it. Define an element $\hat{d}\in\mca{A}$ by 
\begin{align*}
\hat{d}&=\hat{d}_\star\otimes1\otimes1+\theta_E\otimes E\otimes1+\theta_E\otimes1\otimes E\\
&\quad+\theta_H\otimes H\otimes1+\theta_H\otimes1\otimes H\\
&\quad+\theta_F\otimes F\otimes1+\theta_F\otimes1\otimes F. 
\end{align*}
Then $\left(\sigma\otimes\pi\otimes\xi\right)(\hat{d})=d$, where $d$ is the differential of $C^*\left(\sl(2,\bb{R});\mca{H}\otimes V\right)$. 

Now define $\hat{\delta}\in\mca{A}$ by 
\begin{align*}
\hat{\delta}&=E\otimes F\otimes1+H\otimes H\otimes1+F\otimes E\otimes1\\
&\quad-E\otimes1\otimes F-H\otimes1\otimes H-F\otimes1\otimes E
\end{align*}
and put $\delta=\left(\sigma\otimes\pi\otimes\xi\right)(\hat{\delta})$. 

\begin{thm}\label{5555}
We have 
\begin{equation*}
\hat{d}\hat{\delta}+\hat{\delta}\hat{d}=1\otimes\Omega\otimes1-1\otimes1\otimes\Omega
\end{equation*}
in $\mca{A}$, where $\Omega\in U(\sl(2,\bb{R}))$ is the Casimir element of $\sl(2,\bb{R})$. 
\end{thm}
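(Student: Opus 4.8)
The plan is to verify the identity $\hat{d}\hat{\delta}+\hat{\delta}\hat{d}=1\otimes\Omega\otimes1-1\otimes1\otimes\Omega$ by direct expansion in the algebra $\mca{A}=C(\sl(2,\bb{R})\oplus\sl(2,\bb{R})^*)\otimes U(\sl(2,\bb{R}))\otimes U(\sl(2,\bb{R}))$, organizing the computation exactly as in the proof of Theorem \ref{mtrm}. The two factors $\hat{d}$ and $\hat{\delta}$ are each sums of a Clifford-only term (namely $\hat{d}_\star\otimes1\otimes1$ in $\hat{d}$, and no such term in $\hat{\delta}$) together with six ``mixed'' terms of the shape $(\text{Clifford})\otimes(\text{enveloping})\otimes1$ or $(\text{Clifford})\otimes1\otimes(\text{enveloping})$. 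When I multiply $\hat{d}\hat{\delta}$ and $\hat{\delta}\hat{d}$ and symmetrize, the terms sort naturally into three groups according to which tensor slots carry the $U(\sl(2,\bb{R}))$-factors: those landing in the second slot (coming from $\pi$), those in the third slot (coming from $\xi$), and cross terms that mix the second and third slots.

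First I would collect the mixed-mixed products where both factors act in the second slot. Here a term $\theta_X\otimes X\otimes1$ from $\hat{d}$ meets a term $Y\otimes Y'\otimes1$ from $\hat{\delta}$, producing $\theta_X Y\otimes XY'\otimes1$. Using the anticommutation relation $\iota(Y)\varepsilon(\theta_X)+\varepsilon(\theta_X)\iota(Y)=\theta_X(Y)$ from Lemma \ref{cliffrule} (lifted to the Clifford algebra via $Y^\top=\theta_Y$ and the identification $\sigma$), the Clifford coefficients $\theta_X Y + Y\theta_X$ collapse to scalars $\theta_X(Y)=\delta_{XY}^{B_\theta}$ when $X=Y$ and to genuine Clifford two-forms otherwise. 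I expect the diagonal contributions $\theta_E E + \theta_H H + \theta_F F$-type pairings to assemble precisely into $1\otimes(FE+H^2+EF)\otimes1=1\otimes\Omega\otimes1$ via the first expression $\Omega=FE+H^2+EF$ recorded in Section \ref{notation}, while the off-diagonal Clifford two-forms must cancel against the Lie-derivative contributions $\hat{L}_X$ arising from $\hat{d}_\star Y + Y\hat{d}_\star=\hat{L}_Y$ (the analogue of \eqref{liederivativei}). Symmetrically, the third-slot products give $-1\otimes1\otimes\Omega$ with the sign coming from the minus signs built into $\hat{\delta}$.

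The genuinely delicate part will be the cross terms, where a second-slot factor from one operator meets a third-slot factor from the other: products like $(\theta_X\otimes X\otimes1)(Y\otimes1\otimes(-Y'))$ and $(\theta_X\otimes1\otimes X)(Y\otimes Y'\otimes1)$. These do not obviously vanish term-by-term, and I expect the main obstacle to be showing that all such cross contributions cancel. The mechanism should be that each cross product carries a Clifford coefficient $\theta_X Y\pm Y\theta_X$ tensored into two different enveloping slots; pairing the contribution from $\hat{d}\hat{\delta}$ against the matching one in $\hat{\delta}\hat{d}$, the symmetric part of the Clifford factor produces scalars that, because $\mca{H}$ and $V$ carry \emph{the same} algebra $\sl(2,\bb{R})$ acting through $\pi$ and $\xi$, appear with opposite signs and annihilate one another, whereas the antisymmetric Clifford parts again feed into $\hat{L}_X$-type terms that reorganize via the commutation relations $[H,E]=\tfrac{1}{\sqrt2}E$, $[H,F]=-\tfrac{1}{\sqrt2}F$, $[E,F]=\tfrac{1}{\sqrt2}H$. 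The opposite sign in the two $U(\sl(2,\bb{R}))$-factors of $\hat{\delta}$ is exactly what forces these to cancel, so that no term survives in which both slots are simultaneously occupied.

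Finally, after the cross terms vanish and the diagonal mixed-mixed terms produce $1\otimes\Omega\otimes1-1\otimes1\otimes\Omega$, I would check that the residual purely-Clifford contributions — the products $\hat{d}_\star(\cdot)+(\cdot)\hat{d}_\star$ acting on the Clifford coefficients, which generate Lie-derivative terms $\hat{L}_X\otimes X\otimes1$ and their third-slot analogues — sum to zero. This is the Clifford-algebra identity playing the role that $\hat{d}_\star\hat{d}_\star^\top+\hat{d}_\star^\top\hat{d}_\star+\tfrac{1}{\sqrt2}\hat{L}_H=0$ (Proposition \ref{yuuui}) played in the $\mf{an}$ case; here one expects a cleaner statement because $\sl(2,\bb{R})$ is semisimple, so I anticipate that the sum of all $\hat{L}_X$-terms telescopes to $0$ directly from $\sum_X \hat{L}_X$ paired against dual basis elements, reflecting that the Clifford/Lie-derivative contribution is precisely the spin-representation image of $d_\star\delta_\star+\delta_\star d_\star$ for the trivial coefficient complex, which vanishes on $\sl(2,\bb{R})$. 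I would cite \cite{MTIII} for the standard Clifford identities \eqref{liederivativei} and \eqref{liederivativefo} and present only the reorganization that is specific to the semisimple bracket relations, leaving the bookkeeping of the twelve cross terms as the one explicit verification to carry out in full.
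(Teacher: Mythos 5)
Your overall strategy---expanding $\hat{d}\hat{\delta}+\hat{\delta}\hat{d}$ directly in $\mca{A}$ and sorting terms by which tensor slots carry the $U(\sl(2,\bb{R}))$-factors---is exactly how the paper proves Theorem \ref{5555}, and your second paragraph is correct: the diagonal same-slot products assemble into $1\otimes FE\otimes1+1\otimes H^2\otimes1+1\otimes EF\otimes1=1\otimes\Omega\otimes1$ (and its negative in the third slot), while the off-diagonal products leave Clifford two-forms tensored with commutators $[X,Y']$, which cancel against the Lie-derivative terms $\hat{L}_Y$ coming from $\hat{d}_\star Y+Y\hat{d}_\star=\hat{L}_Y$.

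However, two of your claimed mechanisms are wrong as stated. First, your final paragraph asserts that there remain ``residual purely-Clifford contributions'' $\hat{L}_X\otimes X\otimes1$ which ``sum to zero directly.'' This is both false and inconsistent with your second paragraph. The sum $\hat{L}_E\otimes F\otimes1+\hat{L}_H\otimes H\otimes1+\hat{L}_F\otimes E\otimes1$ does not vanish: using \eqref{liederivativefo} one has $\hat{L}_E=\frac{1}{\sqrt{2}}\theta_HE-\frac{1}{\sqrt{2}}\theta_FH$, and the monomial $\theta_HE\otimes F\otimes1$ occurs nowhere else in that sum, so nothing can kill it internally. These Lie-derivative terms are exactly the terms consumed by the off-diagonal cancellation you already invoked in your second paragraph; there is no residue left over for a separate vanishing argument, and your plan double-counts them. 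Relatedly, the appeal to ``$d_\star\delta_\star+\delta_\star d_\star=0$ on $\sl(2,\bb{R})$'' is vacuous here: unlike in Theorem \ref{mtrm}, the element $\hat{\delta}$ contains no Clifford-only term $\hat{d}_\star^\top$, so no analogue of Proposition \ref{yuuui} enters at all---this is precisely why the $\sl(2,\bb{R})$ computation is cleaner than the $\mf{an}$ one. Second, your description of the cross terms is off. Since one computes the anticommutator, a cross product such as $(\theta_X\otimes X\otimes1)(Y\otimes1\otimes Y')+(Y\otimes1\otimes Y')(\theta_X\otimes X\otimes1)$ collapses immediately to the scalar $\theta_X(Y)$ times $X\otimes Y'$ in the two enveloping slots; no ``antisymmetric Clifford part'' survives and nothing feeds into $\hat{L}$-type terms. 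Moreover the cancellation is not between matching terms of $\hat{d}\hat{\delta}$ and $\hat{\delta}\hat{d}$, but across different Clifford basis directions: for instance $-1\otimes E\otimes F$ (from $\theta_E\otimes E\otimes1$ against $-E\otimes1\otimes F$) is cancelled by $+1\otimes E\otimes F$ (from $\theta_F\otimes1\otimes F$ against $F\otimes E\otimes1$). What makes the six scalars cancel in total is the dual-basis pairing built into $\hat{\delta}$ ($E$ matched with $F$, $H$ with $H$, $F$ with $E$) together with the minus signs on its third-slot terms. With these two points corrected, your plan reduces to the paper's computation.
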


\begin{proof}
In $C\left(\sl(2,\bb{R})\oplus\sl(2,\bb{R})^*\right)$ we have 
\begin{gather*}
\hat{d}_\star X+X\hat{d}_\star=\hat{L}_X,\\
\varphi X+X\varphi=\varphi(X)
\end{gather*}
for $X\in\sl(2,\bb{R})$ and $\varphi\in\sl(2,\bb{R})^*$. Using these formulas and the definitions 
\begin{align*}
\hat{d}&=\hat{d}_\star\otimes1\otimes1+\theta_E\otimes E\otimes1+\theta_E\otimes1\otimes E\\
&\quad+\theta_H\otimes H\otimes1+\theta_H\otimes1\otimes H\\
&\quad+\theta_F\otimes F\otimes1+\theta_F\otimes1\otimes F
\end{align*}
and 
\begin{align*}
\hat{\delta}&=E\otimes F\otimes1+H\otimes H\otimes1+F\otimes E\otimes1\\
&\quad-E\otimes1\otimes F-H\otimes1\otimes H-F\otimes1\otimes E, 
\end{align*}
we get 
\begin{align*}
&\hat{d}\hat{\delta}+\hat{\delta}\hat{d}\\
&=\hat{L}_E\otimes F\otimes1+\hat{L}_H\otimes H\otimes1+\hat{L}_F\otimes E\otimes1\\
&\quad-\hat{L}_E\otimes1\otimes F-\hat{L}_H\otimes1\otimes H-\hat{L}_F\otimes1\otimes E\\
&\quad+\theta_EE\otimes EF\otimes1+(1-\theta_EE)\otimes FE\otimes1+\theta_EH\otimes EH\otimes1\\
&\quad-\theta_EH\otimes HE\otimes1-1\otimes E\otimes F\\
&\quad+1\otimes F\otimes E-\theta_EE\otimes1\otimes EF-(1-\theta_EE)\otimes1\otimes FE\\
&\quad-\theta_EH\otimes1\otimes EH+\theta_EH\otimes1\otimes HE\\
&\quad+\theta_HE\otimes HF\otimes1-\theta_HE\otimes FH\otimes1+1\otimes H^2\otimes1+\theta_HF\otimes HE\otimes1\\
&\quad-\theta_HF\otimes EH\otimes1-1\otimes H\otimes H\\
&\quad+1\otimes H\otimes H-\theta_HE\otimes1\otimes HF+\theta_HE\otimes1\otimes FH-1\otimes1\otimes H^2\\
&\quad-\theta_HF\otimes1\otimes HE+\theta_HF\otimes1\otimes EH\\
&\quad+\theta_FH\otimes FH\otimes1-\theta_FH\otimes HF\otimes1+\theta_FF\otimes FE\otimes1\\
&\quad+(1-\theta_FF)\otimes EF\otimes1-1\otimes F\otimes E\\
&\quad+1\otimes E\otimes F-\theta_FH\otimes1\otimes FH+\theta_FH\otimes1\otimes HF-\theta_FF\otimes1\otimes FE\\
&\quad-(1-\theta_FF)\otimes1\otimes EF. 
\end{align*}
By the formula 
\begin{equation*}
\hat{L}_X=-\theta_E[X,E]-\theta_H[X,H]-\theta_F[X,F]
\end{equation*}
for $X\in\sl(2,\bb{R})$, we have 
\begin{gather*}
\hat{L}_E=\frac{1}{\sqrt{2}}\theta_HE-\frac{1}{\sqrt{2}}\theta_FH,\quad\hat{L}_H=-\frac{1}{\sqrt{2}}\theta_EE+\frac{1}{\sqrt{2}}\theta_FF,\\
\hat{L}_F=\frac{1}{\sqrt{2}}\theta_EH-\frac{1}{\sqrt{2}}\theta_HF. 
\end{gather*}
Recalling 
\begin{equation*}
\Omega=FE+H^2+EF, 
\end{equation*}
it follows that 
\begin{align*}
&\hat{d}\hat{\delta}+\hat{\delta}\hat{d}\\
&=\left(\frac{1}{\sqrt{2}}\theta_HE-\frac{1}{\sqrt{2}}\theta_FH\right)\otimes F\otimes1+\left(-\frac{1}{\sqrt{2}}\theta_EE+\frac{1}{\sqrt{2}}\theta_FF\right)\otimes H\otimes1\\
&\quad+\left(\frac{1}{\sqrt{2}}\theta_EH-\frac{1}{\sqrt{2}}\theta_HF\right)\otimes E\otimes1-\left(\frac{1}{\sqrt{2}}\theta_HE-\frac{1}{\sqrt{2}}\theta_FH\right)\otimes1\otimes F\\
&\quad-\left(-\frac{1}{\sqrt{2}}\theta_EE+\frac{1}{\sqrt{2}}\theta_FF\right)\otimes1\otimes H-\left(\frac{1}{\sqrt{2}}\theta_EH-\frac{1}{\sqrt{2}}\theta_HF\right)\otimes1\otimes E\\
&\quad+1\otimes FE\otimes1-1\otimes E\otimes F+1\otimes F\otimes E-1\otimes1\otimes FE+1\otimes H^2\otimes1\\
&\quad-1\otimes H\otimes H+1\otimes H\otimes H-1\otimes1\otimes H^2+1\otimes EF\otimes1-1\otimes F\otimes E\\
&\quad+1\otimes E\otimes F-1\otimes1\otimes EF\\
&\quad+\theta_EE\otimes\frac{1}{\sqrt{2}}H\otimes1+\theta_EH\otimes\left(-\frac{1}{\sqrt{2}}E\right)\otimes1-\theta_EE\otimes1\otimes\frac{1}{\sqrt{2}}H\\
&\quad+\theta_EH\otimes1\otimes\frac{1}{\sqrt{2}}E+\theta_HE\otimes\left(-\frac{1}{\sqrt{2}}F\right)\otimes1+\theta_HF\otimes\frac{1}{\sqrt{2}}E\otimes1\\
&\quad+\theta_HE\otimes1\otimes\frac{1}{\sqrt{2}}F-\theta_HF\otimes1\otimes\frac{1}{\sqrt{2}}E+\theta_FH\otimes\frac{1}{\sqrt{2}}F\otimes1\\
&\quad+\theta_FF\otimes\left(-\frac{1}{\sqrt{2}}H\right)\otimes1+\theta_FH\otimes1\otimes\left(-\frac{1}{\sqrt{2}}F\right)-\theta_FF\otimes1\otimes\left(-\frac{1}{\sqrt{2}}H\right)\\
&=1\otimes FE\otimes1-1\otimes1\otimes FE+1\otimes H^2\otimes1-1\otimes1\otimes H^2+1\otimes EF\otimes1\\
&\quad-1\otimes1\otimes EF\\
&=1\otimes\Omega\otimes1-1\otimes1\otimes\Omega. 
\end{align*}
\end{proof}

\begin{cor}\label{ddddxi}
We have 
\begin{equation*}
d\delta+\delta d=\id\otimes\pi(\Omega)\otimes\id-\id\otimes\id\otimes\xi(\Omega)
\end{equation*}
on $C^*\left(\sl(2,\bb{R});\mca{H}\otimes V\right)$. 
\end{cor}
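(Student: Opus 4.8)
The plan is to deduce this corollary directly from Theorem \ref{5555} by applying the algebra representation $\sigma\otimes\pi\otimes\xi\colon\mca{A}\to\End\left(C^*\left(\sl(2,\bb{R});\mca{H}\otimes V\right)\right)$ to the identity
\begin{equation*}
\hat{d}\hat{\delta}+\hat{\delta}\hat{d}=1\otimes\Omega\otimes1-1\otimes1\otimes\Omega
\end{equation*}
established there in $\mca{A}$. The essential point is that $\sigma\otimes\pi\otimes\xi$ is a homomorphism of $\bb{R}$-algebras, so it carries products to products and sums to sums; in particular it sends $\hat{d}\hat{\delta}+\hat{\delta}\hat{d}$ to $\left(\sigma\otimes\pi\otimes\xi\right)(\hat{d})\left(\sigma\otimes\pi\otimes\xi\right)(\hat{\delta})+\left(\sigma\otimes\pi\otimes\xi\right)(\hat{\delta})\left(\sigma\otimes\pi\otimes\xi\right)(\hat{d})$.

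First I would recall the two identifications already set up just before the theorem: by construction $\left(\sigma\otimes\pi\otimes\xi\right)(\hat{d})=d$, the differential of $C^*\left(\sl(2,\bb{R});\mca{H}\otimes V\right)$, and $\delta$ was \emph{defined} as $\left(\sigma\otimes\pi\otimes\xi\right)(\hat{\delta})$. Hence the left-hand side of the algebra identity maps precisely to $d\delta+\delta d$.

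Next I would compute the image of the right-hand side factor by factor. Since the Clifford factor of both $1\otimes\Omega\otimes1$ and $1\otimes1\otimes\Omega$ is the unit, and $\sigma$ sends $1$ to $\id$ on $\bigwedge^*\sl(2,\bb{R})^*$, these two elements map to $\id\otimes\pi(\Omega)\otimes\id$ and $\id\otimes\id\otimes\xi(\Omega)$ respectively, where $\pi$ and $\xi$ are extended as usual to the universal enveloping algebra $U(\sl(2,\bb{R}))$. Assembling these gives
\begin{equation*}
d\delta+\delta d=\id\otimes\pi(\Omega)\otimes\id-\id\otimes\id\otimes\xi(\Omega),
\end{equation*}
which is the asserted formula.

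There is essentially no obstacle here: the entire content of the corollary is carried by Theorem \ref{5555}, whose proof is the genuine computation inside $\mca{A}$. The only thing to be slightly careful about is the bookkeeping of how the homomorphism acts on each tensor factor and the (standard) fact that $\pi$ and $\xi$ on $\sl(2,\bb{R})$ extend uniquely to $U(\sl(2,\bb{R}))$ so that $\pi(\Omega)$ and $\xi(\Omega)$ are well defined; once this is noted, the corollary is immediate.
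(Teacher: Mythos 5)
Your proposal is correct and is exactly the argument the paper intends: Corollary \ref{ddddxi} is stated without proof precisely because it follows by applying the algebra homomorphism $\sigma\otimes\pi\otimes\xi$ to the identity of Theorem \ref{5555}, using $(\sigma\otimes\pi\otimes\xi)(\hat{d})=d$ and the definition $\delta=(\sigma\otimes\pi\otimes\xi)(\hat{\delta})$. Your bookkeeping of the tensor factors and the remark that $\pi$, $\xi$ extend to $U(\sl(2,\bb{R}))$ are the only points that need saying, and you say them.
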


\begin{rem}
We can get 
\begin{align*}
\hat{\delta}^2&=\frac{1}{\sqrt{2}}\left(EH\otimes(F\otimes1+1\otimes F)+HF\otimes(E\otimes1+1\otimes E)\right.\\
&\left.\qquad\quad+FE\otimes(H\otimes1+1\otimes H)\right), 
\end{align*}
which shows $\delta^2$ is not necessarily $0$ on $C^*\left(\sl(2,\bb{R});\mca{H}\otimes V\right)$. 
\end{rem}

\subsubsection{The operator $\delta$ preserves $C^*\left(\sl(2,\bb{R}),\so(2);\mca{H}\otimes V\right)$}
Now we will prove that the operator $\delta$ preserves the subcomplex 
\begin{equation*}
C^*\left(\sl(2,\bb{R}),\so(2);\mca{H}\otimes V\right)
\end{equation*}
of $C^*\left(\sl(2,\bb{R});\mca{H}\otimes V\right)$, which will be defined later. Since we will not use this property in this paper, this section and Section \ref{relationco} can be skipped. 

First we will define a representation $\sl(2,\bb{R})\stackrel{\ad}{\curvearrowright}\mca{A}$. Recall that we have the adjoint representation $\sl(2,\bb{R})\stackrel{\ad}{\curvearrowright}U(\sl(2,\bb{R}))$ defined by 
\begin{equation*}
(\ad X)u=Xu-uX
\end{equation*}
for $X\in\sl(2,\bb{R})$ and $u\in U(\sl(2,\bb{R}))$. We also have a representation $\sl(2,\bb{R})\stackrel{\ad}{\curvearrowright}C\left(\sl(2,\bb{R})\oplus\sl(2,\bb{R})^*\right)$ defined by 
\begin{equation*}
(\ad X)x=\hat{L}_Xx-x\hat{L}_X
\end{equation*}
for $X\in\sl(2,\bb{R})$ and $x\in C\left(\sl(2,\bb{R})\oplus\sl(2,\bb{R})^*\right)$. This is actually a representation since 
\begin{itemize}
\item the map 
\begin{align*}
\sl(2,\bb{R})&\to C\left(\sl(2,\bb{R})\oplus\sl(2,\bb{R})^*\right)\\
X&\mapsto\hat{L}_X
\end{align*}
is a Lie algebra homomorphism, ie the formula 
\begin{equation*}
\hat{L}_{[X,Y]}=\hat{L}_X\hat{L}_Y-\hat{L}_Y\hat{L}_X
\end{equation*}
holds for all $X$, $Y\in\sl(2,\bb{R})$, 
\item the map 
\begin{align*}
C\left(\sl(2,\bb{R})\oplus\sl(2,\bb{R})^*\right)&\to\End\left(C\left(\sl(2,\bb{R})\oplus\sl(2,\bb{R})^*\right)\right)\\
x&\mapsto\left(y\mapsto xy-yx\right)
\end{align*}
is the adjoint representation of the Lie algebra $C\left(\sl(2,\bb{R})\oplus\sl(2,\bb{R})^*\right)$
\item and it is the composition of these two Lie algebra homomorphisms. 
\end{itemize}

The notation $\ad$ for the representation $\sl(2,\bb{R})\curvearrowright C\left(\sl(2,\bb{R})\oplus\sl(2,\bb{R})^*\right)$ is justified by the following lemma. 

\begin{lem}
We have 
\begin{gather*}
\hat{L}_XY-Y\hat{L}_X=[X,Y],\quad\hat{L}_X\varphi-\varphi\hat{L}_X=-\varphi\circ\ad X
\end{gather*}
in $C\left(\sl(2,\bb{R})\oplus\sl(2,\bb{R})^*\right)$ for $X$, $Y\in\sl(2,\bb{R})$ and $\varphi\in\sl(2,\bb{R})^*$. Here $[X,Y]$ in the first equation is the Lie bracket of $X$ and $Y$ in the Lie algebra $\sl(2,\bb{R})$, not in $C\left(\sl(2,\bb{R})\oplus\sl(2,\bb{R})^*\right)$. 
\end{lem}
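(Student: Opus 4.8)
The plan is to deduce both identities from the corresponding commutation relations in $\End\left(\bigwedge^*\sl(2,\bb{R})^*\right)$ by applying the spin representation $\sigma$, which is an algebra isomorphism (hence injective). Recall that $\sigma(Y)=\iota(Y)$ for $Y\in\sl(2,\bb{R})$, that $\sigma(\varphi)=\varepsilon(\varphi)$ for $\varphi\in\sl(2,\bb{R})^*$, and that $\sigma(\hat{L}_X)=L_X$, where $L_X$ is the Lie derivative on $\bigwedge^*\sl(2,\bb{R})^*=C^*(\sl(2,\bb{R});\bb{R})$ from Section \ref{hochschildserre} (with the trivial representation, so that the $\pi(X)$-term drops out). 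Since $\sigma$ is injective and sends $[X,Y]\in\sl(2,\bb{R})$ to $\iota([X,Y])$ and $-\varphi\circ\ad X\in\sl(2,\bb{R})^*$ to $\varepsilon(-\varphi\circ\ad X)$, it suffices to verify the two relations $L_X\iota(Y)-\iota(Y)L_X=\iota([X,Y])$ and $L_X\varepsilon(\varphi)-\varepsilon(\varphi)L_X=\varepsilon(-\varphi\circ\ad X)$ as operators on $\bigwedge^*\sl(2,\bb{R})^*$.

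For the first relation I would use the defining formula $(L_X\omega)(X_1,\dots,X_p)=-\sum_i\omega(X_1,\dots,[X,X_i],\dots,X_p)$ together with $(\iota(Y)\omega)(X_1,\dots,X_{p-1})=\omega(Y,X_1,\dots,X_{p-1})$. Evaluating $L_X\iota(Y)\omega$ and $\iota(Y)L_X\omega$ on $(X_1,\dots,X_{p-1})$, the sums over the $X_i$ cancel and the two expressions differ exactly by the single term $\omega([X,Y],X_1,\dots,X_{p-1})=\left(\iota([X,Y])\omega\right)(X_1,\dots,X_{p-1})$; this is the Cartan-type identity $[L_X,\iota(Y)]=\iota([X,Y])$. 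For the second relation I would use that $L_X$ is a derivation of the wedge product (being a Lie derivative) and that on the degree-one part $\sl(2,\bb{R})^*$ it acts by $(L_X\varphi)(Y)=-\varphi([X,Y])=(-\varphi\circ\ad X)(Y)$, i.e. $L_X\varphi=-\varphi\circ\ad X$ there. The derivation property gives $L_X(\varphi\wedge\psi)-\varphi\wedge L_X\psi=(L_X\varphi)\wedge\psi$ for all $\psi$, which is precisely $[L_X,\varepsilon(\varphi)]=\varepsilon(L_X\varphi)=\varepsilon(-\varphi\circ\ad X)$. Applying $\sigma^{-1}$ to both computed relations yields the two displayed equations in $C\left(\sl(2,\bb{R})\oplus\sl(2,\bb{R})^*\right)$.

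The only step demanding care — the routine-but-sign-sensitive part — is the sign bookkeeping in the first evaluation and the confirmation that $L_X$ is genuinely a derivation of $\bigwedge^*\sl(2,\bb{R})^*$, so that $[L_X,\varepsilon(\varphi)]=\varepsilon(L_X\varphi)$. As an alternative one can bypass $\sigma$ entirely and compute directly in $C\left(\sl(2,\bb{R})\oplus\sl(2,\bb{R})^*\right)$, using the explicit expression $\hat{L}_X=-\theta_E[X,E]-\theta_H[X,H]-\theta_F[X,F]$ and the Clifford relations $\varphi Z+Z\varphi=\varphi(Z)$ for $Z\in\sl(2,\bb{R})$, $\varphi\in\sl(2,\bb{R})^*$; this is more laborious but equally elementary, and the computation in the proof of Theorem \ref{5555} already illustrates the needed manipulations.
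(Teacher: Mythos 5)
Your proof is correct, but it takes a different route from the paper's. The paper's own proof is exactly what you describe in your final paragraph as the ``alternative'': it fixes a basis $X_1,X_2,X_3$ of $\sl(2,\bb{R})$ with dual basis $\theta_1,\theta_2,\theta_3$ and computes directly in $C\left(\sl(2,\bb{R})\oplus\sl(2,\bb{R})^*\right)$ from the formula $\hat{L}_X=-\sum_i\theta_i[X,X_i]$, using the Clifford relations (in particular $ZW+WZ=0$ for $Z,W\in\sl(2,\bb{R})$, $\varphi\psi+\psi\varphi=0$ for $\varphi,\psi\in\sl(2,\bb{R})^*$, and $\varphi Z+Z\varphi=\varphi(Z)$), which yields $\hat{L}_XY-Y\hat{L}_X=\sum_i\theta_i(Y)[X,X_i]=[X,Y]$ and $\hat{L}_X\varphi-\varphi\hat{L}_X=-\sum_i\varphi([X,X_i])\theta_i=-\varphi\circ\ad X$. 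Your primary argument instead pushes everything through the spin representation $\sigma$: since $\sigma$ is an algebra isomorphism with $\sigma(Y)=\iota(Y)$, $\sigma(\varphi)=\varepsilon(\varphi)$, $\sigma(\hat{L}_X)=L_X$, the lemma is equivalent to the operator identities $[L_X,\iota(Y)]=\iota([X,Y])$ (the algebraic Cartan identity) and $[L_X,\varepsilon(\varphi)]=\varepsilon(L_X\varphi)$ with $L_X\varphi=-\varphi\circ\ad X$ (the derivation property of the Lie derivative), both of which you verify correctly; injectivity of $\sigma$ then transfers them back to the Clifford algebra. What your route buys is conceptual economy: it replaces Clifford sign bookkeeping by two standard facts about the Lie derivative on $\bigwedge^*\sl(2,\bb{R})^*$, and it makes transparent why the representation $(\ad X)x=\hat{L}_Xx-x\hat{L}_X$ deserves the name $\ad$. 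What the paper's route buys is that it stays entirely inside the Clifford algebra, needs no appeal to $\sigma$ being an isomorphism, and is a one-line reduction given the explicit formula for $\hat{L}_X$ already established for the proof of Theorem \ref{5555}.
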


\begin{proof}
Let $X_1$, $X_2$, $X_3$ be a basis of $\sl(2,\bb{R})$ and $\theta_1$, $\theta_2$, $\theta_3\in\sl(2,\bb{R})^*$ be its dual basis. The lemma follows from the computation based on the formula 
\begin{equation*}
\hat{L}_X=-\sum_i\theta_i[X,X_i]. 
\end{equation*}
\end{proof}

Consider the direct product $\sl(2,\bb{R})\curvearrowright\sl(2,\bb{R})\oplus\sl(2,\bb{R})^*$ of the adjoint representation $\sl(2,\bb{R})\curvearrowright\sl(2,\bb{R})$ and its dual $\sl(2,\bb{R})\curvearrowright\sl(2,\bb{R})^*$. Note that we have a natural inclusion $\sl(2,\bb{R})\oplus\sl(2,\bb{R})^*\subset C\left(\sl(2,\bb{R})\oplus\sl(2,\bb{R})^*\right)$. The representation $\sl(2,\bb{R})\stackrel{\ad}{\curvearrowright}C\left(\sl(2,\bb{R})\oplus\sl(2,\bb{R})^*\right)$ is obtained by extending $\sl(2,\bb{R})\curvearrowright\sl(2,\bb{R})\oplus\sl(2,\bb{R})^*$ to $C\left(\sl(2,\bb{R})\oplus\sl(2,\bb{R})^*\right)$ so that all the operators act as derivations of the algebra $C\left(\sl(2,\bb{R})\oplus\sl(2,\bb{R})^*\right)$. 

By taking tensor product of representations, we get a representation 
\begin{equation*}
\sl(2,\bb{R})\stackrel{\ad}{\curvearrowright}\mca{A}. 
\end{equation*}
Consider the Lie subalgebra $\so(2)$ of $\sl(2,\bb{R})$. Let $R=E-F\in\so(2)$. Then $R$ is a basis of $\so(2)$. 

\begin{prop}\label{soinvariant}
We have 
\begin{equation*}
\hat{\delta}\in\mca{A}^{\so(2)}, 
\end{equation*}
ie $(\ad R)\hat{\delta}=0$. 
\end{prop}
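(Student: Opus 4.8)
The plan is to use that the representation $\sl(2,\bb{R})\stackrel{\ad}{\curvearrowright}\mca{A}$ is, by construction, the tensor product of the adjoint actions on the three factors, so that $\ad R$ operates on $\mca{A}$ as a derivation (the Leibniz rule). First I would record how $\ad R$ acts on each tensor factor when that factor is a single element $Y\in\sl(2,\bb{R})$. On the two universal enveloping factors this is immediate from the definition $(\ad R)u=Ru-uR$, which for $u=Y$ equals the Lie bracket $[R,Y]$. On the Clifford factor, the preceding lemma gives $\hat{L}_RY-Y\hat{L}_R=[R,Y]$ for $Y\in\sl(2,\bb{R})$, so $\ad R$ again acts as $[R,\cdot]$ on the copy of $\sl(2,\bb{R})$ sitting inside $C(\sl(2,\bb{R})\oplus\sl(2,\bb{R})^*)$. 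The one point that must be handled carefully is precisely this identification: on every factor appearing in $\hat{\delta}$, the operator $\ad R$ reduces to the bracket $[R,\cdot]$ of $\sl(2,\bb{R})$, and not to a product in the Clifford algebra.

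The conceptual heart of the statement is that $\hat{\delta}$ is assembled from a Killing-form Casimir tensor. Writing $\{Y_i\}=\{E,H,F\}$ and letting $\{Y^i\}=\{F,H,E\}$ be its dual basis with respect to the Killing form $B$ (recall from Section \ref{notation} that $B$ pairs $E\leftrightarrow F$ and $H\leftrightarrow H$), one has
\begin{equation*}
\hat{\delta}=\sum_iY_i\otimes Y^i\otimes1-\sum_iY_i\otimes1\otimes Y^i.
\end{equation*}
Each of the two sums is the image of the symmetric tensor $\omega=\sum_iY_i\otimes Y^i\in\sl(2,\bb{R})\otimes\sl(2,\bb{R})$, placed in the (Clifford, first $U(\sl(2,\bb{R}))$) and (Clifford, second $U(\sl(2,\bb{R}))$) factors respectively. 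Since $\ad R$ acts as $[R,\cdot]$ on each relevant factor, it acts on $\omega$ by the diagonal adjoint action of $\sl(2,\bb{R})$ on $\sl(2,\bb{R})\otimes\sl(2,\bb{R})$, and $\omega$ is invariant under this action because the Killing form $B$ is $\ad$-invariant. Hence $\ad R$ annihilates each sum separately, and therefore $(\ad R)\hat{\delta}=0$.

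Alternatively, and this is the verification I would actually carry out to be safe, one computes the three brackets $[R,E]=\frac{1}{\sqrt{2}}H$, $[R,F]=\frac{1}{\sqrt{2}}H$ and $[R,H]=-\frac{1}{\sqrt{2}}(E+F)$ from the commutation relations of Section \ref{notation}, applies the Leibniz rule to each of the six monomials of $\hat{\delta}$, and checks that the resulting terms cancel in pairs. The three monomials ending in $\otimes1$ cancel among themselves, and the three monomials with $1$ in the middle factor cancel among themselves by the same pattern with the second and third factors interchanged; this symmetry means only one of the two groups has to be computed in detail. Since this is a purely mechanical check and no genuine obstacle arises, I would present the invariance of $B$ as the main argument and treat the explicit bracket computation as the confirming step.
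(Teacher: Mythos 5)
Your proposal is correct, but your \emph{primary} argument is genuinely different from the paper's proof; what the paper does is exactly your fallback ``confirming step''. The paper records $(\ad R)E=(\ad R)F=\frac{1}{\sqrt{2}}H$ and $(\ad R)H=-\frac{1}{\sqrt{2}}(E+F)$, expands $(\ad R)\hat{\delta}$ over the six monomials by the Leibniz rule, and checks that the twelve resulting terms cancel, indeed in the two groups you predict (the monomials ending in $\otimes1$ cancel among themselves, as do those with $1$ in the middle slot). Your main argument instead writes $\hat{\delta}=\sum_iY_i\otimes Y^i\otimes1-\sum_iY_i\otimes1\otimes Y^i$ with $\{Y_i\}=\{E,H,F\}$ and $\{Y^i\}=\{F,H,E\}$ dual bases for the Killing form $B$, and invokes the $\ad$-invariance of $B$, which makes the tensor $\sum_iY_i\otimes Y^i$ invariant under the diagonal adjoint action; since $(\ad R)1=0$ in $U(\sl(2,\bb{R}))$ and, by the lemma preceding the proposition, $\ad R$ restricts to the bracket $[R,\cdot\,]$ on the copy of $\sl(2,\bb{R})$ inside the Clifford algebra, each of the two sums is annihilated separately. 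This is sound, and it buys two things the paper's computation does not: it proves the stronger statement $\hat{\delta}\in\mca{A}^{\sl(2,\bb{R})}$ (invariance under all of $\sl(2,\bb{R})$, not merely $\so(2)$, since nothing in the argument is special to $R$), and it is robust against arithmetic slips --- notably, the paper's own displayed expansion contains a misprinted term, $E\otimes\left(-\frac{1}{\sqrt{2}}F\right)\otimes1$, which should read $E\otimes\frac{1}{\sqrt{2}}H\otimes1$ because $(\ad R)F=\frac{1}{\sqrt{2}}H$; as printed the first group of terms would not cancel, though the conclusion is of course correct. What the paper's direct verification buys in exchange is self-containedness: it uses nothing beyond the bracket relations of Section \ref{notation}, whereas your route relies on the additional (standard) fact that the Casimir $2$-tensor of an invariant nondegenerate symmetric form is invariant under the diagonal adjoint action.
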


\begin{proof}
We have 
\begin{gather*}
(\ad R)E=\frac{1}{\sqrt{2}}H,\quad(\ad R)H=-\frac{1}{\sqrt{2}}(E+F),\quad(\ad R)F=\frac{1}{\sqrt{2}}H. 
\end{gather*}
Using the definition 
\begin{align*}
\hat{\delta}&=E\otimes F\otimes1+H\otimes H\otimes1+F\otimes E\otimes1\\
&\quad-E\otimes1\otimes F-H\otimes1\otimes H-F\otimes1\otimes E, 
\end{align*}
we get 
\begin{align*}
(\ad R)\hat{\delta}&=\frac{1}{\sqrt{2}}H\otimes F\otimes1+E\otimes\left(-\frac{1}{\sqrt{2}}F\right)\otimes1-\frac{1}{\sqrt{2}}(E+F)\otimes H\otimes1\\
&\quad+H\otimes\left(-\frac{1}{\sqrt{2}}(E+F)\right)\otimes1+\frac{1}{\sqrt{2}}H\otimes E\otimes1+F\otimes\frac{1}{\sqrt{2}}H\otimes1\\
&\quad-\frac{1}{\sqrt{2}}H\otimes1\otimes F-E\otimes1\otimes\frac{1}{\sqrt{2}}H+\frac{1}{\sqrt{2}}(E+F)\otimes1\otimes H\\
&\quad-H\otimes1\otimes\left(-\frac{1}{\sqrt{2}}(E+F)\right)-\frac{1}{\sqrt{2}}H\otimes1\otimes E-F\otimes1\otimes\frac{1}{\sqrt{2}}H\\
&=0. 
\end{align*}
\end{proof}

\begin{lem}\label{drrd0}
We have 
\begin{equation*}
\hat{\delta}\left(R\otimes1\otimes1\right)+\left(R\otimes1\otimes1\right)\hat{\delta}=0
\end{equation*}
in $\mca{A}$. 
\end{lem}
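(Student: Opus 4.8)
The plan is to deduce the identity from a single structural feature of the Clifford factor of $\mca{A}$, after observing that the two universal enveloping tensor slots play no role. Since $R\otimes1\otimes1$ carries the identity in both $U(\sl(2,\bb{R}))$ factors, expanding the anticommutator term by term against the six summands of $\hat{\delta}$ leaves the $U(\sl(2,\bb{R}))$ components untouched; each summand of $\hat{\delta}$ has the shape $X\otimes u\otimes1$ with $X\in\{E,H,F\}$ and $u\in U(\sl(2,\bb{R}))$, and the corresponding contribution to $\hat{\delta}(R\otimes1\otimes1)+(R\otimes1\otimes1)\hat{\delta}$ equals $(XR+RX)\otimes u\otimes1$. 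Thus the whole question reduces to computing the anticommutators $XR+RX$ inside $C(\sl(2,\bb{R})\oplus\sl(2,\bb{R})^*)$.

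Next I would invoke the defining relation of the Clifford algebra together with the shape of the bilinear form. The symmetric form on $\sl(2,\bb{R})\oplus\sl(2,\bb{R})^*$ comes from the duality pairing, exactly as the form on $\mf{an}\oplus\mf{an}^*$ in Section \ref{construc}; in particular it pairs $\sl(2,\bb{R})$ against $\sl(2,\bb{R})^*$ and vanishes on $\sl(2,\bb{R})\times\sl(2,\bb{R})$. Hence for any two elements $X,Y$ of $\sl(2,\bb{R})$, viewed inside the Clifford algebra, the relation $XY+YX=2(X|Y)=0$ holds, so all elements of the Lie algebra $\sl(2,\bb{R})$ anticommute in $C(\sl(2,\bb{R})\oplus\sl(2,\bb{R})^*)$.

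Finally I would assemble the conclusion. Because $R=E-F\in\sl(2,\bb{R})$ and every Clifford component of $\hat{\delta}$ --- namely $E$, $H$, or $F$ --- also lies in $\sl(2,\bb{R})$, each such component anticommutes with $R$, so every contribution $(XR+RX)\otimes u\otimes1$ vanishes and the total anticommutator is $0$. There is essentially no hard step here: the only points that require care are confirming that the bilinear form genuinely restricts to zero on $\sl(2,\bb{R})\times\sl(2,\bb{R})$ (so that the Clifford anticommutators vanish) and checking that passing the trivial enveloping algebra factors of $R\otimes1\otimes1$ through $\hat{\delta}$ introduces no ordering or sign subtleties, which it does not since those slots carry only $1$.
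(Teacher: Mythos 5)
Your proposal is correct and is essentially the paper's own proof: the identity reduces to the relation $RX+XR=0$ in $C\left(\sl(2,\bb{R})\oplus\sl(2,\bb{R})^*\right)$ for $X\in\sl(2,\bb{R})$, which holds because the duality bilinear form vanishes on $\sl(2,\bb{R})\times\sl(2,\bb{R})$. (Your minor misstatement that every summand of $\hat{\delta}$ has the shape $X\otimes u\otimes1$ --- three of them are of the form $X\otimes1\otimes u$ --- is immaterial, since the argument applies verbatim to both shapes.)
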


\begin{proof}
This follows from the equation $RX+XR=0$ in $C\left(\sl(2,\bb{R})\oplus\sl(2,\bb{R})^*\right)$ for any $X\in\sl(2,\bb{R})$. 
\end{proof}

We define a representation $\sl(2,\bb{R})\stackrel{\mca{L}}{\curvearrowright}C^*\left(\sl(2,\bb{R});\mca{H}\otimes V\right)$ by 
\begin{align*}
(\mca{L}_X\varphi)\left(X_1,\ldots,X_p\right)&=-\sum_{i=1}^p\varphi\left(X_1,\ldots,[X,X_i],\ldots,X_p\right)\\
&\quad+\left(\pi(X)\otimes\id+\id\otimes\xi(X)\right)\varphi(X_1,\ldots,X_p)
\end{align*}
for $\varphi\in C^p\left(\sl(2,\bb{R});\mca{H}\otimes V\right)$ and $X$, $X_1,\ldots,X_p\in\sl(2,\bb{R})$. Then we have 
\begin{align*}
\mca{L}_X&=L_X\otimes\id\otimes\id+\id\otimes\pi(X)\otimes\id+\id\otimes\id\otimes\xi(X)\\
&=\left(\sigma\otimes\pi\otimes\xi\right)\left(\hat{L}_X\otimes1\otimes1+1\otimes X\otimes1+1\otimes1\otimes X\right)
\end{align*}
for $X\in\sl(2,\bb{R})$. 

Define $C^*\left(\sl(2,\bb{R}),\so(2);\mca{H}\otimes V\right)\subset C^*\left(\sl(2,\bb{R});\mca{H}\otimes V\right)$ by 
\begin{equation*}
C^*\left(\sl(2,\bb{R}),\so(2);\mca{H}\otimes V\right)=\left\{\varphi\in C^*\left(\sl(2,\bb{R});\mca{H}\otimes V\right)\ \middle|\ 
\begin{gathered}
\mca{L}_R\varphi=0,\\
\left(\iota(R)\otimes\id\otimes\id\right)\varphi=0
\end{gathered}
\right\}. 
\end{equation*}
Since 
\begin{gather*}
d\mca{L}_R=\mca{L}_Rd,\quad d\left(\iota(R)\otimes\id\otimes\id\right)+\left(\iota(R)\otimes\id\otimes\id\right)d=\mca{L}_R, 
\end{gather*}
the subspace $C^*\left(\sl(2,\bb{R}),\so(2);\mca{H}\otimes V\right)$ is a subcomplex of $C^*\left(\sl(2,\bb{R});\mca{H}\otimes V\right)$. 

By the definition of $\sl(2,\bb{R})\stackrel{\ad}{\curvearrowright}\mca{A}$, we have 
\begin{align*}
(\ad X)x&=\left(\hat{L}_X\otimes1\otimes1+1\otimes X\otimes1+1\otimes1\otimes X\right)x\\
&\quad-x\left(\hat{L}_X\otimes1\otimes1+1\otimes X\otimes1+1\otimes1\otimes X\right)
\end{align*}
for any $X\in\sl(2,\bb{R})$ and $x\in\mca{A}$, hence 
\begin{equation}\label{coad}
\left(\sigma\otimes\pi\otimes\xi\right)\left((\ad X)x\right)=\mca{L}_X\left(\sigma\otimes\pi\otimes\xi\right)(x)-\left(\sigma\otimes\pi\otimes\xi\right)(x)\mca{L}_X, 
\end{equation}
which means 
\begin{equation*}
\sigma\otimes\pi\otimes\xi\colon\mca{A}\to\End\left(C^*\left(\sl(2,\bb{R});\mca{H}\otimes V\right)\right)
\end{equation*}
is $\sl(2,\bb{R})$-equivariant. 

\begin{prop}
The operator $\delta$ preserves $C^*\left(\sl(2,\bb{R}),\so(2);\mca{H}\otimes V\right)$. 
\end{prop}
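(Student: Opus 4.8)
The plan is to verify directly that $\delta$ preserves both defining conditions of the subcomplex, namely $\mca{L}_R\varphi=0$ and $\left(\iota(R)\otimes\id\otimes\id\right)\varphi=0$, by transporting the two algebraic identities already established for $\hat{\delta}$ in $\mca{A}$ through the homomorphism $\sigma\otimes\pi\otimes\xi$. The key observation throughout is that $\sigma\otimes\pi\otimes\xi$ is an algebra homomorphism, so it converts products and (anti)commutators in $\mca{A}$ into the corresponding relations among operators on $C^*\left(\sl(2,\bb{R});\mca{H}\otimes V\right)$.

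First I would handle the condition $\mca{L}_R(\delta\varphi)=0$. Applying \eqref{coad} with $X=R$ and $x=\hat{\delta}$ gives
\begin{equation*}
\left(\sigma\otimes\pi\otimes\xi\right)\left((\ad R)\hat{\delta}\right)=\mca{L}_R\delta-\delta\mca{L}_R.
\end{equation*}
By Proposition \ref{soinvariant} we have $(\ad R)\hat{\delta}=0$, so the right hand side vanishes and $\mca{L}_R$ commutes with $\delta$. Hence for any $\varphi$ with $\mca{L}_R\varphi=0$ we get $\mca{L}_R(\delta\varphi)=\delta(\mca{L}_R\varphi)=0$, as required.

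Next I would treat the condition $\left(\iota(R)\otimes\id\otimes\id\right)(\delta\varphi)=0$. Here the one identification to record is that the spin representation sends $R\in\sl(2,\bb{R})\subset C\left(\sl(2,\bb{R})\oplus\sl(2,\bb{R})^*\right)$ to $\iota(R)$, so that $\left(\sigma\otimes\pi\otimes\xi\right)(R\otimes1\otimes1)=\iota(R)\otimes\id\otimes\id$. Applying the homomorphism $\sigma\otimes\pi\otimes\xi$ to Lemma \ref{drrd0} then yields
\begin{equation*}
\delta\left(\iota(R)\otimes\id\otimes\id\right)+\left(\iota(R)\otimes\id\otimes\id\right)\delta=0,
\end{equation*}
i.e.\ $\delta$ anticommutes with $\iota(R)\otimes\id\otimes\id$. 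Consequently, if $\left(\iota(R)\otimes\id\otimes\id\right)\varphi=0$, then $\left(\iota(R)\otimes\id\otimes\id\right)(\delta\varphi)=-\delta\left(\left(\iota(R)\otimes\id\otimes\id\right)\varphi\right)=0$.

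Combining the two computations shows that $\delta\varphi$ again lies in $C^*\left(\sl(2,\bb{R}),\so(2);\mca{H}\otimes V\right)$ whenever $\varphi$ does. I do not expect a genuine obstacle in this argument: the work was already done in establishing $(\ad R)\hat{\delta}=0$ and the anticommutation relation at the level of $\mca{A}$. The only point deserving care is the bookkeeping identification $\sigma(R)=\iota(R)$ together with the fact that $\sigma\otimes\pi\otimes\xi$ respects the algebra structure, which is exactly what turns the two Clifford-algebra identities into the commutation and anticommutation statements used above.
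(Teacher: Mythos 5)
Your proposal is correct and follows essentially the same route as the paper's own proof: both conditions are verified by pushing the two Clifford-algebra identities, namely $(\ad R)\hat{\delta}=0$ (Proposition \ref{soinvariant} via Equation \eqref{coad}) and the anticommutation relation of Lemma \ref{drrd0}, through the homomorphism $\sigma\otimes\pi\otimes\xi$. Your explicit remark that $\sigma(R)=\iota(R)$ under the spin representation is a small bookkeeping point the paper leaves implicit, but otherwise the arguments coincide.
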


\begin{proof}
Take $\varphi\in C^*\left(\sl(2,\bb{R}),\so(2);\mca{H}\otimes V\right)$. By Equation \eqref{coad} and Proposition \ref{soinvariant}, we have 
\begin{equation*}
0=\left(\sigma\otimes\pi\otimes\xi\right)\left((\ad R)\hat{\delta}\right)=\mca{L}_R\delta-\delta\mca{L}_R. 
\end{equation*}
Thus $\mca{L}_R\delta\varphi=0$. Applying $\sigma\otimes\pi\otimes\xi$ to the equation in Lemma \ref{drrd0}, we also get 
\begin{equation*}
\delta\left(\iota(R)\otimes\id\otimes\id\right)+\left(\iota(R)\otimes\id\otimes\id\right)\delta=0, 
\end{equation*}
hence $\left(\iota(R)\otimes\id\otimes\id\right)\delta\varphi=0$. Therefore $\delta\varphi\in C^*\left(\sl(2,\bb{R}),\so(2);\mca{H}\otimes V\right)$. 
\end{proof}

\subsubsection{Relation with the usual codifferential by Hodge theory}\label{relationco}
In this section we will show that the operator $\delta$ on $C^*\left(\sl(2,\bb{R});\mca{H}\otimes V\right)$ is an extension of an operator $\delta_{MM}$ considered in 2.3 of Borel--Wallach \cite{BW} defined on certain subspace of $C^*\left(\sl(2,\bb{R});\mca{H}\otimes V\right)$ which contains $C^*\left(\sl(2,\bb{R}),\so(2);\mca{H}\otimes V\right)$. The operator $\delta_{MM}$ is a generalization of (an extension of) an operator considered in Matsushima--Murakami \cite{MaMu}. 

Let $\sl(2,\bb{R})=\so(2)\oplus\mf{p}$ be the Cartan decomposition with respect to the Cartan involution $\theta\colon X\mapsto-X^\top$. Consider a graded vector space 
\begin{equation*}
D^*=\bigwedge^*\mf{p}^*\otimes\mca{H}\otimes V. 
\end{equation*}
The projection $\sl(2,\bb{R})\to\mf{p}$ induces a map $\bigwedge^*\mf{p}^*\to\bigwedge^*\sl(2,\bb{R})^*$ and hence an injective map 
\begin{equation*}
i\colon D^*\to C^*\left(\sl(2,\bb{R});\mca{H}\otimes V\right). 
\end{equation*}
By identifying the image of $i$, we get  
\begin{equation}\label{natident}
D^*\simeq\left\{\varphi\in C^*\left(\sl(2,\bb{R});\mca{H}\otimes V\right)\ \middle|\ \left(\iota(R)\otimes\id\otimes\id\right)\varphi=0\right\}. 
\end{equation}
Note that, by recalling the formula 
\begin{equation*}
d\left(\iota(R)\otimes\id\otimes\id\right)+\left(\iota(R)\otimes\id\otimes\id\right)d=\mca{L}_R, 
\end{equation*}
the right hand side of \eqref{natident} is not a subcomplex of $C^*\left(\sl(2,\bb{R});\mca{H}\otimes V\right)$ in general. 

By using representations $\so(2)\stackrel{\ad}{\curvearrowright}\mf{p}$, $\sl(2,\bb{R})\stackrel{\pi}{\curvearrowright}\mca{H}$ and $\sl(2,\bb{R})\stackrel{\xi}{\curvearrowright}V$, we get a representation $\so(2)\stackrel{\mca{L}}{\curvearrowright}D^*$. Define a graded subspace $C^*\subset D^*$ by 
\begin{equation*}
C^*=\left(D^*\right)^{\so(2)}. 
\end{equation*}
Since $i\colon D^*\to C^*\left(\sl(2,\bb{R});\mca{H}\otimes V\right)$ is $\so(2)$-equivariant with respect to $\mca{L}$ and $\mca{L}|_{\so(2)}$, we obtain a natural isomorphism 
\begin{equation*}
C^*\simeq C^*\left(\sl(2,\bb{R}),\so(2);\mca{H}\otimes V\right). 
\end{equation*}

Let $C\left(\mf{p}\oplus\mf{p}^*\right)$ be the Clifford algebra of $\mf{p}\oplus\mf{p}^*$ equipped with the symmetric bilinear form defined by the duality of $\mf{p}$ and $\mf{p}^*$. Let 
\begin{equation*}
\mca{B}=C\left(\mf{p}\oplus\mf{p}^*\right)\otimes U(\sl(2,\bb{R}))\otimes U(\sl(2,\bb{R})). 
\end{equation*}
We have a representation $\mca{B}\stackrel{\sigma\otimes\pi\otimes\xi}{\curvearrowright}D^*$, where $C\left(\mf{p}\oplus\mf{p}^*\right)\stackrel{\sigma}{\curvearrowright}\bigwedge^*\mf{p}^*$ is the spin representation. Therefore we have the following commutative diagram: 
\begin{equation*}
\begin{tikzcd}
C^*\left(\sl(2,\bb{R}),\so(2);\mca{H}\otimes V\right)&[-28pt]\subset&[-28pt]C^*\left(\sl(2,\bb{R});\mca{H}\otimes V\right)&[-28pt]\curvearrowleft&[-28pt]\mca{A}\\
C^*\ar[u,"\wr"',pos=0.4]&\subset&D^*\ar[u,hook]&\curvearrowleft&\mca{B}. 
\end{tikzcd}
\end{equation*}

Note that $H,\frac{1}{\sqrt{2}}(E+F)\in\mf{p}$ is an orthonormal basis of $\mf{p}$ with respect to $B$. See Section \ref{notation}. Define $\hat{\delta}_{MM}\in\mca{B}$ by 
\begin{align*}
\hat{\delta}_{MM}&=H\otimes H\otimes1+\frac{1}{2}(E+F)\otimes(E+F)\otimes1\\
&\quad-H\otimes1\otimes H-\frac{1}{2}(E+F)\otimes1\otimes(E+F)
\end{align*}
and set $\delta_{MM}=\left(\sigma\otimes\pi\otimes\xi\right)(\hat{\delta}_{MM})$. Then $\delta_{MM}$ is an operator on $D^*$ of degree $-1$. The operator $\delta_{MM}$ is the same as the operator in 2.3 of Borel--Wallach \cite{BW}. 

The following proposition describes the relation between $\delta$ and $\delta_{MM}$, where $\delta$ is the operator constructed in the previous section. 

\begin{prop}
We have 
\begin{equation*}
\delta|_{D^*}=\delta_{MM}
\end{equation*}
by identifying $D^*$ with a subspace of $C^*\left(\sl(2,\bb{R});\mca{H}\otimes V\right)$ through the isomorphism \eqref{natident}. 
\end{prop}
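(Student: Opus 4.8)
The plan is to compare the two operators by expressing both as actions of Clifford-algebra elements and checking that their relevant components agree on $D^*$. The key point is that $\hat{\delta}$ lives in $\mca{A}=C(\sl(2,\bb{R})\oplus\sl(2,\bb{R})^*)\otimes U(\sl(2,\bb{R}))\otimes U(\sl(2,\bb{R}))$ while $\hat{\delta}_{MM}$ lives in the smaller algebra $\mca{B}=C(\mf{p}\oplus\mf{p}^*)\otimes U(\sl(2,\bb{R}))\otimes U(\sl(2,\bb{R}))$, so the comparison must be made after restricting the action of $\hat{\delta}$ to the subspace $D^*\simeq\{\varphi\mid(\iota(R)\otimes\id\otimes\id)\varphi=0\}$. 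First I would recall the definitions
\begin{align*}
\hat{\delta}&=E\otimes F\otimes1+H\otimes H\otimes1+F\otimes E\otimes1\\
&\quad-E\otimes1\otimes F-H\otimes1\otimes H-F\otimes1\otimes E
\end{align*}
and
\begin{align*}
\hat{\delta}_{MM}&=H\otimes H\otimes1+\tfrac{1}{2}(E+F)\otimes(E+F)\otimes1\\
&\quad-H\otimes1\otimes H-\tfrac{1}{2}(E+F)\otimes1\otimes(E+F).
\end{align*}
The $H\otimes H\otimes1-H\otimes1\otimes H$ terms match verbatim, so the whole problem reduces to comparing the $E,F$-contributions on $D^*$.

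The main idea is to rewrite the Clifford factors $E$ and $F$ appearing in $\hat{\delta}$ in terms of the basis $R=E-F\in\so(2)$ and $S:=E+F\in\mf{p}$, namely $E=\tfrac{1}{2}(S+R)$ and $F=\tfrac{1}{2}(S-R)$. On $D^*$ the operator $\iota(R)\otimes\id\otimes\id$ kills everything, and one checks via $\sigma(R)=\iota(R)+\varepsilon(\kappa R)$ that the $R$-parts of the Clifford factors of $\hat{\delta}$ act trivially on $D^*$ up to terms that again involve $\iota(R)$. Concretely I would substitute these expressions into the first and third terms of $\hat{\delta}$ (and the corresponding $V$-terms), collect the coefficients, and verify that modulo the ideal of operators annihilating $D^*$ one obtains exactly $\tfrac{1}{2}S\otimes F\otimes1+\tfrac{1}{2}S\otimes E\otimes1=\tfrac{1}{2}(E+F)\otimes(E+F)\otimes1$ from the first pair and its negative with $V$-factors from the second. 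This is the computation that produces the coefficient $\tfrac12$ in $\hat{\delta}_{MM}$ from the two separate terms $E\otimes F\otimes1$ and $F\otimes E\otimes1$ in $\hat{\delta}$.

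I expect the main obstacle to be the careful bookkeeping of the Clifford algebra when passing from $C(\sl(2,\bb{R})\oplus\sl(2,\bb{R})^*)$ to $C(\mf{p}\oplus\mf{p}^*)$: the two spin representations act on different exterior algebras ($\bigwedge^*\sl(2,\bb{R})^*$ versus $\bigwedge^*\mf{p}^*$), and I must justify that the action of the $\sl(2,\bb{R})$-Clifford factor of $\hat{\delta}$ on an element of the image of $i\colon D^*\hookrightarrow C^*(\sl(2,\bb{R});\mca{H}\otimes V)$ lands back in that image and agrees with the $\mf{p}$-Clifford action of the corresponding factor of $\hat{\delta}_{MM}$. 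The clean way to handle this is to use the identity $\sigma(X)=\iota(X)+\varepsilon(\kappa X)$ for $X\in\sl(2,\bb{R})$, note that for $X\in\mf{p}$ the relevant creation/annihilation operators restrict correctly to $\bigwedge^*\mf{p}^*$, and that all occurrences of $\sigma(R)$ reduce on $D^*$ either to $\varepsilon(\kappa R)$ (which produces a form no longer annihilated by $\iota(R)$, i.e.\ leaves $D^*$) or combine to cancel. I would therefore verify that the $R$-components drop out precisely because of the defining condition $(\iota(R)\otimes\id\otimes\id)\varphi=0$ together with the equivariance and the antisymmetric way $E$ and $F$ enter the two summands, so that only the symmetric $S=E+F$ parts survive. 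Once this reduction is established the remaining identity is the elementary coefficient comparison described above, giving $\delta|_{D^*}=\delta_{MM}$.
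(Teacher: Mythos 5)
Your overall strategy is the same as the paper's: decompose $E=\frac{1}{2}R+\frac{1}{2}(E+F)$ and $F=-\frac{1}{2}R+\frac{1}{2}(E+F)$, and use the defining condition $\left(\iota(R)\otimes\id\otimes\id\right)\varphi=0$ of $D^*$ to discard the $R$-parts, so that only the symmetric $(E+F)$-parts survive and produce the coefficient $\frac{1}{2}$ in $\hat{\delta}_{MM}$. The gap is in the identity on which you build this reduction: $\sigma(R)=\iota(R)+\varepsilon(\kappa R)$ is false. The Clifford algebra here is $C\left(\sl(2,\bb{R})\oplus\sl(2,\bb{R})^*\right)$ for the duality pairing $(\cdot|\cdot)$, under which $\sl(2,\bb{R})$ is an isotropic subspace, and the spin representation sends $X\in\sl(2,\bb{R})$ to $\iota(X)$ alone and $\varphi\in\sl(2,\bb{R})^*$ to $\varepsilon(\varphi)$; there is no $\varepsilon(\kappa X)$ term. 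Nor is your formula an alternative convention: it violates the Clifford relation, since $\sigma(R)^2$ must equal $(R|R)=0$, whereas by the Clifford relations (cf.\ Lemma \ref{cliffrule}) one has $\left(\iota(R)+\varepsilon(\kappa R)\right)^2=B_\theta(R,R)=2\neq0$. You are importing a creation-plus-annihilation formula that belongs to the Clifford algebra of the quadratic space $\left(\sl(2,\bb{R}),B_\theta\right)$, not to the hyperbolic one used in the paper.

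This error is not cosmetic, because your proof closes by asserting that the spurious $\varepsilon(\kappa R)$ terms ``combine to cancel'' owing to the antisymmetric way $E$ and $F$ enter $\hat{\delta}$. They would not cancel. Collecting the relevant terms of $\hat{\delta}$ along $R$ and $E+F$ gives, in $\mca{A}$,
\begin{align*}
&E\otimes F\otimes1+F\otimes E\otimes1-E\otimes1\otimes F-F\otimes1\otimes E\\
&\quad=\tfrac{1}{2}R\otimes\bigl((F-E)\otimes1-1\otimes(F-E)\bigr)+\tfrac{1}{2}(E+F)\otimes\bigl((E+F)\otimes1-1\otimes(E+F)\bigr),
\end{align*}
so under your formula the difference $\delta|_{D^*}-\delta_{MM}$ would contain the unmatched term $\tfrac{1}{2}\varepsilon(\kappa R)\otimes\bigl(\pi(F-E)\otimes\id-\id\otimes\xi(F-E)\bigr)$, which is nonzero in general: the antisymmetry of the Clifford factor is exactly what couples it to the nonzero antisymmetric combination $F-E$, so the cancellation your argument needs never happens and the proof cannot close as written. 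With the correct formula the difficulty disappears entirely: $\sigma(R)=\iota(R)$ vanishes on $D^*$, the operators $\iota(H)$ and $\iota(E+F)$ preserve $D^*$ because they anticommute with $\iota(R)$ (this also settles your worry about the action landing back in the image of $i$), and the display above immediately yields
\begin{equation*}
\delta|_{D^*}=\iota(H)\otimes\bigl(\pi(H)\otimes\id-\id\otimes\xi(H)\bigr)+\iota\left(\tfrac{1}{2}(E+F)\right)\otimes\bigl(\pi(E+F)\otimes\id-\id\otimes\xi(E+F)\bigr)=\delta_{MM},
\end{equation*}
which is precisely the paper's two-line computation.
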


\begin{proof}
Following the decomposition $\sl(2,\bb{R})=\so(2)\oplus\mf{p}$, we get 
\begin{gather*}
E=\frac{1}{2}R+\frac{1}{2}(E+F),\quad F=-\frac{1}{2}R+\frac{1}{2}(E+F). 
\end{gather*}
On $D^*$ we have 
\begin{align*}
\delta&=\iota(E)\otimes\pi(F)\otimes\id+\iota(H)\otimes\pi(H)\otimes\id+\iota(F)\otimes\pi(E)\otimes\id\\
&\quad-\iota(E)\otimes\id\otimes\xi(F)-\iota(H)\otimes\id\otimes\xi(H)-\iota(F)\otimes\id\otimes\xi(E)\\
&=\iota\left(\frac{1}{2}(E+F)\right)\otimes\pi(F)\otimes\id+\iota(H)\otimes\pi(H)\otimes\id+\iota\left(\frac{1}{2}(E+F)\right)\otimes\pi(E)\otimes\id\\
&\quad-\iota\left(\frac{1}{2}(E+F)\right)\otimes\id\otimes\xi(F)-\iota(H)\otimes\id\otimes\xi(H)-\iota\left(\frac{1}{2}(E+F)\right)\otimes\id\otimes\xi(E)\\
&=\iota(H)\otimes\pi(H)\otimes\id+\iota\left(\frac{1}{2}(E+F)\right)\otimes\pi(E+F)\otimes\id\\
&\quad-\iota(H)\otimes\id\otimes\xi(H)-\iota\left(\frac{1}{2}(E+F)\right)\otimes\id\otimes\xi(E+F)\\
&=\delta_{MM}. 
\end{align*}
\end{proof}

\begin{cor}
The operator $\delta_{MM}$ preserves $C^*$. 
\end{cor}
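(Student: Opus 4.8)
The plan is to deduce the statement formally, by transporting the two facts established immediately before through the identification $C^*\simeq C^*\left(\sl(2,\bb{R}),\so(2);\mca{H}\otimes V\right)$. The two inputs are: first, the proposition that $\delta$ preserves the subcomplex $C^*\left(\sl(2,\bb{R}),\so(2);\mca{H}\otimes V\right)$ of $C^*\left(\sl(2,\bb{R});\mca{H}\otimes V\right)$; and second, the preceding proposition $\delta|_{D^*}=\delta_{MM}$, which under the inclusion $i\colon D^*\hookrightarrow C^*\left(\sl(2,\bb{R});\mca{H}\otimes V\right)$ of \eqref{natident} reads $\delta\circ i=i\circ\delta_{MM}$. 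The only thing to be careful about is how $C^*$ sits inside the ambient spaces.

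First I would record the relevant inclusions of defining conditions. By \eqref{natident}, the image $i(D^*)$ is precisely the kernel of $\iota(R)\otimes\id\otimes\id$ inside $C^*\left(\sl(2,\bb{R});\mca{H}\otimes V\right)$, whereas $C^*\left(\sl(2,\bb{R}),\so(2);\mca{H}\otimes V\right)$ is cut out by the two conditions $\mca{L}_R\varphi=0$ and $\left(\iota(R)\otimes\id\otimes\id\right)\varphi=0$. Hence $C^*\left(\sl(2,\bb{R}),\so(2);\mca{H}\otimes V\right)\subseteq i(D^*)$, and since $i$ is $\so(2)$-equivariant the restriction of $i$ identifies $C^*=(D^*)^{\so(2)}$ with $C^*\left(\sl(2,\bb{R}),\so(2);\mca{H}\otimes V\right)$, exactly as already noted in the text.

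Then I would chain the facts. Take $\varphi\in C^*\subseteq D^*$, so that $i(\varphi)\in C^*\left(\sl(2,\bb{R}),\so(2);\mca{H}\otimes V\right)$ by the identification. Applying the proposition that $\delta$ preserves $C^*\left(\sl(2,\bb{R}),\so(2);\mca{H}\otimes V\right)$ gives $\delta\left(i(\varphi)\right)\in C^*\left(\sl(2,\bb{R}),\so(2);\mca{H}\otimes V\right)$. On the other hand, because $\varphi\in D^*$, the identity $\delta|_{D^*}=\delta_{MM}$ yields $\delta\left(i(\varphi)\right)=i\left(\delta_{MM}\varphi\right)$. Combining the two, $i\left(\delta_{MM}\varphi\right)\in C^*\left(\sl(2,\bb{R}),\so(2);\mca{H}\otimes V\right)=i(C^*)$, and injectivity of $i$ gives $\delta_{MM}\varphi\in C^*$.

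I expect no genuine obstacle here; the argument is purely a matter of tracking the identifications. The one point that deserves a word is that $\delta\left(i(\varphi)\right)$, known a priori only to lie in $C^*\left(\sl(2,\bb{R}),\so(2);\mca{H}\otimes V\right)$, must be recognized as lying in $i(C^*)$ so that one may apply $i^{-1}$; this is handled precisely by the inclusion $C^*\left(\sl(2,\bb{R}),\so(2);\mca{H}\otimes V\right)\subseteq i(D^*)$ together with the identification $i(C^*)=C^*\left(\sl(2,\bb{R}),\so(2);\mca{H}\otimes V\right)$ from the first step. No new computation is required.
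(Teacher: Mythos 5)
Your proof is correct and follows exactly the paper's route: the paper's own proof is the one-line observation that $\delta|_{D^*}=\delta_{MM}$ together with the fact that $\delta$ preserves $C^*\left(\sl(2,\bb{R}),\so(2);\mca{H}\otimes V\right)$. Your version merely makes explicit the bookkeeping of the identifications (in particular that $C^*\left(\sl(2,\bb{R}),\so(2);\mca{H}\otimes V\right)\subseteq i(D^*)$, so $i^{-1}$ may be applied), which the paper leaves implicit.
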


\begin{proof}
This is because $\delta|_{D^*}=\delta_{MM}$ and $\delta$ preserves $C^*\left(\sl(2,\bb{R}),\so(2);\mca{H}\otimes V\right)$. 
\end{proof}

\subsection{Computation of $H^*\left(\Gamma\backslash\PSL(2,\bb{R});\bb{C}\right)$}
In this section we compute the de Rham cohomology $H^*(P;\bb{C})$, where $P=\Gamma\backslash\PSL(2,\bb{R})$ and $\Gamma$ is a torsion free cocompact lattice of $\PSL(2,\bb{R})$. (As we said before this cohomology can be computed by using the Serre spectral sequence of the circle bundle $P\to\Sigma$, but here we will compute it by using the representation theory.)

By Proposition \ref{drce} there is an isomorphism
\begin{equation*}
\Gamma\left(\bigwedge^*T^*P\otimes\bb{C}\right)\simeq C^*\left(\sl(2,\bb{R});C^\infty(P,\bb{C})\right)
\end{equation*}
of cochain complexes. Let 
\begin{equation*}
L^2(P)=W_0\oplus\bigoplus_{\mu\neq0}W_\mu
\end{equation*}
be the decomposition \eqref{decompooo}, where $W_\mu$ is the sum of the irreducible unitary subrepresentations of $L^2(P)$ with Casimir parameter $\mu$. This is a decomposition of a unitary representation of $\PSL(2,\bb{R})$. By Proposition \ref{cinfinityvect} and Lemma \ref{smooooth}, we obtain 
\begin{equation*}
C^\infty(P,\bb{C})=C^\infty(W_0)\oplus C^\infty\left(\bigoplus_{\mu\neq0}W_\mu\right), 
\end{equation*}
which is a decomposition of a representation of $\sl(2,\bb{R})$. Therefore 
\begin{equation*}
C^*\left(\sl(2,\bb{R});C^\infty(P,\bb{C})\right)\simeq C^*\left(\sl(2,\bb{R});C^\infty(W_0)\right)\oplus C^*\left(\sl(2,\bb{R});C^\infty\left(\bigoplus_{\mu\neq0}W_\mu\right)\right)
\end{equation*}
as cochain complexes. 

Put $\mca{H}=C^\infty\left(\bigoplus_{\mu\neq0}W_\mu\right)$. Applying Corollary \ref{ddddxi} to $\mca{H}$ and $V=\bb{R}$, there exists an operator $\delta$ on $C^*\left(\sl(2,\bb{R});\mca{H}\right)$ such that 
\begin{equation*}
d\delta+\delta d=\id\otimes\pi(\Omega), 
\end{equation*}
where $\pi$ is the representation of $\sl(2,\bb{R})$ on $\mca{H}$. We can show that $\id\otimes\pi(\Omega)$ is bijective on $C^*\left(\sl(2,\bb{R});\mca{H}\right)$ by following the proof of Lemma \ref{biject}. Putting $\delta^\prime=(\id\otimes\pi(\Omega))^{-1}\delta$, we have $d\delta^\prime+\delta^\prime d=\id$ on $C^*\left(\sl(2,\bb{R});\mca{H}\right)$. Therefore 
\begin{equation*}
H^*\left(\sl(2,\bb{R});\mca{H}\right)=0. 
\end{equation*}
It follows that 
\begin{equation}\label{wzero}
H^*\left(\sl(2,\bb{R});C^\infty(P,\bb{C})\right)\simeq H^*\left(\sl(2,\bb{R});C^\infty(W_0)\right). 
\end{equation}
By Section \ref{conclu} we have $W_0\simeq\bb{C}\oplus g_\Sigma(D_1^+\oplus D_1^-)$ as unitary representations of $\PSL(2,\bb{R})$, where $g_\Sigma$ is the genus of the surface $\Sigma=P/\PSO(2)$. Hence 
\begin{equation*}
C^\infty(W_0)\simeq\bb{C}\oplus g_\Sigma\left(C^\infty(D_1^+)\oplus C^\infty(D_1^-)\right)
\end{equation*}
by Lemma \ref{smooooth} and 
\begin{align*}
C^*\left(\sl(2,\bb{R});C^\infty(W_0)\right)\simeq C^*(\sl(2,\bb{R});\bb{C})&\oplus g_\Sigma C^*\left(\sl(2,\bb{R});C^\infty(D_1^+)\right)\\
&\oplus g_\Sigma C^*\left(\sl(2,\bb{R});C^\infty(D_1^-)\right). 
\end{align*}
By Section \ref{compirruni} we get the following. 

\begin{prop}
\begin{align*}
H^*(P;\bb{C})&\simeq H^*\left(\sl(2,\bb{R});C^\infty(P,\bb{C})\right)\\
&\simeq
\begin{cases}
\bb{C}&i=0,3\\
\bb{C}^{2g_\Sigma}&i=1,2\\
0&\text{otherwise}. 
\end{cases}
\end{align*}
\end{prop}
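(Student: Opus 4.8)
The plan is to assemble the computation of $H^*(P;\bb{C})$ from three ingredients that have all been prepared earlier in the excerpt. First I would invoke Proposition \ref{drce} (with $G=\PSL(2,\bb{R})$, $M=P$ and the trivial representation $V=\bb{R}$, complexified) to replace the de Rham cohomology of $P$ by the Lie algebra cohomology $H^*\left(\sl(2,\bb{R});C^\infty(P,\bb{C})\right)$. Then I would decompose $C^\infty(P,\bb{C})$ as an $\sl(2,\bb{R})$-module. By Proposition \ref{cinfinityvect} the space of $C^\infty$ vectors of $L^2(P)$ is exactly $C^\infty(P,\bb{C})$, so taking $C^\infty$ vectors of the unitary decomposition \eqref{decompooo} and applying Lemma \ref{smooooth} gives a direct sum decomposition
\begin{equation*}
C^\infty(P,\bb{C})=C^\infty(W_0)\oplus C^\infty\left(\bigoplus_{\mu\neq0}W_\mu\right)
\end{equation*}
into $\sl(2,\bb{R})$-invariant subspaces, whence the Chevalley--Eilenberg complex splits accordingly.

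The key step is to kill the generic part. Setting $\mca{H}=C^\infty\left(\bigoplus_{\mu\neq0}W_\mu\right)$ and $V=\bb{R}$, Corollary \ref{ddddxi} produces an operator $\delta$ with $d\delta+\delta d=\id\otimes\pi(\Omega)$, since $\xi(\Omega)=0$ for the trivial representation. The content here is that $\id\otimes\pi(\Omega)$ is invertible on $C^*(\sl(2,\bb{R});\mca{H})$: this follows by the same reasoning as Lemma \ref{biject}, because on $C^\infty(W_\mu)$ the Casimir $\pi(\Omega)$ acts as the scalar $\mu$, and the set of Casimir parameters $\mu\neq0$ appearing in $L^2(P)$ has no accumulation point, so the inverse scalars $\mu^{-1}$ are uniformly bounded and define a bounded operator preserving the $C^\infty$ condition of Lemma \ref{smooooth}. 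Putting $\delta'=(\id\otimes\pi(\Omega))^{-1}\delta$ gives $d\delta'+\delta'd=\id$, so this summand is acyclic and $H^*(\sl(2,\bb{R});C^\infty(P,\bb{C}))\simeq H^*(\sl(2,\bb{R});C^\infty(W_0))$.

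It remains to compute the $W_0$ contribution. By the conclusion in Section \ref{conclu}, $W_0\simeq\bb{C}\oplus g_\Sigma(D_1^+\oplus D_1^-)$, so Lemma \ref{smooooth} gives $C^\infty(W_0)\simeq\bb{C}\oplus g_\Sigma\left(C^\infty(D_1^+)\oplus C^\infty(D_1^-)\right)$ and the complex splits once more. For each summand I would simply read off the answer from Section \ref{compirruni}: there $H^i(\sl(2,\bb{R});\bb{C})\simeq\bb{C}$ for $i=0,3$ and vanishes otherwise, while $H^i(\sl(2,\bb{R});C^\infty(D_1^\pm))\simeq\bb{C}$ for $i=1,2$ and vanishes otherwise. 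Summing, $H^0$ and $H^3$ each get a single $\bb{C}$ from the trivial summand, and $H^1$, $H^2$ each get $2g_\Sigma$ copies of $\bb{C}$ from the $2g_\Sigma$ discrete series summands, yielding the stated result.

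The main obstacle is the convergence/invertibility claim for $\id\otimes\pi(\Omega)$ on the infinite-dimensional generic part; this is where one must be careful that the formal inverse really lands back in the space of $C^\infty$ vectors and defines an operator on the cochain complex. Everything else is bookkeeping: Proposition \ref{drce}, the spectral decomposition, and the per-representation computations of Section \ref{compirruni} do all the real work, and the only genuine analytic input is the spectral-gap argument borrowed from the proof of Lemma \ref{biject}.
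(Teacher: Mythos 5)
Your proposal is correct and follows essentially the same route as the paper: Proposition \ref{drce} to pass to $H^*\left(\sl(2,\bb{R});C^\infty(P,\bb{C})\right)$, the splitting $C^\infty(P,\bb{C})=C^\infty(W_0)\oplus C^\infty\bigl(\bigoplus_{\mu\neq0}W_\mu\bigr)$ via Proposition \ref{cinfinityvect} and Lemma \ref{smooooth}, the operator $\delta$ from Corollary \ref{ddddxi} with the invertibility of $\id\otimes\pi(\Omega)$ argued as in Lemma \ref{biject} to kill the generic part, and finally the per-summand computations of Section \ref{compirruni} applied to $C^\infty(W_0)\simeq\bb{C}\oplus g_\Sigma\left(C^\infty(D_1^+)\oplus C^\infty(D_1^-)\right)$. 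Your handling of the one genuine analytic point — that the nonzero Casimir parameters are bounded away from zero because $\Upsilon$ has no accumulation points, so the formal inverse preserves the $C^\infty$-vector condition — matches the paper's argument.
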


\subsection{The restriction map $H^*\left(\Gamma\backslash\PSL(2,\bb{R});\bb{C}\right)\to H^*(\mca{F};\bb{C})$}
In this section we compute the restriction map $H^*(P;\bb{C})\to H^*(\mca{F};\bb{C})$. This is the goal of Section \ref{restmapH} and will be applied in Section \ref{haefp}. 

Consider the commutative diagram 
\begin{equation*}
\begin{tikzcd}
\Gamma\left(\bigwedge^*T^*P\otimes\bb{C}\right)\ar[r,"\sim"]\ar[d]&C^*\left(\sl(2,\bb{R});C^\infty(P,\bb{C})\right)\ar[d]&C^*\left(\sl(2,\bb{R});C^\infty(W_0)\right)\ar[l]\ar[d]\\
\Gamma\left(\bigwedge^*T^*\mca{F}\otimes\bb{C}\right)\ar[r,"\sim"]&C^*\left(\mf{an};C^\infty(P,\bb{C})\right)&C^*\left(\mf{an};C^\infty(W_0)\right)\ar[l]
\end{tikzcd}
\end{equation*}
of cochain complexes and cochain maps, where the vertical maps are given by restriction. By taking cohomology and applying Theorem \ref{36kwlxisee} and \eqref{wzero}, we get 
\begin{equation*}
\begin{tikzcd}
H^*(P;\bb{C})\ar[r,"\sim"]\ar[d]&H^*\left(\sl(2,\bb{R});C^\infty(P,\bb{C})\right)\ar[d]&H^*\left(\sl(2,\bb{R});C^\infty(W_0)\right)\ar[l,"\sim"']\ar[d]\\
H^*(\mca{F};\bb{C})\ar[r,"\sim"]&H^*\left(\mf{an};C^\infty(P,\bb{C})\right)&H^*\left(\mf{an};C^\infty(W_0)\right).\ar[l,"\sim"']
\end{tikzcd}
\end{equation*}
Fix an isomorphism $C^\infty(W_0)\simeq\bb{C}\oplus g_\Sigma C^\infty(D_1^+)\oplus g_\Sigma C^\infty(D_1^-)$ of $\sl(2,\bb{R})$-modules. Then we have 
\begin{equation*}
\begin{tikzcd}
H^*\left(\sl(2,\bb{R});C^\infty(W_0)\right)\ar[r,"\sim"]\ar[d]&H^*(\sl(2,\bb{R});\bb{C})\oplus\bigoplus_{\epsilon=\pm}g_\Sigma H^*\left(\sl(2,\bb{R});C^\infty(D_1^\epsilon)\right)\ar[d]\\
H^*\left(\mf{an};C^\infty(W_0)\right)\ar[r,"\sim"]&H^*(\mf{an};\bb{C})\oplus\bigoplus_{\epsilon=\pm}g_\Sigma H^*\left(\mf{an};C^\infty(D_1^\epsilon)\right).
\end{tikzcd}
\end{equation*}

By Proposition \ref{restrichom}, the restriction map $H^*\left(\sl(2,\bb{R});C^\infty(D_1^\pm)\right)\to H^*\left(\mf{an};C^\infty(D_1^\pm)\right)$ is an isomorphism. 

By Section \ref{compirruni} and \ref{cohcomp}, we have 
\begin{equation*}
H^i\left(\sl(2,\bb{R});\bb{C}\right)\simeq
\begin{cases}
\bb{C}&i=0,3\\
0&\text{otherwise}
\end{cases}
\end{equation*}
and 
\begin{equation*}
H^i\left(\mf{an};\bb{C}\right)\simeq
\begin{cases}
\bb{C}&i=0,1\\
0&\text{otherwise}. 
\end{cases}
\end{equation*}
It follows that $H^i\left(\sl(2,\bb{R});\bb{C}\right)\to H^i\left(\mf{an};\bb{C}\right)$ is the identity map when $i=0,2$ and is $0$ otherwise. 

\begin{thm}\label{resthhi}
The homomorphism $H^i(P;\bb{C})\to H^i(\mca{F};\bb{C})$ induced by the restriction map is an isomorphism when $i=0,2$. 
\end{thm}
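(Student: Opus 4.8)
The plan is to read the restriction map directly off the two commutative diagrams assembled in this section, which express $H^*(P;\bb{C})\to H^*(\mca{F};\bb{C})$ as a direct sum of restriction maps attached to the irreducible summands of $C^\infty(W_0)$. First I would use the lower-right horizontal isomorphisms of the big diagram together with Theorem \ref{36kwlxisee} and \eqref{wzero} to identify the restriction map $H^i(P;\bb{C})\to H^i(\mca{F};\bb{C})$ with the map $H^i(\sl(2,\bb{R});C^\infty(W_0))\to H^i(\mf{an};C^\infty(W_0))$. Fixing the isomorphism $C^\infty(W_0)\simeq\bb{C}\oplus g_\Sigma C^\infty(D_1^+)\oplus g_\Sigma C^\infty(D_1^-)$ of $\sl(2,\bb{R})$-modules then splits this map as the direct sum of the trivial-coefficient restriction $H^i(\sl(2,\bb{R});\bb{C})\to H^i(\mf{an};\bb{C})$ and of $g_\Sigma$ copies each of $H^i(\sl(2,\bb{R});C^\infty(D_1^\pm))\to H^i(\mf{an};C^\infty(D_1^\pm))$.

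Next I would dispose of the discrete-series summands: by Proposition \ref{restrichom} each map $H^i(\sl(2,\bb{R});C^\infty(D_1^\pm))\to H^i(\mf{an};C^\infty(D_1^\pm))$ is an isomorphism in every degree, so these blocks contribute an isomorphism for all $i$, in particular for $i=0,2$. It then remains only to examine the trivial-coefficient block in degrees $0$ and $2$. Using the computations of $H^*(\sl(2,\bb{R});\bb{C})$ in Section \ref{compirruni} and of $H^*(\mf{an};\bb{C})$ in Section \ref{cohcomp}, I would invoke the observation made just before the statement that $H^i(\sl(2,\bb{R});\bb{C})\to H^i(\mf{an};\bb{C})$ is the identity map when $i=0,2$ (the identity on $\bb{C}$ for $i=0$ and the identity on $0$ for $i=2$), hence an isomorphism in both cases. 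Combining this with the discrete-series blocks, every summand of the decomposition restricts isomorphically in degrees $0$ and $2$, whence $H^i(P;\bb{C})\to H^i(\mca{F};\bb{C})$ is an isomorphism for $i=0,2$.

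The argument is essentially bookkeeping once the diagrams are in place. It is worth noting why the result is stated only for $i=0,2$: the trivial-coefficient block is an isomorphism exactly in these two degrees and fails at $i=1,3$, where $H^*(\sl(2,\bb{R});\bb{C})$ and $H^*(\mf{an};\bb{C})$ have mismatched dimensions, so the discrete-series blocks alone cannot repair surjectivity (at $i=1$) or injectivity (at $i=3$). Thus restricting to $i=0,2$ is genuinely necessary and not an artifact of the method.

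The only mild obstacle is ensuring that the identifications in the two diagrams are mutually compatible, i.e. that the chosen module isomorphism $C^\infty(W_0)\simeq\bb{C}\oplus g_\Sigma C^\infty(D_1^+)\oplus g_\Sigma C^\infty(D_1^-)$ is applied consistently on both the $\sl(2,\bb{R})$ side and the $\mf{an}$ side, so that the restriction map is genuinely block-diagonal with respect to it. This is guaranteed by the naturality of the restriction maps together with the $\sl(2,\bb{R})$-equivariance of the fixed isomorphism, which makes the relevant square commute; once this is recorded, the conclusion for $i=0,2$ follows immediately.
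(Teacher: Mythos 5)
Your proposal is correct and follows essentially the same route as the paper: the same two commutative diagrams, reduction to $H^*(\sl(2,\bb{R});C^\infty(W_0))\to H^*(\mf{an};C^\infty(W_0))$ via Theorem \ref{36kwlxisee} and \eqref{wzero}, Proposition \ref{restrichom} for the $D_1^\pm$ blocks, and the degree-$0,2$ comparison of $H^*(\sl(2,\bb{R});\bb{C})$ with $H^*(\mf{an};\bb{C})$. Your added remarks on why $i=1,3$ fail and on the block-diagonality of the restriction map are accurate but not needed beyond what the paper's diagrams already encode.
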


\section{Application: $\mca{F}$ is totally minimizable}\label{haefp}
In this section we prove that the foliation $\mca{F}$ is totally minimizable. 

\begin{dfn}
Let $\mca{G}$ be a $C^\infty$ foliation of a $C^\infty$ manifold $M$. Following Matsumoto \cite{Mat}, we say that $\mca{G}$ is totally minimizable if for any $C^\infty$ Riemannian metric $g$ of $T\mca{G}$, there exists a $C^\infty$ Riemannian metric $\tilde{g}$ on $M$ such that $\tilde{g}|_{T\mca{G}\times_MT\mca{G}}=g$ and every leaf of $\mca{G}$ is a minimal submanifold of the Riemannian manifold $(M,\tilde{g})$. 
\end{dfn}

Let $\Gamma$ be a torsion free cocompact lattice of $\PSL(2,\bb{R})$. Put $P=\Gamma\backslash\PSL(2,\bb{R})$ and let $\mca{F}$ be the orbit foliation of the action of $AN$ on $P$ by right multiplication. By identifying $P$ with the unit tangent bundle of the hyperbolic surface $\Sigma=P/\PSO(2)$, the foliation $\mca{F}$ is identified with the weak stable foliation of the geodesic flow of $\Sigma$. 

It seems that in Haefliger \cite{Haef} and Haefliger--Li \cite{HL}, Haefliger and Li were interested in the following problem. 

\begin{prob}\label{tomin}
Is $\mca{F}$ totally minimizable?
\end{prob}

Haefliger obtained the following weaker result. 

\begin{prop}[Haefliger \cite{Haef}]
For any $C^\infty$ Riemannian metric $g$ of $T\mca{F}$ and any open neighborhood $U$ of $g$ in $\Gamma\left(\bigwedge^2T^*\mca{F}\right)$ with the $C^\infty$ topology, there exists a $C^\infty$ Riemannian metric $\tilde{g}$ on $P$ such that $\tilde{g}|_{T\mca{F}\times_PT\mca{F}}\in U$ and every leaf of $\mca{F}$ is a minimal submanifold of $(P,\tilde{g})$.  
\end{prop}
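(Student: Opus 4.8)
The plan is to pass through Rummler's criterion, which in this codimension-one situation takes a particularly clean form. Since $\mca F$ has $2$-dimensional leaves in the $3$-manifold $P$, a Riemannian metric $\tilde g$ on $P$ together with a leafwise orientation determines a characteristic $2$-form $\chi_{\tilde g}\in\Gamma\left(\bigwedge^2T^*P\right)$, namely the leafwise area form extended by $\iota_N\chi_{\tilde g}=0$ for the $\tilde g$-unit normal $N$. Rummler's theorem in codimension one says that every leaf is a minimal submanifold of $(P,\tilde g)$ if and only if $d\chi_{\tilde g}=0$, because on a $3$-manifold the condition $\iota_X d\chi_{\tilde g}=0$ for all $X\in\Gamma(T\mca F)$ forces $d\chi_{\tilde g}=0$. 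Thus the whole problem reduces to the following: given the leafwise area form $\omega_g=r\chi\in\Gamma\left(\bigwedge^2T^*\mca F\right)$ of $g$ (here $r$ is leafwise restriction) and a $C^\infty$-neighborhood $U$, produce a closed $2$-form $\chi\in\Gamma\left(\bigwedge^2T^*P\right)$ whose restriction $r\chi$ lies in $U$ and is leafwise positive. Once such a $\chi$ is found, its kernel $\ker\chi$ is a line field transverse to $\mca F$ (the restriction of $\chi$ to $T\mca F$ being nondegenerate), and declaring $\ker\chi\perp T\mca F$, equipping $T\mca F$ with the metric having the conformal class of $g$ and area form $r\chi$, and choosing any metric on $\ker\chi$, yields a metric $\tilde g$ with $\chi_{\tilde g}=\chi$; this $\tilde g$ has minimal leaves and $\tilde g|_{T\mca F\times_P T\mca F}\in U$.

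Next I would make the closedness condition explicit using the left-invariant coframe $\theta_H,\theta_E,\theta_F$ dual to $H,E,F$, in which $T\mca F=\spn(H,E)$ and $F$ is transverse. The Maurer--Cartan equations give
\[
d\theta_H=-\tfrac{1}{\sqrt2}\theta_E\wedge\theta_F,\quad d\theta_E=-\tfrac{1}{\sqrt2}\theta_H\wedge\theta_E,\quad d\theta_F=\tfrac{1}{\sqrt2}\theta_H\wedge\theta_F,
\]
so that $\chi_0:=\theta_H\wedge\theta_E$ is closed. In particular $\chi_0$ is the characteristic form of the metric making $H,E,F$ orthonormal, so the leaves are already minimal for the standard metric $B_\theta$. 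Writing a general $2$-form as $\chi=a\,\theta_H\wedge\theta_E+b\,\theta_E\wedge\theta_F+c\,\theta_H\wedge\theta_F$ with $a,b,c\in C^\infty(P,\bb{R})$, a direct computation gives
\[
d\chi=(Fa+Hb-Ec)\,\theta_H\wedge\theta_E\wedge\theta_F,\qquad r\chi=a\cdot(\theta_H\wedge\theta_E)|_{\mca F}.
\]
Hence realizing $\omega_g=\phi\,(\theta_H\wedge\theta_E)|_{\mca F}$ (with $\phi>0$) up to $U$ amounts to finding $a$ that is $C^\infty$-close to $\phi$ and admits $b,c$ with $Fa+Hb-Ec=0$; positivity of $r\chi$ is then an open condition automatically satisfied for $a$ near $\phi$.

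The crux, and the main obstacle, is therefore to show that the set
\[
\mca A=\left\{a\in C^\infty(P,\bb{R})\ \middle|\ \exists\,b,c\in C^\infty(P,\bb{R}),\ Fa+Hb-Ec=0\right\}
\]
is $C^\infty$-dense in $C^\infty(P,\bb{R})$. The difficulty is that $(b,c)\mapsto Hb-Ec$ is a purely tangential operator ($H,E$ span $T\mca F$ and are not bracket-generating), so its image is a proper subspace; one must exploit the transverse derivative $F$ together with the freedom to perturb $a$ within $U$. By duality, the obstruction to solving $Hb-Ec=-Fa$ and hence to enlarging $\mca A$ is measured by the continuous functionals on $C^\infty(P,\bb{R})$ annihilated by $H$ and $E$, that is, by the $\mf{an}$-invariant distributions on $P$. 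These are tightly controlled by the unitary representation theory assembled earlier: decomposing $L^2(P)$ as in Theorem \ref{unidecompkk} and applying the Flaminio--Forni computation of Proposition \ref{ffoorr} to each irreducible summand shows that the invariant distributions form a small, explicitly described space, which is what forces $\mca A$ to be dense. I expect this density (equivalently, solvability up to a small $C^\infty$-error) to be the only substantial point; the reduction in the first two paragraphs is formal.

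Finally, I would remark that the machinery of the present paper in fact removes the word ``approximately'': since Theorem \ref{resthhi} gives that the restriction map $H^2(P;\bb{R})\to H^2(\mca F;\bb{R})$ is an isomorphism, hence surjective, one can lift the leafwise cohomology class of any leafwise area form and correct by an exact form to obtain a closed $\chi$ with $r\chi=\omega_g$ exactly, as recorded from \cite{Mat}. Taking $\tilde g|_{T\mca F\times_P T\mca F}=g$ then realizes $g$ itself, so Haefliger's weaker statement follows a fortiori; the independent route above is the one available without the cohomology computation.
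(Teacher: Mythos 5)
Your reduction is fine: the codimension-one form of Rummler--Sullivan, the construction of $\tilde g$ from a closed leafwise-positive $2$-form, and the coframe identity $d\chi=(Fa+Hb-Ec)\,\theta_H\wedge\theta_E\wedge\theta_F$ are all correct, and they recast the problem as showing that $\mca{A}=\left\{a\ \middle|\ Fa\in\im\left((b,c)\mapsto Hb-Ec\right)\right\}$ is $C^\infty$-dense. (For comparison, the paper offers no proof of this proposition at all: it cites Corollary 5 of Theorem 4.1 of Haefliger \cite{Haef} together with Haefliger's classification of holonomy invariant $1$-currents, so any complete argument here is necessarily independent of the paper.) The gap is your final implication: ``the $\mf{an}$-invariant distributions are only $\bb{R}\,d\mathrm{vol}$, which is what forces $\mca{A}$ to be dense.'' Hahn--Banach plus that classification controls only the \emph{closure} of $\im\left((b,c)\mapsto Hb-Ec\right)$: it shows this closure is the hyperplane $\left\{\psi\ \middle|\ \int_P\psi\,d\mathrm{vol}=0\right\}$, hence that for every $a$ one can make $Fa+Hb-Ec$ arbitrarily $C^\infty$-small. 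That produces forms with $d\chi$ small but nonzero (``almost-minimal'' leaves), not closed ones. What is needed is membership of $Fa$ in the image itself, and the preimage under $F$ of a dense subspace need not be dense: one cannot absorb the error into $a$ without inverting $F$, which has no inverse with any control. This is exactly the cohomological-equation (loss of derivatives) difficulty, and no count of $\mf{an}$-invariant distributions can, by itself, settle it.

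Moreover, the obstruction that genuinely governs the problem lives elsewhere and is invisible in your bookkeeping. Writing $r$ for leafwise restriction and trivializing by $r(\theta_H\wedge\theta_E)$, the Hodge decomposition of $\Gamma\left(\bigwedge^2T^*P\right)$ gives $\mca{A}=r(\ker d)=\im\df+r(\mca{H}^2)$, where $\mca{H}^2$ denotes the harmonic $2$-forms of $P$. The annihilator of $\im\df$ consists not of $\mf{an}$-invariant distributions but of $E$-invariant distributions of $H$-weight $-\frac{1}{\sqrt{2}}$, because $\df\left(u\,r\theta_H+v\,r\theta_E\right)=\left(-Eu+Hv-\tfrac{1}{\sqrt{2}}v\right)r(\theta_H\wedge\theta_E)$; by Proposition \ref{ffoorr} these come exactly from the $g_\Sigma$ copies of $D_1^+$ and of $D_1^-$, a $2g_\Sigma$-dimensional space (this is the incarnation of the holonomy invariant $1$-currents classified by Haefliger). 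Density of $\mca{A}$ is therefore equivalent to this $2g_\Sigma$-dimensional space pairing nondegenerately with $r(\mca{H}^2)$, i.e.\ to the injectivity of $H^2(P;\bb{R})\to\ol{H}^2(\mca{F};\bb{R})$ (the reduced leafwise cohomology). That nondegeneracy is the real content of the proposition, and it is precisely what Haefliger's cited classification, or Theorem \ref{resthhi} of this paper, supplies. So your closing paragraph is not an optional strengthening: an input of that type is indispensable, and without it the argument you propose does not close.
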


This proposition follows from Corollary 5 of Theorem 4.1 in \cite{Haef} and the classification of holonomy invariant $1$-currents of $\mca{F}$ on page 281 of \cite{Haef}. 

We will prove the following theorem. 

\begin{thm}\label{totalfm}
The foliation $\mca{F}$ of $P$ is totally minimizable. 
\end{thm}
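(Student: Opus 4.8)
The plan is to derive total minimizability from the cohomological criterion stated in the introduction, namely that if the restriction map $H^2(P;\bb{R})\to H^2(\mca{F};\bb{R})$ is surjective then $\mca{F}$ is totally minimizable. This criterion is quoted from Matsumoto \cite{Mat}, and our job is simply to verify its hypothesis using the computations carried out in Section \ref{restmapH}. So the entire proof reduces to checking surjectivity of the degree-$2$ restriction map, which I would obtain directly from Theorem \ref{resthhi}.

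First I would recall that by Theorem \ref{resthhi} the induced homomorphism $H^i(P;\bb{C})\to H^i(\mca{F};\bb{C})$ is an isomorphism for $i=0,2$; in particular it is surjective when $i=2$. Since all the cohomology groups in question are the complexifications of the corresponding real cohomology groups — by the isomorphism \eqref{rrrccc} and the discussion following it, we have $H^*(\mca{F};\bb{C})\simeq H^*(\mca{F};\bb{R})\otimes\bb{C}$ and likewise $H^*(P;\bb{C})\simeq H^*(P;\bb{R})\otimes\bb{C}$ — surjectivity of the complex restriction map in degree $2$ forces surjectivity of the real restriction map $H^2(P;\bb{R})\to H^2(\mca{F};\bb{R})$. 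Indeed, the complex restriction map is the complexification of the real one, and a linear map over $\bb{R}$ is surjective precisely when its complexification is. This is the step where the real-versus-complex bookkeeping must be stated cleanly, but it is entirely formal.

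Having established that $H^2(P;\bb{R})\to H^2(\mca{F};\bb{R})$ is surjective, I would then invoke the criterion of Matsumoto \cite{Mat} verbatim: surjectivity of this restriction map implies that $\mca{F}$ is totally minimizable. This completes the proof of Theorem \ref{totalfm}. Concretely the argument reads as follows.

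\begin{proof}
By Theorem \ref{resthhi} the restriction map $H^2(P;\bb{C})\to H^2(\mca{F};\bb{C})$ is an isomorphism, hence surjective. This map is the complexification of the real restriction map $H^2(P;\bb{R})\to H^2(\mca{F};\bb{R})$, because $H^*(P;\bb{C})\simeq H^*(P;\bb{R})\otimes\bb{C}$ and $H^*(\mca{F};\bb{C})\simeq H^*(\mca{F};\bb{R})\otimes\bb{C}$ by \eqref{rrrccc} and the restriction map commutes with complexification. Since a linear map over $\bb{R}$ is surjective if and only if its complexification is surjective, the restriction map $H^2(P;\bb{R})\to H^2(\mca{F};\bb{R})$ is surjective. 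As explained in Section \ref{haefp} and following Matsumoto \cite{Mat}, surjectivity of this restriction map implies that $\mca{F}$ is totally minimizable. Therefore $\mca{F}$ is totally minimizable.
\end{proof}

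The main obstacle here is not in the present theorem at all — it is entirely contained in the already-proved Theorem \ref{resthhi}, whose proof rests on identifying the degree-$2$ restriction map on the $\bb{C}$-summand with the identity and on showing (via Proposition \ref{restrichom}) that the discrete series summands restrict isomorphically. Given Theorem \ref{resthhi}, the remaining content is the purely formal passage from $\bb{C}$-coefficients to $\bb{R}$-coefficients and the citation of Matsumoto's criterion, so the genuine difficulty has been front-loaded into the cohomology computation rather than into this final deduction.
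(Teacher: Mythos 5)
Your proposal is correct and follows essentially the same route as the paper: both deduce surjectivity (in fact, the paper shows bijectivity) of the real restriction map $H^2(P;\bb{R})\to H^2(\mca{F};\bb{R})$ from Theorem \ref{resthhi} via the compatibility of restriction with the complexification isomorphisms $H^*(\ \cdot\ ;\bb{C})\simeq H^*(\ \cdot\ ;\bb{R})\otimes\bb{C}$, and then apply Matsumoto's criterion (which the paper restates as a corollary of the Rummler--Sullivan theorem, with proof reproduced from \cite{Mat}). The only cosmetic difference is that you cite the criterion rather than reproving it, and you settle for surjectivity where the paper records the isomorphism.
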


The approach by Haefliger and Li to Problem \ref{tomin} and our solution are both based on the following theorem. 

\begin{thm}[Rummler--Sullivan \cite{Ru,Su}]\label{rumsul}
Let $\mca{G}$ be a $p$-dimensional oriented $C^\infty$ foliation of a $C^\infty$ manifold $M$ and $g$ be a $C^\infty$ Riemannian metric of $T\mca{G}$. Let $\omega\in\Gamma\left(\bigwedge^pT^*\mca{G}\right)$ be the volume form of $g$. Then the following two conditions are equivalent: 
\begin{enumerate}
\item There exists a $C^\infty$ Riemannian metric $\tilde{g}$ on $M$ such that $\tilde{g}|_{T\mca{G}\times_MT\mca{G}}=g$ and every leaf of $\mca{G}$ is a minimal submanifold of $(M,\tilde{g})$
\item There exists a $C^\infty$ $p$-form $\tilde{\omega}$ on $M$ such that $\tilde{\omega}|_{T\mca{G}\times_M\cdots\times_MT\mca{G}}=\omega$ and 
\begin{equation*}
d\tilde{\omega}\left(X_1,\ldots,X_{p+1}\right)=0
\end{equation*}
for $X_1,\ldots,X_p\in\Gamma(T\mca{G})$ and $X_{p+1}\in\Gamma(TM)$. 
\end{enumerate}
\end{thm}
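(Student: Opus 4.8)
The plan is to prove the two implications separately, with Rummler's pointwise formula for the differential of the characteristic form as the common engine. First I would fix an auxiliary complementary distribution $Q_0$ with $TM=T\mca{G}\oplus Q_0$; this induces a bigrading $\Omega^{a,b}(M)=\Gamma\left(\bigwedge^aT^*\mca{G}\otimes\bigwedge^bQ_0^*\right)$ and, since $T\mca{G}$ is integrable, a decomposition $d=d_{1,0}+d_{0,1}+d_{2,-1}$ with no component lowering the leafwise degree. Let $\mca{I}=\bigoplus_{b\geq1}\Omega^{\bullet,b}$ be the ideal of forms vanishing on $\bigwedge^\bullet T\mca{G}$, so that $\mca{I}^2=\bigoplus_{b\geq2}\Omega^{\bullet,b}$. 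The first observation to record is that, because $\bigwedge^{p+1}T^*\mca{G}=0$, condition (2) is exactly the statement $d\tilde\omega\in\mca{I}^2$: evaluating a $(p+1)$-form on $p$ leafwise vectors and one arbitrary vector reads off precisely its bidegree-$(p,1)$ part, whose vanishing is what (2) asserts.

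The key lemma I would then isolate is that $(d\alpha)(X_1,\dots,X_p,X_{p+1})$ with $X_1,\dots,X_p\in\Gamma(T\mca{G})$ depends only on the class of $\alpha$ modulo $\mca{I}^2$. This is immediate from the Koszul formula: every term evaluates $\alpha$ on a $p$-tuple containing at least $p-1$ leafwise entries (the remaining entry being $X_{p+1}$, a leafwise bracket $[X_i,X_j]$, or a bracket $[X_i,X_{p+1}]$), so only the $(p,0)$- and $(p-1,1)$-components of $\alpha$ ever enter. In particular the map $\alpha\mapsto (d\alpha)(X_1,\dots,X_p,\cdot)$ annihilates $\mca{I}^2$; this both recovers the reformulation of (2) and is what will let me replace $\tilde\omega$ by a convenient representative.

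For (1)$\Rightarrow$(2) I would take $Q=T\mca{G}^{\perp_{\tilde g}}$, let $\tilde\omega=\chi$ be the characteristic form (the extension of $\omega$ by $\iota_Y\chi=0$ for $Y\perp T\mca{G}$, so $\chi\in\Omega^{p,0}$ and $\chi|_{T\mca{G}}=\omega$), and compute $(d\chi)(e_1,\dots,e_p,Y)$ for a local oriented orthonormal leafwise frame $e_1,\dots,e_p$ and a normal field $Y$. The Koszul formula together with the integrability of $T\mca{G}$ collapses everything (the terms $Y(\chi(e_1,\dots,e_p))$ and those evaluating $\chi$ on a normal vector both dropping out) to $\sum_i\langle[e_i,Y],e_i\rangle=-\langle Y,\sum_i\nabla_{e_i}e_i\rangle=\pm\langle Y,\tau\rangle$, where $\tau$ is the mean-curvature (tension) field of the leaves. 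Thus the leaves are minimal for $\tilde g$ if and only if this $(p,1)$-evaluation vanishes, which gives (2) at once and, read in reverse, is the criterion I feed into the other implication.

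The main work is (2)$\Rightarrow$(1). Given $\tilde\omega$ with $d\tilde\omega\in\mca{I}^2$, I would manufacture a metric whose characteristic form is relatively closed. Starting from $Q_0$, parametrize complements by bundle maps $A\in\Gamma\left(Q_0^*\otimes T\mca{G}\right)$ via the shear $Q_A=\{v+Av : v\in Q_0\}$; the characteristic form $\chi_A$ (extension of $\omega$ by zero in the $Q_A$-directions) has $(p,0)$-component equal to $\omega$, while its $(p-1,1)_{Q_0}$-component is an affine function of $A$ whose linear part is invertible, because $\omega$ induces an isomorphism $T\mca{G}\xrightarrow{\sim}\bigwedge^{p-1}T^*\mca{G}$. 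Hence I can solve for $A$ so that $\chi_A\equiv\tilde\omega\pmod{\mca{I}^2}$. By the key lemma, $(d\chi_A)(X_1,\dots,X_p,\cdot)=(d\tilde\omega)(X_1,\dots,X_p,\cdot)=0$, so $\chi_A$ is relatively closed; applying Rummler's formula from Step (1)$\Rightarrow$(2) in reverse, the metric $\tilde g=g\oplus g_Q$ determined by $T\mca{G}\perp Q_A$ and any transverse metric $g_Q$ extends $g$ and has minimal leaves. The delicate point to get right is precisely this mod-$\mca{I}^2$ matching: one must verify that varying only the complement (a finite, first-order family of shears) already suffices to adjust the single obstructing component, which is exactly what the key lemma guarantees, since the higher transverse-degree terms of $\tilde\omega$ are invisible to the $(p,1)$-evaluation.
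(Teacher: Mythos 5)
Your proof is correct and is essentially the standard short argument that the paper itself does not reproduce but delegates to Haefliger \cite{Haef}: Rummler's formula $d\chi\left(e_1,\ldots,e_p,Y\right)=\pm\langle Y,\tau\rangle$ for the characteristic form gives (1)$\Rightarrow$(2), and shearing the complement so that the characteristic form agrees with $\tilde{\omega}$ modulo $\mca{I}^2$ — legitimate because $Z\mapsto\iota_Z\omega$ is an isomorphism $T\mca{G}\to\bigwedge^{p-1}T^*\mca{G}$, and sufficient because of your key lemma — gives the converse, exactly as in the cited proof. One slip, harmless since nothing downstream uses it: with your bigrading the component of $d$ killed by integrability of $T\mca{G}$ is $d_{2,-1}$, while the surviving exotic component is $d_{-1,2}$ (nonzero whenever $Q_0$ is non-integrable), which \emph{does} lower the leafwise degree; fortunately your reformulation of (2) and your key lemma are established by direct Koszul evaluation and never invoke that decomposition.
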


A short proof of this theorem can be found in Haefliger \cite{Haef}. 

Though we use the same theorem as Haefliger and Li, our approach is different from theirs. To consider whether $\omega$ has an extension $\tilde{\omega}$ satisfying the second condition of Theorem \ref{rumsul}, Haefliger and Li translate the problem into a problem of certain $1$-form $d\int_\mca{F}\omega$ on a complete transversal of the foliation $\mca{F}$. Instead of dealing with objects on a transversal, we consider the problem directly by dealing with leafwise objects. 

\begin{cor}[Matsumoto \cite{Mat}]
Let $\mca{G}$ be a $p$-dimensional oriented $C^\infty$ foliation of a $C^\infty$ manifold $M$. If the restriction map 
\begin{equation*}
H^p(M;\bb{R})\to H^p(\mca{G};\bb{R})
\end{equation*}
is surjective, the foliation $\mca{G}$ is totally minimizable. 
\end{cor}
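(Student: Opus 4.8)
The plan is to deduce the Corollary from the Rummler--Sullivan criterion (Theorem \ref{rumsul}) together with the observation that the leafwise volume form is automatically leafwise closed. Fix a $C^\infty$ Riemannian metric $g$ of $T\mca{G}$ and let $\omega\in\Gamma\left(\bigwedge^pT^*\mca{G}\right)$ be its volume form. By Theorem \ref{rumsul} it suffices to produce a $C^\infty$ $p$-form $\tilde{\omega}$ on $M$ with $\tilde{\omega}|_{T\mca{G}\times_M\cdots\times_MT\mca{G}}=\omega$ and $d\tilde{\omega}(X_1,\ldots,X_{p+1})=0$ whenever $X_1,\ldots,X_p\in\Gamma(T\mca{G})$ and $X_{p+1}\in\Gamma(TM)$. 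I will in fact arrange that $\tilde{\omega}$ is closed, which is stronger than the stated condition, and this is exactly where the surjectivity hypothesis enters.

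First I would recall that the restriction of forms $r\colon\Gamma\left(\bigwedge^*T^*M\right)\to\Gamma\left(\bigwedge^*T^*\mca{G}\right)$ is a cochain map, $r\circ d=d_\mca{G}\circ r$: this follows from the bracket formula for the exterior derivative together with the involutivity $[X_i,X_j]\in\Gamma(T\mca{G})$ for leafwise fields, since then only the values of a form on leafwise vectors enter. This is precisely the map inducing $H^p(M;\bb{R})\to H^p(\mca{G};\bb{R})$. Next, since $\dim\mca{G}=p$ we have $\bigwedge^{p+1}T^*\mca{G}=0$, so $d_\mca{G}\omega=0$ and $\omega$ defines a class $[\omega]\in H^p(\mca{G};\bb{R})$. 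By the surjectivity hypothesis there is a closed $p$-form $\alpha$ on $M$ with $r_*[\alpha]=[\omega]$, that is, $\alpha|_{T\mca{G}}=\omega+d_\mca{G}\beta$ for some $\beta\in\Gamma\left(\bigwedge^{p-1}T^*\mca{G}\right)$. Choosing a complementary subbundle $Q$ with $TM=T\mca{G}\oplus Q$ makes the form-level restriction $\Gamma\left(\bigwedge^{p-1}T^*M\right)\to\Gamma\left(\bigwedge^{p-1}T^*\mca{G}\right)$ surjective, so I may extend $\beta$ to some $\hat{\beta}\in\Gamma\left(\bigwedge^{p-1}T^*M\right)$. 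Then I set $\tilde{\omega}=\alpha-d\hat{\beta}$.

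It remains to verify the two required properties. Restricting, $\tilde{\omega}|_{T\mca{G}}=\alpha|_{T\mca{G}}-d_\mca{G}\left(\hat{\beta}|_{T\mca{G}}\right)=\alpha|_{T\mca{G}}-d_\mca{G}\beta=\omega$, using $r\circ d=d_\mca{G}\circ r$ again; and $d\tilde{\omega}=d\alpha-d^2\hat{\beta}=0$. Hence $\tilde{\omega}$ is a closed extension of $\omega$, condition (2) of Theorem \ref{rumsul} holds a fortiori, and $\mca{G}$ is totally minimizable. The computations here are routine; the only points requiring care are the ones I expect to be the crux, namely that $r$ is genuinely a cochain map realizing $H^p(M;\bb{R})\to H^p(\mca{G};\bb{R})$ (so that the hypothesis applies to the class $[\omega]$), and that the form-level restriction is onto (so that $\beta$ admits a global extension). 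Neither is difficult, but both rest on structural features of the foliation, involutivity for the former and the existence of a normal complement for the latter.
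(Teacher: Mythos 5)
Your proposal is correct and follows essentially the same argument as the paper: lift the class $[\omega]$ to a closed form on $M$, correct by the exterior derivative of an extension of the leafwise primitive, and invoke the Rummler--Sullivan criterion. The only differences are cosmetic elaborations (noting explicitly that $d_\mca{G}\omega=0$ for degree reasons, that $r$ is a cochain map, and that a normal complement gives the extension of $\beta$), which the paper leaves implicit.
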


\begin{proof}
The proof is taken from \cite{Mat} for the convenience of the reader. Let $r$ denote the restriction maps. Take any $C^\infty$ Riemannian metric $g$ of $T\mca{G}$ and let $\omega\in\Gamma\left(\bigwedge^pT^*\mca{G}\right)$ be the volume form of $g$. We have $[\omega]\in H^p(\mca{G};\bb{R})$. By assumption, there exists $[\omega^\prime]\in H^p(M;\bb{R})$ such that $r[\omega^\prime]=[\omega]$. So we have $r\omega^\prime=\omega+d_\mca{G}\eta$ for some $\eta\in\Gamma\left(\bigwedge^{p-1}T^*\mca{G}\right)$. We can take $\eta^\prime\in\Gamma\left(\bigwedge^{p-1}T^*M\right)$ such that $r\eta^\prime=\eta$. Define $\tilde{\omega}\in\Gamma(\bigwedge^pT^*M)$ by $\tilde{\omega}=\omega^\prime-d\eta^\prime$. Then 
\begin{equation*}
r\tilde{\omega}=\omega+d_\mca{G}\eta-d_\mca{G}r\eta^\prime=\omega
\end{equation*}
and $d\tilde{\omega}=0$. Hence $\tilde{\omega}$ satisfies the second condition of Theorem \ref{rumsul}. 
\end{proof}


So Theorem \ref{totalfm} follows from the next lemma. 

\begin{lem}
The restriction map 
\begin{equation*}
H^2(P;\bb{R})\to H^2(\mca{F};\bb{R})
\end{equation*}
is an isomorphism. 
\end{lem}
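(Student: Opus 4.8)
The plan is to prove the final lemma by combining the real-coefficient version of Theorem \ref{resthhi} with the complexification isomorphism already established in the excerpt. The statement we must prove is that the restriction map $H^2(P;\bb{R})\to H^2(\mca{F};\bb{R})$ is an isomorphism, whereas Theorem \ref{resthhi} asserts that $H^i(P;\bb{C})\to H^i(\mca{F};\bb{C})$ is an isomorphism for $i=0,2$. So the essential content is already available over $\bb{C}$, and the work is to descend to $\bb{R}$.

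First I would observe that the restriction map is defined at the level of real cochain complexes, namely as the map
\begin{equation*}
\Gamma\left(\bigwedge^*T^*P\right)\to\Gamma\left(\bigwedge^*T^*\mca{F}\right)
\end{equation*}
given by pulling back forms along the inclusion $T\mca{F}\hookrightarrow TP$, and that this map commutes with the respective exterior derivatives. Complexifying, the induced map on cohomology fits into a commutative square with the complex restriction map, using the canonical isomorphisms $H^*(P;\bb{R})\otimes\bb{C}\simeq H^*(P;\bb{C})$ and $H^*(\mca{F};\bb{R})\otimes\bb{C}\simeq H^*(\mca{F};\bb{C})$; the latter is exactly the isomorphism \eqref{realcomplex} established in Section \ref{acofpcf} applied to $V=\bb{R}_0=\bb{R}$, and the former is the standard complexification of ordinary de Rham cohomology. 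Concretely, writing $r_\bb{R}$ and $r_\bb{C}$ for the real and complex restriction maps, we have $r_\bb{C}=r_\bb{R}\otimes\id_\bb{C}$ under these identifications.

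The key step is then the elementary fact of linear algebra that a real linear map $f\colon A\to B$ between finite dimensional real vector spaces is an isomorphism if and only if its complexification $f\otimes\id_\bb{C}\colon A\otimes\bb{C}\to B\otimes\bb{C}$ is an isomorphism. Since $H^2(P;\bb{R})$ and $H^2(\mca{F};\bb{R})$ are finite dimensional (the former is ordinary de Rham cohomology of the closed manifold $P$, and the latter is finite dimensional by the computation $H^2(\mca{F};\bb{C})\simeq\bb{C}^{2g_\Sigma}$ in Section \ref{comprrrr} together with the complexification isomorphism), and since $r_\bb{C}$ in degree $2$ is an isomorphism by Theorem \ref{resthhi}, we conclude that $r_\bb{R}$ in degree $2$ is an isomorphism. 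This completes the proof.

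There is no serious obstacle here: all the analytic and representation-theoretic difficulty has already been absorbed into Theorem \ref{resthhi}. The only point requiring mild care is verifying the compatibility of the complexification identifications with the restriction maps, i.e.\ that the diagram relating $r_\bb{R}\otimes\id_\bb{C}$ and $r_\bb{C}$ genuinely commutes; this is routine since both restriction maps arise from the same pullback of forms and the complexification isomorphisms \eqref{rrrccc} and \eqref{realcomplex} are natural in the cochain complex. Thus the main (and essentially only) step is the descent via complexification, and the lemma follows formally from Theorem \ref{resthhi}.
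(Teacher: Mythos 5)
Your proof is correct and follows essentially the same route as the paper: both identify the complex restriction map $H^2(P;\bb{C})\to H^2(\mca{F};\bb{C})$ with the complexification of the real restriction map (via \eqref{rrrccc} and \eqref{realcomplex}) and then descend from Theorem \ref{resthhi}. The only cosmetic difference is that you invoke finite dimensionality to justify the descent, which is harmless but unnecessary, since a real linear map is an isomorphism if and only if its complexification is, with no dimension hypothesis (complexification preserves both kernels and cokernels).
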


\begin{proof}
We have a commutative diagram 
\begin{equation*}
\begin{tikzcd}
\Gamma\left(\bigwedge^*T^*P\otimes\bb{C}\right)\ar[r]\ar[d,equal]&\Gamma\left(\bigwedge^*T^*\mca{F}\otimes\bb{C}\right)\ar[d,equal]\\[-3pt]
\Gamma\left(\bigwedge^*T^*P\right)\oplus i\Gamma\left(\bigwedge^*T^*P\right)\ar[r]&\Gamma\left(\bigwedge^*T^*\mca{F}\right)\oplus i\Gamma\left(\bigwedge^*T^*\mca{F}\right)
\end{tikzcd}
\end{equation*}
of real cochain complexes, where the horizontal maps are given by restriction. It induces 
\begin{equation*}
\begin{tikzcd}
H^*(P;\bb{C})\ar[r]\ar[d,equal]&H^*(\mca{F};\bb{C})\ar[d,equal]\\[-3pt]
H^*(P;\bb{R})\oplus iH^*(P;\bb{R})\ar[r]\ar[d,dash,"\wr"]&H^*(\mca{F};\bb{R})\oplus iH^*(\mca{F};\bb{R})\ar[d,dash,"\wr"]\\[-3pt]
H^*(P;\bb{R})\otimes\bb{C}\ar[r]&H^*(\mca{F};\bb{R})\otimes\bb{C}.
\end{tikzcd}
\end{equation*}
So the lemma follows from Theorem \ref{resthhi}. 
\end{proof}

\section{Computation of $H^*(\mca{F};\mf{an})$ by a Hodge decomposition}\label{hodgean}
We will compute the de Rham cohomology $H^*(\mca{F};\mf{an})$ for the adjoint representation $\mf{an}\stackrel{\ad}{\curvearrowright}\mf{an}$ by giving a ``Hodge decomposition'' of $C^*(\mf{an};C^\infty(P,\bb{C})\otimes\mf{an})$. Since $\mf{n}$ acts nontrivially on $\mf{an}$, we cannot apply the results in Section \ref{construc}. 

However, if we want only the computation of $H^*(\mca{F};\mf{an})$, we can compute it as follows. In this case we do not obtain a Hodge decomposition. 

There is a short exact sequence 
\begin{equation*}
0\to\mf{n}\to\mf{an}\to\mf{an}/\mf{n}\to0
\end{equation*}
of $\mf{an}$-modules. This is isomorphic to 
\begin{equation*}
0\to\bb{R}_\frac{1}{\sqrt{2}}\to\mf{an}\to\bb{R}_0\to0, 
\end{equation*}
which, together with the computation results of $H^*\left(\mca{F};\bb{R}_\frac{1}{\sqrt{2}}\right)$ and $H^*(\mca{F};\bb{R}_0)$ in Section \ref{comprrrr}, gives a long exact sequence 
\begin{equation*}
\begin{tikzcd}
0&&\\
\bb{R}\ar[r]&H^2(\mca{F};\mf{an})\ar[r]&\bb{R}^{2g_\Sigma}\ar[llu,shorten <= 1ex, shorten >= 1ex]\\
\bb{R}\ar[r]&H^1(\mca{F};\mf{an})\ar[r]&\bb{R}^{2g_\Sigma+1}\ar[llu,shorten <= 0.5ex, shorten >= 1ex]\\
0\ar[r]&H^0(\mca{F};\mf{an})\ar[r]&\bb{R}. \ar[llu,shorten <= 1ex, shorten >= 1ex]
\end{tikzcd}
\end{equation*}
It is easy to show $H^0(\mca{F};\mf{an})=0$. In \cite{A}, Asaoka showed $H^1(\mca{F};\mf{an})\simeq\bb{R}^{2g_\Sigma}$ by using a result in Matsumoto--Mitsumatsu \cite{MM}. Therefore we get $H^2(\mca{F};\mf{an})\simeq\bb{R}^{2g_\Sigma}$ from the long exact sequence.

\subsection{Hodge decomposition of $C^*\left(\mf{an};\mca{H}\otimes\mf{an}\right)$ for generic $\mca{H}$}\label{an1h}
Let $\sl(2,\bb{R})\stackrel{\pi}{\curvearrowright}\mca{H}$ be a representation on a complex vector space $\mca{H}$. Consider the tensor product $\mf{an}\curvearrowright\mca{H}\otimes\mf{an}$ of the restriction $\mf{an}\curvearrowright\mca{H}$ and the adjoint representation $\mf{an}\stackrel{\ad}{\curvearrowright}\mf{an}$. In this section we will construct an operator $\delta$ on $C^*\left(\mf{an};\mca{H}\otimes\mf{an}\right)$ of degree $-1$ such that 
\begin{equation*}
d\delta+\delta d=\id\otimes\pi(\Omega)^2\otimes\id, 
\end{equation*}
where $\Omega$ is the Casimir element of $\sl(2,\bb{R})$. 

Let $\ad\colon U(\mf{an})\to\End(\mf{an})$ be the extension of the adjoint representation $\ad\colon\mf{an}\to\End(\mf{an})$ and $I$ be the kernel of $\ad\colon U(\mf{an})\to\End(\mf{an})$. Consider the algebra 
\begin{equation*}
\mca{A}=C(\mf{an}\oplus\mf{an}^*)\otimes U(\sl(2,\bb{R}))\otimes U(\mf{an})/I, 
\end{equation*}
which has a natural representation 
\begin{equation*}
\mca{A}\stackrel{\sigma\otimes\pi\otimes\ad}{\curvearrowright}C^*\left(\mf{an};\mca{H}\otimes\mf{an}\right)=\bigwedge^*\mf{an}^*\otimes\mca{H}\otimes\mf{an}. 
\end{equation*}
Define an element $\hat{d}\in\mca{A}$ by 
\begin{equation*}
\hat{d}=\hat{d}_\star\otimes1\otimes1+\theta_H\otimes H\otimes1+\theta_H\otimes1\otimes H+\theta_E\otimes E\otimes1+\theta_E\otimes1\otimes E, 
\end{equation*}
which satisfies $\left(\sigma\otimes\pi\otimes\ad\right)(\hat{d})=d$ for the differential $d$. Define $\hat{\delta}\in\mca{A}$ by 
\begin{align*}
\hat{\delta}&=\hat{d}_\star^\top\otimes\Omega\otimes1-6\hat{d}_\star^\top\otimes F\otimes E+H\otimes H\Omega\otimes1-\sqrt{2}H\otimes F\otimes E\\
&\quad-2H\otimes FH\otimes E+\frac{1}{\sqrt{2}}H\otimes\Omega\otimes1-H\otimes\Omega\otimes H\\
&\quad+2E\otimes F\Omega\otimes1-4E\otimes F^2\otimes E
\end{align*}
and put $\delta=\left(\sigma\otimes\pi\otimes\ad\right)(\hat{\delta})$, which is an operator of degree $-1$ on $C^*\left(\mf{an};\mca{H}\otimes\mf{an}\right)$. 

\begin{lem}\label{ellvvisl}
We have 
\begin{equation*}
H^2=\frac{1}{\sqrt{2}}H,\quad E^2=0,\quad HE=\frac{1}{\sqrt{2}}E,\quad EH=0
\end{equation*}
in $U(\mf{an})/I$. 
\end{lem}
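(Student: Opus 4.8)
The statement to prove is Lemma~\ref{ellvvisl}, which asserts the four relations
\begin{equation*}
H^2=\tfrac{1}{\sqrt{2}}H,\quad E^2=0,\quad HE=\tfrac{1}{\sqrt{2}}E,\quad EH=0
\end{equation*}
in the quotient algebra $U(\mf{an})/I$, where $I=\ker\left(\ad\colon U(\mf{an})\to\End(\mf{an})\right)$.

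The plan is to argue entirely at the level of the adjoint representation $\ad\colon U(\mf{an})\to\End(\mf{an})$. Since $I$ is by definition the kernel of this algebra homomorphism, the quotient $U(\mf{an})/I$ is isomorphic to the image $\ad\left(U(\mf{an})\right)\subset\End(\mf{an})$, and two elements of $U(\mf{an})$ are equal in the quotient if and only if they act identically on $\mf{an}$. So each of the four claimed identities reduces to checking that both sides act the same way on $\mf{an}$ as operators. First I would record the action of the generators: using the single nontrivial bracket $[H,E]=\tfrac{1}{\sqrt 2}E$ together with $[H,H]=0$, $[E,E]=0$, $[E,H]=-\tfrac{1}{\sqrt 2}E$, the operators $\ad H$ and $\ad E$ are explicitly determined on the basis $H,E$ of $\mf{an}$. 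Concretely $\ad H$ sends $H\mapsto 0$ and $E\mapsto\tfrac{1}{\sqrt 2}E$, while $\ad E$ sends $H\mapsto -\tfrac{1}{\sqrt2}E$ and $E\mapsto 0$.

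The key computation is then to compose these two $2\times 2$ operators and compare with the scalar multiples of $\ad H$, $\ad E$, or the zero map. For instance, $\left(\ad E\right)^2$ kills $H$ (since $\ad E$ sends $H$ into $\bb{R}E$, which $\ad E$ then kills) and kills $E$, giving $E^2=0$; similarly $\left(\ad H\right)^2$ acts as $0$ on $H$ and as $\tfrac{1}{2}$ on $E$, which matches $\tfrac{1}{\sqrt2}\ad H$ since the latter sends $E\mapsto\tfrac{1}{2}E$ and $H\mapsto 0$, giving $H^2=\tfrac{1}{\sqrt2}H$. For the mixed products, $\ad H\,\ad E$ sends $H\mapsto\ad H\left(-\tfrac{1}{\sqrt2}E\right)=-\tfrac12 E$ and $E\mapsto 0$, which one checks against $\tfrac{1}{\sqrt2}\ad E$ (sending $H\mapsto-\tfrac12 E$, $E\mapsto 0$), establishing $HE=\tfrac{1}{\sqrt2}E$; and $\ad E\,\ad H$ sends $H\mapsto 0$ and $E\mapsto\ad E\left(\tfrac{1}{\sqrt2}E\right)=0$, giving $EH=0$. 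I would present these as four short matrix or basis-wise verifications.

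I do not expect any genuine obstacle here: the calculation is routine linear algebra on a two-dimensional space, and the only subtlety is the bookkeeping convention that $HE$ in $U(\mf{an})/I$ corresponds to the composite $\ad(H)\ad(E)=\ad H\circ\ad E$ (order of operator composition matching the order of the product), which I would state explicitly to avoid sign or ordering confusion. The mild conceptual point worth flagging is simply that equalities are being asserted in a \emph{quotient} of the universal enveloping algebra, so one must invoke the definition of $I$ to justify that agreement of the adjoint actions is sufficient for equality; once that is in place the verification is immediate.
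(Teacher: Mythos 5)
Your proof is correct and is precisely the computation the paper has in mind: the paper's own proof of this lemma is just ``The formulas follow from easy computation,'' and your argument—identifying $U(\mf{an})/I$ with $\ad\left(U(\mf{an})\right)\subset\End(\mf{an})$ via the first isomorphism theorem and then verifying the four identities by composing $\ad H$ and $\ad E$ on the basis $H,E$—is exactly that computation, carried out with the correct operator-ordering convention. All four basis-wise verifications check out, so nothing is missing.
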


\begin{proof}
The formulas follow from easy computation. 
\end{proof}

\begin{thm}
We have 
\begin{equation*}
\hat{d}\hat{\delta}+\hat{\delta}\hat{d}=1\otimes\Omega^2\otimes1
\end{equation*}
in $\mca{A}$. 
\end{thm}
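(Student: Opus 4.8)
The plan is to verify the identity $\hat{d}\hat{\delta}+\hat{\delta}\hat{d}=1\otimes\Omega^2\otimes1$ by direct computation in the algebra $\mca{A}=C(\mf{an}\oplus\mf{an}^*)\otimes U(\sl(2,\bb{R}))\otimes U(\mf{an})/I$, exactly as in the proof of Theorem \ref{mtrm}. The fundamental tools are the same three families of commutation relations used there: the Clifford relations of Lemma \ref{cliffrule} together with $\hat{d}_\star X+X\hat{d}_\star=\hat{L}_X$ and $\hat{d}_\star^\top\kappa(X)+\kappa(X)\hat{d}_\star^\top=\hat{L}_X^\top$ from \eqref{liederivativei}, the explicit Lie-derivative formula \eqref{liederivativefo} $\hat{L}_X=-\theta_H[X,H]-\theta_E[X,E]$, and the commutation relations of $\sl(2,\bb{R})$ from Section \ref{notation}. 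The crucial new ingredient, which replaces the scalar action $\xi(H)$ used before, is Lemma \ref{ellvvisl}: the third tensor factor now lives in $U(\mf{an})/I$, where $H^2=\frac{1}{\sqrt{2}}H$, $E^2=0$, $HE=\frac{1}{\sqrt{2}}E$ and $EH=0$. These relations collapse the seemingly complicated $U(\mf{an})/I$-components of $\hat{\delta}$ into manageable expressions.

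First I would expand $\hat{d}\hat{\delta}+\hat{\delta}\hat{d}$ using bilinearity over the tensor factors. Because $\hat{d}$ has five summands and $\hat{\delta}$ has nine, this produces a large collection of terms, each a product of a Clifford element, a $U(\sl(2,\bb{R}))$ element, and a $U(\mf{an})/I$ element. I would organize the computation by grouping terms according to their Clifford-algebra component. Terms of Clifford-degree zero in the $\theta$'s (i.e. those where the $\varepsilon(\theta)$ and $\iota$, equivalently $\theta$ and $X\in\mf{an}$, factors pair off to give scalars via Lemma \ref{cliffrule}) will contribute to the diagonal $1\otimes\,?\,\otimes1$ part and should ultimately assemble into $1\otimes\Omega^2\otimes1$; the strategy here is to recognize the expression $\Omega=2FE+H^2+\frac{1}{\sqrt{2}}H$ (or equivalently $2EF+H^2-\frac{1}{\sqrt{2}}H$) appearing as a factor, so that the $U(\sl(2,\bb{R}))$-components combine into $\Omega\cdot\Omega=\Omega^2$. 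Terms with a surviving $\theta_H$ or $\theta_E$ (Clifford-degree one or two) must cancel among themselves; I would collect these by their Clifford monomial $\theta_HE$, $\theta_EH$, $\theta_EE$, $\theta_H\theta_E\cdots$, etc., and check that the accompanying $U(\sl(2,\bb{R}))\otimes U(\mf{an})/I$ coefficients sum to zero, using the structure constants $[H,E]=\frac{1}{\sqrt{2}}E$ and the $U(\mf{an})/I$ relations to simplify.

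I expect the main obstacle to be bookkeeping rather than conceptual difficulty: unlike Theorem \ref{mtrm}, here all three tensor factors are genuinely active and the third factor $U(\mf{an})/I$ is noncommutative, so the order of $H$ and $E$ matters and one must repeatedly apply Lemma \ref{ellvvisl} to reduce products such as $FH\cdot E$, $F^2\cdot E$, and $\Omega\cdot H$ appropriately. The delicate point is that $\Omega$ does not lie in $I$ and does not act as a scalar in the third factor, so the cross terms coupling the $U(\sl(2,\bb{R}))$ factor of $\hat{d}$ (through $\pi(H)$, $\pi(E)$) with the $U(\mf{an})/I$ factor of $\hat{\delta}$ (through $E$, $H$) must be tracked with care; the several summands of $\hat{\delta}$ carrying an $E$ in the third slot are precisely engineered to cancel the unwanted contributions of these cross terms. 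I would verify that the $\hat{L}_X$-type terms arising from moving $\hat{d}_\star$ and $\hat{d}_\star^\top$ past the $\mf{an}$-elements recombine with the explicit Clifford monomials to vanish, paralleling the simplification $\hat{d}_\star\hat{d}_\star^\top+\hat{d}_\star^\top\hat{d}_\star+\frac{1}{\sqrt{2}}\hat{L}_H=0$ of Proposition \ref{yuuui}.

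Once the identity in $\mca{A}$ is established, applying the representation $\sigma\otimes\pi\otimes\ad$ immediately yields $d\delta+\delta d=\id\otimes\pi(\Omega)^2\otimes\id$ on $C^*(\mf{an};\mca{H}\otimes\mf{an})$, which is the stated goal. As a sanity check before grinding through the full expansion, I would first confirm that the diagonal coefficient really is $\Omega^2$ by isolating the Clifford-scalar part, since any algebra error in designing $\hat{\delta}$ would most visibly corrupt that leading term; this also guides the correct signs and coefficients (such as the factors $-6$, $-\sqrt{2}$, $-2$, $\frac{1}{\sqrt{2}}$, $2$, $-4$) in the definition of $\hat{\delta}$.
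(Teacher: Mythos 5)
Your proposal is correct and follows essentially the same route as the paper: the paper itself omits the details, saying only that the identity follows from a computation like those proving Theorems \ref{mtrm} and \ref{5555} carried out with the relations of Lemma \ref{ellvvisl} in $U(\mf{an})/I$, which is exactly the expansion-and-cancellation scheme you describe (including the role of Proposition \ref{yuuui} in eliminating the $\hat{d}_\star\hat{d}_\star^\top+\hat{d}_\star^\top\hat{d}_\star$ contribution). Your additional observations---that the third tensor factor is now noncommutative so Lemma \ref{ellvvisl} must be applied with attention to ordering, and that the Clifford-scalar terms assemble into $1\otimes\Omega^2\otimes1$ via $\Omega=2FE+H^2+\frac{1}{\sqrt{2}}H$---are consistent with what that computation produces.
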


\begin{proof}
A proof can be given by a similar computation as in the proof of Theorems \ref{mtrm} and \ref{5555} using Lemma \ref{ellvvisl}. We omit the proof. 
\end{proof}

\begin{cor}\label{cofff}
We have 
\begin{equation*}
d\delta+\delta d=\id\otimes\pi(\Omega)^2\otimes\id
\end{equation*}
on $C^*\left(\mf{an};\mca{H}\otimes\mf{an}\right)$. 
\end{cor}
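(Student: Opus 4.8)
The plan is to deduce Corollary \ref{cofff} directly from the preceding theorem by applying the representation $\sigma\otimes\pi\otimes\ad$ to the identity $\hat{d}\hat{\delta}+\hat{\delta}\hat{d}=1\otimes\Omega^2\otimes1$ in $\mca{A}$. First I would recall that $\left(\sigma\otimes\pi\otimes\ad\right)$ is an algebra homomorphism $\mca{A}\to\End\left(C^*(\mf{an};\mca{H}\otimes\mf{an})\right)$, so it sends products to compositions and sums to sums. Since $\left(\sigma\otimes\pi\otimes\ad\right)(\hat{d})=d$ and $\left(\sigma\otimes\pi\otimes\ad\right)(\hat{\delta})=\delta$ by definition, applying the homomorphism to the left-hand side yields
\begin{equation*}
\left(\sigma\otimes\pi\otimes\ad\right)(\hat{d}\hat{\delta}+\hat{\delta}\hat{d})=d\delta+\delta d.
\end{equation*}

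Next I would compute the image of the right-hand side. The element $1\otimes\Omega^2\otimes1\in\mca{A}=C(\mf{an}\oplus\mf{an}^*)\otimes U(\sl(2,\bb{R}))\otimes U(\mf{an})/I$ is a pure tensor whose Clifford component is the identity $1$, whose $U(\sl(2,\bb{R}))$-component is $\Omega^2$, and whose $U(\mf{an})/I$-component is $1$. Under $\sigma\otimes\pi\otimes\ad$ these map to $\id$, $\pi(\Omega^2)=\pi(\Omega)^2$ (using that $\pi$ is an algebra homomorphism on $U(\sl(2,\bb{R}))$), and $\ad(1)=\id$ respectively. Hence
\begin{equation*}
\left(\sigma\otimes\pi\otimes\ad\right)(1\otimes\Omega^2\otimes1)=\id\otimes\pi(\Omega)^2\otimes\id.
\end{equation*}
Combining the two displays gives the desired equality $d\delta+\delta d=\id\otimes\pi(\Omega)^2\otimes\id$ on $C^*(\mf{an};\mca{H}\otimes\mf{an})$.

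This is essentially a one-line formal consequence of the theorem, so there is no serious obstacle at the level of the corollary itself; the only point requiring a brief justification is that $\pi(\Omega^2)=\pi(\Omega)^2$, which is immediate since the Casimir element lies in the associative algebra $U(\sl(2,\bb{R}))$ and $\pi$ is multiplicative. All the genuine work has already been absorbed into the proof of the preceding theorem (the algebraic identity in $\mca{A}$), which itself is asserted to follow ``by a similar computation as in the proof of Theorems \ref{mtrm} and \ref{5555} using Lemma \ref{ellvvisl}.'' I would therefore present the corollary as an immediate application of the representation $\sigma\otimes\pi\otimes\ad$, and if anything the subtlety lies one level up: verifying the relation in $\mca{A}$ requires carefully reducing the $U(\mf{an})$-factors modulo the ideal $I$ via Lemma \ref{ellvvisl} and tracking the Clifford relations, which is the part that would demand real computation were one to carry it out in full.
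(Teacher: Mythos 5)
Your proposal is correct and matches the paper's (implicit) argument exactly: the corollary is an immediate consequence of the preceding theorem obtained by applying the algebra homomorphism $\sigma\otimes\pi\otimes\ad$, which sends $\hat{d}\mapsto d$, $\hat{\delta}\mapsto\delta$, and $1\otimes\Omega^2\otimes1\mapsto\id\otimes\pi(\Omega)^2\otimes\id$. You are also right that the only real content lies one level up, in verifying the identity in $\mca{A}$, which the paper likewise delegates to a computation analogous to Theorems \ref{mtrm} and \ref{5555} using Lemma \ref{ellvvisl}.
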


\begin{prop}
We have $\hat{\delta}^2=0$ in $\mca{A}$. 
\end{prop}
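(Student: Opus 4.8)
The plan is to verify the equation $\hat{\delta}_f^2=0$ in the algebra $\mca{A}=C(\mf{an}\oplus\mf{an}^*)\otimes U(\sl(2,\bb{R}))\otimes U(\mf{an})/I$ by direct computation, exploiting the tensor product structure of $\mca{A}$ so that the three factors can be treated separately. Since $\hat{\delta}$ is a sum of nine terms, expanding $\hat{\delta}^2$ naively produces many products, and the main task is to organize this expansion so that the relations in each factor collapse everything to zero. The crucial simplifying tools are the Clifford relations in $C(\mf{an}\oplus\mf{an}^*)$ (Lemma \ref{cliffrule}, giving $\theta_H^2=\theta_E^2=0$ and the anticommutation rules, together with $\hat{d}_\star^2=0$ and the formula \eqref{dfor} for $\hat{d}_\star$), and the relations in $U(\mf{an})/I$ from Lemma \ref{ellvvisl}, namely $H^2=\tfrac{1}{\sqrt{2}}H$, $E^2=0$, $HE=\tfrac{1}{\sqrt{2}}E$, $EH=0$.

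First I would record, for each pair of the nine summands of $\hat{\delta}$, the product of the $U(\mf{an})/I$-components appearing in the third tensor slot. Many cross terms contain a product such as $E\cdot E$, $E\cdot1$ followed by $E$, or an $EH$, which vanish or simplify dramatically by Lemma \ref{ellvvisl}; in particular every term whose third slot would require $E^2$ or $EH$ to be nonzero drops out immediately. This should eliminate the majority of the $81$ products before any serious Clifford computation is needed. Next I would handle the Clifford factors: because $\hat{d}_\star=-\tfrac{1}{\sqrt{2}}\theta_H\theta_E E$ (see \eqref{dfor}) and $H^\top=\theta_H$, $E^\top=\theta_E$, the first-slot elements are built from $\theta_H,\theta_E,H,E$, and the rules $\theta_H^2=\theta_E^2=0$, $\iota(H)\varepsilon(\theta_H)+\varepsilon(\theta_H)\iota(H)=1$ (and analogues) force the surviving Clifford products to collapse.

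The key structural point, which I would state and then use, is that $\Omega$ lies in the center of $U(\sl(2,\bb{R}))$, so its second-slot occurrences commute freely past the other $U(\sl(2,\bb{R}))$-factors; this lets me factor out powers of $\Omega$ and reduces the middle-slot bookkeeping to products among $H$, $E$, $F$ only. After the vanishing of most terms, the residual terms should pair up into cancelling couples: for instance the two terms carrying $\hat{d}_\star^\top\otimes\Omega\otimes1$ and $-6\hat{d}_\star^\top\otimes F\otimes E$ interact with the $H$- and $E$-leading terms through a single anticommutator in the Clifford factor, and the sign and coefficient arrangement in the definition of $\hat{\delta}$ is precisely what makes these residues cancel. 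The whole computation parallels the proof of Proposition \ref{squarezero}, where $\hat{\delta}_f^2=0$ was checked by reducing to the two terms $-\tfrac{1}{\sqrt{2}}EH\otimes2F\otimes1+HE\otimes2[H,F]\otimes1$ that annihilate each other.

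The hard part will be the bookkeeping: keeping track of signs, coefficients, and the order of the Clifford generators across nine times nine products, since unlike Proposition \ref{squarezero} the third tensor slot now carries nontrivial elements of $U(\mf{an})/I$ and the coupling between the second and third slots (through the adjoint action) is what makes the cancellation subtle rather than term-by-term obvious. In the write-up I would not display the full expansion; instead I would note that the computation is entirely analogous to the proofs of Theorem \ref{mtrm}, Theorem \ref{5555} and Proposition \ref{squarezero}, invoking Lemma \ref{ellvvisl}, the Clifford relations of Lemma \ref{cliffrule}, the formula \eqref{dfor} for $\hat{d}_\star$, and the centrality of $\Omega$, and then simply state that all terms cancel, exactly as the paper does for the companion theorem with the phrase ``We omit the proof.''
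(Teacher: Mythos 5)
Your proposal is correct and takes essentially the same route as the paper: the paper's own proof of this proposition is exactly the argument you describe---``a direct computation as in the proof of Proposition \ref{squarezero} using Lemma \ref{ellvvisl},'' with the expansion omitted---and the tools you invoke (the Clifford relations, the formula \eqref{dfor} for $\hat{d}_\star$, the relations $H^2=\tfrac{1}{\sqrt{2}}H$, $E^2=0$, $HE=\tfrac{1}{\sqrt{2}}E$, $EH=0$ in $U(\mf{an})/I$, and the centrality of $\Omega$) are precisely what make all $81$ products either vanish or cancel, the only nontrivial cancellation being among the coefficients of $HE\otimes F^2\Omega\otimes E$. One small correction: your opening sentence says you will verify $\hat{\delta}_f^2=0$, but the element here is $\hat{\delta}\in\mca{A}=C(\mf{an}\oplus\mf{an}^*)\otimes U(\sl(2,\bb{R}))\otimes U(\mf{an})/I$, not the fake codifferential $\hat{\delta}_f$ of Proposition \ref{squarezero}.
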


\begin{proof}
This follows from a direct computation as in the proof of Proposition \ref{squarezero} using Lemma \ref{ellvvisl}. We omit the proof. 
\end{proof}

\subsection{Hodge decomposition of $C^*\left(\mf{an};\bb{C}\otimes\mf{an}\right)$}\label{trivialhoooo}
We take the following elements as a basis of $C^*\left(\mf{an};\bb{C}\otimes\mf{an}\right)$: 
\begin{gather*}
1\otimes H,\ \ \ 1\otimes E, \\
\theta_H\otimes H,\ \ \ \theta_H\otimes E,\ \ \ \theta_E\otimes H,\ \ \ \theta_E\otimes E, \\
\theta_H\wedge\theta_E\otimes H,\ \ \ \theta_H\wedge\theta_E\otimes E. 
\end{gather*}
Let $d_i\colon C^i\left(\mf{an};\bb{C}\otimes\mf{an}\right)\to C^{i+1}\left(\mf{an};\bb{C}\otimes\mf{an}\right)$ be the differential. Then we have 
\begin{equation*}
d_0=
\begin{pmatrix}
0&0\\
0&\frac{1}{\sqrt{2}}\\
0&0\\
-\frac{1}{\sqrt{2}}&0
\end{pmatrix}
,\quad d_1=
\begin{pmatrix}
0&0&-\frac{1}{\sqrt{2}}&0\\
\frac{1}{\sqrt{2}}&0&0&0
\end{pmatrix}
\end{equation*}
with respect to the basis. 

Let $\delta_i\colon C^i\left(\mf{an};\bb{C}\otimes\mf{an}\right)\to C^{i-1}\left(\mf{an};\bb{C}\otimes\mf{an}\right)$ be a linear map. Then 
\begin{equation*}
d\delta+\delta d=\id
\end{equation*}
if and only if 
\begin{equation*}
\delta_1=
\begin{pmatrix}
a&0&b&-\sqrt{2}\\
c&\sqrt{2}&e&0
\end{pmatrix}
,\quad\delta_2=
\begin{pmatrix}
0&\sqrt{2}\\
e&-c\\
-\sqrt{2}&0\\
-b&a
\end{pmatrix}
\end{equation*}
for some $a$, $b$, $c$, $e\in\bb{C}$. This $\delta$ satisfies $\delta^2=0$. Therefore 
\begin{align*}
C^*\left(\mf{an};\bb{C}\otimes\mf{an}\right)&=\im d\oplus\im\delta\\
\ker d&=\im d\\
\ker\delta&=\im\delta. 
\end{align*}
In particular, 
\begin{equation*}
H^*\left(\mf{an};\bb{C}\otimes\mf{an}\right)=0. 
\end{equation*}

\subsection{Hodge decomposition of $C^*\left(\mf{an};C^\infty(D_1^\pm)\otimes\mf{an}\right)$}\label{hood1pm}
In this section we construct a ``Hodge decomposition'' of $C^*\left(\mf{an};C^\infty(D_1^\pm)\otimes\mf{an}\right)$. 

The adjoint representation $\ad\colon\mf{an}\to\End(\mf{an})$ extends to an algebra homomorphism $\ad\colon U(\mf{an})\to\End(\mf{an})$. Let $I$ be the kernel of $\ad\colon U(\mf{an})\to\End(\mf{an})$. Consider the algebra 
\begin{equation*}
\mca{A}=C(\mf{an}\oplus\mf{an}^*)\otimes\End\left(C^\infty(D_1^\pm)\right)\otimes U(\mf{an})/I, 
\end{equation*}
where $C(\mf{an}\oplus\mf{an}^*)$ is the Clifford algebra and $\End\left(C^\infty(D_1^\pm)\right)$ is the algebra of all $\bb{C}$-linear maps from $C^\infty(D_1^\pm)$ to itself. As before we have a natural representation $\mca{A}\curvearrowright C^*\left(\mf{an};C^\infty(D_1^\pm)\otimes\mf{an}\right)$. 

Let $\pi$ denote the representation $\sl(2,\bb{R})\curvearrowright C^\infty(D_1^\pm)$. By Proposition \ref{ffoorr} there exist a $\bb{C}$-linear map $\varphi\colon C^\infty(D_1^\pm)\to\bb{C}$ and an element $h\in C^\infty(D_1^\pm)$ such that 
\begin{equation*}
\ker\varphi=\pi(E)C^\infty(D_1^\pm),\quad\pi(H)\varphi=-\frac{1}{\sqrt{2}}\varphi,\quad\varphi(h)=1. 
\end{equation*}
As we explained in Sections \ref{geneconsss} and \ref{2generc}, we can define operators $S,T\colon C^\infty(D_1^\pm)\to C^\infty(D_1^\pm)$ by 
\begin{gather*}
S=\pi(E)^{-1}\left(\pi(H)-\frac{1}{\sqrt{2}}\right),\quad T=\pi(E)^{-1}\left(\id-\varphi h\right). 
\end{gather*}

Define elements $\hat{d}$, $\hat{\delta}\in\mca{A}$ by 
\begin{equation*}
\hat{d}=\hat{d}_\star\otimes\id\otimes1+\theta_H\otimes\pi(H)\otimes1+\theta_H\otimes\id\otimes H+\theta_E\otimes\pi(E)\otimes1+\theta_E\otimes\id\otimes E
\end{equation*}
and 
\begin{align*}
\hat{\delta}&=2\hat{d}_\star^\top\otimes T\otimes E+2H\otimes\id\otimes H-2E\otimes S\otimes H\\
&\quad+E\otimes T\otimes\left(1-\sqrt{2}H\right)+\sqrt{2}E\otimes ST\otimes E. 
\end{align*}
The element $\hat{d}$ operates as the differential $d$ on the complex $C^*\left(\mf{an};C^\infty(D_1^\pm)\otimes\mf{an}\right)$ and let $\delta$ be the operator determined by $\hat{\delta}$. 

\begin{thm}\label{ddddpppi}
We have 
\begin{equation*}
\hat{d}\hat{\delta}+\hat{\delta}\hat{d}=1\otimes\id\otimes1-\hat{p}
\end{equation*}
in $\mca{A}$, where $\hat{p}$ is an element of $\mca{A}$ defined by 
\begin{align*}
\hat{p}&=\left(\theta_E\pm\sqrt{2}i\theta_H\right)E\otimes\varphi h\otimes\left(1-\sqrt{2}H\pm2iE\right)\\
&\quad+\theta_H\theta_EHE\otimes\varphi h\otimes\left(\pm2iE\right). 
\end{align*}
Here we identify $\mca{A}$ with 
\begin{equation*}
\left(C(\mf{an}\oplus\mf{an}^*)\otimes\bb{C}\right)\otimes_\bb{C}\End\left(C^\infty(D_1^\pm)\right)\otimes_\bb{C}\left(U(\mf{an})/I\otimes\bb{C}\right). 
\end{equation*}
\end{thm}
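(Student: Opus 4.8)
The plan is to prove Theorem \ref{ddddpppi} by a direct computation in the algebra $\mca{A}$, exactly in the spirit of the proofs of Theorems \ref{mtrm} and \ref{5555} and of the theorem in Section \ref{an1h}. Since $\mca{A}$ is a tensor product of the three algebras $C(\mf{an}\oplus\mf{an}^*)\otimes\bb{C}$, $\End(C^\infty(D_1^\pm))$ and $U(\mf{an})/I\otimes\bb{C}$, a product $(a\otimes b\otimes c)(a'\otimes b'\otimes c')$ equals $aa'\otimes bb'\otimes cc'$, so I would track each tensor factor separately. The relations I would feed into the computation are: in the Clifford factor, the anticommutation rules of Lemma \ref{cliffrule} together with $\hat{d}_\star=-\frac{1}{\sqrt{2}}\theta_H\theta_E E$ from \eqref{dfor}, the formulas \eqref{liederivativei}, \eqref{liederivativefo} for $\hat{L}_X$, and the identities $\hat{d}_\star^\top H+H\hat{d}_\star^\top=0$, $\hat{d}_\star^\top E+E\hat{d}_\star^\top=-\frac{1}{\sqrt{2}}EH$ established in the proof of Proposition \ref{squarezero}; in the $\End$ factor, Lemmas \ref{spie}, \ref{commmmp}, \ref{pihsb}, \ref{pihtb}, the defining relations $\pi(E)T=\id-\varphi h$, $T\pi(E)=\id$, $\pi(E)S=\pi(H)-\frac{1}{\sqrt{2}}$, $S\pi(E)=\pi(H)$, and the facts $\varphi\circ\pi(E)=0$, $\varphi\circ\pi(H)=\frac{1}{\sqrt{2}}\varphi$, $\varphi(h)=1$ specialized to $\nu=-\frac{1}{\sqrt{2}}$ and $h^\prime=\pm\sqrt{2}ih$ (Lemma \ref{hprimeih}); and in the $U(\mf{an})/I$ factor, the multiplication table $H^2=\frac{1}{\sqrt{2}}H$, $E^2=0$, $HE=\frac{1}{\sqrt{2}}E$, $EH=0$ of Lemma \ref{ellvvisl}.

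Concretely, I would first expand $\hat{d}\hat{\delta}$ and $\hat{\delta}\hat{d}$ by distributivity; each of $\hat{d}$ and $\hat{\delta}$ is a sum of five terms, so the total is a sum of many triple tensors. I would then sort these terms by their $U(\mf{an})/I$-component, collapsing each product of third factors via Lemma \ref{ellvvisl} (note that the asymmetry $EH=0\neq HE=\frac{1}{\sqrt{2}}E$ and $E^2=0$ immediately annihilates a large fraction of the terms). Within each resulting group I would reduce the Clifford factor to a normal-ordered monomial in $\theta_H,\theta_E,H,E$ and simplify the $\End$ factor by pushing the operator relations through, so that the $S$- and $T$-dependence is eliminated in favour of $\pi(H)$, $\pi(E)$ and the rank-one operator $\varphi h$. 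The terms not involving $\varphi h$ should combine, via the $\hat{L}$-formulas and the Casimir-type cancellations already seen in Theorem \ref{mtrm}, into the single term $1\otimes\id\otimes1$, while the $\varphi h$-terms assemble into $-\hat{p}$; the coefficients $\pm\sqrt{2}i$ and $\pm2i$ appearing in $\hat{p}$ arise precisely from $h^\prime=\pm\sqrt{2}ih$ and from rewriting $\pi(E)h$ through the weight relation \eqref{2ihheh}.

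The main obstacle I anticipate is purely organizational bookkeeping rather than any conceptual difficulty: the symbols $H$ and $E$ occur with different relations in the Clifford factor (where $H^2=E^2=0$ and $\theta_HH+H\theta_H=1$) and in the $U(\mf{an})/I$ factor (where $H^2=\frac{1}{\sqrt{2}}H$, $EH=0$), so sign- and factor-tracking must be done with great care, and the cross terms mixing a normal-ordering move in the Clifford factor with a noncommuting operator rearrangement in $\End$ are where errors are most likely. The most delicate point is isolating the non-cancelling remainder and checking that it equals exactly the stated $\hat{p}$, in particular verifying the rank-one structure $\varphi h$ in all three summands and the precise $1-\sqrt{2}H\pm2iE$ and $\pm2iE$ coefficients in $U(\mf{an})/I$. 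As in Section \ref{trivialhoooo}, once the identity $\hat{d}\hat{\delta}+\hat{\delta}\hat{d}=1\otimes\id\otimes1-\hat{p}$ is in hand it follows that $d\delta+\delta d=\id-p$ on $C^*(\mf{an};C^\infty(D_1^\pm)\otimes\mf{an})$, which is the payoff of the theorem.
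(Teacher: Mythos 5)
Your proposal coincides with the paper's own proof: the paper establishes Theorem \ref{ddddpppi} by exactly this direct expansion of $\hat{d}\hat{\delta}+\hat{\delta}\hat{d}$ in $\mca{A}$, invoking Lemma \ref{ellvvisl}, Proposition \ref{yuuui} (which your Clifford-factor toolkit of \eqref{dfor} and Lemma \ref{cliffrule} reproduces), and the same operator relations $[\pi(H),S]=-\frac{1}{\sqrt{2}}S$, $[\pi(E),S]=-\frac{1}{\sqrt{2}}$, $[\pi(H),T]=-\varphi Sh-\frac{1}{\sqrt{2}}T$, $[\pi(E),T]=-\varphi h$, $S\pi(E)=\pi(H)$, $\pi(E)S=\pi(H)-\frac{1}{\sqrt{2}}$, $T\pi(E)=\id$ and $Sh=h^\prime=\pm\sqrt{2}ih$ that you list. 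Your relation set is equivalent to the paper's, and your bookkeeping plan (sorting by the $U(\mf{an})/I$ factor and isolating the rank-one $\varphi h$ terms as $-\hat{p}$) is precisely the computation the paper describes, so the proposal is correct and essentially identical in approach.
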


\begin{proof}
The theorem is obtained by a direct computation as in the proof of Theorems \ref{mtrm} and  \ref{5555} using Proposition \ref{yuuui}, Lemma \ref{ellvvisl} and the following formulas from Sections \ref{1-n2} and \ref{1-n22}: 
\begin{gather*}
[\pi(H),S]=-\frac{1}{\sqrt{2}}S,\quad[\pi(E),S]=-\frac{1}{\sqrt{2}}\\
[\pi(H),T]=-\varphi Sh-\frac{1}{\sqrt{2}}T,\quad[\pi(E),T]=-\varphi h\\
S\pi(E)=\pi(H),\quad\pi(E)S=\pi(H)-\frac{1}{\sqrt{2}}\\
T\pi(H)=\pi(E)^{-1}\left(\pi(H)-\frac{1}{\sqrt{2}}\varphi h\right),\quad T\pi(E)=\id\\
Sh=\pm\sqrt{2}ih. 
\end{gather*}
It may take three to four hours to complete this computation. 
\end{proof}

\begin{prop}\label{dee2000}
We have $\hat{\delta}^2=0$ in $\mca{A}$. 
\end{prop}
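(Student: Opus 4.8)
The plan is to verify $\hat{\delta}^2=0$ by a direct computation in the associative algebra $\mca{A}=C(\mf{an}\oplus\mf{an}^*)\otimes\End\left(C^\infty(D_1^\pm)\right)\otimes U(\mf{an})/I$, following the template of the proof of Proposition \ref{squarezero}. Writing $\hat{\delta}=\sum_{k=1}^5 a_k\otimes b_k\otimes c_k$ for its five summands, the square $\hat{\delta}^2=\sum_{j,k}(a_ja_k)\otimes(b_jb_k)\otimes(c_jc_k)$ is a sum of $25$ triple tensors, each evaluated factorwise: because $\mca{A}$ is the \emph{ordinary} (ungraded) tensor product of algebras and $\sigma\otimes\pi\otimes\ad$ is the factorwise tensor representation, no Koszul signs intervene. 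Here the first factors are $a_1=\hat{d}_\star^\top$, $a_2=H$, $a_3=a_4=a_5=E$; the third factors are $c_1=c_5=E$, $c_2=c_3=H$, $c_4=1-\sqrt{2}H$.

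First I would prune the sum using the third factor via Lemma \ref{ellvvisl}. The relations $E^2=0$, $EH=0$, $H^2=\frac{1}{\sqrt{2}}H$, $HE=\frac{1}{\sqrt{2}}E$ in $U(\mf{an})/I$ force $c_jc_k=0$ except for the eleven pairs with left factor $H$ paired with $E$ or $H$ (that is $j\in\{2,3\}$, $k\in\{1,2,3,5\}$) together with $(1,4)$, $(5,4)$ and $(4,4)$. Next I would cut these down with the Clifford factor: since $(H|H)=(E|E)=0$ and $(H|E)=0$, we have $H^2=E^2=0$ and $HE+EH=0$ in $C(\mf{an}\oplus\mf{an}^*)$, which kills $(2,2)$, $(3,3)$, $(3,5)$, $(5,4)$ and $(4,4)$.

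The only remaining terms involving $\hat{d}_\star^\top$ are $(1,4)$ and $(2,1)$. To dispose of them I would compute the transpose explicitly: from \eqref{dfor}, $\hat{d}_\star=-\frac{1}{\sqrt{2}}\theta_H\theta_E E$, so using $H^\top=\theta_H$, $E^\top=\theta_E$ and reversal of order one gets $\hat{d}_\star^\top=-\frac{1}{\sqrt{2}}\theta_E EH$. The Clifford anticommutators then yield $\hat{d}_\star^\top E=0$ and $H\hat{d}_\star^\top=0$ (and, as a companion used below, $E\hat{d}_\star^\top=-\frac{1}{\sqrt{2}}EH$, matching the identity in the proof of Proposition \ref{squarezero}). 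Hence $(1,4)$ and $(2,1)$ vanish as well, leaving exactly the four pairs $(2,3)$, $(3,2)$, $(2,5)$, $(3,1)$.

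A convenient feature is that no relation for $S$ or $T$ is needed: the pairs $(2,3)$ and $(3,2)$ share middle factor $S$ and third factor $H$, while $(2,5)$ and $(3,1)$ share middle factor $ST$ and third factor $E$. Within each pair the Clifford identity $EH=-HE$ (applied to $a_3a_2=EH$ against $a_2a_3=HE$, and to $a_3a_1=E\hat{d}_\star^\top=-\frac{1}{\sqrt{2}}EH$ against $a_2a_5=HE$) makes the two contributions equal and opposite, so $(2,3)+(3,2)=0$ and $(2,5)+(3,1)=0$, giving $\hat{\delta}^2=0$. The main obstacle is purely the bookkeeping and sign-tracking across the three factors, concentrated in getting $\hat{d}_\star^\top$ and the three Clifford identities $\hat{d}_\star^\top E=0$, $H\hat{d}_\star^\top=0$, $E\hat{d}_\star^\top=-\frac{1}{\sqrt{2}}EH$ right; once these are in hand and the $U(\mf{an})/I$ relations have trimmed the sum to four terms, the cancellation is immediate and, unlike Theorem \ref{ddddpppi}, avoids the lengthy operator algebra for $S$ and $T$.
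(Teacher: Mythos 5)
Your proposal is correct and takes exactly the approach the paper intends: its proof of Proposition \ref{dee2000} simply says the identity ``follows from an easy computation'' in the style of Proposition \ref{squarezero}, and your argument is that computation carried out in full. I checked the details: the $U(\mf{an})/I$ pruning to eleven pairs, the Clifford pruning, the identities $\hat{d}_\star^\top E=0$, $H\hat{d}_\star^\top=0$, $E\hat{d}_\star^\top=-\frac{1}{\sqrt{2}}EH$ coming from $\hat{d}_\star^\top=-\frac{1}{\sqrt{2}}\theta_EEH$, and the pairwise cancellations $(2,3)+(3,2)=0$ and $(2,5)+(3,1)=0$ via $HE+EH=0$ are all accurate.
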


\begin{proof}
This follows from an easy computation. We omit a proof. 
\end{proof}

Let $p\colon C^*\left(\mf{an};C^\infty(D_1^\pm)\otimes\mf{an}\right)\to C^*\left(\mf{an};C^\infty(D_1^\pm)\otimes\mf{an}\right)$ be the operator obtained from $\hat{p}$ through the representation of $\mca{A}$. 

\begin{prop}\label{unipt}
We have $\hat{p}^2=\hat{p}$ in $\mca{A}$. Hence $p$ is a projection. 
\end{prop}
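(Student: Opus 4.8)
The plan is to prove $\hat{p}^2=\hat{p}$ by a direct computation in $\mca{A}$, exploiting the fact that $\mca{A}$ is a tensor product of three algebras, so that products of decomposable elements multiply factorwise. I would write $\hat{p}=a_1\otimes P\otimes c_1+a_2\otimes P\otimes c_2$, where
\begin{gather*}
a_1=\left(\theta_E\pm\sqrt{2}i\theta_H\right)E,\quad a_2=\theta_H\theta_EHE
\end{gather*}
lie in $C(\mf{an}\oplus\mf{an}^*)\otimes\bb{C}$, the middle factor $P=\varphi h\in\End\left(C^\infty(D_1^\pm)\right)$ is the rank one operator $f\mapsto\varphi(f)h$, and
\begin{gather*}
c_1=1-\sqrt{2}H\pm2iE,\quad c_2=\pm2iE
\end{gather*}
lie in $U(\mf{an})/I\otimes\bb{C}$. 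Expanding $\hat{p}^2$ produces four terms $a_ia_j\otimes P^2\otimes c_ic_j$. Since $\varphi(h)=1$ we have $P^2=\varphi(h)\,\varphi h=P$, so the middle factor is $P$ in every term and the whole problem reduces to computing the four Clifford products $a_ia_j$ and the four products $c_ic_j$ in $U(\mf{an})/I$.

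First I would treat the $U(\mf{an})/I$ factor. Using Lemma~\ref{ellvvisl} (that is, $H^2=\frac{1}{\sqrt{2}}H$, $E^2=0$, $HE=\frac{1}{\sqrt{2}}E$, $EH=0$) one checks by short expansions that
\begin{gather*}
c_1^2=c_1,\quad c_1c_2=0,\quad c_2c_1=c_2,\quad c_2^2=0.
\end{gather*}
The key cancellation is that the cross term $\mp2\sqrt{2}i\,HE=\mp2iE$ balances the linear $E$ contributions, while $E^2=0$ and $EH=0$ kill the remaining pieces.

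Next I would compute the Clifford factor, most transparently through the spin representation $\sigma\colon C(\mf{an}\oplus\mf{an}^*)\otimes\bb{C}\stackrel{\sim}{\to}\End\left(\bigwedge^*\mf{an}^*\otimes\bb{C}\right)$ evaluated on the ordered basis $1,\theta_H,\theta_E,\theta_H\wedge\theta_E$. With the paper's wedge and contraction conventions one finds that $\sigma(a_1)$ fixes $\theta_H\wedge\theta_E$, sends $\theta_E\mapsto\theta_E\pm\sqrt{2}i\theta_H$, and annihilates $1$ and $\theta_H$, whereas $\sigma(a_2)=-P_{\theta_H\wedge\theta_E}$ is minus the projection onto $\bb{C}\,\theta_H\wedge\theta_E$. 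Composing these operators gives
\begin{gather*}
a_1^2=a_1,\quad a_1a_2=a_2,\quad a_2a_1=a_2,\quad a_2^2=-a_2.
\end{gather*}
Assembling the three factors, the four terms of $\hat{p}^2$ become $a_1\otimes P\otimes c_1$, $a_2\otimes P\otimes0$, $a_2\otimes P\otimes c_2$ and $-a_2\otimes P\otimes0$; the second and fourth vanish because $c_1c_2=c_2^2=0$, and the first and third reproduce exactly the two terms of $\hat{p}$. Hence $\hat{p}^2=\hat{p}$, and $p=\left(\sigma\otimes\pi\otimes\ad\right)(\hat{p})$ is a projection.

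The main obstacle is the sign bookkeeping in the Clifford factor: the wedge and interior-product conventions make it easy to mis-sign the action on $\theta_H\wedge\theta_E$, and in fact $a_2^2=-a_2$ rather than $a_2$, so idempotency is \emph{not} visible factor by factor and genuinely relies on the companion identity $c_2^2=0$ in $U(\mf{an})/I$ to discard the offending term. The only other delicate point is keeping the $\pm$ (which distinguishes $D_1^+$ from $D_1^-$) consistent across $a_1$, $c_1$ and $c_2$; one verifies that the final cancellation is insensitive to this choice.
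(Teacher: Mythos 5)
Your proof is correct, and it is essentially the direct computation in $\mca{A}$ that the paper's one-line proof (``an easy computation'') leaves to the reader: factorwise multiplication in the tensor product, $P^2=P$ from $\varphi(h)=1$, the relations of Lemma \ref{ellvvisl} in $U(\mf{an})/I$ (giving $c_1^2=c_1$, $c_1c_2=0$, $c_2c_1=c_2$, $c_2^2=0$), and the Clifford products $a_1^2=a_1$, $a_1a_2=a_2a_1=a_2$, $a_2^2=-a_2$, which you correctly evaluate via the spin representation. Your remark that idempotency is not visible factor by factor --- the offending term $a_2^2\otimes P\otimes c_2^2$ survives only because $c_2^2=0$ --- is accurate and is the one genuinely delicate point of the verification.
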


\begin{proof}
This is proved by an easy computation as well. 
\end{proof}

\begin{prop}\label{imageofp}
The image of the projection $p$ is given by 
\begin{align*}
(\im p)^0&=0\\
(\im p)^1&=\bb{C}\left(\theta_E\pm\sqrt{2}i\theta_H\right)\otimes h\otimes\left(H\mp\sqrt{2}iE\right)\\
(\im p)^2&=\bb{C}\left(\theta_H\wedge\theta_E\right)\otimes h\otimes H, 
\end{align*}
where 
\begin{equation*}
(\im p)^i=\im p\cap C^i\left(\mf{an};C^\infty(D_1^\pm)\otimes\mf{an}\right)
\end{equation*}
and we identify $C^*\left(\mf{an};C^\infty(D_1^\pm)\otimes\mf{an}\right)$ with 
\begin{equation*}
\left(\bigwedge^*\mf{an}^*\otimes\bb{C}\right)\otimes_\bb{C}C^\infty(D_1^\pm)\otimes_\bb{C}(\mf{an}\otimes\bb{C}). 
\end{equation*}
\end{prop}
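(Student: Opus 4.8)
The plan is to compute $\im p$ directly, degree by degree, in exact analogy with the proof of Lemma \ref{impds}. Since the two summands of $\hat p$ have Clifford parts $(\theta_E\pm\sqrt2 i\theta_H)E$ and $\theta_H\theta_E HE$, both of net degree zero, the operator $p$ preserves the grading, so $(\im p)^i=p(C^i)$ and it suffices to apply $p$ to a general element in each degree $i=0,1,2$. Under the representation of $\mca{A}$, the Clifford factors act on $\bigwedge^*\mf{an}^*$ by the spin representation (so $\theta_H,\theta_E$ act by $\varepsilon$ and $H,E$ by $\iota$), the factor $\varphi h\in\End(C^\infty(D_1^\pm))$ acts as the rank-one map $f\mapsto\varphi(f)h$, and $U(\mf{an})/I$ acts on $\mf{an}\otimes\bb{C}$ by $\ad$. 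I would record first the adjoint data $\ad(H)H=0$, $\ad(H)E=\tfrac{1}{\sqrt2}E$, $\ad(E)H=-\tfrac{1}{\sqrt2}E$, $\ad(E)E=0$, from which $1-\sqrt2 H\pm2iE$ annihilates $E$ and sends $H$ to $H\mp\sqrt2 iE$, while $\pm2iE$ sends $H$ to $\mp\sqrt2 iE$ and annihilates $E$.

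In degree $0$ every summand of $\hat p$ applies $\iota(E)$ to the Clifford factor first, and $\iota(E)$ annihilates $\bigwedge^0\mf{an}^*$; hence $p$ vanishes on $C^0$, giving $(\im p)^0=0$. In degree $1$ the contraction $\iota(E)$ kills $\theta_H$ and sends $\theta_E$ to $1$, so only the $\theta_E$-component of an input survives; moreover the second summand of $\hat p$ dies since it applies $\iota(H)$ after $\iota(E)$ to a one-form. Thus $p(\theta_E\otimes g\otimes Y)=\varphi(g)\,(\theta_E\pm\sqrt2 i\theta_H)\otimes h\otimes\big((1-\sqrt2 H\pm2iE)Y\big)$, which is nonzero only for $Y=H$ and then equals $\varphi(g)\,(\theta_E\pm\sqrt2 i\theta_H)\otimes h\otimes(H\mp\sqrt2 iE)$, yielding the stated $(\im p)^1$.

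In degree $2$ both summands contribute. Using the wedge convention fixed in the introduction one has $\iota(E)(\theta_H\wedge\theta_E)=-\theta_H$, and a short computation shows that the first Clifford factor returns $+\theta_H\wedge\theta_E$ while the second returns $-\theta_H\wedge\theta_E$. Hence $p(\theta_H\wedge\theta_E\otimes f\otimes X)=\varphi(f)\,\theta_H\wedge\theta_E\otimes h\otimes\big((1-\sqrt2 H\pm2iE)X-(\pm2iE)X\big)$; the two $\pm2iE$ contributions cancel, leaving $(1-\sqrt2 H)X$. Since $1-\sqrt2 H$ fixes $H$ and annihilates $E$, the image is nonzero only for $X=H$, giving $(\im p)^2=\bb{C}\,(\theta_H\wedge\theta_E)\otimes h\otimes H$.

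I do not anticipate any conceptual obstacle: the entire argument is bookkeeping of Clifford contractions together with the $\ad$-action on $\mf{an}$, and the only delicate point is the cancellation of the $\pm2iE$ terms in degree $2$, which depends on the precise coefficients of the two summands of $\hat p$ and on the sign in $\iota(E)(\theta_H\wedge\theta_E)=-\theta_H$. The outcome is the exact analogue of Lemma \ref{impds}, and, since $p$ is a projection by Proposition \ref{unipt}, the spaces computed above are automatically the fixed subspaces of $p$ in each degree.
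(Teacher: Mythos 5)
Your proposal is correct and follows essentially the same route as the paper: apply $p$ to a general element in each degree and evaluate the Clifford contractions together with the $\ad$-action of $1-\sqrt{2}H\pm2iE$ and $\pm2iE$ on $\mf{an}\otimes\bb{C}$. All the individual computations check out, including the sign $\iota(E)(\theta_H\wedge\theta_E)=-\theta_H$ and the cancellation of the $\pm2iE$ contributions in degree $2$, so your conclusion coincides with the paper's.
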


\begin{proof}
By easy computations we get 
\begin{equation*}
p\left(1\otimes f\otimes H+1\otimes g\otimes E\right)=0, 
\end{equation*}
\begin{align*}
&p\left(\theta_H\otimes f\otimes H+\theta_H\otimes g\otimes E+\theta_E\otimes k\otimes H+\theta_E\otimes l\otimes E\right)\\
&=\left(\theta_E\pm\sqrt{2}i\theta_H\right)\otimes\varphi(k)h\otimes\left(H\mp\sqrt{2}iE\right)
\end{align*}
and 
\begin{align*}
&p\left(\left(\theta_H\wedge\theta_E\right)\otimes f\otimes H+\left(\theta_H\wedge\theta_E\right)\otimes g\otimes E\right)\\
&=\left(\theta_H\wedge\theta_E\right)\otimes\varphi(f)h\otimes H, 
\end{align*}
from which the proposition follows. 
\end{proof}

\begin{lem}\label{popopo}
We have $dp=0$ and $\delta p=0$. 
\end{lem}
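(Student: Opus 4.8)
The goal is to verify the two identities $dp=0$ and $\delta p=0$ for the projection $p$ coming from $\hat p$ and the operator $\delta$ coming from $\hat\delta$. Since both $p$ and $\delta$ are obtained by applying the representation $\sigma\otimes\pi\otimes\ad$ of $\mca{A}$ to the elements $\hat p$ and $\hat\delta$, and $\hat d$ represents $d$, the cleanest route is to prove the stronger algebraic identities
\begin{equation*}
\hat d\hat p=0\quad\text{and}\quad\hat\delta\hat p=0
\end{equation*}
in $\mca{A}$, from which $dp=0$ and $\delta p=0$ follow immediately by applying $\sigma\otimes\pi\otimes\ad$. This keeps everything at the level of the universal algebra and avoids any dependence on the particular choices of $\varphi$ and $h$ beyond the structural relations they satisfy.

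First I would assemble the tools already available in the preceding sections. From Proposition \ref{unipt} we have $\hat p^2=\hat p$, and from Theorem \ref{ddddpppi} we have $\hat d\hat\delta+\hat\delta\hat d=1\otimes\id\otimes1-\hat p$. Multiplying the latter on the right by $\hat p$ and using $\hat p^2=\hat p$ gives $(\hat d\hat\delta+\hat\delta\hat d)\hat p=0$, so $\hat d\hat\delta\hat p+\hat\delta\hat d\hat p=0$; this already ties the two desired identities together but does not separate them. To separate them I would instead compute $\hat d\hat p$ and $\hat\delta\hat p$ directly, exploiting the explicit form
\begin{equation*}
\hat p=\left(\theta_E\pm\sqrt{2}i\theta_H\right)E\otimes\varphi h\otimes\left(1-\sqrt{2}H\pm2iE\right)+\theta_H\theta_EHE\otimes\varphi h\otimes\left(\pm2iE\right),
\end{equation*}
together with the Clifford relations of Lemma \ref{cliffrule}, the relations $H^2=\tfrac{1}{\sqrt2}H$, $E^2=0$, $HE=\tfrac{1}{\sqrt2}E$, $EH=0$ in $U(\mf{an})/I$ from Lemma \ref{ellvvisl}, the formula $\hat d_\star=-\tfrac{1}{\sqrt2}\theta_H\theta_EE$ from \eqref{dfor}, and the endomorphism identities listed in the proof of Theorem \ref{ddddpppi} (the commutators $[\pi(H),S]$, $[\pi(E),S]$, the facts $\pi(E)T=\id$, $T\pi(E)=\id$, $\varphi\pi(E)=0$, $\pi(H)\varphi=-\tfrac{1}{\sqrt2}\varphi$, and $Sh=\pm\sqrt2 ih$). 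The factor $\varphi h$ in each term of $\hat p$ means that when $\hat d$ or $\hat\delta$ is applied, the $\End(C^\infty(D_1^\pm))$-components reduce via $\varphi\pi(E)=0$ and the eigenrelation $\pi(H)\varphi=-\tfrac{1}{\sqrt2}\varphi$, while the $U(\mf{an})/I$-components are controlled entirely by Lemma \ref{ellvvisl}; the Clifford parts collapse because $\hat p$ is built from $\theta_E\pm\sqrt2 i\theta_H$ times $E$ and the top-degree element $\theta_H\theta_EHE$.

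The computation is a bookkeeping exercise of the same nature as the proofs of Theorems \ref{mtrm} and \ref{5555}: one expands each product, moves Clifford generators past one another with Lemma \ref{cliffrule}, collapses the $U(\mf{an})/I$ factors with Lemma \ref{ellvvisl}, and watches the $\End(C^\infty(D_1^\pm))$ factors annihilate through $\varphi\pi(E)=0$. The main obstacle is purely the length and delicacy of this expansion — there are many cross terms in three tensor factors simultaneously, and the $\pm$ and $i$ signs (tracking the choice $D_1^+$ versus $D_1^-$ via $Sh=\pm\sqrt2 ih$ and $h^\prime=\pm\sqrt2 ih$ from Lemma \ref{hprimeih}) must be propagated consistently. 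As with the analogous propositions earlier in the paper, the honest statement is that this is a direct but lengthy verification, so I would present the reduction to $\hat d\hat p=0$ and $\hat\delta\hat p=0$ in $\mca{A}$, indicate the relations used, and state that the remaining identity is obtained by a computation of the same type as in Theorems \ref{mtrm}, \ref{5555} and \ref{ddddpppi}, whose routine details we omit.
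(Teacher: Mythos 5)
Your proposal is correct, but it takes a different route from the paper's proof, which never works in $\mca{A}$ for this lemma. The paper invokes Proposition \ref{imageofp}: since $\im p$ is spanned by the two explicit cochains $\left(\theta_E\pm\sqrt{2}i\theta_H\right)\otimes h\otimes\left(H\mp\sqrt{2}iE\right)$ in degree $1$ and $\left(\theta_H\wedge\theta_E\right)\otimes h\otimes H$ in degree $2$, the identities $dp=0$ and $\delta p=0$ reduce to three short evaluations of $d$ and $\delta$ on these generators ($d$ of the degree-$2$ one vanishes automatically because $C^3=0$), carried out with \eqref{fffooo}, \eqref{2ihheh}, $Th=0$ and $Sh=\pm\sqrt{2}ih$. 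You instead prove $\hat{d}\hat{p}=0$ and $\hat{\delta}\hat{p}=0$ in $\mca{A}$, mirroring what the paper itself does in Lemma \ref{pdpd0aa} for the opposite-order products; this suffices since the representation of $\mca{A}$ is an algebra homomorphism, and the two algebra identities are indeed true (the representation is even faithful, as $\sigma$ is an isomorphism and $\ad\colon U(\mf{an})/I\to\End(\mf{an})$ is injective by construction, so nothing is lost by passing to $\mca{A}$). The expansion also genuinely closes: in $\hat{\delta}\hat{p}$ every term containing $T$ dies because $T\circ(\varphi h)=\varphi\otimes Th=0$, and the two surviving terms cancel using $Sh=\pm\sqrt{2}ih$ and Lemma \ref{ellvvisl}; in $\hat{d}\hat{p}$ all products against the second summand of $\hat{p}$ vanish already in the Clifford factor, and the remaining terms cancel via \eqref{2ihheh}. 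So your route buys uniformity with Lemma \ref{pdpd0aa} and a formally stronger, algebra-level statement, at the cost of a longer expansion; the paper's route buys brevity by exploiting the already-computed, small image of $p$. Two corrections to your list of ingredients: the correct identity is $\pi(E)T=\id-\varphi h$ (equivalently $[\pi(E),T]=-\varphi h$), not $\pi(E)T=\id$; and the relation that actually drives the $\hat{\delta}\hat{p}$ computation is $Th=0$ (a consequence of $\varphi(h)=1$ and the definition of $T$), which you did not list, whereas $\varphi\circ\pi(E)=0$ and $\pi(H)\varphi=-\frac{1}{\sqrt{2}}\varphi$ are the relations that drive the opposite-order products of Lemma \ref{pdpd0aa} rather than yours.
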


\begin{proof}
Following Proposition \ref{imageofp}, it suffices to show 
\begin{align*}
d\left(\left(\theta_E\pm\sqrt{2}i\theta_H\right)\otimes h\otimes\left(H\mp\sqrt{2}iE\right)\right)&=0\\
\delta\left(\left(\theta_E\pm\sqrt{2}i\theta_H\right)\otimes h\otimes\left(H\mp\sqrt{2}iE\right)\right)&=0\\
\delta\left(\left(\theta_H\wedge\theta_E\right)\otimes h\otimes H\right)&=0. 
\end{align*}
These equations are proved by direct computations. For the first equation, we use the definition of $\hat{d}$, Equations \eqref{fffooo} and \eqref{2ihheh}. For the second and third ones, we use the definition of $\delta$, $Th=0$ and $Sh=\pm\sqrt{2}ih$. 
\end{proof}

\begin{lem}\label{pdpd0aa}
We have $\hat{p}\hat{d}=0$ and $\hat{p}\hat{\delta}=0$ in $\mca{A}$. 
\end{lem}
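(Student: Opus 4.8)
The plan is to avoid expanding $\hat{p}\hat{d}$ and $\hat{p}\hat{\delta}$ by hand, and instead deduce both identities from relations already in place. The crucial enabling fact is that the natural representation
\[
\rho\colon\mca{A}\to\End\left(C^*\left(\mf{an};C^\infty(D_1^\pm)\otimes\mf{an}\right)\right)
\]
is faithful, so it suffices to prove the operator identities $pd=0$ and $p\delta=0$, where $p=\rho(\hat{p})$, $d=\rho(\hat{d})$ and $\delta=\rho(\hat{\delta})$. Faithfulness holds because, after complexification, $\rho$ is the tensor product of three injective algebra homomorphisms: the spin representation identifies $C(\mf{an}\oplus\mf{an}^*)\otimes\bb{C}$ with $\End_\bb{C}\left(\bigwedge^*\mf{an}^*\otimes\bb{C}\right)$, the factor $\End_\bb{C}\left(C^\infty(D_1^\pm)\right)$ acts tautologically and hence faithfully on $C^\infty(D_1^\pm)$, and $U(\mf{an})/I$ embeds into $\End(\mf{an})$ by the very definition $I=\ker(\ad)$. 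Since the tensor product over $\bb{C}$ of injective linear maps is injective and $\End_\bb{C}(\bigwedge^*\mf{an}^*\otimes\bb{C})\otimes\End_\bb{C}(C^\infty(D_1^\pm))\otimes\End_\bb{C}(\mf{an}\otimes\bb{C})$ injects into the endomorphism algebra of the triple tensor product, $\rho$ is injective.

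First I would record the operator consequences of the earlier results. Applying $\rho$ to Theorem \ref{ddddpppi} gives $d\delta+\delta d=\id-p$; Proposition \ref{dee2000} gives $\delta^2=0$; and $d^2=0$ holds because $d$ is a Chevalley--Eilenberg differential. Multiplying $d\delta+\delta d=\id-p$ on the left and on the right by $d$ and using $d^2=0$ gives
\[
d(\id-p)=d\delta d=(\id-p)d,
\]
so that $dp=pd$. The same manipulation with $\delta$, using $\delta^2=0$, yields
\[
\delta(\id-p)=\delta d\delta=(\id-p)\delta,
\]
so that $\delta p=p\delta$.

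Next I would invoke Lemma \ref{popopo}, which gives $dp=0$ and $\delta p=0$. Combined with the commutation relations just obtained, this forces $pd=dp=0$ and $p\delta=\delta p=0$ as operators. Faithfulness of $\rho$ then promotes these to the algebra identities $\hat{p}\hat{d}=0$ and $\hat{p}\hat{\delta}=0$ in $\mca{A}$, which is exactly the assertion of the lemma.

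The only genuinely delicate point in this route is the faithfulness of $\rho$, and within it the injectivity of the natural map $\End_\bb{C}(\bigwedge^*\mf{an}^*\otimes\bb{C})\otimes\End_\bb{C}(C^\infty(D_1^\pm))\otimes\End_\bb{C}(\mf{an}\otimes\bb{C})\to\End_\bb{C}\!\left(\bigwedge^*\mf{an}^*\otimes\bb{C}\otimes C^\infty(D_1^\pm)\otimes\mf{an}\otimes\bb{C}\right)$ in the presence of the infinite dimensional middle factor; this is standard but should be stated explicitly. If one prefers to remain self-contained and bypass the faithfulness argument, the alternative is the direct expansion used elsewhere in this section: multiply out $\hat{p}\hat{d}$ and $\hat{p}\hat{\delta}$ using the Clifford relations of Lemma \ref{cliffrule}, the relations $H^2=\tfrac{1}{\sqrt{2}}H$, $E^2=0$, $HE=\tfrac{1}{\sqrt{2}}E$, $EH=0$ of Lemma \ref{ellvvisl}, and the operator identities for $\pi(E)$, $\pi(H)$, $S$, $T$, $\varphi$, $h$ listed in Theorem \ref{ddddpppi}, together with $(\varphi h)\pi(E)=0$ and $(\varphi h)\pi(H)=\tfrac{1}{\sqrt{2}}\varphi h$ coming from $\ker\varphi=\pi(E)C^\infty(D_1^\pm)$ and $\pi(H)\varphi=-\tfrac{1}{\sqrt{2}}\varphi$. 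These last relations annihilate most terms, but the bookkeeping across the three tensor factors is then the main obstacle.
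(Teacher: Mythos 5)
Your proof is correct, but it takes a genuinely different route from the paper's. The paper proves Lemma \ref{pdpd0aa} by direct expansion inside $\mca{A}$: it multiplies out $\hat{p}\hat{\delta}$ and $\hat{p}\hat{d}$ using the relations $\left(1-\sqrt{2}H\pm2iE\right)H=0$, $\left(1-\sqrt{2}H\pm2iE\right)E=0$, $EH=0$, $E^2=0$ in $U(\mf{an})/I\otimes\bb{C}$, the relation $E^2=0$ in the Clifford factor, the formula $\hat{d}_\star=-\frac{1}{\sqrt{2}}\theta_H\theta_EE$, and the identities $\varphi\circ\pi(E)=0$, $\varphi\circ\pi(H)=\frac{1}{\sqrt{2}}\varphi$; it never leaves the algebra and, in particular, does not use Lemma \ref{popopo}. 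You instead argue at the operator level: from Theorem \ref{ddddpppi} together with $d^2=0$ and $\delta^2=0$ you get $dp=pd$ and $\delta p=p\delta$, then Lemma \ref{popopo} kills both products, and faithfulness of $\rho$ lifts the operator identities back to $\mca{A}$. This is sound: there is no circularity, since Theorem \ref{ddddpppi}, Proposition \ref{dee2000} and Lemma \ref{popopo} are all established independently of the present lemma, and the faithfulness claim is true --- $\sigma$ is an isomorphism after complexification, $\ad\colon U(\mf{an})/I\to\End(\mf{an})$ is injective by the definition of $I$, the middle factor acts tautologically, and the canonical map $\End_\bb{C}(V_1)\otimes_\bb{C}\End_\bb{C}(V_2)\otimes_\bb{C}\End_\bb{C}(V_3)\to\End_\bb{C}\left(V_1\otimes_\bb{C}V_2\otimes_\bb{C}V_3\right)$ is injective even when $V_2$ is infinite dimensional (given a kernel element $\sum_if_i\otimes g_i\otimes h_i$ with the $g_i\otimes h_i$ linearly independent, finitely many test vectors have linearly independent images under the $g_i\otimes h_i$, forcing each $f_i=0$). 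You correctly single this out as the one step that must be stated and proved explicitly; it is the price of avoiding the paper's lengthy bookkeeping. What your route buys is a short conceptual proof, immune to computational slips, which also yields the commutation relations $pd=dp$ and $p\delta=\delta p$ as a by-product; what the paper's route buys is a proof that lives entirely inside $\mca{A}$, independent of Lemma \ref{popopo} and of any representation-theoretic faithfulness considerations.
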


\begin{proof}
Recall the definitions: 
\begin{equation*}
\hat{p}=\left(\theta_E\pm\sqrt{2}i\theta_H\right)E\otimes\varphi h\otimes\left(1-\sqrt{2}H\pm2iE\right)+\theta_H\theta_EHE\otimes\varphi h\otimes\left(\pm2iE\right), 
\end{equation*}
\begin{equation*}
\hat{d}=\hat{d}_\star\otimes\id\otimes1+\theta_H\otimes\pi(H)\otimes1+\theta_H\otimes\id\otimes H+\theta_E\otimes\pi(E)\otimes1+\theta_E\otimes\id\otimes E, 
\end{equation*}
\begin{align*}
\hat{\delta}&=2\hat{d}_\star^\top\otimes T\otimes E+2H\otimes\id\otimes H-2E\otimes S\otimes H\\
&\quad+E\otimes T\otimes\left(1-\sqrt{2}H\right)+\sqrt{2}E\otimes ST\otimes E. 
\end{align*}
We have $\hat{p}\hat{\delta}=0$ since 
\begin{gather*}
\left(1-\sqrt{2}H\pm2iE\right)H=0,\quad\left(1-\sqrt{2}H\pm2iE\right)E=0\\
EH=0,\quad E^2=0
\end{gather*}
in $U(\mf{an})/I\otimes\bb{C}$ and $E^2=0$ in $C(\mf{an}\oplus\mf{an}^*)\otimes\bb{C}$. By these equations and $\varphi\circ\pi(E)=0$, we get 
\begin{align*}
&\hat{p}\hat{d}\\
&=\left(\left(\theta_E\pm\sqrt{2}i\theta_H\right)E\otimes\varphi h\otimes\left(1-\sqrt{2}H\pm2iE\right)+\theta_H\theta_EHE\otimes\varphi h\otimes\left(\pm2iE\right)\right)\\
&\quad\left(\hat{d}_\star\otimes\id\otimes1+\theta_H\otimes\pi(H)\otimes1\right). 
\end{align*}
By a computation using $\hat{d}_\star=-\frac{1}{\sqrt{2}}\theta_H\theta_EE$ and $\varphi\circ\pi(H)=\frac{1}{\sqrt{2}}\varphi$, we see $\hat{p}\hat{d}=0$. 
\end{proof}

\begin{thm}
We have 
\begin{align*}
C^*\left(\mf{an};C^\infty(D_1^\pm)\otimes\mf{an}\right)&=\im p\oplus\im d\oplus\im\delta\\
\ker d&=\im p\oplus\im d\\
\ker\delta&=\im p\oplus\im\delta\\
\ker d\cap\ker\delta&=\im p\\
\im d\oplus\im\delta&=\ker p. 
\end{align*}
An element $x$ of $C^*\left(\mf{an};C^\infty(D_1^\pm)\otimes\mf{an}\right)$ is decomposed as 
\begin{equation*}
x=p(x)+d\delta x+\delta dx
\end{equation*}
with respect to the first decomposition. The second decomposition implies 
\begin{equation*}
H^*\left(\mf{an};C^\infty(D_1^\pm)\otimes\mf{an}\right)\simeq\im p. 
\end{equation*}
\end{thm}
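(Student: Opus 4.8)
The plan is to deduce all the asserted identities purely formally from the algebraic relations already established among the three operators $d$, $\delta$ and $p$ on $C^*(\mf{an};C^\infty(D_1^\pm)\otimes\mf{an})$. Applying the natural representation of $\mca{A}$ on $C^*(\mf{an};C^\infty(D_1^\pm)\otimes\mf{an})$ to the identities of Theorem \ref{ddddpppi}, Proposition \ref{dee2000}, Proposition \ref{unipt}, Lemma \ref{popopo} and Lemma \ref{pdpd0aa}, together with $d^2=0$ for the Chevalley--Eilenberg differential, we obtain
\begin{gather*}
d^2=0,\quad\delta^2=0,\quad p^2=p,\\
d\delta+\delta d=\id-p,\\
dp=pd=0,\quad\delta p=p\delta=0.
\end{gather*}
Every step below uses only these six relations, so the argument is the standard formal Hodge-theoretic deduction.

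First I would record the pointwise decomposition formula. For any $x$ the relation $d\delta+\delta d=\id-p$ gives $x=px+d\delta x+\delta dx$, so that $C^*(\mf{an};C^\infty(D_1^\pm)\otimes\mf{an})=\im p+\im d+\im\delta$. The main point requiring care is that this sum is direct. Suppose $pa+db+\delta c=0$. Applying $p$ and using $p^2=p$, $pd=0$, $p\delta=0$ yields $pa=0$, hence $db=-\delta c=:y$. Then $dy=d^2b=0$ and $\delta y=-\delta^2 c=0$, so $y\in\ker d\cap\ker\delta$; feeding $y$ into $d\delta+\delta d=\id-p$ gives $0=y-py$, while $py=(pd)b=0$, so $y=0$. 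Thus each summand vanishes and
\begin{equation*}
C^*(\mf{an};C^\infty(D_1^\pm)\otimes\mf{an})=\im p\oplus\im d\oplus\im\delta.
\end{equation*}

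The remaining four identities then fall out by intersecting the decomposition formula with the various kernels. Since $dp=0$ and $d^2=0$ give $\im p\oplus\im d\subset\ker d$, while any $x\in\ker d$ satisfies $x=px+d\delta x$ (the term $\delta dx$ drops), we get $\ker d=\im p\oplus\im d$; symmetrically $\ker\delta=\im p\oplus\im\delta$ using $\delta p=0$, $\delta^2=0$. Intersecting these two, or directly noting that $x\in\ker d\cap\ker\delta$ forces $x=px$, gives $\ker d\cap\ker\delta=\im p$. Finally $pd=0$ and $p\delta=0$ give $\im d\oplus\im\delta\subset\ker p$, and $px=0$ forces $x=d\delta x+\delta dx$, so $\ker p=\im d\oplus\im\delta$. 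The cohomology statement is then immediate: $H^*(\mf{an};C^\infty(D_1^\pm)\otimes\mf{an})=\ker d/\im d=(\im p\oplus\im d)/\im d\simeq\im p$.

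I expect no genuine obstacle here; the only step demanding attention is the directness of the triple sum, where the one subtlety is to invoke both $pd=0$ and $dp=0$ (from Lemmas \ref{popopo} and \ref{pdpd0aa}) rather than treating $p$ as self-adjoint. An explicit description of $\im p$ in each degree is already supplied by Proposition \ref{imageofp}, which therefore also pins down the dimensions of the cohomology.
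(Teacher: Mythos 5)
Your proposal is correct and follows exactly the paper's approach: the paper's own proof is a one-line citation of Theorem \ref{ddddpppi}, Proposition \ref{dee2000}, Proposition \ref{unipt}, Lemma \ref{popopo} and Lemma \ref{pdpd0aa}, and your argument is precisely the formal deduction from those six operator identities that this citation leaves implicit. Your careful treatment of the directness of the triple sum (using both $pd=0$ and $dp=0$ rather than any self-adjointness of $p$) is exactly the right way to fill in that omitted verification.
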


\begin{proof}
This follows from Theorem \ref{ddddpppi}, Proposition \ref{dee2000}, Proposition \ref{unipt}, Lemma \ref{popopo} and Lemma \ref{pdpd0aa}. 
\end{proof}

\subsection{Computation of $H^*(\mca{F};\mf{an})$}\label{comphfan}
In this section we give a ``Hodge decomposition'' of $C^*\left(\mf{an};C^\infty(P,\bb{C})\otimes\mf{an}\right)$ and determine $H^*(\mca{F};\mf{an})$ as a consequence. 

Since $C^\infty(P,\bb{R})\otimes(\mf{an}\otimes\bb{C})\simeq C^\infty(P,\bb{C})\otimes\mf{an}$ as representations of $\mf{an}$, 
\begin{align*}
\Gamma\left(\bigwedge^*T^*\mca{F}\otimes(\mf{an}\otimes\bb{C})\right)&\simeq C^*\left(\mf{an};C^\infty(P,\bb{R})\otimes(\mf{an}\otimes\bb{C})\right)\\
&\simeq C^*\left(\mf{an};C^\infty(P,\bb{C})\otimes\mf{an}\right). 
\end{align*}
The cohomology of the left hand side is $H^*(\mca{F};\mf{an}\otimes\bb{C})$. 

By Section \ref{beginning} we have an equivalence 
\begin{equation*}
C^\infty(P,\bb{C})\simeq\bb{C}\oplus g_\Sigma\left(C^\infty(D_1^+)\oplus C^\infty(D_1^-)\right)\oplus\mca{H}
\end{equation*}
of representations of $\sl(2,\bb{R})$, where the Casimir element $\Omega$ of $\sl(2,\bb{R})$ acts as a bijection on $\mca{H}$. The cochain complex decomposes as well: 
\begin{align*}
C^*\left(\mf{an};C^\infty(P,\bb{C})\otimes\mf{an}\right)&\simeq C^*\left(\mf{an};\bb{C}\otimes\mf{an}\right)\\
&\quad\oplus g_\Sigma\left(C^*\left(\mf{an};C^\infty(D_1^+)\otimes\mf{an}\right)\oplus C^*\left(\mf{an};C^\infty(D_1^-)\otimes\mf{an}\right)\right)\\
&\quad\oplus C^*\left(\mf{an};\mca{H}\otimes\mf{an}\right). 
\end{align*}
``Hodge decompositions'' of $C^*\left(\mf{an};\bb{C}\otimes\mf{an}\right)$ and $C^*\left(\mf{an};C^\infty(D_1^\pm)\otimes\mf{an}\right)$ are given in Sections \ref{trivialhoooo} and \ref{hood1pm}. Only the second one has a nontrivial cohomology. 

To give a ``Hodge decomposition'' of $C^*\left(\mf{an};\mca{H}\otimes\mf{an}\right)$, let $\delta$ be the operator in Section \ref{an1h} for the complex $C^*\left(\mf{an};\mca{H}\otimes\mf{an}\right)$. It satisfies 
\begin{equation*}
d\delta+\delta d=\id\otimes\pi(\Omega)^2\otimes\id, 
\end{equation*}
where $\pi$ denotes the representation $\sl(2,\bb{R})\curvearrowright\mca{H}$. We can define 
\begin{equation*}
\delta^\prime=(\id\otimes\pi(\Omega)^{-2}\otimes\id)\delta
\end{equation*}
since $\pi(\Omega)$ is bijective. Then $d\delta^\prime+\delta^\prime d=\id$ and $(\delta^\prime)^2=0$. Hence we get the following ``Hodge decomposition'': 
\begin{gather*}
C^*\left(\mf{an};\mca{H}\otimes\mf{an}\right)=\im d\oplus\im\delta^\prime, \\
\ker d=\im d,\quad\ker\delta^\prime=\im\delta^\prime, \\
H^*(\mf{an};\mca{H}\otimes\mf{an})=0. 
\end{gather*}
Therefore we obtained a ``Hodge decomposition'' of the entire cochain complex $C^*\left(\mf{an};C^\infty(P,\bb{C})\otimes\mf{an}\right)$ by combining the four decompositions. As a corollary we get a computation of the cohomology. 

\begin{cor}\label{ancohrrreee}
We have 
\begin{align*}
H^i(\mca{F};\mf{an}\otimes\bb{C})\simeq
\begin{cases}
\bb{C}^{2g_\Sigma}&i=1,2\\
0&\text{otherwise}
\end{cases}
\end{align*}
and 
\begin{align*}
H^i(\mca{F};\mf{an})\simeq
\begin{cases}
\bb{R}^{2g_\Sigma}&i=1,2\\
0&\text{otherwise}. 
\end{cases}
\end{align*}
\end{cor}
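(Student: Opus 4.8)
The plan is to compute $H^*(\mca{F};\mf{an}\otimes\bb{C})$ by giving a ``Hodge decomposition'' of the cochain complex $C^*\left(\mf{an};C^\infty(P,\bb{C})\otimes\mf{an}\right)$, which is isomorphic to $\Gamma\left(\bigwedge^*T^*\mca{F}\otimes(\mf{an}\otimes\bb{C})\right)$ by Proposition \ref{drce} (after identifying $C^\infty(P,\bb{R})\otimes(\mf{an}\otimes\bb{C})\simeq C^\infty(P,\bb{C})\otimes\mf{an}$). The strategy parallels the computation of $H^*(\mca{F};\bb{C}_\lambda)$ in Section \ref{beginning}: first decompose $C^\infty(P,\bb{C})$ as a representation of $\sl(2,\bb{R})$ using the unitary decomposition of $L^2(P)$ from Theorem \ref{unidecompkk}, then decompose the cochain complex accordingly, and finally produce an appropriate $\delta$ on each summand.

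First I would use the decomposition of $C^\infty(P,\bb{C})$ into $\sl(2,\bb{R})$-invariant subspaces coming from Section \ref{beginning}, namely
\begin{equation*}
C^\infty(P,\bb{C})\simeq\bb{C}\oplus g_\Sigma\left(C^\infty(D_1^+)\oplus C^\infty(D_1^-)\right)\oplus\mca{H},
\end{equation*}
where the Casimir element $\Omega$ acts bijectively on $\mca{H}$. Tensoring with $\mf{an}$ and using that the adjoint action of $\mf{an}$ on $\mf{an}$ respects these summands, the complex $C^*\left(\mf{an};C^\infty(P,\bb{C})\otimes\mf{an}\right)$ splits into a direct sum of the four pieces $C^*\left(\mf{an};\bb{C}\otimes\mf{an}\right)$, the two copies $g_\Sigma\,C^*\left(\mf{an};C^\infty(D_1^\pm)\otimes\mf{an}\right)$, and $C^*\left(\mf{an};\mca{H}\otimes\mf{an}\right)$. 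I would then handle each summand with the Hodge decomposition constructed for it: Section \ref{trivialhoooo} gives $H^*\left(\mf{an};\bb{C}\otimes\mf{an}\right)=0$, Section \ref{hood1pm} gives $H^*\left(\mf{an};C^\infty(D_1^\pm)\otimes\mf{an}\right)\simeq\im p$ with $\im p$ computed explicitly in Proposition \ref{imageofp}, and the generic part is dealt with by the operator $\delta$ of Section \ref{an1h}.

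For the generic summand, by Corollary \ref{cofff} the operator $\delta$ satisfies $d\delta+\delta d=\id\otimes\pi(\Omega)^2\otimes\id$ on $C^*\left(\mf{an};\mca{H}\otimes\mf{an}\right)$, where $\pi$ is the representation of $\sl(2,\bb{R})$ on $\mca{H}$. Since $\pi(\Omega)$ is bijective on $\mca{H}$, I would verify that $\id\otimes\pi(\Omega)^2\otimes\id$ is bijective on the whole complex — this requires the same convergence argument as in the proof of Lemma \ref{biject}, using the description of $C^\infty$ vectors from Lemma \ref{smooooth} and the fact that the Casimir parameters have no accumulation point, so that $\pi(\Omega)^{-2}$ is a bounded operator preserving the space of $C^\infty$ vectors. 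Setting $\delta^\prime=(\id\otimes\pi(\Omega)^{-2}\otimes\id)\delta$ yields $d\delta^\prime+\delta^\prime d=\id$ and $(\delta^\prime)^2=0$, whence $H^*\left(\mf{an};\mca{H}\otimes\mf{an}\right)=0$.

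Assembling the four decompositions gives a ``Hodge decomposition'' of the entire complex and shows that the cohomology is carried entirely by the two discrete-series summands. Counting dimensions via Proposition \ref{imageofp}, each copy of $C^*\left(\mf{an};C^\infty(D_1^\pm)\otimes\mf{an}\right)$ contributes $\bb{C}$ in degrees $1$ and $2$, and there are $g_\Sigma$ copies of each of $D_1^+$ and $D_1^-$, giving $H^i(\mca{F};\mf{an}\otimes\bb{C})\simeq\bb{C}^{2g_\Sigma}$ for $i=1,2$ and $0$ otherwise; the real statement $H^i(\mca{F};\mf{an})\simeq\bb{R}^{2g_\Sigma}$ then follows from the isomorphism \eqref{realcomplex}. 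The main obstacle I anticipate is not the formal assembly but the verified bijectivity of $\id\otimes\pi(\Omega)^2\otimes\id$ on the generic part: one must carefully confirm, using Lemma \ref{smooooth}, that the purely algebraic inverse $\pi(\Omega)^{-2}$ extends to a well-defined operator on the space of $C^\infty$ vectors of the infinite direct sum, exactly as in Lemma \ref{biject}; once this is in hand the remaining steps are the direct computations already packaged in Sections \ref{trivialhoooo}, \ref{hood1pm} and \ref{an1h}.
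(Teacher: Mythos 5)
Your proposal is correct and follows essentially the same route as the paper's own proof in Section \ref{comphfan}: the same four-way splitting of $C^*\left(\mf{an};C^\infty(P,\bb{C})\otimes\mf{an}\right)$ via the decomposition $C^\infty(P,\bb{C})\simeq\bb{C}\oplus g_\Sigma\left(C^\infty(D_1^+)\oplus C^\infty(D_1^-)\right)\oplus\mca{H}$, the same use of the Hodge decompositions from Sections \ref{trivialhoooo}, \ref{hood1pm} and \ref{an1h}, the same inversion of $\id\otimes\pi(\Omega)^2\otimes\id$ on the generic part, and the same final passage to the real case via \eqref{realcomplex}. Your flagged concern about bijectivity of $\pi(\Omega)^2$ on the generic summand is handled exactly as you anticipate, by the Lemma \ref{biject}/Lemma \ref{smooooth} argument already established in Section \ref{beginning}.
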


\begin{proof}
The second isomorphism follows from the first since we have 
\begin{equation*}
H^*(\mca{F};\mf{an}\otimes\bb{C})\simeq H^*(\mca{F};\mf{an})\otimes\bb{C}
\end{equation*}
by \eqref{realcomplex} in Section \ref{acofpcf}. 
\end{proof}

\section{Computation of $H^*\left(\mca{F};\mf{sl}(2,\bb{R})\right)$}\label{slslsls2rr}
In this section we compute $H^*\left(\mca{F};\mf{sl}(2,\bb{R})\right)$ for the adjoint representation $\mf{an}\stackrel{\ad}{\curvearrowright}\sl(2,\bb{R})$. 

By \eqref{realcomplex} in Section \ref{acofpcf} and Proposition \ref{drce}, we have 
\begin{align}
H^*\left(\mca{F};\sl(2,\bb{R})\right)\otimes\bb{C}&\simeq H^*\left(\mca{F};\mf{sl}(2,\bb{R})\otimes\bb{C}\right)\nonumber\\
&\simeq H^*\left(\mf{an};C^\infty(P,\bb{R})\otimes\left(\mf{sl}(2,\bb{R})\otimes\bb{C}\right)\right)\nonumber\\
&\simeq H^*\left(\mf{an};C^\infty(P,\bb{C})\otimes\mf{sl}(2,\bb{R})\right). \label{lxifne}
\end{align}
From the short exact sequence 
\begin{equation*}
0\to\mf{an}\to\sl(2,\bb{R})\to\sl(2,\bb{R})/\mf{an}\to0
\end{equation*}
and an isomorphism $\sl(2,\bb{R})/\mf{an}\simeq\bb{R}_{-\frac{1}{\sqrt{2}}}$ of representations of $\mf{an}$, we get a long exact sequence 
\begin{align*}
\cdots\to H^i\left(\mf{an};C^\infty(P,\bb{C})\otimes\mf{an}\right)&\to H^i\left(\mf{an};C^\infty(P,\bb{C})\otimes\sl(2,\bb{R})\right)\\
&\to H^i\left(\mf{an};C^\infty(P,\bb{C})\otimes\bb{R}_{-\frac{1}{\sqrt{2}}}\right)\to\cdots. 
\end{align*}

\begin{lem}
The connecting homomorphisms 
\begin{equation}
H^i\left(\mf{an};C^\infty(P,\bb{C})\otimes\bb{R}_{-\frac{1}{\sqrt{2}}}\right)\to H^{i+1}\left(\mf{an};C^\infty(P,\bb{C})\otimes\mf{an}\right)\label{connectingh}
\end{equation}
are zero. 
\end{lem}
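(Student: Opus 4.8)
The plan is to prove that the connecting homomorphisms \eqref{connectingh} vanish by producing, for each class in $H^i\left(\mf{an};C^\infty(P,\bb{C})\otimes\bb{R}_{-\frac{1}{\sqrt{2}}}\right)$, a lift to a closed cochain valued in $C^\infty(P,\bb{C})\otimes\sl(2,\bb{R})$, which forces the connecting map to be zero. Recall that the connecting homomorphism of the long exact sequence associated to the short exact sequence of $\mf{an}$-modules
\begin{equation*}
0\to\mf{an}\to\sl(2,\bb{R})\to\sl(2,\bb{R})/\mf{an}\simeq\bb{R}_{-\frac{1}{\sqrt{2}}}\to0
\end{equation*}
sends the class of a cocycle $\omega\in C^i\left(\mf{an};C^\infty(P,\bb{C})\otimes\bb{R}_{-\frac{1}{\sqrt{2}}}\right)$ to the class of $d\tilde\omega$, where $\tilde\omega$ is any cochain valued in $C^\infty(P,\bb{C})\otimes\sl(2,\bb{R})$ lifting $\omega$ through the projection $\sl(2,\bb{R})\to\sl(2,\bb{R})/\mf{an}$. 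Thus the map is zero precisely when each such cocycle admits a lift that is itself closed.

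First I would use the explicit computations already carried out in the paper. By Corollary \ref{ancohrrreee}, $H^*(\mca{F};\mf{an}\otimes\bb{C})=H^*\left(\mf{an};C^\infty(P,\bb{C})\otimes\mf{an}\right)$ is nonzero only in degrees $1$ and $2$, where it is $\bb{C}^{2g_\Sigma}$; so the target of \eqref{connectingh} is automatically zero unless $i+1\in\{1,2\}$, i.e.\ $i\in\{0,1\}$. Next, the source $H^i\left(\mf{an};C^\infty(P,\bb{C})\otimes\bb{R}_{-\frac{1}{\sqrt{2}}}\right)\simeq H^i\left(\mca{F};\bb{C}_{-\frac{1}{\sqrt{2}}}\right)$ can be read off from the results of Section \ref{comprrrr}: since $-\tfrac{1}{\sqrt2}=\tfrac{n}{\sqrt2}$ with $n=-1$, we are in the range $n\in\bb{Z}_{\leq-1}$ and obtain $H^i\left(\mca{F};\bb{C}_{-\frac{1}{\sqrt{2}}}\right)\simeq\bb{C}^{2\cdot3\cdot(g_\Sigma-1)}=\bb{C}^{6(g_\Sigma-1)}$ for $i=1,2$ and $0$ otherwise. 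Combining the two constraints, the only degree in which both the source and the target of \eqref{connectingh} are nonzero is $i=1$, so the only map to rule out is
\begin{equation*}
H^1\left(\mf{an};C^\infty(P,\bb{C})\otimes\bb{R}_{-\frac{1}{\sqrt{2}}}\right)\to H^2\left(\mf{an};C^\infty(P,\bb{C})\otimes\mf{an}\right).
\end{equation*}

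For this remaining case I would exploit the decomposition of $C^\infty(P,\bb{C})$ into isotypic pieces $\bb{C}\oplus g_\Sigma C^\infty(D_1^\pm)\oplus\mca{H}$ from Section \ref{beginning}, together with the fact that $\Omega$ acts invertibly on the generic part $\mca{H}$. The Hodge decompositions constructed in this paper are all compatible with this isotypic splitting, so the connecting homomorphism respects it and it suffices to check vanishing summand by summand. On the generic summand the relevant cohomologies vanish outright; on the trivial summand $\bb{C}$ the source $H^1(\mf{an};\bb{C}_{-1/\sqrt2})$ is zero by Section \ref{cohcomp}. The only genuinely nontrivial contribution comes from the discrete-series summands $C^\infty(D_1^\pm)$, where both $H^1\left(\mf{an};C^\infty(D_1^\pm)\otimes\bb{C}_{-\frac{1}{\sqrt{2}}}\right)$ and $H^2\left(\mf{an};C^\infty(D_1^\pm)\otimes\mf{an}\right)$ are nonzero. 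Here I would identify an explicit generator of the source using the harmonic representative furnished by Lemma \ref{impds} (the class of $\left(\theta_E\pm\sqrt{2}i\theta_H\right)\otimes h\otimes1$), lift it to a cochain valued in $C^\infty(D_1^\pm)\otimes\sl(2,\bb{R})$ by choosing a preimage of $1\in\sl(2,\bb{R})/\mf{an}$, and verify that the differential of this lift is a coboundary by comparing with the explicit harmonic generator of the target given in Proposition \ref{imageofp}. The main obstacle is exactly this discrete-series computation: one must pin down the lift concretely and show that $d\tilde\omega$ lands in $\im d$ rather than contributing to $\im p$, which amounts to a short but delicate calculation with the operators $\pi(H_\pm)$ and the element $h\in(D_n^\pm)_{(\pm1)}$, using $\varphi\circ\pi(E)=0$ and $Sh=\pm\sqrt2 i h$ from Lemma \ref{hprimeih}. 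Once that is settled, the vanishing in all other degrees and on all other summands is immediate from the cohomology tables, completing the proof.
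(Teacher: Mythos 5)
Your overall skeleton---split $C^\infty(P,\bb{C})$ into isotypic summands, observe that the connecting homomorphism respects the splitting, and kill each summand separately---is exactly the paper's strategy, and your degree bookkeeping reducing everything to $i=1$ is fine. But the last step contains a genuine error. You claim that on the summands $C^\infty(D_1^\pm)$ both the source $H^1\left(\mf{an};C^\infty(D_1^\pm)\otimes\bb{C}_{-\frac{1}{\sqrt{2}}}\right)$ and the target $H^2\left(\mf{an};C^\infty(D_1^\pm)\otimes\mf{an}\right)$ are nonzero, so that a ``delicate calculation'' with explicit lifts is required there. In fact the source vanishes: by the table in Section \ref{cohcomp}, $H^*\left(\mf{an};C^\infty(D_n^\pm)\otimes\bb{C}_\lambda\right)=0$ unless $\lambda=\frac{1-n}{\sqrt{2}}$, and for $n=1$ this means $\lambda=0\neq-\frac{1}{\sqrt{2}}$. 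Your appeal to Lemma \ref{impds} for a harmonic generator of that group is a misapplication: for $D_1^\pm$ that lemma concerns the coefficient $\bb{C}_0$, not $\bb{C}_{-\frac{1}{\sqrt{2}}}$; the class $\left(\theta_E\pm\sqrt{2}i\theta_H\right)\otimes h\otimes1$ with coefficient $\bb{C}_{-\frac{1}{\sqrt{2}}}$ lives in the complex for $D_2^\pm$ (take $n=2$, so $\frac{1-n}{\sqrt{2}}=-\frac{1}{\sqrt{2}}$). Consequently the computation you propose would compare a source class supported on the $D_2^\pm$ summands with a target class (from Proposition \ref{imageofp}) supported on the $D_1^\pm$ summands; since the connecting homomorphism cannot mix distinct summands of $C^\infty(P,\bb{C})$, this is not a computation of the connecting map at all.

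Once the supports are identified correctly, the lemma is immediate and no computation is needed, which is what the paper does: the source $H^*\left(\mf{an};C^\infty(P,\bb{C})\otimes\bb{R}_{-\frac{1}{\sqrt{2}}}\right)$ is supported entirely on the $(3g_\Sigma-3)$ copies of $C^\infty(D_2^\pm)$, whereas the target $H^*\left(\mf{an};C^\infty(P,\bb{C})\otimes\mf{an}\right)$ is supported entirely on the $g_\Sigma$ copies of $C^\infty(D_1^\pm)$, because on the generic part (which contains $D_2^\pm$) the Casimir acts invertibly and $H^*(\mf{an};\cdot\otimes\mf{an})=0$ by Section \ref{comphfan}. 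Hence on every summand of the decomposition $C^\infty(P,\bb{C})=g_\Sigma C^\infty(D_1^+)\oplus g_\Sigma C^\infty(D_1^-)\oplus(3g_\Sigma-3)C^\infty(D_2^+)\oplus(3g_\Sigma-3)C^\infty(D_2^-)\oplus W$ either the domain or the codomain of the connecting map is zero, so the map is zero. Note also that your sentence ``on the generic summand the relevant cohomologies vanish outright'' is only half true: the source does \emph{not} vanish there---that is precisely where all of it lives---but the target does, which is what saves that summand.
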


\begin{proof}
Recall that by Theorem \ref{36kwlxisee} and \eqref{ywolxi} in Section \ref{beginning}, 
\begin{align*}
H^*\left(\mf{an};C^\infty(P,\bb{C})\otimes\bb{R}_{-\frac{1}{\sqrt{2}}}\right)&\simeq(3g_\Sigma-3)H^*\left(\mf{an};C^\infty(D_2^+)\otimes\bb{R}_{-\frac{1}{\sqrt{2}}}\right)\\
&\quad\oplus(3g_\Sigma-3)H^*\left(\mf{an};C^\infty(D_2^-)\otimes\bb{R}_{-\frac{1}{\sqrt{2}}}\right)
\end{align*}
and by Section \ref{comphfan}, 
\begin{equation*}
H^*\left(\mf{an};C^\infty(P,\bb{C})\otimes\mf{an}\right)\simeq g_\Sigma H^*\left(\mf{an};C^\infty(D_1^+)\otimes\mf{an}\right)\oplus g_\Sigma H^*\left(\mf{an};C^\infty(D_1^-)\otimes\mf{an}\right). 
\end{equation*}
By Theorem \ref{unidecompkk} and an argument in Section \ref{beginning}, we have a decomposition 
\begin{align}
C^\infty(P,\bb{C})&=g_\Sigma C^\infty(D_1^+)\oplus g_\Sigma C^\infty(D_1^-)\nonumber\\
&\quad\oplus(3g_\Sigma-3)C^\infty(D_2^+)\oplus(3g_\Sigma-3)C^\infty(D_2^-)\oplus W\label{deccco}
\end{align}
of representation of $\sl(2,\bb{R})$, where $W$ is a subrepresentation of $C^\infty(P,\bb{C})$. By the definition of the connecting homomorphism, the map \eqref{connectingh} decomposes with respect to the decompositions of the cohomologies obtained by \eqref{deccco}. The homomorphisms 
\begin{equation*}
H^i\left(\mf{an};C^\infty(D_n^\pm)\otimes\bb{R}_{-\frac{1}{\sqrt{2}}}\right)\to H^{i+1}\left(\mf{an};C^\infty(D_n^\pm)\otimes\mf{an}\right)
\end{equation*}
for $n=1,2$ and 
\begin{equation*}
H^i\left(\mf{an};W\otimes\bb{R}_{-\frac{1}{\sqrt{2}}}\right)\to H^{i+1}\left(\mf{an};W\otimes\mf{an}\right)
\end{equation*}
are zero, since the domain or the codomain is zero in each case. Hence the map \eqref{connectingh} is zero as well. 
\end{proof}

Therefore, by Section \ref{comprrrr} and \ref{hodgean}, 
\begin{align*}
H^i\left(\mca{F};\sl(2,\bb{R})\otimes\bb{C}\right)&\simeq H^i\left(\mf{an};C^\infty(P,\bb{C})\otimes\mf{sl}(2,\bb{R})\right)\\
&\simeq H^i\left(\mf{an};C^\infty(P,\bb{C})\otimes\mf{an}\right)\oplus H^i\left(\mf{an};C^\infty(P,\bb{C})\otimes\bb{R}_{-\frac{1}{\sqrt{2}}}\right)\\
&\simeq
\begin{cases}
\bb{C}^{8g_\Sigma-6}&i=1,2\\
0&\text{otherwise}. 
\end{cases}
\end{align*}
Hence by \eqref{lxifne}, 
\begin{align*}
H^i\left(\mca{F};\sl(2,\bb{R})\right)\simeq
\begin{cases}
\bb{R}^{8g_\Sigma-6}&i=1,2\\
0&\text{otherwise}. 
\end{cases}
\end{align*}

\section*{Acknowledgements}
The authors thank Yu Nishimura for attending all the Skype (or Zoom) seminars for the preparation of this work. They also thank Shigenori Matsumoto for letting them know that total minimizability follows from their computation. Most of the results in this paper were obtained when the first named author was a Research Fellow of Japan Society for the Promotion of Science. He also thanks Masahiko Kanai and Masayuki Asaoka for some discussions related to this paper.

\bibliography{maruhashi}
\end{document}